\newcommand{\dist}{\operatorname{dist}}
\newcommand{\ddiv}{\operatorname{div}}
\newcommand{\Hess}{\operatorname{Hess}}
\newcommand{\Ric}{\operatorname{Ric}}
\newcommand{\grad}{\operatorname{grad}}
\newcommand{\D}{\Delta}
\def\AS{A_{S_1(x)}}
\newcommand{\NN}{{\mathbb{N}}}
\newcommand{\RR}{{\mathbb{R}}}
\newcommand{\Q}{{\mathcal{Q}}}
\newcommand{\K}{{\mathcal{K}}}
\newcommand{\M}{{\mathcal{M}}}
\newcommand{\FK}{{\mathcal{FK}}}
\newcommand{\N}{{\mathcal{N}}}
\newcommand{\eChar}{\begin{enumerate}[(i)]}
\newcommand{\eCharR}{\begin{enumerate}[(a)]}
\newcommand{\eBr}{\begin{enumerate}[(1)]}
\def\P{\mathcal{P}}
\newcommand{\Abstract}
\newcommand\at[2]{\left.#1\right|_{#2}}
\keywords{Bakry-\'Emery curvature, Curvature-Dimension Inequality, Cartesian products, Cayley graphs, Strongly regular graphs}
\subjclass{Primary 05C50, Secondary 52C99, 53A40}
\title
{
Bakry-\'Emery curvature functions of graphs
}
\author{David Cushing}
\address{
Department of Mathematical Sciences\\
Durham University\\
South Road\\
Durham, DH1 3LE\\
United Kingdom}
\email{david.cushing@durham.ac.uk}
\author{Shiping Liu}
\address{
School of Mathematical Sciences\\
University of Science and Technology of China\\
96 Jinzhai Road\\
Hefei 230026\\
Anhui Province\\
China}
\email{spliu@ustc.edu.cn}
\author{Norbert Peyerimhoff}
\address{
Department of Mathematical Sciences\\
Durham University\\
South Road\\
Durham, DH1 3LE\\
United Kingdom}
\email{norbert.peyerimhoff@durham.ac.uk}
\date{\today}
\theoremstyle{plain}
\newtheorem{lemma}{Lemma}[section]
\newtheorem{theorem}[lemma]{Theorem}
\newtheorem{proposition}[lemma]{Proposition}
\newtheorem{corollary}[lemma]{Corollary}
\theoremstyle{definition}
\newtheorem{conjecture}[lemma]{Conjecture}
\newtheorem{definition}[lemma]{Definition}
\newtheorem{remark}[lemma]{Remark}
\newtheorem{remarks}[lemma]{Remarks}
\newtheorem{example}[lemma]{Example}
\newtheorem{examples}[lemma]{Examples}
\newtheorem{problem}[lemma]{Problem}
\numberwithin{equation}{section}
\begin{document}

\maketitle

\pagestyle{plain}

\begin{abstract}
  We study local properties of the Bakry-\'Emery curvature function
  $\K_{G,x}:(0,\infty]\to \mathbb{R}$ at a vertex $x$ of a graph $G$
  systematically. Here $\K_{G,x}(\N)$ is defined as the optimal
  curvature lower bound $\K$ in the Bakry-\'Emery curvature-dimension
  inequality $CD(\K,\N)$ that $x$ satisfies. We provide upper and
  lower bounds for the curvature functions, introduce fundamental
  concepts like curvature sharpness and $S^1$-out regularity, and
  relate the curvature functions of $G$ with various spectral
  properties of (weighted) graphs constructed from local structures of
  $G$. We prove that the curvature functions of the Cartesian product
  of two graphs $G_1,G_2$ are equal to an abstract product of
  curvature functions of $G_1,G_2$. We explore the curvature functions
  of Cayley graphs, strongly regular graphs, and many particular
  (families of) examples. We present various conjectures and construct
  an infinite increasing family of $6$-regular graphs which satisfy
  $CD(0,\infty)$ but are not Cayley graphs.
\end{abstract}

\section{Introduction}

In this section we introduce Bakry-\'Emery curvature and survey the main results of the paper.\\

A fundamental notion in the smooth setting of Riemannian manifolds is
Ricci curvature. This notion has been generalized in various ways to
the more general setting of metric spaces. In this article, we
consider the discrete setting of graphs and study the optimal Ricci
curvature lower bound $\K$ in Bakry-\'Emery's curvature-dimension
inequality $CD(\K,\N)$ at a vertex $x$ of a graph $G$ as a function
of the variable $\N\in (0,\infty]$. Let us start to introduce this
curvature notion which is based on the choice of a Laplace operator
and which has been studied extensively in recent years (see, e.g.,
\cite{Schmuckenschlager98,LY10,JL14,KKRT16,CLY14,LP14,HL15,HL16}).

Let $G=(V,E)$ be a locally finite simple graph (that is, no loops and
no multiple edges) with vertex set $V$ and edge set $E$. For any
$x,y\in V$, we write $x\sim y$ if $\{x,y\}\in E$. Let
$d_x:=\sum_{y:y\sim x}1$ be the \emph{degree of $x$}. We say a graph
$G$ is \emph{$d$-regular} if $d_x=d$ for any $x\in V$. Let
$\dist: V \times V \to \N \cup \{0\}$ denote the \emph{combinatorial
  distance function}. For any function $f: V\to \mathbb{R}$ and any
vertex $x\in V$, the \emph{(non-normalized) Laplacian} $\Delta$ is
defined via
\begin{equation}\label{eq:nonnormalised_Laplacian}
\Delta f(x):=\sum_{y,y\sim x}(f(y)-f(x)).
\end{equation}
The notion of a Laplacian can be generalised by intoducing a vertex
measure and edge weights. In this article we will only consider
curvature associated to the non-normalized Laplacian, except
for the final Section \ref{genmeassec}, where we will briefly provide
some additional information about this curvature notion for general
Laplacians.

\begin{definition}[$\Gamma$ and $\Gamma_{2}$ operators]\label{defn:GammaGamma2}
Let $G=(V,E)$ be a locally finite simple graph.
For any two functions $f,g: V\to \mathbb{R}$, we define
\begin{align*}
2\Gamma(f,g)&:=\D(fg)-f\D g-g\D f;\\
2\Gamma_2(f,g)&:=\D\Gamma(f,g)-\Gamma(f,\D g)-\Gamma(\D f,g).
\end{align*}
\end{definition}
We will write $\Gamma(f):=\Gamma(f,f)$ and $\Gamma_2(f,f):=\Gamma_2(f)$, for short.

\begin{definition}[Bakry-\'Emery curvature]\label{defn:BEcurvature} Let $G=(V,E)$ be a locally finite simple graph. Let $\K\in \mathbb{R}$ and $\N\in (0,\infty]$. We say that a vertex $x\in V$ satisfies the \emph{curvature-dimension inequality} $CD(\mathcal{K},\mathcal{N})$, if for any $f:V\to \mathbb{R}$, we have
\begin{equation}\label{eq:CDineq}
\Gamma_2(f)(x)\geq \frac{1}{\N}(\Delta f(x))^2+\K\Gamma(f)(x).
\end{equation}
We call $\K$ a lower Ricci curvature bound of $x$, and $\N$ a dimension parameter. The graph $G=(V,E)$ satisfies $CD(\K,\N)$ (globally), if all its vertices satisfy $CD(\K,\N)$. At a vertex $x\in V$, let $\K(G,x;\N)$ be the largest $\K$ such that (\ref{eq:CDineq}) holds for all functions $f$ at $x$ for a given $\N$. We call $\K_{G,x}(\N):=\K(G,x;\N)$ the \emph{Bakry-\'Emery curvature function} of $x$.
\end{definition}

The reader can find various modifications of this curvature notion in,
e.g., \cite{BHLLMY13, Horn14,FM14,FM15,LMP16,LM16}. It is natural to
ask about the motivation for this curvature. The notion is rooted on
\emph{Bochner's formula}, a fundamental identity in Riemannian
  Geometry. The following remark explains this connection to the
  smooth setting in more detail.

\begin{remark}
Let $(M,\langle \cdot, \cdot \rangle)$ be a  Riemannian manifold of dimension $n$ with
the Laplacian defined via $\Delta = \ddiv \circ \grad \le 0$.

Bochner's formula states for all smooth functions $f \in C^\infty(M)$ that
$$ \frac{1}{2} \Delta \vert \grad\: f \vert^2(x) = \vert \Hess\: f \vert^2(x)
+ \langle \grad\: \Delta f(x), \grad\: f(x) \rangle + \Ric(\grad\: f(x)), $$
where $\Hess$ denotes the Hessian and $\Ric$ denotes the Ricci tensor.
If $\Ric(v) \geq K_x \vert v \vert^2$ for all $v \in T_xM$ and, using the inequality $\vert \Hess\: f \vert^2(x) \geq \frac{1}{n} ( \Delta f(x) )^2$, we obtain
$$ \frac{1}{2} \Delta \vert \grad\: f \vert^2(x) - \langle \grad\: \Delta f(x), \grad\: f(x) \rangle \geq \frac{1}{n} ( \Delta f(x) )^2 + K_x \vert 
\grad\: f(x) \vert^2.$$
The $\Gamma$ and $\Gamma_2$ of Bakry-\'Emery \cite{BE85} for two functions $f,g \in C^\infty(M)$ are defined as
\begin{eqnarray*}
2 \Gamma(f,g) &:=& \Delta(fg) - f \Delta g - g \Delta f = \langle \grad\: f, \grad\: g \rangle, \\
2 \Gamma_2(f,g) &:=& \Delta \Gamma(f,g) - \Gamma(f,\Delta g) - \Gamma(g,\Delta f).
\end{eqnarray*}
Noting that
$$ \Gamma_2(f,f) = \frac{1}{2} \Delta \vert \grad\: f \vert^2 - \langle \grad\: \Delta f, \grad\: f \rangle $$
and by using Bochner's formula, we obtain the inequality
$$\Gamma_2(f,f)(x) \geq  \frac{1}{n} ( \Delta f(x) )^2 + K_x \Gamma(f,f)(x).$$
In conclusion, an $n$-dimensional Riemannian manifold $(M,\langle \cdot,\cdot \rangle)$ with Ricci curvature bounded below by $K_x$ at $x \in M$ satisfies an inequality of the form given in \eqref{eq:CDineq}.  This suggests to use this inequality to define, indirectly, a Ricci curvature notion for a metric space via the help of the Laplacian.
\end{remark}

Before we give a more detailed discussion of the results in this
article, we like to first provide a rough overview with references
to the sections:

\begin{itemize}
\item Section \ref{sec:gammagamma2props}: Properties of $\Gamma$ and $\Gamma_2$,
  by formulating curvature via semidefinite programming
\item Section \ref{props}: General properties of curvature functions
\item Sections \ref{section:ub} and \ref{section:lb}: Upper and lower curvature bounds
\item Section \ref{section:connComp}: Negative curvature at ``bottlenecks''
\item Section \ref{section:Cartesian}: Curvature of Cartesian products
\item Section \ref{gc}: Global $CD(0,\infty)$ conjectures 
\item Section \ref{sec:S1outreg}: Vertex curvature and spectral gaps in the $1$-sphere.
\item Section \ref{section:Cayley}: Curvature of Cayley graphs
\item Section \ref{section:srg}: Curvature of strongly regular graphs
\item Section \ref{genmeassec}: Curvature for graphs with general measures
\end{itemize}

\subsection{Properties of the Bakry-\'Emery curvature function}
In this article, we are particularly interested in the full Bakry-\'Emery curvature functions $\K_{G,x}: (0,\infty]\to \mathbb{R}$ at all vertices $x \in V$ which carry substantially more information than just the global $CD(\K,\N)$ condition. It follows directly from the definition
that $\K_{G,x}$ is monotone non-decreasing. In fact, we study further properties of
this curvature function in Section \ref{props} and show that the function $\K_{G,x}$ is concave, continuous, and $\lim_{\N \to 0} K_{G,x}(\N) = -\infty$  (Proposition \ref{prop:cur_function}). Moreover, there exist constants $c_1(G,x), c_2(G,x)$, depending on the local structure at $x$, such that
\begin{equation} \label{eq:lowuppestK} 
c_1(G,x) - \frac{2d_x}{\N} \le \K_{G,x}(\N) \le c_2(G,x) - \frac{2d_x}{\N}. 
\end{equation}
The curvature function $\K_{G,x}$ is fully determined by the topology
of the $2$-ball $B_2(x) = \{ y \in V: \dist(x,y) \le 2 \}$ centered at
$x \in V$ and Section \ref{section:ub} is concerned with the upper
bound $c_2(G,x)$ in \eqref{eq:lowuppestK} in terms of this local
structure. Introducing, for a vertex $y \in V$, the \emph{out degree}
(with respect to the center $x$)
$$ d_y^{x,+} := | \{ z: z \sim y, \dist(x,z) > \dist(x,y) \} |, $$
the \emph{average out degree} $av_1^+(x)$ of the $1$-sphere $S_1(x) := \{ y \in V: \dist(x,y) = 1 \}$ is defined by
$$ av_1^+(x) = \frac{1}{d_x} \sum_{y \in S_1(x)} d_y^+, $$
and  the constant $c_2(G,)$ in \eqref{eq:lowuppestK} is given by (Theorem \ref{thm:ub} and Definition \ref{defn:Kinfty0})
\begin{equation}\label{eq:uppbdK} 
c_2(G,x) = \K_\infty^0(x) := \frac{3+d_x-av_1^+(x)}{2}. 
\end{equation}
Since, in many cases, the curvature function agrees with this upper bound, we introduce the following terminology:

\begin{definition}[Curvature sharpness] Let $G=(V,E)$ be a locally finite simple graph. Let $\N \in (0,\infty]$. We call a vertex $x \in V$ to be \emph{$\N$-curvature sharp} if $\K_{G,x}(\N)$
agrees with the upper bound given in \eqref{eq:lowuppestK} and \eqref{eq:uppbdK}, that
is 
$$ \K_{G,x}(\N) = \K_\infty^0(x) - \frac{2d_x}{\N}. $$
We call the graph $G$ to be $\N$-curvature sharp, if every vertex $x \in V$ is $\N$-curvature sharp.
\end{definition}  

We will show that if $x$ is $\N$-curvature sharp, then it is also $\N'$-curvature for all smaller values $\N'$ (Proposition \ref{prop:cur_sharp_before}). Moreover, monotonicity and concavity of $\K_{G,x}$ imply that if $\K_{G,x}(\N_1) = \K_{G,x}(\N_2)$ for $\N_1 < \N_2$, then $\K_{G,x}(\N)$ is constant for all values $\N \ge \N_1$ (Proposition \ref{prop:cur_function_strictly_monotone}).  

A natural question is what local information can be extracted from the curvature function $\K_{G,x}$. We show that the degree $d_x$ can be read off via (Corollary \ref{cor:read_cur_dx})
$$ d_x = -\frac{1}{2} \lim_{\N \to 0} \N \K_{G,x}(\N), $$
and the average out-degree $av_1^+(x)$ can be read off via (Corollary \ref{cor:read_cur_av1})
$$ av_1^+ = 3 + d_x - 2 \lim_{\N \to 0} \left( \K_{G,x}(\N) + \frac{2d_x}{\N} \right). $$ 

Proposition \ref{prop:cur_function} also provides the lower curvature
bound $c_1(G,x) = \K_{G,x}(\infty)$ in \eqref{eq:lowuppestK}. Section
\ref{section:lb} provides another lower curvature bound in terms of
the upper bound \eqref{eq:uppbdK} and a correction term given by the
(non-positive) smallest eigenvalue of a specific matrix
$\widehat{\P}_{\N}$ (Theorem \ref{thm:lb}).

\subsection{Curvature of $S_1$-out regular vertices}

Now we introduce a certain homogeneity property within the $2$-ball
of a vertex, called \emph{$S_1$-out regularity}. It turns out that
this notion is closely linked to the curvature sharpness introduced
above.

\begin{definition}[$S_1$-out regularity]
We say a locally finite simple graph $G$
is \emph{$S_1$-out regular} at $x$, if all vertices in $S_1(x)$ have the same out degree.
\end{definition}

We have the following surprising characterization: A graph $G$ is
$S_1$-out regular at a vertex $x$ if and only if there exists
$\N \in (0,\infty]$ such that $x$ is $\N$-curvature sharp (Corollary
\ref{cor:Outregular_characterization}). That is, the curvature
function $\K_{G,x}$ assumes the upper bound $\K_\infty^0(x) - 2d_x/\N$
for some $\N \in (0,\infty]$ if and only if the local structure around
$x$ is homogeneous in the sense of $S_1$-out regularity. Moreover,
when $G$ is $S_1$-out regular at $x$, there exists a threshold
$\N_0(x)$ such that $x$ is $\N$-curvature sharp for any
$\N \in(0,\N_0(x)]$, and $\K_{G,x}(\N) \equiv \K_{G,x}(\N_0(x))$ for
any $\N \in [\N_0(x),\infty]$ (Theorem \ref{thm:OutRegularFormula}).

\subsection{Curvature and local connectedness}

\begin{figure}[h]
    \centering
    \includegraphics[height=2cm]{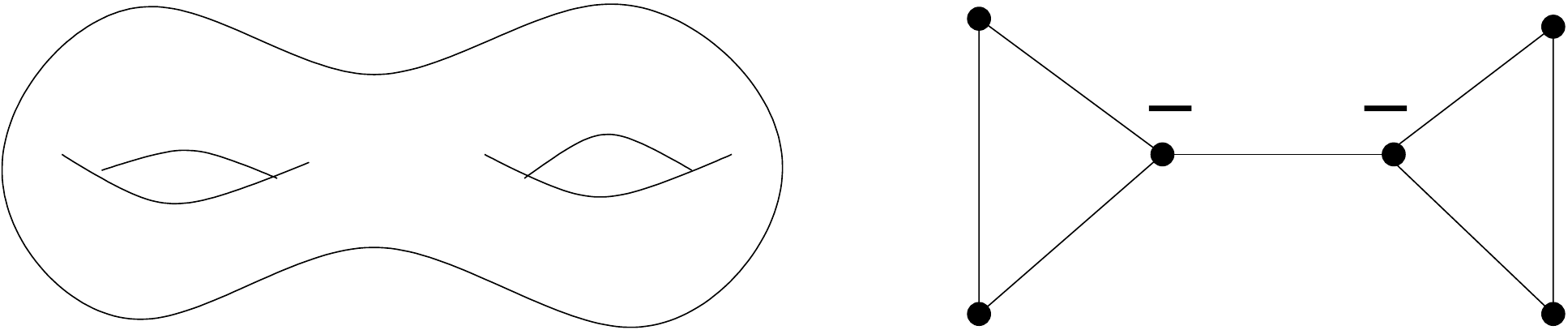}
    \caption{Negative curvature at a ``bottleneck''\label{bottleneck}}
  \end{figure}

  A well know phenomenon in the setting of Riemannian surfaces is that
  ``bottlenecks'' generate regions of negative curvature. A similar
  phenomenon occurs in the graph setting (the two vertices at the
  bottleneck in Figure \ref{bottleneck} have strictly negative
  curvature functions). More generally, the curvature
  $\K_{G,x}(\infty)$ is -- with very few exceptions -- always
  negative, if the \emph{punctured $2$-ball}
  $\mathring{B}_2(x) = B_2(x) - \{ x \}$ has more than one connected
  component (Theorem \ref{thm:conncomp}). Here $\mathring{B}_2(x)$
  denotes the subgraph containing all spherical edges of $S_1(x)$ and
  all radial edges between $S_1(x)$ and the $2$-sphere $S_2(x)$ (but
  not the radial edges of $S_2(x)$, since they have no influence on
  the curvature function at $x$). Further relations between the
  curvature and the local structure of $\mathring{B}_2(x)$ will be
  presented in Section \ref{section:connComp}. In the specific case of
  an $S_1$-out regular vertex $x \in V$ with $\mathring{B}_2(x)$ having more
  than one connected component, we derive in Section
  \ref{sec:S1outreg} the explicit expression
  $\K_{G,x}(\infty) = (3-d_x-av_1^+)/2$, which is negative as soon as
  $d_x+av_1^+ > 3$ (Corollary \ref{cor:curvS1out}). This follows from
  a precise formula for the curvature function $\K_{G,x}$ in terms of
  the spectral gap of a weighted graph $S_1''(x)$ of size $d_x$
  constructed from $\mathring{B}_2(x)$ (Theorem \ref{thm:curvs1out})
  in the specific case of an $S_1$-regular vertex $x$.

\subsection{Curvature of Cartesian products}

In Section \ref{section:Cartesian}, we discuss curvature functions of Cartesian
products. Let $G_i = (V_i,E_i)$ be two locally finite simple graphs
and let $x \in V_1, y \in V_2$. Then the curvature function $\K_{G_1 \times G_2,(x,y)}$,
$(x,y) \in V_1 \times V_2$ is gvien by
$$ \K_{G_1 \times G_2,(x,y)}(\N) = \K_{G_1,x}(\N_1) = \K_{G_2,y}(\N_2), $$
where $\N_1, \N_2 > 0$ are chosen such that $\N = \N_1 + \N_2$ and
$\K_{G_1,x}(\N_1) = \K_{G_2,y}(\N_2)$. We say that
$\K_{G_1 \times G_2,(x,y)}$ is the $*$-product of $\K_{G_1,x}$ and
$\K_{G_2,y}$. This $*$-product (Definition \ref{defn:star_operation}) of the curvature
functions is a well-defined abstract product and interesting in its
own right. In Figure \ref{figure:Cur_fct}, we illustrate the curvature
functions of the complete graphs $K_2$ and $K_3$ and their Cartesian
product $K_2 \times K_3$ (Example \ref{example:LKmn}). Observe that
$\K_{K_2}(4) = \K_{K_3}(4) = 3/2$. As illustrated in Figure
\ref{figure:Cur_fct_2d}, this tells us that
$\K_{K_2 \times K_3}(8) = 3/2$.

\begin{figure}[h]
\begin{minipage}[t]{0.45\linewidth}
\centering
\includegraphics[width=\textwidth]{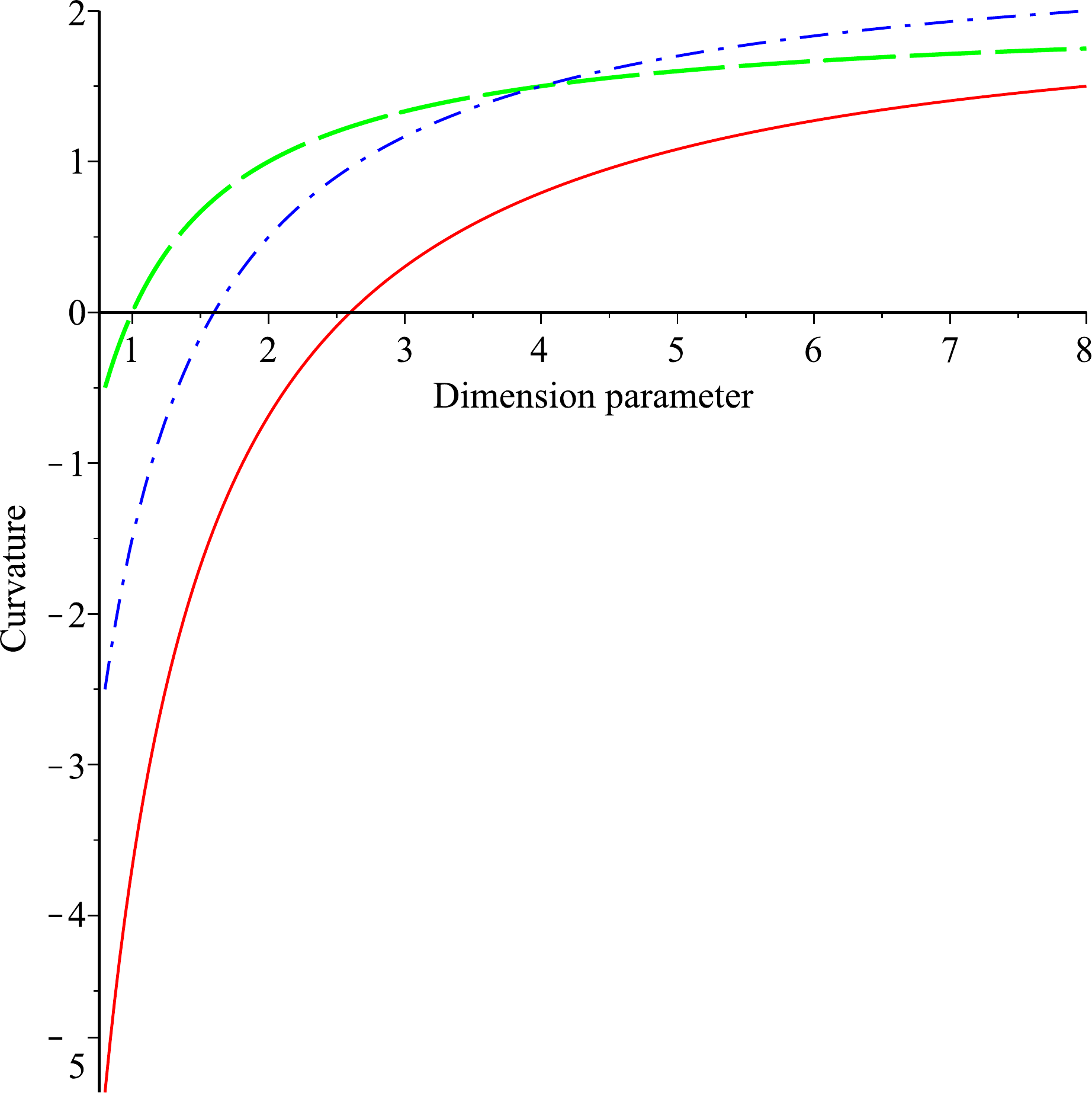}
\captionsetup{width=\linewidth}
\caption{Curvature functions of $\K_{K_2}(\N)$ (dashed), $\K_{K_3}(\N)$ (dashdotted), and $\K_{{K_2\times K_3}}(\N)$ (solid) when $\N\in [0.8,8]$.\label{figure:Cur_fct}}
\end{minipage}
\hfill
\begin{minipage}[t]{0.45\linewidth}
\centering
\includegraphics[width=\textwidth]{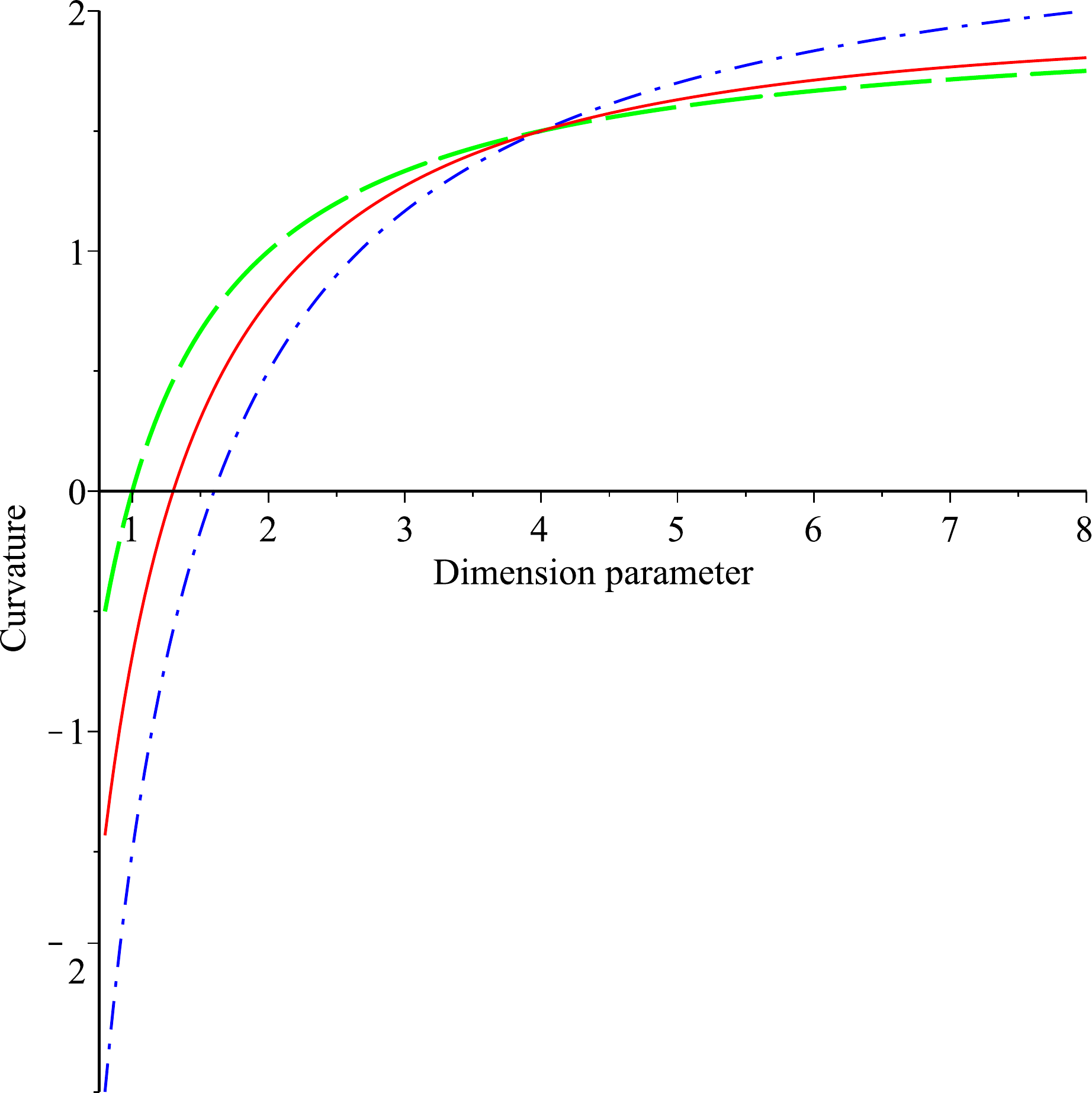}
\captionsetup{width=\linewidth}
\caption{Curvature functions of $\K_{K_2}(\N)$ (dashed), $\K_{K_3}(\N)$ (dashdotted), and $\K_{{K_2\times K_3}}(2\N)$ (solid) when $\N\in [0.8,8]$.\label{figure:Cur_fct_2d}}
\end{minipage}
\end{figure}

\subsection{Curvature of specific graph families}

We calculate curvature functions of many particular graph families, e.g.,
\begin{itemize}
\item paths and cycles, star graphs and regular trees, complete
  graphs, complete bipartite graphs, and crown graphs in Section
  \ref{section:families_examples},
\item hypercubes and line graphs of bipartite graphs in Section \ref{subsec:examples},
\item and Johnson graphs in Example \ref{example:Johnson}.
\end{itemize}

In Section \ref{section:Cayley}, we study curvature functions of \emph{Cayley graphs}. It
is well know that all abelian Cayley graphs satisfy the $CD(0,\infty)$ condition. In
Theorem \ref{thm:coxcurv}, we give a direct relation between the curvature
function of the Cayley graph of a Coxeter group with standard generators (which can be huge;
for example, the Cayley graph of $E_8$ is of size
$192 \cdot 10!$) and the maximal eigenvalue of the Laplacian of the
corresponding Coxeter diagram (which is usually very small
in comparison; in the example $E_8$ of size $8$). 

It is not known and a very interesting question whether there exist
infinite expander families in the class $CD(0,\infty)$ or not (Conjecture
\ref{conj:exp}). Such an expander family cannot consist of \emph{abelian} Cayley
graphs. In Example \ref{example:family_non_Cay} we construct an
infinite family of increasing $6$-regular non-Cayley graphs satisfying
$CD(0,\infty)$, but it is easy to see that this class is not a family
of expanders.

Another interesting class of graphs, studied in Section \ref{section:srg}, are
\emph{strongly regular graphs}. A strongly regular graph $G$ comes with a
family of parameters $(N,d,\alpha,\beta)$, determining its girth
(which can only by $3,4$ or $5$). The curvature functions of strongly
regular graphs with girth $4$ or $5$ are very simple and only depend
on the parameter $d$ (Corollary \ref{cor:srglb}). In the girth $3$ case, the
parameters $(d,\alpha,\beta)$, together with the spectrum of the
adjacency matrix of $1$-sphere $S_1(x)$, determine the curvature
function $\K_{G,x}$ (Theorem \ref{thm:srgCur}), but there are examples of
strongly regular graphs (Shrikhande and $4 \times 4$ rook's graph)
with the same parameters $(N,d,\alpha,\beta)$ and different curvature
functions (Example \ref{example:shrikhande_rook}). The curvature results suggest that all
strongly regular graphs with girth $3$ should satisfy $CD(2,\infty)$
(Conjecture \ref{conj:srg}), but at present we are not even able to prove that
they satisfy $CD(0,\infty)$.

\subsection{Return to Riemannian manifolds}

Readers not interesting in the Riemannian manifold case can safely
skip this subsection, which draws a comparison between curvature
functions of graphs with that of weighted Riemannian manifolds.

A \emph{weighted Riemannian manifold} is a triple
$(M^n,g,e^{-f}d\mathrm{vol}_g)$, where $(M^n,g)$ is an $n$-dimensional
complete Riemannian manifold, $d\mathrm{vol}_g$ is the Riemannian
volume element, and $f$ is a smooth real valued function on $M^n$. The
$\N$-dimensional weighted Ricci tensor of
$(M^n,g,e^{-f}d\mathrm{vol}_g)$ is
\begin{equation}\label{eq:RicciTensor}
\mathrm{Ric}_f(\N):=\mathrm{Ric}+\mathrm{Hess}f-\frac{df\otimes df}{\N-n},
\end{equation}
where $\mathrm{Ric}$ is the Ricci curvature tensor of $(M^n,g)$, $\mathrm{Hess}f$ is the Hessian of $f$ (\cite{BE85,Qian97}). Using the $f$-Laplacian $\D_f=\D_g-\nabla f\cdot\nabla$, where $\D_g$ is the Laplace-Beltrami operator on $(M^n,g)$, one can define the Bakry-\'Emery curvature-dimension inequality $CD(\K,\N)$ as in Definitions \ref{defn:GammaGamma2} and \ref{defn:BEcurvature}. Then $CD(\K,\N),\N\in (n,\infty]$ holds if and only if $\mathrm{Ric}_f(\N)\geq \K$ (see, e.g., \cite[Section 3]{Bakry}).
Recall $n$ in (\ref{eq:RicciTensor}) is the dimension of the underlying Riemannian manifold. When $f$ is not constant, the lower bound of $\mathrm{Ric}_f(\N)$ tends to $-\infty$ as $\N$ tends to $n$. So, in comparison, a graph can be considered as $0$-dimensional. This is natural in the sense that we are using a difference operator to define the curvature functions of a graph.

Recently, the conditions $\mathrm{Ric}_f(\N)\geq \K$ on $(M^n,g,e^{-f}d\mathrm{vol}_g)$, where $\N<n$, have also been studied in \cite{KM13,Ohta13}. In particular, for $\N\in (-\infty,n)$, $CD(\K,\N)$ holds if and only if $\mathrm{Ric}_f(\N)\geq \K$ (\cite[Remark 2.4]{KM13}, \cite[Theorem 4.10]{Ohta13}). In principle, the curvature functions of a graph studied in this article can also be defined on $(-\infty,0)\cup (0,\infty]$. However, we will restrict ourselves to curvature functions on the interval $(0,\infty]$.

\section{Bakry-\'Emery curvature and local $\Gamma$ and $\Gamma_2$ matrices}
\label{sec:gammagamma2props}

In this section, we view curvature as solution of a \emph{semidefinite
programming problem} and derive upper curvature bounds via higher
multiplicities of the zero eigenvalue of certain matrices. We also
derive some properties of vertices satisfying the $CD(0,\N)$ condition.

\subsection{Fundamental notions}

Henceforth, we use the standard notation $[k] := \{1,2,\dots,k\}$.
Given a vertex $x\in V$, the curvature function $\K_{G,x}$ only
depends on the local structure of the graph around $x$.  We now
prepare the notations describing this local structure.  We denote by
$\mathrm{dist}$ the discrete graph distance. For any $r\in\mathbb{N}$,
the $r$-ball centered at $x$ is defined as
$$B_r(x):=\{y\in V: \mathrm{dist}(x,y)\leq r\},$$
and the $r$-sphere centered at $x$ is
$$S_r(x):=\{y\in V: \mathrm{dist}(x,y)=r\}.$$
Then we have the following decomposition of the $2$-ball $B_2(x)$:
\begin{equation*}
B_2(x)=\{x\}\sqcup S_1(x)\sqcup S_2(x).
\end{equation*}
We call an edge $\{y,z\}\in E$ a \emph{spherical edge} (w.r.t. $x$) if $\mathrm{dist}(x,y)=\mathrm{dist}(x,z)$, and a \emph{radial edge} if otherwise. For a vertex $y\in V$, we define
\begin{align*}
d_y^{x,+}&:=|\{z: z\sim y, \mathrm{dist}(x,z)>\mathrm{dist}(x,y)\}|, \\
d_y^{x,0}&:=|\{z: z\sim y, \mathrm{dist}(x,z)=\mathrm{dist}(x,y)\}|, \\
d_y^{x,-}&:=|\{z: z\sim y, \mathrm{dist}(x,z)<\mathrm{dist}(x,y)\}|.
\end{align*}
In the above, the notation $|\cdot|$ stands for the cardinality of the set. We call $d_y^{x,+}$, $d_y^{x,0}$, and $d_y^{x,-}$ the \emph{out degree}, \emph{spherical degree}, and \emph{in degree} of $y$ w.r.t. $x$. We sometimes write $d_y^+, d_y^0, d_y^-$ for short when the reference vertex $x$ is clear from the context.

By abuse of notion, we use $S_1(x)$ in this article also for the
induced subgraph of the vertices in $S_1(x)$ in a graph $G$; we use
$B_2(x)$ also for the subgraph of $G$ with vertex set $B_2(x)$ and
edge set given by the radial edges connecting $\{x\}$ and $S_1(x)$,
the radial edges between $S_1(x)$ and $S_2(x)$, and the
spherical edges in $S_1(x)$. Note that this graph is not the induced
subgraph of $B_2(x)$ in $G$ (since the spherical edges in $S_2(x)$ are
not included), but this local information is all that is needed for
the calculation of the Bakry-\'Emery curvature function $\K_{G,x}$.
We denote by $\mathring{B}_2(x)$ the subgraph of $B_2(x)$ obtained by
deleting $\{x\}$ and all radial edges connecting $\{x\}$ and $S_1(x)$.

Since at a vertex $x$, both $\Gamma(f,g)(x)$ and $\Gamma_2(f,g)(x)$ are quadratic forms, we can talk about their local matrices. The curvature-dimension inequalities can be reformulated as linear matrix inequalities. Recall the following proposition (see \cite[Proposition 3.10]{LMP16}).
\begin{proposition}[\cite{LMP16}]\label{prop:LMP}
Let $G=(V,E)$ be a locally finite simple graph and let $x\in V$. The Bakry-\'Emery curvature function $\K_{G,x}(\N)$ valued at $\N\in (0,\infty]$
 is the solution of the following semidefinite programming,
\begin{align*}
 &\text{maximize}\,\,\, K\\
&\text{subject to}\,\,\,\Gamma_2(x)-\frac{1}{\N}\Delta(x)^\top\Delta(x)\geq K\Gamma(x),
\end{align*}
\end{proposition}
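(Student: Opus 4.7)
The plan is to reduce the curvature-dimension inequality at $x$, which is \emph{a priori} a condition on all $f:V\to\mathbb{R}$, to a finite-dimensional linear matrix inequality on $\mathbb{R}^{B_2(x)}$; once achieved, the resulting optimization over $\K$ is manifestly a semidefinite program. The guiding idea is locality: each of $\Delta f(x)$, $\Gamma(f)(x)$, and $\Gamma_2(f)(x)$ depends only on the restriction $\tilde f := f|_{B_2(x)}$.

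First I would verify this locality claim. From \eqref{eq:nonnormalised_Laplacian}, $\Delta f(x)$ only uses values on $B_1(x)$, and the identity $2\Gamma(f)(x) = \sum_{y \sim x}(f(y)-f(x))^2$, obtained directly from Definition \ref{defn:GammaGamma2}, shows the same for $\Gamma(f)(x)$. For $\Gamma_2(f)(x) = \tfrac12 \Delta\Gamma(f)(x) - \Gamma(f,\Delta f)(x)$, the outer $\Delta$ involves only values of $\Gamma(f)$ on $B_1(x)$, and $\Gamma(f)(y)$ for $y \in S_1(x)$ in turn depends only on $f|_{B_1(y)}\subseteq f|_{B_2(x)}$; the cross term $\Gamma(f,\Delta f)(x)$ is handled analogously. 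Hence all three quantities are polynomial in $\tilde f \in \mathbb{R}^{B_2(x)}$, of degree one for $\Delta f(x)$ and homogeneous of degree two for $\Gamma(f)(x)$ and $\Gamma_2(f)(x)$.

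Next I would introduce, by polarisation, symmetric matrices $\Gamma(x)$ and $\Gamma_2(x)$ on $\mathbb{R}^{B_2(x)}$ representing these quadratic forms, together with the row vector $\Delta(x)$ representing the linear map $\tilde f \mapsto \Delta f(x)$, so that $(\Delta f(x))^2 = \tilde f^\top \Delta(x)^\top \Delta(x) \tilde f$. The CD$(\K,\N)$ inequality at $x$ then rewrites as
\begin{equation*}
\tilde f^\top \bigl(\Gamma_2(x) - \tfrac{1}{\N}\Delta(x)^\top \Delta(x) - \K\,\Gamma(x)\bigr)\tilde f \,\ge\, 0,
\end{equation*}
and demanding this for all $\tilde f \in \mathbb{R}^{B_2(x)}$ is equivalent, in the positive-semidefinite order, to the matrix inequality $\Gamma_2(x) - \frac{1}{\N}\Delta(x)^\top \Delta(x) \ge \K\,\Gamma(x)$. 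The equivalence with the original CD$(\K,\N)$ statement on all of $V$ needs that every $\tilde f \in \mathbb{R}^{B_2(x)}$ arises from some $f: V\to\mathbb{R}$, which is immediate by extending by zero outside $B_2(x)$; locality guarantees that this extension does not alter the three quantities at $x$. Maximizing $\K$ subject to this affine matrix inequality in $\K$ is by definition a semidefinite program, whose optimum equals $\K_{G,x}(\N)$ by the definition of the curvature function.

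The hard part is really just the careful support analysis for $\Gamma_2(f)(x)$, since the nested applications of $\Delta$ in its definition must be tracked to confirm that no values of $f$ outside $B_2(x)$ enter; I would also need to fix an ordering of the vertices of $B_2(x)$ to write down the matrices concretely, but that is purely cosmetic. Once locality is settled, the SDP reformulation follows essentially tautologically from the linear dependence of the constraint on the parameter $\K$.
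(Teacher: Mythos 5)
Your proposal is correct. The paper does not prove Proposition \ref{prop:LMP} itself but cites it from \cite{LMP16}; your argument --- locality of $\Delta f(x)$, $\Gamma(f)(x)$, $\Gamma_2(f)(x)$ on $B_2(x)$, extension by zero to pass from $\mathbb{R}^{B_2(x)}$ back to functions on $V$, and the tautological rewriting of the pointwise $CD(\K,\N)$ inequality as an affine linear matrix inequality in $\K$ --- is exactly the standard reasoning underlying the cited result, and it matches the paper's subsequent discussion of the non-trivial blocks of the local matrices $\Delta(x)$, $\Gamma(x)$, $\Gamma_2(x)$.
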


In the above, the local matrices $\Gamma_2(x)$, $\D(x)$, and $\Gamma(x)$ are matrices of sizes $|V|$ by $|V|$, $1$ by $|V|$, and $|V|$ by $|V|$, respectively. But their non-trivial blocks are of relatively small sizes. For example, the non-trivial block of $\D(x)$ is the one corresponding to vertices $B_1(x)$ given by
\begin{equation}\label{eq:Laplacian}
\Delta(x)=\begin{pmatrix} -d_x & 1 & \cdots & 1 \end{pmatrix},
\end{equation}
It is of size $1$ by $|B_1(x)|=d_x+1$. The non-trivial blocks of $\Gamma(x)$ and $\Gamma_2(x)$ are of sizes $|B_2(x)|$ by $|B_2(x)|$ and $|B_1(x)|$ by $|B_1(x)|$, respectively.
In the remaining part of this paper, we will reserve the notations $\Gamma_2(x)$, $\Delta(x)$, and $\Gamma(x)$ for their non-trivial block. Whenever we write linear combinations of them, we pad matrices of smaller sizes with $0$ entries.

We will discuss the local matrices $\Gamma(x)$ and $\Gamma_2(x)$ in more details in the following subsections.

\subsection{Local $\Gamma$ matrix and its basic properties}
We check by definition that $\Gamma(x)$ is a $|B_1(x)|$ by $|B_1(x)|$ matrix corresponding to vertices in $B_1(x)$ given by
\begin{equation}\label{eq:Gamma}
2\Gamma(x)=\begin{pmatrix}
d_x & -1 & \cdots & -1 \\
-1  & 1 & \cdots & 0 \\
\vdots & \vdots & \ddots &\vdots \\
-1 & 0 & \cdots & 1
\end{pmatrix}.
\end{equation}
The following property is a direct observation. Let us denote by $\mathbf{1}_{d_x+1}:=(1,1,\ldots,1)^\top$ and by $\mathbf{1}_{d_x+1}^{\perp}$ the orthogonal complement of the subspace spanned by $\mathbf{1}_{d_x+1}$ in $\mathbb{R}^{d_x+1}$. For convenience, we will often drop the subindex when no confusion is possible.

\begin{proposition}\label{prop:Gamma}
Let $v\in \mathbb{R}^{d_x+1}$. Then $\Gamma(x)v=0$ if and only if $v=a\mathbf{1}$ for some constant $a\in \mathbb{R}$. Moreover, the smallest eigenvalue $\lambda_{\min}(\at{2\Gamma(x)}{\mathbf{1}^\perp})$ of $2\Gamma(x)$ restricted to the subspace $\mathbf{1}^\perp$
satisfies
\begin{equation}\label{eq:GammaMini}
\lambda_{\min}(\at{2\Gamma(x)}{\mathbf{1}^\perp})\geq 1.
\end{equation}
The equality holds in (\ref{eq:GammaMini}) when $d_x>1$.
\end{proposition}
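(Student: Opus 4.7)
The plan is to prove the three assertions directly from the explicit form of the matrix $2\Gamma(x)$ given in \eqref{eq:Gamma}. Write elements of $\mathbb{R}^{d_x+1}$ as pairs $v = (v_0,w)$ with $v_0 \in \mathbb{R}$ corresponding to the central vertex $x$ and $w \in \mathbb{R}^{d_x}$ corresponding to the neighbors of $x$, so that
\[
2\Gamma(x) = \begin{pmatrix} d_x & -\mathbf{1}_{d_x}^\top \\ -\mathbf{1}_{d_x} & I_{d_x} \end{pmatrix}.
\]

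For the kernel statement, first I would check directly that $2\Gamma(x)\mathbf{1} = 0$: the zeroth row gives $d_x - d_x = 0$, and each remaining row contributes $-1 + 1 = 0$. Conversely, if $2\Gamma(x)v = 0$, then for each $i \ge 1$ the $i$-th row yields $-v_0 + v_i = 0$, forcing $w = v_0 \mathbf{1}_{d_x}$, and hence $v = v_0 \mathbf{1}$.

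For the eigenvalue bound, I would compute the quadratic form on $\mathbf{1}^\perp$. A vector $v = (v_0,w) \in \mathbf{1}^\perp$ satisfies $\mathbf{1}_{d_x}^\top w = -v_0$, and expanding
\[
v^\top (2\Gamma(x)) v = d_x v_0^2 - 2v_0 \mathbf{1}_{d_x}^\top w + \|w\|^2
\]
and substituting $\mathbf{1}_{d_x}^\top w = -v_0$ gives $(d_x+2)v_0^2 + \|w\|^2$. Since $\|v\|^2 = v_0^2 + \|w\|^2$, we obtain
\[
\frac{v^\top (2\Gamma(x)) v}{\|v\|^2} = 1 + \frac{(d_x+1)v_0^2}{v_0^2 + \|w\|^2} \ge 1,
\]
yielding $\lambda_{\min}(2\Gamma(x)|_{\mathbf{1}^\perp}) \ge 1$.

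Finally, for the sharpness claim when $d_x > 1$, I would exhibit a vector realizing the bound. Since $\dim \mathbf{1}_{d_x}^\perp \ge 1$ in this case, there exists $w \neq 0$ with $\mathbf{1}_{d_x}^\top w = 0$; taking $v = (0,w)$ gives $v \in \mathbf{1}^\perp$ and, from the above computation, $v^\top(2\Gamma(x))v = \|w\|^2 = \|v\|^2$. There are no nontrivial obstacles here; the only slightly delicate point is tracking the orthogonality constraint carefully so that the cross term $-2v_0\mathbf{1}_{d_x}^\top w$ is converted into a positive contribution rather than being bounded by Cauchy-Schwarz, which would give a weaker estimate.
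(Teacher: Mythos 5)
Your proposal is correct and follows essentially the same route as the paper: the quadratic-form computation on $\mathbf{1}^\perp$ giving $(d_x+2)v_0^2+\|w\|^2$ is exactly the paper's $(d_x+1)v_0^2+\sum_{i=0}^{d_x}v_i^2$, and the sharpness witness with $v_0=0$ is the same. The only cosmetic difference is that you verify the kernel statement by a direct row computation, whereas the paper lets it follow from $2\Gamma(x)\mathbf{1}=0$ together with strict positivity of the form on $\mathbf{1}^\perp$; both are fine.
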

\begin{proof}
By (\ref{eq:Gamma}), $2\Gamma(x)$ is diagonal dominant and $2\Gamma(x) \mathbf{1}=0$.

For any $v:=(v_0,v_1,\ldots, v_{d_x})^\top\in \mathbf{1}^\perp$, we have $v_0=-\sum_{i=1}^{d_x}v_i$, and therefore,
\begin{equation}
v^\top(2\Gamma(x))v=(d_x+1)v_0^2+\sum_{i=0}^{d_x}v_i^2\geq |v|^2,
\end{equation}
where $|v|$ is the norm of $v$. This shows $\lambda_{\min}(\at{2\Gamma(x)}{\mathbf{1}^\perp})\geq 1$.

When $d_x>1$, there exists $v:=(v_0,v_1,\ldots, v_{d_x})^\top\in \mathbf{1}^\perp$ with $v_0=0$. We can check
$$2\Gamma(x)v=v.$$
Hence in this case $\lambda_{\min}(\at{2\Gamma(x)}{\mathbf{1}^\perp})= 1$.
\end{proof}

\subsection{Local $\Gamma_2$ matrix and its basic properties}

The matrix $\Gamma_2(x)$ is of size $|B_2(x)|\times |B_2(x)|$ with the following structure (\cite[Proposition 3.12]{LMP16})
\begin{equation}\label{eq:Gamma2}
4\Gamma_2(x)=\begin{pmatrix}
(4\Gamma_2(x))_{x,x} & (4\Gamma_2(x))_{x,S_1(x)} & (4\Gamma_2(x))_{x,S_2(x)} \\
(4\Gamma_2(x))_{S_1(x),x} & (4\Gamma_2(x))_{S_1(x),S_1(x)} & (4\Gamma_2(x))_{S_1(x), S_2(x)}\\
(4\Gamma_2(x))_{S_2(x),x} & (4\Gamma_2(x))_{S_2(x), S_1(x)} & (4\Gamma_2(x))_{S_2(x),S_2(x)}
\end{pmatrix}.
\end{equation}
The sub-indices indicate the vertices that each submatrix is corresponding to.
We will omit the dependence on $x$ in the above expressions for simplicity. When we exchange the order of the sub-indices, we mean the transpose of the original submatrix. For example, we have $(4\Gamma_2)_{S_1,x}:=((4\Gamma_2)_{x,S_1})^\top$.

Denote the vertices in $S_1(x)$ by $\{y_1,\ldots, y_{d_{x}}\}$. Then we have
\begin{equation}\label{eq:Gamma2xS1}
(4\Gamma_2)_{x,x}=3d_x+d_x^2,\,\,\,(4\Gamma_2)_{x,S_1}=\begin{pmatrix}
-3-d_x-d_{y_1}^+ & \cdots & -3-d_x-d_{y_{d_x}}^+
\end{pmatrix},
\end{equation}
and
\begin{align}
&(4\Gamma_2)_{S_1,S_1}\notag\\
=&\begin{pmatrix}
 5-d_x+3d_{y_1}^++4d^0_{y_1}& 2-4w_{y_1y_2} & \cdots & 2-4w_{y_1y_{d_x}}\\
 2-4w_{y_1y_2} & 5-d_x+3d_{y_2}^++4d^0_{y_2} & \cdots & 2-4w_{y_2y_{d_x}}\\
 \vdots & \vdots & \ddots & \vdots \\
 2-4w_{y_1y_{d_x}} & 2-4w_{y_2y_{d_x}} & \cdots & 5-d_x+3d_{y_{d_x}}^++4d^0_{y_{d_x}}
\end{pmatrix},\label{eq:Gamma2S1S1}
\end{align}
where we use the notation that for any two vertices $x,y\in V$,
\begin{equation}\label{eq:0_1_edge_weight}
w_{xy}=\begin{cases}
1, & \text{ if $x\sim y$ }\\
0, & \text{otherwise}.
\end{cases}
\end{equation}


Denote the vertices in $S_2(x)$ by $\{z_1,\ldots, z_{|S_2(x)|}\}$. Then we have
\begin{equation}\label{eq:Gamma2xS2}
(4\Gamma_2)_{x,S_2}=\begin{pmatrix}
d_{z_1}^- & d_{z_2}^- & \cdots & d_{z_{|S_2(x)|}}^-
\end{pmatrix}
,
\end{equation}
\begin{equation}\label{eq:Gamma2S1S2}
(4\Gamma_2)_{S_1,S_2}=\begin{pmatrix}
-2w_{y_1z_1} & -2w_{y_1z_2} & \cdots & -2w_{y_1z_{|S_2(x)|}}\\
\vdots & \vdots & \ddots & \vdots \\
-2w_{y_{d_x}z_1} & -2w_{y_{d_x}z_2} & \cdots & -2w_{y_{d_x}z_{|S_2(x)|}}
\end{pmatrix}.
\end{equation}
and
\begin{equation}\label{eq:Gamma2S2S2}
(4\Gamma_2)_{S_2,S_2}=\begin{pmatrix}
d_{z_1}^- & 0 & \cdots & 0 \\
0 & d_{z_2}^- & \cdots & 0 \\
\vdots & \vdots & \ddots & \vdots\\
0 & 0 & \cdots & d^-_{z_{|S_2(x)|}}
\end{pmatrix}.
\end{equation}
Note that each diagonal entry of $(4\Gamma_2)_{S_2,S_2}$ is positive.

Let $A(G)$ be the adjacency matrix of the graph $G$. Then we see
$$(4\Gamma_2)_{S_1,S_2}=-2\cdot A(G)_{S_1,S_2}.$$
In fact, we can decompose the matrix $4\Gamma_2$ as follows:
\begin{align}
&4\Gamma_2=\left(\begin{array}{c|c|c}
0 & 0 & 0\\\hline
0 & -4\at{\D}{S_1(x)} & 0\\\hline
0 & 0 & 0
\end{array}
\right)+
\left(\begin{array}{c|cc}
0 & 0 & 0\\\hline
0 & \multicolumn{2}{c}{\multirow{2}{*}{\raisebox{-1mm}{\scalebox{1}{$-2\at{\D}{\mathring{B}_2(x)}$}}}}  \\
0 &
\end{array}
\right)\notag\\
+&\left(\begin{array}{c|ccc|ccc}
3d_x+d_x^2 & -3-d_x-d_{y_1}^+ & \cdots & -3-d_x-d_{y_{d_x}}^+ & d_{z_1}^- & \cdots & d_{z_{|S_2|}}^-\\\hline
-3-d_x-d_{y_1}^+ & 5-d_x+d_{y_1}^+ & \cdots & 2 & 0 & \cdots & 0\\
\vdots & \vdots & \ddots & \vdots & \vdots &\ddots & \vdots\\
-3-d_x-d_{y_{d_x}}^+  & 2& \dots &5-d_x+d_{y_{d_x}}^+ & 0 & \cdots & 0\\\hline
d_{z_1}^- & 0 & \cdots & 0 & -d_{z_1}^- & \cdots & 0\\
\vdots & \vdots & \ddots & \vdots & \vdots & \ddots & \vdots\\
d_{z_{|S_2|}}^- & 0 & \cdots & 0 & 0 & \cdots & -d_{z_{|S_2|}}^-
\end{array}
\right).\notag
\end{align}
In the above, $\at{\D}{S_1(x)}$ and $\at{\D}{\mathring{B}_2(x)}$ are the non-normalized Laplacian of the subgraphs $S_1(x)$ and $\mathring{B}_2(x)$, respectively.

The following proposition can be checked directly.
\begin{proposition}\label{prop:Gamma2Constant}
For the constant vector $\mathbf{1}\in \mathbb{R}^{|B_2(x)|}$, we have $\Gamma_2(x)\mathbf{1}=0$.
\end{proposition}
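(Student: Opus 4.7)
The cleanest route is conceptual rather than by brute force on the entries. The plan is to observe that $\Gamma_2$, as a bilinear form on pairs of functions, vanishes identically whenever one argument is a constant function, and then to translate this vanishing into a kernel statement for the local matrix $\Gamma_2(x)$.

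First I would recall that the local matrix $\Gamma_2(x)$ represents the quadratic form $f \mapsto \Gamma_2(f)(x)$ on functions supported in $B_2(x)$; by polarization, the associated symmetric bilinear form satisfies $f^{\top}\Gamma_2(x)g = \Gamma_2(f,g)(x)$ for all such $f,g$. Thus it suffices to show that the function $v \mapsto v^{\top}\Gamma_2(x)\mathbf{1}$ is identically zero, i.e.\ that $\Gamma_2(f,g)(x) = 0$ whenever $g$ is the constant function $1$ on $B_2(x)$ (for arbitrary $f$).

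Next I would verify this vanishing directly from Definition \ref{defn:GammaGamma2}. If $g \equiv 1$, then $\Delta g \equiv 0$, and
\[
2\Gamma(f,g) \;=\; \Delta(f\cdot 1) - f\,\Delta 1 - 1\cdot\Delta f \;=\; \Delta f - \Delta f \;=\; 0
\]
on all of $V$. Substituting $g \equiv 1$ into the definition of $\Gamma_2$,
\[
2\Gamma_2(f,1) \;=\; \Delta \Gamma(f,1) - \Gamma(f,\Delta 1) - \Gamma(\Delta f,1),
\]
every term on the right vanishes identically (the first because $\Gamma(f,1)=0$, the second and third by applying the same observation with $f$ or $\Delta f$ in place of $f$). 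Hence $\Gamma_2(f,1)(x)=0$ for every $f$ and every $x$, which via the bilinear form interpretation gives $v^{\top}\Gamma_2(x)\mathbf{1} = 0$ for all $v \in \mathbb{R}^{|B_2(x)|}$, and so $\Gamma_2(x)\mathbf{1} = 0$ as claimed.

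As a sanity check, one can also verify the row sums directly from \eqref{eq:Gamma2xS1}--\eqref{eq:Gamma2S2S2}. For instance the $x$-row of $4\Gamma_2$ sums to $3d_x + d_x^2 - \sum_i(3+d_x+d_{y_i}^+) + \sum_j d_{z_j}^-$, which collapses to $0$ using $\sum_i d_{y_i}^+ = \sum_j d_{z_j}^-$ (both count edges between $S_1(x)$ and $S_2(x)$); analogous cancellations work for $y_i$- and $z_j$-rows. There is no real obstacle here — the only thing to be careful about is to argue via the bilinear form (so that one concludes $\Gamma_2(x)\mathbf{1}=0$ as a vector) rather than via the quadratic form alone, since $\Gamma_2(x)$ need not be positive semidefinite.
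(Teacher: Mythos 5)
Your proof is correct. The paper offers no written argument for this proposition --- it only says the statement ``can be checked directly,'' which implicitly means the entry-wise row-sum verification that you relegate to your sanity check (and which you carry out correctly for the $x$-row, using $\sum_i d_{y_i}^+ = \sum_j d_{z_j}^-$). Your main route is genuinely different and more conceptual: you observe that $\Gamma(f,\mathbf{1})\equiv 0$ and $\Delta \mathbf{1}\equiv 0$ force $\Gamma_2(f,\mathbf{1})\equiv 0$ for every $f$, and then pass from the vanishing of the symmetric bilinear form against all $f$ to the kernel statement $\Gamma_2(x)\mathbf{1}=0$. This buys generality and robustness: it works verbatim for the weighted/normalized Laplacians of Section \ref{genmeassec} and does not require knowing the explicit matrix entries \eqref{eq:Gamma2xS1}--\eqref{eq:Gamma2S2S2}, whereas the direct check is tied to those formulas but requires no structural interpretation of the local matrix. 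Your closing caution is also well placed: since $\Gamma_2(x)$ need not be positive semidefinite, one cannot deduce $\Gamma_2(x)\mathbf{1}=0$ from $\mathbf{1}^\top\Gamma_2(x)\mathbf{1}=0$ alone, so arguing via the bilinear form (equivalently, letting $f$ range over all functions) is essential. The only implicit point worth making explicit is that $\Gamma_2(f,g)(x)$ depends on $g$ only through its restriction to $B_2(x)$, so taking $g\equiv 1$ on all of $V$ indeed computes $\Gamma_2(x)\mathbf{1}$ for the constant vector $\mathbf{1}\in\mathbb{R}^{|B_2(x)|}$; this is exactly the locality the paper uses when it restricts to the non-trivial block.
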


\subsection{Multiplicity of zero eigenvalue of $\Gamma_2(x)$ and curvature}
By Proposition \ref{prop:Gamma2Constant}, the multiplicity of zero eigenvalue of matrix $\Gamma_2(x)$ is at least one.
In this subsection, we discuss an interesting relation between this multiplicity and the curvature at $x$.

\begin{theorem}\label{thm:spectrum_curvature}
Let $G=(V,E)$ be a locally finite simple graph and $x\in V$ be a vertex. If the multiplicity of the zero eigenvalue of $\Gamma_2(x)$ is at least $2$, then we have $\K_{G,x}(\infty)\leq 0$.
\end{theorem}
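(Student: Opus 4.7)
The plan is to use the semidefinite characterization from Proposition \ref{prop:LMP}: with $\N=\infty$, $\K_{G,x}(\infty)$ is the largest $K$ such that $\Gamma_2(x) \geq K \Gamma(x)$ as symmetric bilinear forms on $\mathbb{R}^{|B_2(x)|}$ (where $\Gamma(x)$ is padded by zeros on the $S_2(x)$-block). The strategy is to produce a single vector $v_0$ witnessing $v_0^\top \Gamma_2(x) v_0 = 0$ while $v_0^\top \Gamma(x) v_0 > 0$; then $\Gamma_2(x) \geq K \Gamma(x)$ forces $K \leq 0$.

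By Proposition \ref{prop:Gamma2Constant}, the all-ones vector $\mathbf{1} \in \ker \Gamma_2(x)$. Under the hypothesis $\dim \ker \Gamma_2(x) \geq 2$, I can pick $v_0 \in \ker \Gamma_2(x)$ with $v_0 \perp \mathbf{1}$ and $v_0 \neq 0$. The key substep is to argue that the restriction $v_0|_{B_1(x)}$ cannot be constant. Suppose for contradiction it equals $c \mathbf{1}_{B_1(x)}$; then $w := v_0 - c\mathbf{1}$ lies in $\ker \Gamma_2(x)$, is supported on $S_2(x)$, and is nonzero (otherwise $v_0 = c\mathbf{1}$ would force $c=0$ by orthogonality, hence $v_0=0$). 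But the block $(4\Gamma_2)_{S_2,S_2}$ is a diagonal matrix with strictly positive entries $d_{z_i}^-$ (from \eqref{eq:Gamma2S2S2}, noting each $z_i \in S_2(x)$ has $d_{z_i}^- \geq 1$), so applying $\Gamma_2(x)$ to a vector $w$ supported on $S_2(x)$ gives a nonzero $S_2$-component unless $w=0$. This contradiction shows $v_0|_{B_1(x)}$ is non-constant.

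With $v_0|_{B_1(x)}$ non-constant, Proposition \ref{prop:Gamma} guarantees $v_0|_{B_1(x)} \notin \ker \Gamma(x)|_{B_1(x)}$; combined with the positivity $\lambda_{\min}(2\Gamma(x)|_{\mathbf{1}^\perp}) \geq 1$ from \eqref{eq:GammaMini}, this yields $v_0^\top \Gamma(x) v_0 > 0$ (using the padding convention, only the $B_1(x)$-block of $v_0$ enters this quadratic form). On the other hand $v_0 \in \ker \Gamma_2(x)$ gives $v_0^\top \Gamma_2(x) v_0 = 0$. Plugging $v_0$ into the matrix inequality $\Gamma_2(x) \geq K \Gamma(x)$ yields $0 \geq K \cdot v_0^\top \Gamma(x) v_0$, and since the right-hand factor is strictly positive we conclude $K \leq 0$, i.e.\ $\K_{G,x}(\infty) \leq 0$.

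The only nontrivial step is the diagonal-positive-definiteness of $(4\Gamma_2)_{S_2,S_2}$, which rules out purely-$S_2$-supported kernel vectors and is what makes the extra kernel dimension propagate into the $B_1(x)$-block where it interacts nontrivially with $\Gamma(x)$; everything else is bookkeeping with the padding convention and the structural facts about $\Gamma$ and $\Gamma_2$ established earlier in the section.
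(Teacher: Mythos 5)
Your proof is correct and follows essentially the same route as the paper: the paper's Lemma \ref{lemma:gammagamma2} is exactly your key substep (a non-constant kernel vector of $\Gamma_2(x)$ must have non-constant $B_1(x)$-part, proved by subtracting a constant vector and invoking the invertibility of the diagonal block $(\Gamma_2)_{S_2,S_2}$), after which both arguments test the inequality $\Gamma_2(x)\geq K\Gamma(x)$ against that vector. The only cosmetic difference is that the paper phrases the final step as a contradiction with $K>0$ while you conclude $K\leq 0$ directly.
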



Let us first show the following lemma.
\begin{lemma}\label{lemma:gammagamma2}
Let $v=\begin{pmatrix}
v_1 \\ v_2
\end{pmatrix}$ with $v_1\in \mathbb{R}^{|B_1(x)|}$ and $v_2\in \mathbb{R}^{|S_2(x)|}$ be a non-constant vector such that $\Gamma_2(x)v=0$. Then we have $\Gamma(x)v_1\neq 0$.
\end{lemma}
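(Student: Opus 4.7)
The plan is to argue by contradiction. Suppose $\Gamma(x)v_1 = 0$. By Proposition \ref{prop:Gamma}, this forces $v_1 = a\mathbf{1}_{|B_1(x)|}$ for some scalar $a \in \mathbb{R}$. I would then replace $v$ by $w := v - a\mathbf{1}_{|B_2(x)|}$. Since $\Gamma_2(x)\mathbf{1} = 0$ by Proposition \ref{prop:Gamma2Constant}, one still has $\Gamma_2(x)w = 0$; moreover $w_1 = 0$, and $w_2 = v_2 - a\mathbf{1}_{|S_2(x)|} \neq 0$ because $v$ is non-constant. It therefore suffices to rule out the existence of such a $w$.

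Next I would read off the $S_2(x)$-block of $4\Gamma_2(x)w$ using the three-term decomposition of $4\Gamma_2(x)$ displayed just before the lemma. The first summand (containing $-4\at{\D}{S_1(x)}$) is supported on the $S_1(x)$ coordinates and hence annihilates $w$ since $w_1 = 0$. For the second summand $-2\at{\D}{\mathring{B}_2(x)}$, the key point is that the subgraph $\mathring{B}_2(x)$ contains \emph{no} spherical edges inside $S_2(x)$, so for $z \in S_2(x)$ all neighbors of $z$ in $\mathring{B}_2(x)$ lie in $S_1(x)$, where $w$ vanishes. Consequently
\begin{equation*}
\bigl(\at{\D}{\mathring{B}_2(x)} w\bigr)_z = \sum_{y \sim z,\, y \in S_1(x)} (0 - w_2(z)) = -d_z^{x,-} w_2(z),
\end{equation*}
which contributes $2 d_z^{x,-} w_2(z)$ to the $z$-entry. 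The explicit third summand, by inspection of its $(S_2(x), S_1(x))$-block (which is zero) and its $(S_2(x), S_2(x))$-block (which equals $-\operatorname{diag}(d_{z_i}^{x,-})$), contributes $-d_z^{x,-} w_2(z)$.

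Adding the three contributions yields $(4\Gamma_2(x) w)_z = d_z^{x,-} w_2(z)$ for every $z \in S_2(x)$. Since any vertex $z \in S_2(x)$ satisfies $d_z^{x,-} \geq 1$ (it has at least one neighbor in $S_1(x)$ along a shortest path from $x$), the equation $\Gamma_2(x) w = 0$ forces $w_2(z) = 0$ for all such $z$, i.e. $w_2 = 0$, contradicting $w_2 \neq 0$. The main obstacle in writing this cleanly is bookkeeping through the three-block decomposition of $4\Gamma_2(x)$ and correctly using the fact that $\mathring{B}_2(x)$ excludes spherical edges in $S_2(x)$; once that is in hand, the $S_2(x)$-row of $4\Gamma_2(x)$ acts on the subspace $\{w_1 = 0\}$ as the positive diagonal operator $\operatorname{diag}(d_z^{x,-})$, which makes the conclusion immediate.
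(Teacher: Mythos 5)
Your proposal is correct and follows essentially the same route as the paper: assume $\Gamma(x)v_1=0$, deduce $v_1=a\mathbf{1}$ via Proposition \ref{prop:Gamma}, subtract the constant vector to get a zero $\Gamma_2$-vector supported on $S_2(x)$, and use that the $(S_2,S_2)$-block of $\Gamma_2(x)$ is a positive diagonal matrix (hence invertible) to force $w_2=0$. Your extra bookkeeping through the three-term decomposition just re-derives the block formula \eqref{eq:Gamma2S2S2} that the paper cites directly; the argument is the same.
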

\begin{proof}
We prove the lemma by contradiction. Assume that $\Gamma(x)v_1=0$. By Proposition \ref{prop:Gamma}, we have
\begin{equation}\label{eq:l281}
v_1=a\mathbf{1}_{|B_1(x)|},\,\,\text{ for some } \,\,a\in \mathbb{R}.
\end{equation}
Let us denote
$$w:=a\mathbf{1}_{|B_2(x)|}-v =\begin{pmatrix}
0 \\ a\mathbf{1}_{|B_1(x)|}-v_2
\end{pmatrix}.$$
Then by Proposition \ref{prop:Gamma2Constant}, we have $\Gamma_2(x)w=0$.

Since the submatrix $(\Gamma_2)_{S_2,S_2}$ is invertible (recall (\ref{eq:Gamma2S2S2})), we conclude that
\begin{equation}\label{eq:l282}
v_2=a\mathbf{1}_{|B_1(x)|}.
\end{equation}
(\ref{eq:l281}) and (\ref{eq:l282}) imply that $v$ is a constant vector, which is a contradiction.
%
\end{proof}

\begin{proof}[Proof of Theorem \ref{thm:spectrum_curvature}]
We argue by contradiction. Suppose that $K:=\K_{G,x}(\infty)>0$. Then we have
\begin{equation}\label{eq:Kassumption}
\Gamma_2(x)-K\Gamma(x)\geq 0.
\end{equation}
By assumption, there exists a non-constant vector
$v=\begin{pmatrix}
v_1 \\ v_2
\end{pmatrix}$ such that
\begin{equation}\label{eq:contradictions}
v^T\Gamma_2(x)v=0.
\end{equation}
Applying Lemma \ref{lemma:gammagamma2}, we obtain from (\ref{eq:Kassumption})
\begin{equation}
v^T\Gamma_2(x)v\geq Kv_1^T\Gamma(x)v_1>0,
\end{equation}
which is a contradiction to (\ref{eq:contradictions}).
\end{proof}

For any $\N\in (0,\infty]$ and $K\in \mathbb{R}$, we denote
\begin{equation}\label{eq:MKN}
M_{K,\N}(x):=\Gamma_2(x)-\frac{1}{\N}\Delta(x)^\top\Delta(x)-K\Gamma(x),
\end{equation}
Observe that $M_{K,\N}(x)\mathbf{1}=0$, and its $(S_2,S_2)$-block, which equals $(\Gamma_2)_{S_2,S_2}$, is invertible.
Therefore, from the above proofs, it is not hard to see analogous result of Theorem \ref{thm:spectrum_curvature} also holds for $M_{K, \N}$.

\begin{theorem}\label{thm:CurDimMultiplicity}
Let $G=(V,E)$ be a locally finite simple graph and $x\in V$ be a vertex. Let $\N\in (0,\infty]$ and $K\in \mathbb{R}$. If the multiplicity of the zero eigenvalue of the matrix $M_{K,\N}$ is at least $2$, then we have $\K_{G,x}(\N)\leq K$.
\end{theorem}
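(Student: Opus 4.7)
The plan is to imitate the proof of Theorem \ref{thm:spectrum_curvature} almost verbatim, replacing $\Gamma_2(x)$ by $M_{K,\N}(x)$ throughout. The two structural facts that make this work are precisely the ones the authors have already pointed out just before the statement: $M_{K,\N}(x)\mathbf{1} = 0$, and the $(S_2,S_2)$-block of $M_{K,\N}(x)$ is unchanged from that of $\Gamma_2(x)$ and is therefore the diagonal matrix with strictly positive entries $d_{z_i}^{-}$, hence invertible. These are exactly the two ingredients that drove Lemma \ref{lemma:gammagamma2} and the subsequent contradiction, so the same reasoning transfers.

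First I would argue by contradiction. Assume $\K_{G,x}(\N) > K$, set $K' := \K_{G,x}(\N)$, and note that by the closedness of the PSD cone the supremum in Proposition \ref{prop:LMP} is attained, so $M_{K',\N}(x) \geq 0$. Writing
\[
M_{K,\N}(x) = M_{K',\N}(x) + (K'-K)\,\Gamma(x),
\]
and using $K'-K>0$ together with $\Gamma(x)\geq 0$, I would conclude $M_{K,\N}(x) \geq (K'-K)\,\Gamma(x) \geq 0$.

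Next, by the multiplicity hypothesis there is a non-constant $v = (v_1,v_2)^\top \in \mathbb{R}^{|B_1(x)|}\oplus \mathbb{R}^{|S_2(x)|}$ with $M_{K,\N}(x) v = 0$. Then
\[
0 = v^\top M_{K,\N}(x)\, v \;\geq\; (K'-K)\,v_1^\top \Gamma(x) v_1 \;\geq\; 0,
\]
so $\Gamma(x)v_1 = 0$, and Proposition \ref{prop:Gamma} forces $v_1 = a\,\mathbf{1}_{|B_1(x)|}$ for some $a\in\mathbb{R}$. Finally, the analogue of Lemma \ref{lemma:gammagamma2} is obtained by setting $w := v - a\,\mathbf{1}_{|B_2(x)|}$; since $M_{K,\N}(x)\mathbf{1} = 0$, we have $M_{K,\N}(x)w = 0$ with $w_1 = 0$, and reading off the $S_2$-block of this equation yields $(\Gamma_2)_{S_2,S_2}\,w_2 = 0$. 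Invertibility of $(\Gamma_2)_{S_2,S_2}$ gives $w_2 = 0$, so $v_2 = a\,\mathbf{1}_{|S_2(x)|}$ and $v$ is constant, contradicting the choice of $v$.

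I do not anticipate any real obstacle: the algebraic backbone is identical to the $\N=\infty$ case, and both the key positivity $\Gamma(x)\geq 0$ and the invertibility of the $S_2$-block survive untouched under the perturbation $-\frac{1}{\N}\Delta(x)^\top\Delta(x) - K\,\Gamma(x)$, which is supported on $B_1(x)$. The only minor book-keeping point is that one should state the contradiction assumption as $\K_{G,x}(\N)>K$ (not merely $\geq$) and then extract a strictly larger admissible $K'$, which the attainment of the SDP optimum supplies.
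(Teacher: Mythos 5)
Your proof is correct and is exactly the argument the paper intends: the authors merely remark that the proof of Theorem \ref{thm:spectrum_curvature} (via Lemma \ref{lemma:gammagamma2}) carries over to $M_{K,\N}(x)$ because $M_{K,\N}(x)\mathbf{1}=0$ and its $(S_2,S_2)$-block is still the invertible diagonal matrix $(\Gamma_2)_{S_2,S_2}$, and you have spelled out precisely that transfer, including the correct handling of the strict inequality $\K_{G,x}(\N)>K$ via attainment of the SDP optimum. There are no gaps.
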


The following result is an immediate consequence of Theorem \ref{thm:CurDimMultiplicity}.

\begin{corollary}\label{cor:preciseCur}
Let $G=(V,E)$ be a locally finite simple graph and $x\in V$ be a vertex. Let $\N\in (0,\infty]$ and $K\in \mathbb{R}$. Then the following are equivalent:
\begin{itemize}
\item[(i)]
$\K_{G,x}(\N)=K$;
\item[(ii)]
The matrix $M_{K,\N}(x)\geq 0$ and the multiplicity of its zero eigenvalue is at least $2$.
\end{itemize}
\end{corollary}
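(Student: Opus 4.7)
The plan is to deduce this corollary by combining Proposition \ref{prop:LMP} (the semidefinite programming formulation of curvature) with Theorem \ref{thm:CurDimMultiplicity}, supplemented by a short perturbation argument for the nontrivial direction. The implication (ii) $\Rightarrow$ (i) will be essentially immediate: Proposition \ref{prop:LMP} gives $\K_{G,x}(\N) \geq K$ from $M_{K,\N}(x) \geq 0$, and Theorem \ref{thm:CurDimMultiplicity} gives $\K_{G,x}(\N) \leq K$ from the assumption on the multiplicity of the zero eigenvalue, so equality follows.

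For (i) $\Rightarrow$ (ii), I first observe that $M_{K,\N}(x) \geq 0$ is exactly the feasibility condition in Proposition \ref{prop:LMP} attained at the optimum $K = \K_{G,x}(\N)$. Next, I would verify that the zero eigenvalue of $M_{K,\N}(x)$ always has multiplicity at least $1$, by showing $M_{K,\N}(x)\mathbf{1}_{|B_2(x)|} = 0$: Proposition \ref{prop:Gamma2Constant} supplies $\Gamma_2(x)\mathbf{1} = 0$, Proposition \ref{prop:Gamma} supplies $\Gamma(x)\mathbf{1} = 0$, and $\Delta(x)\mathbf{1} = 0$ is immediate from \eqref{eq:Laplacian}. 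The heart of the argument is to rule out multiplicity exactly $1$. I would argue by contradiction: if the kernel of $M_{K,\N}(x)$ were the single line $\mathbb{R}\mathbf{1}$, then $M_{K,\N}(x)$ restricted to $\mathbf{1}^\perp$ would be strictly positive definite with some smallest eigenvalue $\lambda > 0$. Writing $M_{K+\epsilon,\N}(x) = M_{K,\N}(x) - \epsilon\Gamma(x)$, the bound $u^\top\Gamma(x) u \leq \|\Gamma(x)\|_{\mathrm{op}} |u|^2$ on $\mathbf{1}^\perp$ would let me choose $\epsilon > 0$ so small that $M_{K+\epsilon,\N}(x)$ remains positive semidefinite on $\mathbf{1}^\perp$. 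Since $\Gamma(x)\mathbf{1} = 0$ guarantees $M_{K+\epsilon,\N}(x)\mathbf{1} = 0$ as well, I would conclude $M_{K+\epsilon,\N}(x) \geq 0$, contradicting the optimality of $K = \K_{G,x}(\N)$ from Proposition \ref{prop:LMP}.

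The argument is essentially routine: the crucial structural fact exploited is that $\Gamma(x)$, $\Delta(x)^\top\Delta(x)$, and $\Gamma_2(x)$ all annihilate the constant vector $\mathbf{1}_{|B_2(x)|}$, so the decomposition $\mathbb{R}^{|B_2(x)|} = \mathbb{R}\mathbf{1} \oplus \mathbf{1}^\perp$ aligns with the kernel of $M_{K,\N}(x)$, and the perturbation proceeds cleanly on the complement. I do not anticipate any essential obstacle beyond careful bookkeeping of these kernel conditions; the real content of the equivalence already sits in Theorem \ref{thm:CurDimMultiplicity}.
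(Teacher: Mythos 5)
Your proposal is correct and follows essentially the same route as the paper: (ii)$\Rightarrow$(i) by combining feasibility with Theorem \ref{thm:CurDimMultiplicity}, and (i)$\Rightarrow$(ii) by the perturbation argument $M_{K+\epsilon,\N}(x)=M_{K,\N}(x)-\epsilon\Gamma(x)\geq 0$ for small $\epsilon>0$, using that $\mathbf{1}$ spans the kernel and that all three matrices annihilate $\mathbf{1}$. (Your explicit bound $\epsilon\,\|\Gamma(x)\|_{\mathrm{op}}\leq\lambda_{\min}(\at{M_{K,\N}(x)}{\mathbf{1}^\perp})$ is in fact the correct orientation; the fraction displayed in the paper's proof is inverted, evidently a typo.)
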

\begin{proof}
(ii) $\Rightarrow$ (i): Since $M_{K,\N}(x)\geq 0$, we have $\K_{G,x}(\N)\geq K$. By Theorem \ref{thm:CurDimMultiplicity}, (ii) implies $\K_\N(G,x)\leq K$. Therefore, we obtain $\K_\N(G,x)=K$.

(i) $\Rightarrow$ (ii): (i) implies $M_{K,\N}\geq 0$ immediately. Assume that the zero eigenvalue of $M_{K,\N}(x)$ has multiplicity $1$. Then the smallest eigenvalue $\lambda_{\min}(\at{M_{K,\N}(x)}{\mathbf{1}^\perp})$ of $M_{K,\N}$ restricted to the space $\mathbf{1}^\perp$ is positive. Let $\lambda_{\max}(\Gamma(x))$ be the maximal eigenvalue of $\Gamma(x)$. We observe that
\begin{equation}\label{eq:contradiction}
M_{K,\N}(x)-\epsilon \Gamma(x)\geq 0, \,\,\text{for any }\,\,0<\epsilon<\frac{\lambda_{\max}(\Gamma(x))}{\lambda_{\min}(\at{M_{K,\N}(x)}{\mathbf{1}^\perp})},
\end{equation}
which is a contradiction to (i).
\end{proof}

\subsection{Vertices satisfying $CD(0,\N)$}\label{sec:graphsCD0N}
In this subsection, we discuss immediate properties of a vertex $x$ satisfying $CD(0,\N)$, by considering two particular principal minors of the matrix $M_{0,\N}(x)$: the determinants of $(x,x)$- and $(B_1(x),B_1(x))$- blocks.
\begin{proposition}\label{prop:ub2}
Let $G=(V,E)$ be a locally finite simple graph and $x\in V$. If $x$ satisfies $CD(0,\N)$, then we have
\begin{equation}\label{eq:ub2}
\frac{4d_x}{d_x+3}\leq \N.
\end{equation}
\end{proposition}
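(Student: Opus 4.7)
The plan is to exploit the semidefinite programming reformulation from Proposition \ref{prop:LMP}: the hypothesis $CD(0,\N)$ at $x$ is equivalent to $M_{0,\N}(x) = \Gamma_2(x) - \frac{1}{\N}\Delta(x)^\top\Delta(x) \ge 0$. Since every principal submatrix of a positive semidefinite matrix is positive semidefinite, each principal minor of $M_{0,\N}(x)$ must be nonnegative. The bound in the proposition will come from the smallest such minor, namely the scalar $(x,x)$-entry of $M_{0,\N}(x)$.

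I would first read off this entry from the formulas already established in the paper. By \eqref{eq:Gamma2xS1} we have $(4\Gamma_2)_{x,x} = 3d_x + d_x^2$, and from the explicit form \eqref{eq:Laplacian} of $\Delta(x)$ the $(x,x)$-entry of $\Delta(x)^\top \Delta(x)$ is simply $(-d_x)^2 = d_x^2$. Hence
\begin{equation*}
(4 M_{0,\N}(x))_{x,x} \;=\; 3d_x + d_x^2 - \frac{4 d_x^2}{\N}.
\end{equation*}
Requiring this scalar to be nonnegative and simplifying (using $d_x \ge 1$, which is automatic since otherwise $x$ is isolated and the inequality is vacuous or trivial) yields
\begin{equation*}
\N(3 + d_x) \;\ge\; 4 d_x, \qquad\text{i.e.,}\qquad \N \;\ge\; \frac{4 d_x}{d_x + 3},
\end{equation*}
which is exactly \eqref{eq:ub2}.

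There is no real obstacle here; the argument is essentially a one-line extraction of a diagonal entry, and the work was done upstream in assembling the explicit block decomposition of $4\Gamma_2(x)$. The only point to be careful about is the factor of $4$: the matrix identities in the paper are stated for $4\Gamma_2$ and $2\Gamma$, so one must keep the normalization consistent when combining the $\Gamma_2$-block with $\frac{1}{\N}\Delta^\top \Delta$. The mention in the subsection heading of a second principal minor (the $(B_1(x),B_1(x))$-block) is presumably used elsewhere to derive further consequences of $CD(0,\N)$; for Proposition \ref{prop:ub2} itself only the $(x,x)$-entry is needed.
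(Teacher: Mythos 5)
Your proof is correct and is essentially the paper's own argument: both reduce $CD(0,\N)$ to positive semidefiniteness of $M_{0,\N}(x)$ and extract the nonnegativity of the single diagonal entry $(4M_{0,\N}(x))_{x,x}=3d_x+d_x^2-\tfrac{4d_x^2}{\N}$, then rearrange. The normalization remark and the observation that $d_x\ge 1$ is needed for the division are fine and consistent with what the paper does implicitly.
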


\begin{proof}
By Corollary \ref{cor:preciseCur}, we have $4M_{0,\N}(x)\geq 0$.
From Sylvester's criterion we have
$$(4M_{0,\N}(x))_{x,x}=3d_x+d_x^2-\frac{4d_x^2}{\N}\geq 0.$$
Rearranging we thus obtain (\ref{eq:ub2}).
\end{proof}

Proposition \ref{prop:ub2} has interesting consequences.
\begin{corollary}\label{cor:Nsmaller1}
Let $G=(V,E)$ be a locally finite simple graph and let $x\in V$. Then $\K_{G,x}(\N)<0$ when $\N<1$.
\end{corollary}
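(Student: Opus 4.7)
The plan is to obtain this as a direct contrapositive of Proposition \ref{prop:ub2}. Observe first that $\K_{G,x}(\N) \geq 0$ is by definition equivalent to the vertex $x$ satisfying $CD(0,\N)$, so to prove the corollary it suffices to rule out $CD(0,\N)$ whenever $\N < 1$.

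By Proposition \ref{prop:ub2}, the condition $CD(0,\N)$ at $x$ forces
$$ \N \;\geq\; \frac{4d_x}{d_x+3}. $$
I would then observe that the rational function $d \mapsto \frac{4d}{d+3} = 4 - \frac{12}{d+3}$ is strictly increasing in $d$ and takes the value $1$ at $d=1$. Hence for every $d_x \geq 1$ we have $\frac{4d_x}{d_x+3} \geq 1$, which combined with the displayed inequality yields $\N \geq 1$, contradicting the assumption $\N < 1$. The contrapositive therefore gives $\K_{G,x}(\N) < 0$ whenever $\N < 1$.

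The whole argument is essentially a one-line deduction, so the ``main obstacle'' is nothing more than the elementary algebraic step $\frac{4d_x}{d_x+3} \geq 1 \Leftrightarrow d_x \geq 1$. The only conceptual caveat worth flagging is the degenerate case $d_x = 0$ (an isolated vertex), where $\Gamma(x)$, $\Gamma_2(x)$ and $\Delta(x)$ all vanish so the curvature inequality is vacuously satisfied for every $K$; this case is implicitly excluded since the corollary presumes the curvature function is nontrivially defined at $x$.
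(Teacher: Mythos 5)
Your proof is correct and is essentially identical to the paper's own argument: both deduce the statement from Proposition \ref{prop:ub2} via the elementary observation that $\frac{4d_x}{d_x+3}\geq 1$ for $d_x\geq 1$, so $CD(0,\N)$ would force $\N\geq 1$. Your remark about the degenerate case $d_x=0$ is a fair caveat that the paper leaves implicit.
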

\begin{proof}
Let $\N<1$. Assume that $\K_{G,x}(\N)\geq 0$. Then by Proposition \ref{prop:ub2}, we have
$4d_x<3d_x+1$. Hence $d_x<1$, which is impossible.
\end{proof}
\begin{remark}
Corollary \ref{cor:Nsmaller1}  can also be shown by Lichnerowicz type estimate, see \cite[Corollary 6.2]{LMP16}.
\end{remark}
\begin{corollary}
Let $G=(V,E)$ be a locally finite simple graph satisfying $CD(0,\N)$. Let $d_{\max}$ denote the maximum degree taken over all vertices. Then
\begin{itemize}
\item
If $\N\in (0,4)$ then $d_{\max}\leq \frac{3\N}{4-\N}$.
\item
If $\N=4$ then $d_{\max}$ may be arbitrarily large.
\end{itemize}
\end{corollary}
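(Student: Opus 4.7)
The first bullet is a direct consequence of Proposition~\ref{prop:ub2}. Since $G$ satisfies $CD(0,\N)$ globally, every vertex $x$ satisfies $\frac{4 d_x}{d_x+3}\leq \N$, equivalently $(4-\N)d_x\leq 3\N$. When $\N\in(0,4)$ the factor $4-\N$ is strictly positive, so $d_x\leq \frac{3\N}{4-\N}$; taking the supremum over $x$ gives the stated bound on $d_{\max}$.

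For the second bullet, my plan is to exhibit the family of complete graphs $\{K_n\}_{n\geq 2}$ as an explicit witness that $CD(0,4)$ imposes no upper bound on $d_{\max}$. The local data at any vertex $x\in K_n$ are easy to read off: $d_x=n-1$, $S_2(x)=\emptyset$, every $y\in S_1(x)$ has $d_y^{x,+}=0$ and $d_y^{x,0}=n-2$, and any two vertices of $S_1(x)$ are adjacent. I will apply Corollary~\ref{cor:preciseCur} with $K=\tfrac{3}{2}$ and $\N=4$, i.e.\ verify that $M_{3/2,4}(x)\geq 0$ and that $0$ has multiplicity at least $2$ in its spectrum; this yields $\K_{K_n,x}(4)=\tfrac{3}{2}>0$, hence $CD(0,4)$ for every $n$, with $d_{\max}(K_n)=n-1$ unbounded.

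The verification proceeds by a blockwise computation of $4M_{3/2,4}(x)=4\Gamma_2(x)-\Delta(x)^\top\Delta(x)-6\Gamma(x)$ using \eqref{eq:Laplacian}, \eqref{eq:Gamma}, \eqref{eq:Gamma2xS1} and \eqref{eq:Gamma2S1S1}. Because $S_2(x)=\emptyset$ the matrix is supported on $B_1(x)$, and a short calculation shows that at $K=\tfrac{3}{2}$ the $(x,x)$ entry and the entire $(x,S_1)$ row vanish, while the $(S_1,S_1)$ block reduces to $3\bigl((n-1)I_{n-1}-J_{n-1}\bigr)$, with $J_{n-1}$ the all-ones matrix. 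Since $(n-1)I-J\geq 0$ with one-dimensional kernel spanned by $\mathbf{1}_{S_1(x)}$, the matrix $4M_{3/2,4}(x)$ is positive semidefinite, and its kernel contains the two independent vectors $e_x$ and $(0,\mathbf{1}_{S_1(x)})$. The main (mild) obstacle is simply keeping the block bookkeeping straight; no spectral tools beyond Corollary~\ref{cor:preciseCur} are required.
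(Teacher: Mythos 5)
Your proof is correct and takes essentially the same route as the paper: the first bullet is read off from Proposition \ref{prop:ub2} exactly as in the paper, and the second bullet uses the complete graphs $K_n$ as the witness, just as the paper does. Your only deviation is that you verify $\K_{K_n,x}(4)=\tfrac{3}{2}$ by a direct block computation of $4M_{3/2,4}(x)$ via Corollary \ref{cor:preciseCur} rather than simply citing Example \ref{example:Kn}; that computation (vanishing $(x,x)$-entry and $(x,S_1)$-row, and $(S_1,S_1)$-block equal to $3\bigl((n-1)I_{n-1}-J_{n-1}\bigr)$ with the two kernel vectors you name) checks out.
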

\begin{proof}
The case $\N\in (0,3)$ is obtained directly from Proposition \ref{prop:ub2}.
For $\N=4$, consider the complete graphs $K_{n}$ on $n$ vertices which satisfy $CD(0,4)$ (see, e.g., \cite[Proposition 3]{JL14} or Example \ref{example:Kn}).
\end{proof}

\begin{proposition}\label{prop:aminusboverN}
Let $G=(V,E)$ be a locally finite simple graph. For a vertex $x\in V$, let us denote
$$c_1(x):=\det\left(\Gamma_2(x)_{B_1,B_1}\right)\,\,\text{and}\,\,c_2(x)=\D(x) \mathrm{adj}\left(\Gamma_2(x)_{B_1,B_1}\right)\D(x)^\top,$$
where $\mathrm{adj}(\cdot)$ stands for the adjugate matrix. If $x$ satisfies $CD(0,\N)$, then we have
$$c_1(x)\geq \frac{c_2(x)}{\N}.$$
\end{proposition}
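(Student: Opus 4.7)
The plan is to read the hypothesis $CD(0,\N)$ as a positive semidefiniteness condition, pass to a specific principal block, and then convert that into a scalar inequality through the matrix determinant lemma.

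First I would invoke Proposition \ref{prop:LMP} to rewrite the assumption $\K_{G,x}(\N)\ge 0$ as the linear matrix inequality
$$ M_{0,\N}(x) \;=\; \Gamma_2(x) - \frac{1}{\N}\,\Delta(x)^\top\Delta(x) \;\ge\; 0. $$
Since any principal submatrix of a positive semidefinite matrix is itself positive semidefinite, and since $\Delta(x)$ is a row vector whose nontrivial part lies entirely in the $B_1(x)$-block (see \eqref{eq:Laplacian}), restricting $M_{0,\N}(x)$ to the $(B_1,B_1)$-block gives
$$ \Gamma_2(x)_{B_1,B_1} - \frac{1}{\N}\,\Delta(x)^\top\Delta(x) \;\ge\; 0. $$
In particular, the determinant of this $(d_x+1)\times(d_x+1)$ matrix is non-negative.

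Next I would compute that determinant. The matrix differs from $A:=\Gamma_2(x)_{B_1,B_1}$ by a rank-one perturbation $-\tfrac{1}{\N}\Delta(x)^\top\Delta(x)$, so the matrix determinant lemma in its adjugate form (valid even when $A$ is singular),
$$ \det(A + uv^\top) \;=\; \det(A) + v^\top \mathrm{adj}(A)\, u, $$
applied with $u = -\tfrac{1}{\N}\Delta(x)^\top$ and $v = \Delta(x)^\top$, yields
$$ \det\!\left( \Gamma_2(x)_{B_1,B_1} - \tfrac{1}{\N}\Delta(x)^\top\Delta(x) \right) \;=\; c_1(x) - \tfrac{1}{\N} c_2(x). $$
Combining with the non-negativity of this determinant gives $c_1(x) \ge c_2(x)/\N$, as desired.

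There is not much of an obstacle here, but the one point that requires care is the use of the adjugate form of the matrix determinant lemma rather than the more common $\det(A)\bigl(1 + v^\top A^{-1} u\bigr)$ version. The matrix $\Gamma_2(x)_{B_1,B_1}$ need not be invertible in general (for example when $\K_{G,x}(0)=0$ and equality is attained), and using $\mathrm{adj}(A)$ sidesteps the invertibility issue entirely since the adjugate is defined for every square matrix. Everything else is a routine read-off of the block structure from \eqref{eq:Gamma2xS1}–\eqref{eq:Gamma2S1S1} together with Sylvester-type monotonicity of positivity under passage to principal submatrices.
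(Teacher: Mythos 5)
Your proposal is correct and follows essentially the same route as the paper: pass from $CD(0,\N)$ to $M_{0,\N}(x)\ge 0$, observe that the $(B_1,B_1)$ principal block is then positive semidefinite so its determinant is non-negative, and expand that determinant via the adjugate form of the matrix determinant lemma. Your explicit remark that the adjugate version avoids any invertibility assumption on $\Gamma_2(x)_{B_1,B_1}$ is a point the paper leaves implicit, but the argument is otherwise identical.
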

\begin{proof}
By assumption, we have $M_{0,\N}(x)\geq 0$. From Sylvester's criterion we have
$$\det\left(\Gamma_2(x)_{B_1,B_1}-\frac{1}{\N}\D(x)^\top\D(x)\right)\geq 0.$$
Applying the Matrix Determinant Lemma, we obtain
$$\det\left(\Gamma_2(x)_{B_1,B_1}\right)-\frac{1}{\N}\D(x) \mathrm{adj}\left(\Gamma_2(x)_{B_1,B_1}\right)\D(x)^\top\geq 0.$$This completes the proof.
\end{proof}

\begin{example}[$K_{2,6}$]\label{example:K26} Consider the complete bipartite graph $K_{2,6}$. Let $x$ be a vertex with degree $2$. Then we can check
$$\Gamma_2(x)_{B_1,B_1}=\frac{1}{2}\begin{pmatrix}
5 & -5 & -5 \\
-5 & 9 & 1\\
-5 & 1 & 9
\end{pmatrix},\,\,\text{and}\,\,\D(x)=\begin{pmatrix}
-2 & 1 & 1
\end{pmatrix}.
$$
Hence, we have $c_1(x)=0$ and $c_2(x)=20$. By Proposition \ref{prop:aminusboverN}, we know, for any finite $\N > 0$, that $x$ does not satisfy $CD(0, \N)$. However, we will show that the graph $K_{2,6}$ satisfies $CD(0,\infty)$.
\end{example}
In Section \ref{section:ub}, we will derive interesting upper curvature bound by considering the $(\{x\}\sqcup S_2(x), \{x\}\sqcup S_2(x))$-minor of $M_{K,\N}(x)$.

\section{An upper curvature bound}\label{section:ub}
In this section, we derive an upper bound on Bakry-\'Emery curvature
function $\K_{G,x}$ in terms of the local topological structure around
$x$ (Theorem \ref{thm:ub}). Let us denote the average degree and
average out-degree of $S_1(x)$ by
\begin{equation}
av_1(x):=\frac{1}{d_x}\sum_{y\in S_1(x)}d_y,\,\,\text{ and }\,\,av_1^+(x):=\frac{1}{d_x}\sum_{y\in S_1(x)}d_y^+
\end{equation}
We write $d_y^0=\#_{\Delta}(x,y)$ alternatively to emphasis its geometric meaning: It is the number of triangles ($3$-cycles) including the edge $\{x,y\}$. We have
\begin{equation}
\sum_{y\in S_1(x)}d_y^0= \sum_{y\in S_1(x)}\#_{\Delta}(x,y)=:2\#_{\Delta}(x),
\end{equation}
where $\#_{\Delta}(x)$ is the number of triangles including the vertex $x$.

\begin{theorem}\label{thm:ub}
Let $G=(V,E)$ be a locally finite simple graph and let $x\in V$. For $\N\in (0,\infty]$, we have
\begin{equation}\label{eq:ub}
K_{G,x}(\N)\leq 2+\frac{d_{x}-av_1(x)}{2}+\frac{\#_{\Delta}(x)}{d_{x}}-\frac{2d_{x}}{\N}.
\end{equation}
\end{theorem}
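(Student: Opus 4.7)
The key observation is that this upper bound is equivalent to the bound $K_\infty^0(x) - 2d_x/\N = (3+d_x-av_1^+(x))/2 - 2d_x/\N$ already advertised in \eqref{eq:uppbdK}, once one uses the identity $av_1(x) = av_1^+(x) + \frac{2\#_{\Delta}(x)}{d_x} + 1$, which comes from the decomposition $d_y = d_y^+ + d_y^0 + 1$ for $y \in S_1(x)$ together with $\sum_{y \in S_1(x)} d_y^0 = 2\#_{\Delta}(x)$. So my plan is to derive the equivalent inequality $2K \le 3 + d_x - av_1^+(x) - 4d_x/\N$ and then translate.

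The strategy is to exploit Corollary \ref{cor:preciseCur}: if $\K_{G,x}(\N) \ge K$, then $M_{K,\N}(x) \ge 0$, so every principal submatrix of $4M_{K,\N}(x)$ must be positive semidefinite. I would single out the principal submatrix indexed by $\{x\} \sqcup S_2(x)$, because the $\Gamma(x)$ contribution and the $\D(x)^\top \D(x)$ contribution only live on the $B_1(x)$ part, so on this block they simplify dramatically. Using the formulas \eqref{eq:Gamma2xS1}--\eqref{eq:Gamma2S2S2} together with \eqref{eq:Laplacian} and \eqref{eq:Gamma}, the $(\{x\}\sqcup S_2(x))$-block of $4M_{K,\N}(x)$ reads
\begin{equation*}
\begin{pmatrix}
3d_x + d_x^2 - \tfrac{4d_x^2}{\N} - 2Kd_x & d_{z_1}^- & \cdots & d_{z_{|S_2|}}^- \\
d_{z_1}^- & d_{z_1}^- & & \\
\vdots & & \ddots & \\
d_{z_{|S_2|}}^- & & & d_{z_{|S_2|}}^-
\end{pmatrix}.
\end{equation*}

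Since the $(S_2,S_2)$-block is diagonal with strictly positive entries $d_{z_i}^- > 0$, I would apply the Schur complement criterion (equivalently, Sylvester on the nested minors). Positive semidefiniteness of the displayed matrix is equivalent to
\begin{equation*}
3d_x + d_x^2 - \frac{4d_x^2}{\N} - 2Kd_x \;\ge\; \sum_{i=1}^{|S_2(x)|} \frac{(d_{z_i}^-)^2}{d_{z_i}^-} = \sum_{z \in S_2(x)} d_z^-.
\end{equation*}
Counting edges between $S_1(x)$ and $S_2(x)$ in two ways gives $\sum_{z \in S_2(x)} d_z^- = \sum_{y \in S_1(x)} d_y^+ = d_x \cdot av_1^+(x)$, so after dividing by $2d_x > 0$ we arrive at
\begin{equation*}
K \le \frac{3 + d_x - av_1^+(x)}{2} - \frac{2d_x}{\N}.
\end{equation*}
Substituting the identity $av_1^+(x) = av_1(x) - 1 - 2\#_{\Delta}(x)/d_x$ then yields exactly \eqref{eq:ub}.

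I don't foresee a genuine obstacle — the proof is essentially a single Schur-complement computation. The only things to be careful about are the edge cases: when $S_2(x) = \emptyset$, the Schur complement argument degenerates into the single $(x,x)$-minor condition $3d_x + d_x^2 - 4d_x^2/\N - 2Kd_x \ge 0$, which is consistent since then $av_1^+(x) = 0$; and one should note that the strict positivity of the diagonal entries $d_{z_i}^-$ (guaranteed because every $z_i \in S_2(x)$ has at least one neighbor in $S_1(x)$) is what makes the Schur complement well-defined without any extra hypothesis.
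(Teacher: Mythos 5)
Your proof is correct and follows essentially the same route as the paper: both extract the principal submatrix of $4M_{K,\N}(x)$ indexed by $\{x\}\sqcup S_2(x)$ and reduce to the scalar inequality $3d_x+d_x^2-2Kd_x-\tfrac{4d_x^2}{\N}\ge\sum_{z\in S_2(x)}d_z^-$; the paper phrases this via $\det(M_0)\ge 0$ for the arrow matrix while you phrase it via the Schur complement, which for a diagonal positive $(S_2,S_2)$-block is the identical computation. Your explicit treatment of the case $S_2(x)=\emptyset$ and of the identity relating $av_1^+(x)$ to $av_1(x)$ and $\#_\Delta(x)$ matches the paper's calculation in \eqref{eq:CalK0}.
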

\begin{proof}
Let $\K:=\K_{G,x}(\N)$. Then the matrix
$$4M_{\K,\N}(x) = 4\left(\Gamma_2(x)-\frac{1}{\N}\Delta(x)^\top\Delta(x)-\K\Gamma(x)\right)\geq 0.$$
Let $M_{0}$ be the submatrix corresponding to the vertices $\{x\}\sqcup S_2(x)=\{x,z_1, \ldots, z_{|S_2(x)|}\}$.
The matrix $M_0$ has the form
\begin{equation}
M_0=\begin{pmatrix}
d_{x}^{2}+3d_{x}-2\K d_{x}-\frac{4d^{2}_{x}}{\N}  & d_{z_1}^- & d_{z_2}^- & \cdots & d^-_{z_{|S_2(x)|}} \\
d_{z_1}^- & d_{z_1}^- & 0 & \cdots & 0 \\
d_{z_2}^- & 0 & d_{z_2}^- & \cdots & 0 \\
\vdots & \vdots & \vdots & \ddots & \vdots\\
d^-_{z_{|S_2(x)|}} & 0 & 0 & \cdots & d^-_{z_{|S_2(x)|}}
\end{pmatrix}.
\end{equation}
We have, by Sylvester's criterion, $det(M_0)\geq 0.$ Thus
\begin{align}
d_{x}^{2}+3d_{x}-2\K d_{x}-\frac{4d^{2}_{x}}{\N} & \geq \sum_{z\in S_2(x)} d_{z}^-=\sum_{y\in S_1(x)}d_y^+\notag
\\
& = \sum_{y\in S_{1}(x)}d_{y} - d_{x} - 2\#_{\Delta}(x).\label{eq:CalK0}
\end{align}
Rearranging gives (\ref{eq:ub})
as required.
\end{proof}

\begin{definition}[The constant $\K_\infty^0(x)$]\label{defn:Kinfty0}
Let $G=(V,E)$ be a locally finite simple graph. For any vertex $x\in V$, we define
\begin{equation}\label{eq:K0}
\K_\infty^0(x):=2+\frac{d_x-av_1(x)}{2}+\frac{1}{2d_x}\sum_{y\in S_1(x)}\#_{\Delta}(x,y).
\end{equation}
By the calculations in (\ref{eq:CalK0}), we can reformulate $\K_\infty^0(x)$ as follows:
\begin{equation}\label{eq:K0short}
\K_\infty^0(x)=\frac{3+d_x-av_1^+(x)}{2}.
\end{equation}
\end{definition}
In terms of the above definition, we can rewrite (\ref{eq:ub}) as
$$\K_{G,x}(\N)\leq \K_\infty^0(x)-\frac{2d_x}{\N}.$$

\begin{corollary}
Let $G=(V,E)$ be a $d$-regular graph and let $x\in V$. Then
\begin{equation}\label{eq:ubRegularInfinity}
K_{G,x}(\infty)\leq 2+\frac{\#_{\Delta}(x)}{d}=2+\frac{1}{2d}\sum_{y\in S_1(x)}\#_{\Delta}(x,y).
\end{equation}
\end{corollary}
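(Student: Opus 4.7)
The plan is to deduce this immediately from Theorem~\ref{thm:ub} by specializing to the case of a $d$-regular graph with $\N = \infty$. The inequality~\eqref{eq:ub} states
\[
\K_{G,x}(\N) \;\le\; 2 + \frac{d_x - av_1(x)}{2} + \frac{\#_\Delta(x)}{d_x} - \frac{2d_x}{\N},
\]
so the first step is to send $\N \to \infty$, which simply removes the last term.

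Next, I would use $d$-regularity. Since every vertex has degree $d$, we have $d_x = d$ and also
\[
av_1(x) = \frac{1}{d_x}\sum_{y \in S_1(x)} d_y = \frac{1}{d}\sum_{y \in S_1(x)} d = d,
\]
so the term $(d_x - av_1(x))/2$ vanishes. This already gives
\[
\K_{G,x}(\infty) \le 2 + \frac{\#_\Delta(x)}{d},
\]
which is the first equality asserted in~\eqref{eq:ubRegularInfinity}.

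For the second equality in~\eqref{eq:ubRegularInfinity}, I would simply invoke the identity
\[
2\#_\Delta(x) \;=\; \sum_{y \in S_1(x)} \#_\Delta(x,y)
\]
recorded at the beginning of Section~\ref{section:ub} (each triangle through $x$ has exactly two edges incident to $x$ and is therefore counted twice in the right-hand sum, once for each of its two neighbors of $x$). Dividing by $d$ and substituting yields the claimed expression $2 + \frac{1}{2d}\sum_{y \in S_1(x)}\#_\Delta(x,y)$. There is no substantive obstacle here; the corollary is purely a specialization of Theorem~\ref{thm:ub}, and the only care needed is to verify the two regularity-based simplifications $d_x = d$ and $av_1(x) = d$.
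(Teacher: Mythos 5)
Your proposal is correct and follows exactly the same route as the paper: the corollary is obtained by specializing Theorem \ref{thm:ub} to a $d$-regular graph, where $d_x = av_1(x) = d$ kills the middle term, and the second equality is just the triangle-counting identity $2\#_{\Delta}(x) = \sum_{y\in S_1(x)}\#_{\Delta}(x,y)$ recorded at the start of Section \ref{section:ub}. No gaps.
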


\begin{proof}
Follows immediately from Theorem \ref{thm:ub} from noting that in a regular graph we have $d_x=av_1(x)$.
\end{proof}
\begin{remark}\label{remark:KKRT}
We remark that on a $d$-regular graph,  Klartag, Kozma, Ralli, and Tetali  \cite[Theorem 1.2]{KKRT16} have derived an upper bound of $\K_{G,x}(\infty)$, via a different calculating method from here. Namely, they proved
\begin{equation}
\K_\infty(G,x)\leq 2+\frac{1}{2}\max_{y\in S_1(x)}\#_{\Delta}(x,y).
\end{equation}
We comment that their proof can also produce the stronger estimate (\ref{eq:ubRegularInfinity}).
\end{remark}


Theorem \ref{thm:ub} provides stronger estimates than Proposition \ref{prop:ub2}. For example, we have the following one for regular graphs.

\begin{corollary}
Let $G=(V,E)$ be a $d$-regular graph satisfying $CD(0,\N)$. Then
$$d\leq\frac{\N+\sqrt{\N^{2}+2\N\#_{\Delta}(x)}}{2},\,\,\text{ for every}\,\,x\in V.$$
In particular, if $G$ is triangle free, then
$d\leq\N$.
\end{corollary}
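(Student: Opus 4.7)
The plan is to deduce this from Theorem \ref{thm:ub} rather than going back to first principles on $M_{0,\N}(x)$. The $CD(0,\N)$ hypothesis means $\K_{G,x}(\N)\geq 0$ at every vertex, while Theorem \ref{thm:ub} supplies the matching upper bound
$$\K_{G,x}(\N)\leq 2+\frac{d_x-av_1(x)}{2}+\frac{\#_\Delta(x)}{d_x}-\frac{2d_x}{\N}.$$
Combining these gives a single scalar inequality that already contains everything we need.

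Next I would specialize to the $d$-regular case. Regularity forces $d_x=d$ and also $av_1(x)=d$, since every neighbour of $x$ has degree $d$. The middle term $(d_x-av_1(x))/2$ therefore vanishes, and the combined inequality collapses to
$$0\leq 2+\frac{\#_\Delta(x)}{d}-\frac{2d}{\N}.$$
Multiplying through by $d\N$ (which is positive) rearranges this into the quadratic inequality
$$2d^{2}-2\N d-\N\,\#_\Delta(x)\leq 0.$$

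Finally, I would view the left-hand side as a quadratic in $d$ with positive leading coefficient; its larger root is
$$\frac{2\N+\sqrt{4\N^{2}+8\N\,\#_\Delta(x)}}{4}=\frac{\N+\sqrt{\N^{2}+2\N\,\#_\Delta(x)}}{2},$$
so $d$ must lie below this root, yielding the first claim. The triangle-free case is then immediate: with $\#_\Delta(x)=0$ the square root collapses to $\N$ and the bound becomes $d\leq \N$.

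No serious obstacle is anticipated — the only thing to double-check is the sign/regularity reduction that makes $av_1(x)=d_x$, which is a one-line observation for regular graphs. All the analytic content has already been absorbed into Theorem \ref{thm:ub}, so the proof is essentially a rearrangement of the quadratic inequality coming out of that theorem.
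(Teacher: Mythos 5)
Your proposal is correct and follows exactly the route the paper intends: the corollary is stated as an immediate consequence of Theorem \ref{thm:ub}, obtained by combining $\K_{G,x}(\N)\ge 0$ with the upper bound, using $av_1(x)=d_x=d$ for a $d$-regular graph, and solving the resulting quadratic in $d$. The algebra checks out, including the triangle-free specialization.
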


Theorem \ref{thm:ub} also tells cases that the curvature has to be negative. For example, we have the following straightforward consequence.
\begin{corollary}
Let $G=(V,E)$ be a triangle free graph and let $x\in V$. Suppose that
$$av_1(x)>4+d_{x}.$$
Then we have $\K_{G,x}(\infty)<0$.
\end{corollary}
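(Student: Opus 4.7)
The plan is to apply Theorem \ref{thm:ub} directly with $\N = \infty$, using the triangle-free hypothesis to eliminate the $\#_\Delta(x)/d_x$ term. Specifically, Theorem \ref{thm:ub} gives
$$\K_{G,x}(\N) \leq 2 + \frac{d_x - av_1(x)}{2} + \frac{\#_\Delta(x)}{d_x} - \frac{2d_x}{\N}.$$
First I would set $\N = \infty$ so the term $2d_x/\N$ vanishes, leaving
$$\K_{G,x}(\infty) \leq 2 + \frac{d_x - av_1(x)}{2} + \frac{\#_\Delta(x)}{d_x}.$$

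Next, since $G$ is triangle-free, every edge $\{x,y\}$ with $y \in S_1(x)$ lies in zero triangles, so $\#_\Delta(x,y) = 0$ for each $y \in S_1(x)$ and consequently $\#_\Delta(x) = 0$. Substituting this yields
$$\K_{G,x}(\infty) \leq 2 + \frac{d_x - av_1(x)}{2} = \frac{4 + d_x - av_1(x)}{2}.$$

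Finally, the hypothesis $av_1(x) > 4 + d_x$ is equivalent to $4 + d_x - av_1(x) < 0$, so the right-hand side above is strictly negative, giving $\K_{G,x}(\infty) < 0$, as desired. There is no substantive obstacle here; the statement is an immediate specialization of Theorem \ref{thm:ub}, and the only thing to check is the triangle-free reduction and the sign arithmetic.
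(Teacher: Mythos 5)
Your proposal is correct and matches the paper's intent exactly: the paper states this corollary as a "straightforward consequence" of Theorem \ref{thm:ub} without writing out a proof, and your argument (set $\N=\infty$, note $\#_\Delta(x)=0$ by triangle-freeness, and check the sign of $4+d_x-av_1(x)$) is precisely that specialization.
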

The local structures with negative curvature will be explored further in Section \ref{section:connComp}.

\section{Fundamental properties of curvature functions}\label{props}
In this section, we discuss fundamental properties of the curvature function $\K_{G,x}$.

\begin{proposition}\label{prop:cur_function} Let $G=(V,E)$ be a locally finite simple graph and $x\in V$. Then the curvature function $\K_{G,x}:(0,\infty]\to \mathbb{R}$ has the following properties:
\begin{itemize}
\item[(i)] $\K_{G,x}$ is monotone non-decreasing.
\item[(ii)] $\K_{G,x}$ is continuous.
\item[(iii)] For any $\N\in(0,\infty]$, we have
\begin{equation}\label{eq:shape}
\K_{G,x}(\infty)-\frac{2d_x}{\N}\leq \K_{G,x}(\N)\leq \K_\infty^0(x)-\frac{2d_x}{\N}.
\end{equation}
In particular, $\lim_{\N\to 0}\K_{G,x}(\N)=-\infty$.
\item[(iv)] $\K_{G,x}$ is a concave function.
\end{itemize}

\begin{remarks}
\begin{itemize}
\item[(i)] Note that for a given vertex, $\K_{G,x}(\infty)$ and
  $\K_\infty^0(x)$ are both fixed constant. Hence, (\ref{eq:shape})
  describes a rough shape of the graph of the curvature function
  $\K_{G,x}$.
\item[(ii)] We are grateful to Bobo Hua and also to 
Jim Portegies, who
  independently raised the concavity question for this curvature function.
\end{itemize}
\end{remarks}

\end{proposition}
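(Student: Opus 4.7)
The plan is to exploit the semidefinite programming characterization from Proposition \ref{prop:LMP}, viewing $\K_{G,x}(\N)$ as the largest $K$ such that
\[ M_{K,\N}(x) := \Gamma_2(x) - \frac{1}{\N}\Delta(x)^\top\Delta(x) - K\Gamma(x) \geq 0. \]
I would prove the four parts in the order (i), (iii), (iv), (ii), since each relies on what comes before. Property (i) is immediate: because $\Delta(x)^\top\Delta(x) \geq 0$, the matrix $M_{K,\N}(x)$ is monotone non-decreasing in $\N$ in the L\"owner order, so the feasible set of $K$ can only grow with $\N$.

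For (iii), the upper bound is Theorem \ref{thm:ub}. For the lower bound the key intermediate fact is the matrix inequality
\[ 2d_x\,\Gamma(x) \geq \Delta(x)^\top\Delta(x). \]
Granting this, $M_{\K_{G,x}(\infty),\infty}(x) \geq 0$ combined with $\frac{1}{\N}(2d_x\Gamma(x)-\Delta(x)^\top\Delta(x)) \geq 0$ gives $M_{\K_{G,x}(\infty)-2d_x/\N,\,\N}(x) \geq 0$, which is exactly the claimed lower bound. I would verify the matrix inequality by expanding both quadratic forms on $v=(v_0,v_1,\dots,v_{d_x})^\top$ using \eqref{eq:Gamma} and \eqref{eq:Laplacian}; writing $s=\sum_{i\geq 1}v_i$, the difference collapses to $d_x\sum_{i\geq 1}v_i^2 - s^2 \geq 0$ by Cauchy--Schwarz. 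The statement $\lim_{\N\to 0}\K_{G,x}(\N)=-\infty$ then follows directly from the upper bound $\K_\infty^0(x)-2d_x/\N$.

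Property (iv) is the most substantial step. I would perform a Schur complement reduction of $M_{K,\N}(x)$ with respect to its $(S_2(x),S_2(x))$-block, which coincides with $(\Gamma_2(x))_{S_2,S_2}$ and, by \eqref{eq:Gamma2S2S2}, is diagonal with positive entries $d_{z_j}^-$ and hence invertible. Since $\Delta(x)^\top\Delta(x)$ and $\Gamma(x)$ are supported on $B_1(x)$, the reduction yields the variational formula
\[ \K_{G,x}(\N) = \inf_{v_1:\,\Gamma(x)v_1\neq 0} \left( \frac{v_1^\top B(x) v_1}{v_1^\top \Gamma(x) v_1} - \frac{1}{\N}\cdot\frac{(\Delta(x)v_1)^2}{v_1^\top\Gamma(x)v_1} \right), \]
where $B(x) := (\Gamma_2(x))_{B_1,B_1} - (\Gamma_2(x))_{B_1,S_2}(\Gamma_2(x))_{S_2,S_2}^{-1}(\Gamma_2(x))_{S_2,B_1}$ is $\N$-independent. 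For each fixed $v_1$ the bracketed expression has the form $a - b/\N$ with $b\geq 0$, which is concave in $\N$ on $(0,\infty]$ (with the convention $b/\infty = 0$). Since the pointwise infimum of a family of concave functions is concave, (iv) follows.

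Finally, for (ii), concavity on the open interval $(0,\infty)$ automatically yields continuity there. At $\N=\infty$, monotonicity from (i) gives $\limsup_{\N\to\infty}\K_{G,x}(\N) \leq \K_{G,x}(\infty)$, while the lower bound in (iii) gives $\liminf_{\N\to\infty}\K_{G,x}(\N) \geq \K_{G,x}(\infty)$, so $\K_{G,x}$ is left-continuous at $\infty$. I expect the hard part to be (iv): one must justify the Schur complement (which rests on the positive-diagonal structure of $(\Gamma_2(x))_{S_2,S_2}$) and check that eliminating the $S_2(x)$ coordinates leaves the $1/\N$ term untouched, so that the $\N$-dependence factors cleanly through the concave function $-1/\N$.
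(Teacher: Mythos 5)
Your proposal is correct, and parts (i) and (iii) essentially coincide with the paper's argument: the key matrix inequality $\Delta(x)^\top\Delta(x)\le 2d_x\Gamma(x)$ is exactly the paper's \eqref{eq:discrete_differ_continuous} (proved there by diagonal dominance rather than by your Cauchy--Schwarz expansion), and your derivation of the lower bound in \eqref{eq:shape} is the paper's Lemma \ref{lemma:continuity} specialized to $\N_2=\infty$. Where you genuinely diverge is (iv) and, consequently, (ii). The paper proves concavity by a direct manipulation of the convex combination $\N=\alpha\N_1+(1-\alpha)\N_2$, summing the inequalities $\N_j\Gamma_2(f)(x)-\N_j\K_j\Gamma(f)(x)\ge(\Delta f(x))^2$ and invoking monotonicity through the elementary inequality $\alpha\N_1\K_1+(1-\alpha)\N_2\K_2\ge\N(\alpha\K_1+(1-\alpha)\K_2)$; you instead exhibit $\K_{G,x}(\N)$ as a pointwise infimum of functions of the form $a(v)-b(v)/\N$ with $b(v)\ge 0$, each concave on $(0,\infty)$, which gives concavity without using (i) at all. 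This is a clean and arguably more conceptual route; note, though, that the Schur complement reduction is not actually needed for it: the unreduced Rayleigh quotient $\bigl(\Gamma_2(f)(x)-\tfrac{1}{\N}(\Delta f(x))^2\bigr)/\Gamma(f)(x)$, taken over $f$ with $\Gamma(f)(x)\neq 0$, already has the required $a-b/\N$ form (one only has to check, as you implicitly do after the reduction, that functions with $\Gamma(f)(x)=0$ impose no constraint, which follows from $\Delta f(x)=0$ and the positivity of $(\Gamma_2)_{S_2,S_2}$). Eliminating the $S_2$-block is the mechanism behind the paper's later Theorem \ref{thm:Sharpness}, but here it only adds the burden of justifying that positivity passes to the Schur complement. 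For (ii), the paper's route via the two-sided bound of Lemma \ref{lemma:continuity} yields a quantitative modulus of continuity, whereas your route through concavity plus the bounds in (iii) at $\N=\infty$ is qualitative but equally valid.
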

We first prove the following lemma.
\begin{lemma}\label{lemma:continuity}
For any $0<\N_1\leq \N_2\leq \infty$, we have
\begin{equation}\label{eq:continuity}
\K_{G,x}(\N_2)\leq \K_{G,x}(\N_1)+2d_x\left(\frac{1}{\N_1}-\frac{1}{\N_2}\right)
\end{equation}
\end{lemma}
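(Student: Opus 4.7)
The plan is to compare the CD inequalities at $\N_1$ and $\N_2$ using Definition \ref{defn:BEcurvature} together with the elementary Cauchy--Schwarz bound
$$
(\Delta f(x))^2 \;=\; \Bigl(\sum_{y\sim x}(f(y)-f(x))\Bigr)^{2} \;\le\; d_x \sum_{y\sim x}(f(y)-f(x))^2 \;=\; 2\,d_x\,\Gamma(f)(x),
$$
valid for every $f:V\to \mathbb{R}$. This is the only non-trivial input, and at the matrix level it reads $\Delta(x)^\top \Delta(x) \le 2d_x\,\Gamma(x)$.

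Setting $K_2 := \K_{G,x}(\N_2)$, the defining property of the curvature function at $\N_2$ gives
$$
\Gamma_2(f)(x) \;\ge\; \frac{1}{\N_2}(\Delta f(x))^2 + K_2\,\Gamma(f)(x) \qquad \text{for every } f.
$$
I would then split $\frac{1}{\N_2} = \frac{1}{\N_1} - \bigl(\frac{1}{\N_1} - \frac{1}{\N_2}\bigr)$; since $0 < \N_1 \le \N_2$ forces $\frac{1}{\N_1} - \frac{1}{\N_2} \ge 0$, the Cauchy--Schwarz bound above implies
$$
-\Bigl(\frac{1}{\N_1} - \frac{1}{\N_2}\Bigr)(\Delta f(x))^2 \;\ge\; -2d_x\Bigl(\frac{1}{\N_1} - \frac{1}{\N_2}\Bigr)\Gamma(f)(x),
$$
and combining the two gives
$$
\Gamma_2(f)(x) \;\ge\; \frac{1}{\N_1}(\Delta f(x))^2 + \Bigl[K_2 - 2d_x\Bigl(\frac{1}{\N_1} - \frac{1}{\N_2}\Bigr)\Bigr]\Gamma(f)(x)
$$
for every $f$.

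By the maximality in the definition of $\K_{G,x}(\N_1)$, this forces $\K_{G,x}(\N_1) \ge K_2 - 2d_x\bigl(\frac{1}{\N_1} - \frac{1}{\N_2}\bigr)$, which rearranges to the desired inequality \eqref{eq:continuity}. I do not anticipate any real obstacle: the whole argument reduces to Cauchy--Schwarz plus bookkeeping between the two CD inequalities, and requires no structural information about $\Gamma_2(x)$ or the local geometry at $x$. The crucial sign condition is $\N_1 \le \N_2$, without which the Cauchy--Schwarz estimate would be pointing the wrong way.
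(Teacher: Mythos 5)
Your proof is correct and follows essentially the same route as the paper: both rest on the bound $(\Delta f(x))^2 \le 2d_x\,\Gamma(f)(x)$ (equivalently $\Delta(x)^\top\Delta(x)\le 2d_x\Gamma(x)$, which the paper obtains from diagonal dominance of $d_x\cdot 2\Gamma-\Delta^\top\Delta$) together with the split $\tfrac{1}{\N_2}=\tfrac{1}{\N_1}-\bigl(\tfrac{1}{\N_1}-\tfrac{1}{\N_2}\bigr)$. No issues.
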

\begin{proof}
By definition, we have
\begin{align*}
\Gamma_2(x)\geq & \frac{1}{\N_2}\D(x)^\top\D(x)+\K_{G,x}(\N_2)\Gamma(x)\\
=& \frac{1}{\N_1}\D(x)^\top\D(x)-\left(\frac{1}{\N_1}-\frac{1}{\N_2}\right)\D(x)^\top\D(x)+\K_{G,x}(\N_2)\Gamma(x).
\end{align*}
Observe that
\begin{equation}\label{eq:GammaDeltaIdJd}
d_x\cdot 2\Gamma-\D^\top\D=\left(\begin{array}{cccc}
0 & 0 & \cdots & 0\\
0 & \multicolumn{3}{c}{\multirow{3}{*}{\raisebox{1mm}{\scalebox{1}{$d_xI_{d_x}-J_{d_x}$}}}}  \\
\vdots & \\
0 &
\end{array}
\right)
\end{equation}
where $I_{d_{x}}$ is the $d_x$ by $d_x$ identity matrix and $J_{d_{x}}$ is the $d_x$ by $d_x$ matrix whose entries all equal $1$.
Since the matrix in (\ref{eq:GammaDeltaIdJd}) is diagonal dominant, we have
\begin{equation}\label{eq:discrete_differ_continuous}
\D(x)^\top\D(x)\leq 2d_x\Gamma(x).
\end{equation}
Inserting (\ref{eq:discrete_differ_continuous}), we continue the calculation to obtain
\begin{equation*}
\Gamma_2(x)\geq \frac{1}{\N_1}\D(x)^\top\D(x)+\left(\K_{G,x}(\N_2)-2d_x\left(\frac{1}{\N_1}-\frac{1}{\N_2}\right)\right)\Gamma(x).
\end{equation*}
This implies (\ref{eq:continuity}).
\end{proof}
Now we are ready to show Proposition \ref{prop:cur_function}.

\begin{proof}[Proof of Proposition \ref{prop:cur_function}]
(i). The monotonicity is clear from definition.

(ii). By (i) and Lemma \ref{lemma:continuity}, we have, for any $0<\N_1\leq \N\leq\infty$,
$$\K_{G,x}(\N_1)\leq \K_{G,x}(\N)\leq \K_{G,x}(\N_1)+2d_x\left(\frac{1}{\N_1}-\frac{1}{\N}\right).$$
This shows $\K_{G,x}: (0,\infty]\to \mathbb{R}$ is a continuous function.

(iii). The upper bound in (\ref{eq:shape}) is from Theorem \ref{thm:ub} and the lower bound is from Lemma \ref{lemma:continuity} by taking $\N_1=\N$ and $\N_2=\infty$.

(iv). Let $\N_1 < \N_2$ and $\N = \alpha \N_1 + (1-\alpha) \N_2$ for some
$\alpha \in [0,1]$. Let $\K_j = \K_{G,x}(\N_j)$ for $j \in \{1,2\}$. 
We need to show that
\begin{equation} \label{eq:convbeg}
\K_{G,x}(\N) \ge \alpha \K_1 + (1-\alpha) \K_2. 
\end{equation}
It follows from \eqref{eq:CDineq} that we have for any $f:V\to \mathbb{R}$,
$j \in \{1,2\}$,
$$ \N_j \Gamma_2(f)(x) - \N_j \K_j \Gamma(f)(x) \ge (\Delta f(x))^2. $$
This implies that
\begin{equation}\label{eq:conv1} 
\N \Gamma_2(f)(x) - \left( \alpha \N_1 \K_1 + (1-\alpha) \N_2 \K_2 \right)
\Gamma(f)(x)  \ge (\Delta f(x))^2. 
\end{equation}
Recall from the monotonicity of $\K_{G,x}$ that $\K_1 \le \K_2$ and, therefore,
$$ \alpha(1-\alpha)(\N_2-\N_1)(\K_2-\K_1) \ge 0. $$
This transforms straighforwardly into
$$ \alpha \N_1 \K_1 + (1-\alpha) \N_2 \K_2 \ge \left( \alpha \N_1 + (1-\alpha) \N_2 \right) \left( \alpha \K_1 + (1-\alpha) \K_2\right) = \N 
\left( \alpha \K_1 + (1-\alpha) \K_2\right). $$
Plugging this into \eqref{eq:conv1}, using 
$$ \Gamma (f)(x) = \frac{1}{2} \sum_{y,y\sim x}(f(y)-f(x))^2 \ge 0, $$ 
and reversing the original calculations, we end up with
$$ \Gamma_2(f)(x) \ \ge \frac{1}{\N} (\Delta f(x))^2
+ \left( \alpha \K_1 + (1-\alpha) \K_2\right) \Gamma(f)(x). $$
This shows \eqref{eq:convbeg}, finishing the proof.
\end{proof}

Proposition \ref{prop:cur_function} (iii) implies that we can read the degree of $x$ from its curvature funciton.
\begin{corollary}\label{cor:read_cur_dx}
Let $G=(V,E)$ be a locally finite simple graph and $\K_{G,x}:(0,\infty]\to \mathbb{R}$ be the curvature function of $x\in V$. Then
$$d_x=-\frac{1}{2}\lim_{\N\to 0}\N\K_{G,x}(\N).$$
\end{corollary}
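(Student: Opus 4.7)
The plan is to deduce this corollary directly from the two-sided bound \eqref{eq:shape} established in Proposition \ref{prop:cur_function} (iii), namely
$$ \K_{G,x}(\infty) - \frac{2d_x}{\N} \le \K_{G,x}(\N) \le \K_\infty^0(x) - \frac{2d_x}{\N}, $$
via a simple squeeze argument.

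First I would multiply the entire chain of inequalities by $\N > 0$, which preserves the direction of the inequalities and produces
$$ \N\K_{G,x}(\infty) - 2d_x \le \N\K_{G,x}(\N) \le \N \K_\infty^0(x) - 2d_x. $$
Both $\K_{G,x}(\infty)$ and $\K_\infty^0(x)$ are finite constants depending only on the local structure at $x$ (the first by the definition of the curvature function at $\N = \infty$, the second by Definition \ref{defn:Kinfty0}). Therefore, as $\N \downarrow 0$, the left-hand side tends to $-2d_x$ and so does the right-hand side.

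Applying the squeeze theorem yields
$$ \lim_{\N \to 0} \N \K_{G,x}(\N) = -2 d_x, $$
and rearranging gives the claimed identity. There is no real obstacle here: everything reduces to the lower and upper curvature bounds already proven, together with the fact that these bounds share the same singular term $-2d_x/\N$ while differing only by finite constants that become negligible after multiplication by $\N \to 0$.
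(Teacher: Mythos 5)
Your proof is correct and is exactly the argument the paper intends: the corollary is stated as an immediate consequence of the two-sided bound \eqref{eq:shape} in Proposition \ref{prop:cur_function}~(iii), and multiplying by $\N$ and squeezing between the two finite constants $\K_{G,x}(\infty)$ and $\K_\infty^0(x)$ is the standard way to extract the coefficient of the $-2d_x/\N$ term. No gaps.
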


The following property tells that, for any curvature function $\K_{G,x}$,  there always exists a threshold $\N_0(x)\in (0,\infty]$ such that $\K_{G,x}$ is strictly monotone increasing on $(0,\N_0(x)]$, and is constant on $[\N_0(x),\infty]$.
\begin{proposition}\label{prop:cur_function_strictly_monotone}
Let $G=(V,E)$ be a locally finite simple graph and $x\in V$. If there exist $\N_1<\N_2$ such that $\K_{G,x}(\N_1)=\K_{G,x}(\N_2)$, then we have
\begin{equation}\label{eq:cur_function_strictly_monotone}
\K_{G,x}(\N)=\K_{G,x}(\N_1)\,\,\,\,\forall\,\,\N\in [\N_1,\infty].
\end{equation}
\end{proposition}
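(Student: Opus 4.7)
The plan is to exploit the two structural properties of $\K_{G,x}$ already established in Proposition \ref{prop:cur_function}: monotonicity (non-decreasing) and concavity. These together should force constancy on the half-line $[\N_1,\infty]$ as soon as $\K_{G,x}$ takes the same value at two distinct points.

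First I would treat the interior interval $[\N_1, \N_2]$. For any $\N \in [\N_1,\N_2]$, monotonicity gives $\K_{G,x}(\N_1) \le \K_{G,x}(\N) \le \K_{G,x}(\N_2)$, and since the two ends agree, the function is constant on $[\N_1,\N_2]$. This step is trivial.

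The main step is to propagate this constancy past $\N_2$. Fix an arbitrary $\N > \N_2$ (finite for the moment) and write $\N_2$ as a proper convex combination
\begin{equation*}
\N_2 = \alpha \N_1 + (1-\alpha)\N, \qquad \alpha = \frac{\N-\N_2}{\N-\N_1} \in (0,1).
\end{equation*}
Concavity of $\K_{G,x}$ (Proposition \ref{prop:cur_function}(iv)) yields
\begin{equation*}
\K_{G,x}(\N_2) \ge \alpha \K_{G,x}(\N_1) + (1-\alpha)\K_{G,x}(\N).
\end{equation*}
Using $\K_{G,x}(\N_2) = \K_{G,x}(\N_1)$ and $1-\alpha > 0$, this rearranges to $\K_{G,x}(\N) \le \K_{G,x}(\N_1)$. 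Combined with the reverse inequality $\K_{G,x}(\N) \ge \K_{G,x}(\N_2) = \K_{G,x}(\N_1)$ from monotonicity, we conclude $\K_{G,x}(\N) = \K_{G,x}(\N_1)$ for every finite $\N \ge \N_2$.

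Finally, I would handle the endpoint $\N=\infty$ via the continuity of $\K_{G,x}$ on $(0,\infty]$ established in Proposition \ref{prop:cur_function}(ii): since $\K_{G,x}$ is constant on $[\N_1,\infty)$, taking $\N \to \infty$ gives $\K_{G,x}(\infty) = \K_{G,x}(\N_1)$ as well, completing \eqref{eq:cur_function_strictly_monotone}. I do not anticipate a serious obstacle here; the only subtle point is making sure the convex combination $\alpha$ lies strictly in $(0,1)$ so that the concavity inequality can actually be inverted, which is guaranteed by $\N_1 < \N_2 < \N$.
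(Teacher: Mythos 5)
Your proof is correct and follows essentially the same route as the paper's: constancy on $[\N_1,\N_2]$ by monotonicity, then concavity applied to the three points $\N_1<\N_2<\N$ to exclude any increase beyond $\N_2$. Your explicit treatment of the endpoint $\N=\infty$ via continuity is a small extra care that the paper leaves implicit.
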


\begin{proof}
First, by monotonicity, we know $\K_{G,x}(\N)$ is constant on $[\N_1,\N_2]$. Let us denote this constant by $K:=\K_{G,x}(\N_1)$. Again, by monotonicity, we have
$\K_{G,x}(\N) \ge K$ for all $\N \ge \N_2$. But the existence of $\N > \N_2$
with $\K_{G,x}(\N) > K$ would contradict to the concavity of $\K_{G,x}$
(w.r.t. the three points $\N_1 < \N_2 < \N$). This finishes the proof.
\end{proof}

Lemma \ref{lemma:continuity} implies the following property on curvature sharpness.
\begin{proposition}\label{prop:cur_sharp_before}
Let $G=(V,E)$ be a locally finite graph and $x\in V$. If $x$ is $\N$-curvature sharp, then $x$ is $\N'$-curvature sharp for any $\N'\in(0,\N]$.
\end{proposition}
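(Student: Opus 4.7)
The plan is to combine the upper bound from Theorem \ref{thm:ub} with the Lipschitz-type estimate provided by Lemma \ref{lemma:continuity}. By definition, $\N$-curvature sharpness at $x$ means that
$$ \K_{G,x}(\N) = \K_\infty^0(x) - \frac{2d_x}{\N}, $$
so the goal is to show the same equality at any $\N' \in (0, \N]$. The upper bound ``$\leq$'' comes for free from Theorem \ref{thm:ub}, so the real content is the matching lower bound at $\N'$.

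To obtain this lower bound, I would apply Lemma \ref{lemma:continuity} with $\N_1 = \N'$ and $\N_2 = \N$, which yields
$$ \K_{G,x}(\N) \leq \K_{G,x}(\N') + 2d_x\left(\frac{1}{\N'} - \frac{1}{\N}\right). $$
Substituting the assumed equality $\K_{G,x}(\N) = \K_\infty^0(x) - 2d_x/\N$ on the left-hand side and rearranging the $2d_x/\N$ terms gives
$$ \K_{G,x}(\N') \geq \K_\infty^0(x) - \frac{2d_x}{\N'}. $$

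Combined with Theorem \ref{thm:ub}, this forces equality at $\N'$, so $x$ is $\N'$-curvature sharp as desired. There is no real obstacle here; the proposition is essentially a one-line consequence of Lemma \ref{lemma:continuity} once one notices that the Lemma's estimate is tight precisely at the upper bound $\K_\infty^0(x) - 2d_x/\N$, which is why sharpness at a larger $\N$ propagates downward to every smaller $\N'$.
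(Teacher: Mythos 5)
Your proof is correct and follows exactly the paper's own argument: apply Lemma \ref{lemma:continuity} with $\N_1=\N'$, $\N_2=\N$ to transfer the sharpness equality into a lower bound at $\N'$, then invoke the upper bound of Theorem \ref{thm:ub} to force equality. Nothing to add.
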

\begin{proof}
If $x$ is $\N$-curvature sharp, then $\K_{G,x}(\N)=\K_\infty^0(x)-2d_x/\N$. By Lemma \ref{lemma:continuity}, we obtain, for any $\N'\in(0,\N]$,
$$\K_{G,x}(\N')\geq \K_\infty^0(x)-\frac{2d_x}{\N'}.$$
Recalling the upper bound in Theorem \ref{thm:ub}, we see the above equality holds.
\end{proof}
In particular, an $\infty$-curvature sharp vertex is $\N$-curvature sharp for any dimension $\N\in (0,\infty]$.

\section{Reformulation of semidefinite programming problem and lower curvature bound}\label{section:lb}

In this section, we derive a reformulation (see Theorem \ref{thm:Sharpness}) of the semidefinite programming problem in Proposition \ref{prop:LMP}. This leads to a lower bound of the curvature function $\K_{G,x}$ in terms of the upper bound $\K_\infty^0(x)-\frac{2d_x}{\N}$ and the minimal eigenvalue of a local matrix $\widehat{\P}_{\N}(x)$, which reflects the topological structure of the neighbourhood around $x$. When $G$ is $S_1$-out regular at $x$, our lower bound estimate provides a precise formula for $\K_{G,x}$.

\subsection{Main results without proofs}

We refer the readers for the proofs of the main results presented here
to the next subsection.

\begin{definition}[Matrices $\P_\infty$ and $\widehat{\P}_\infty$]\label{defn:Phat}
Let $G=(V,E)$ be a locally finite simple graph and let $x\in V$.
$\widehat{\P}_{\infty}(x)$ is a $(d_x+1)$ by $(d_x+1)$ matrix defined as
\begin{equation}
\widehat{\P}_\infty(x):=\left(\begin{array}{cccc}
0 & d_{y_1}^+-av_1^+(x) & \cdots & d_{y_{d_x}}^+-av_1^+(x)\\
d_{y_1}^+-av_1^+(x) & \multicolumn{3}{c}{\multirow{3}{*}{\raisebox{-1mm}{\scalebox{1}{$\P_\infty(x)$}}}}  \\
\vdots & \\
d_{y_{d_x}}^+-av_1^+(x) &
\end{array}
\right),
\end{equation}
where $\P_\infty(x)$ is a $d_x$ by $d_x$ matrix corresponding to the vertices in $S_1(x)$ given as follows.
For any $i,j\in [d_x], i\neq j$, we have
\begin{equation}\label{eq:Pij}
(\P_\infty(x))_{ij}:=2-4w_{y_iy_j}-4\sum_{z\in S_2(x)}\frac{w_{y_iz}w_{zy_j}}{d_z^-},
\end{equation}
with $w_{uv}$ as defined in \eqref{eq:0_1_edge_weight}
and, for any $i\in [d_x]$,
\begin{equation}\label{eq:Pii}
(\P_\infty(x))_{ii}:=-\sum_{j\in [d_x],j\neq i}(\P_\infty)_{ij}-(d_{y_i}^+-av_1^+(x)).
\end{equation}
\end{definition}

\begin{remark}
Note that the entry $(\P_\infty(x))_{ij}$ in (\ref{eq:Pij}) is determined by the number of $1$-paths between $y_i$ and $y_j$ (i.e., $w_{y_iy_j}$, which is either $0$ or $1$), and a weighted counting of the $2$-paths between $y_i$ and $y_j$ via vertices in $S_2(x)$. The entry $(\P_\infty(x))_{ii}$ in (\ref{eq:Pii}) is defined such that $$\widehat{\P}_\infty(x)\mathbf{1}=0.$$
By a direct calculation, one can reformulate (\ref{eq:Pii}) as
\begin{equation}
(\P_\infty(x))_{ii}=-2(d_x-1)+3d_{y_i}^++av_1^+(x)+4d_{y_i}^0-4\sum_{z\in S_2(x)}\frac{w_{y_iz}^2}{d_z^-}.
\end{equation}
\end{remark}

\begin{definition}[Matrices $\widehat{\P}_{\N}$]\label{defn:PN}
Let $G=(V,E)$ be a locally finite simple graph and let $x\in V$.
For $\N\in (0,\infty]$, we define
\begin{equation} \label{eq:PN}
\widehat{\P}_{\N}(x):=\widehat{\P}_{\infty}(x)+\frac{4}{\N}\left(\begin{array}{cccc}
0 & 0 & \cdots & 0\\
0 & \multicolumn{3}{c}{\multirow{3}{*}{\raisebox{1mm}{\scalebox{1}{$d_xI_{d_x}-J_{d_x}$}}}}  \\
\vdots & \\
0 &
\end{array}
\right).
\end{equation}
\end{definition}
By definition, we have $\widehat{\P}_\N \mathbf{1}=0$.

Now, we are ready to state our main results.

\begin{theorem}\label{thm:Sharpness}
Let $G=(V,E)$ be a locally finite simple graph and let $x\in V$. Then for any $\N\in~(0,\infty]$, $\K_{G,x}(\N)$ is the solution of the following semidefintie programming,
\begin{align*}
 &\text{maximize}\,\,\, \K_\infty^0(x)-\frac{2d_x}{\N}-\frac{\lambda}{2}\\
&\text{subject to}\,\,\,\widehat{\P}_\N(x)\geq -\lambda\cdot 2\Gamma(x).
\end{align*}

Moreover, the following are equivalent:
\begin{itemize}
\item[(i)]
$\K_{G,x}(\N)=\K_\infty^0(x)-\frac{2d_x}{\N}-\frac{\lambda}{2}$;
\item[(ii)]
The matrix $\widehat{\P}_\N(x)+\lambda\cdot 2\Gamma(x)$ is positive semidefinite and has zero eigenvalue of multiplicity at least $2$.
\end{itemize}
\end{theorem}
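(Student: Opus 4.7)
The plan is to convert the semidefinite program in Proposition \ref{prop:LMP} (on the $|B_2(x)|\times|B_2(x)|$ matrix $M_{K,\N}(x)$) into an equivalent program on the much smaller $(d_x+1)\times(d_x+1)$ matrix $\widehat{\P}_\N(x)+\lambda\cdot 2\Gamma(x)$, via a Schur-complement reduction against the $(S_2(x),S_2(x))$-block. By the formulas \eqref{eq:Gamma2S1S2}--\eqref{eq:Gamma2S2S2} and the fact that $\Delta(x)^\top\Delta(x)$ and $\Gamma(x)$ are supported on $B_1(x)$, the $(S_2,S_2)$-block of $4M_{K,\N}(x)$ equals the diagonal matrix $\mathrm{diag}(d_z^-)_{z\in S_2(x)}$, which is strictly positive definite since every $z\in S_2(x)$ has at least one neighbour in $S_1(x)$. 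This strict positivity is what allows the Schur-complement reduction to preserve both positive semidefiniteness and the dimension of the zero eigenspace.

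The first, and main, step is the identity: with $K=\K_\infty^0(x)-\frac{2d_x}{\N}-\frac{\lambda}{2}$, the Schur complement of $4M_{K,\N}(x)$ with respect to its $(S_2,S_2)$-block equals $\widehat{\P}_\N(x)+\lambda\cdot 2\Gamma(x)$. I would verify this entrywise using \eqref{eq:Gamma2xS1}--\eqref{eq:Gamma2S2S2}. The Schur-correction sums needed are $\sum_z (d_z^-)^2/d_z^-=d_x\cdot av_1^+(x)$ at position $(x,x)$, $-2d_{y_i}^+$ at $(x,y_i)$, $4\sum_z w_{y_iz}w_{y_jz}/d_z^-$ at off-diagonal $(y_i,y_j)$, and $4\sum_z w_{y_iz}^2/d_z^-$ on the $S_1$-diagonal. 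After substituting the expression for $K$, the matching of entries hinges on the algebraic identity $2\K_\infty^0(x)=3+d_x-av_1^+(x)$ together with the definitions \eqref{eq:Pij} and \eqref{eq:Pii} of $(\P_\infty)_{ij}$ and $(\P_\infty)_{ii}$; meanwhile the extra $\frac{4}{\N}(d_xI_{d_x}-J_{d_x})$ block built into $\widehat{\P}_\N(x)$ is precisely engineered to absorb the $S_1$-part of $-\frac{1}{\N}\Delta(x)^\top\Delta(x)$. This bookkeeping is the most delicate step and is where I would expect to spend the most effort.

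With the Schur-complement identity in hand, the rest is essentially formal. The standard congruence decomposition
\[
\begin{pmatrix} A & B \\ B^\top & D \end{pmatrix}=\begin{pmatrix} I & BD^{-1} \\ 0 & I \end{pmatrix}\begin{pmatrix} A-BD^{-1}B^\top & 0 \\ 0 & D \end{pmatrix}\begin{pmatrix} I & 0 \\ D^{-1}B^\top & I \end{pmatrix},
\]
applied with $D=(4\Gamma_2)_{S_2,S_2}\succ 0$, shows that $4M_{K,\N}(x)\succeq 0$ is equivalent to $\widehat{\P}_\N(x)+\lambda\cdot 2\Gamma(x)\succeq 0$, and that the dimensions of the zero eigenspaces agree. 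Combined with Proposition \ref{prop:LMP}, this immediately yields the SDP reformulation: maximising $K$ subject to $M_{K,\N}(x)\succeq 0$ is the same as maximising $\K_\infty^0(x)-\frac{2d_x}{\N}-\frac{\lambda}{2}$ subject to $\widehat{\P}_\N(x)\succeq -\lambda\cdot 2\Gamma(x)$. For the equivalence (i)$\Leftrightarrow$(ii), I apply Corollary \ref{cor:preciseCur}, which characterises the optimal $K$ as the value for which $M_{K,\N}(x)\succeq 0$ with zero eigenvalue of multiplicity at least $2$; the Schur reduction transports both conditions to the corresponding statement about $\widehat{\P}_\N(x)+\lambda\cdot 2\Gamma(x)$. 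Note that $\mathbf{1}$ always lies in the kernel of $\widehat{\P}_\N(x)+\lambda\cdot 2\Gamma(x)$ (since $\widehat{\P}_\N(x)\mathbf{1}=0$ by construction and $2\Gamma(x)\mathbf{1}=0$ by Proposition \ref{prop:Gamma}), so the multiplicity condition really asserts the existence of a second, independent kernel vector.
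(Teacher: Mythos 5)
Your proposal is correct and follows essentially the same route as the paper: a Schur-complement reduction of $M_{K,\N}(x)$ against the positive-definite $(S_2,S_2)$-block $(4\Gamma_2)_{S_2,S_2}=\mathrm{diag}(d_z^-)$, an entrywise verification that this Schur complement equals $\widehat{\P}_\N(x)+\lambda\cdot 2\Gamma(x)$ under the substitution $\lambda=2(\K_\infty^0(x)-\tfrac{2d_x}{\N}-K)$ (the paper's Lemma \ref{lemma:QM=P}), and transport of both positive semidefiniteness and kernel multiplicity back through Proposition \ref{prop:LMP} and Corollary \ref{cor:preciseCur}. The only cosmetic difference is that you justify the transport via the block-LDU congruence, while the paper packages the same facts in Proposition \ref{prop:MatrixVersion1} using a factorization of the $(S_2,S_2)$-block.
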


Theorem \ref{thm:Sharpness} can be considered as a new version of Proposition \ref{prop:LMP} and Corollary \ref{cor:preciseCur}, in terms of the matrix $\widehat{\P}_\N(x)+\lambda\cdot 2\Gamma(x)$ instead of $4M_{K,\N}$. Note the latter matrix has a larger size.

As a consequence of Theorem \ref{thm:Sharpness}, we have the following lower bound of the curvature function.
\begin{theorem}\label{thm:lb}
Let $G=(V,E)$ be a locally finite simple graph and let $x\in V$, Then for any $\N\in (0,\infty]$, we have
\begin{equation}\label{eq:lb}
\K_{G,x}(\N)\geq \K_\infty^0(x)-\frac{2d_x}{\N}+\frac{1}{2}\lambda_{\min}(\widehat{\P}_\N(x)).
\end{equation}
The above estimate is sharp if and only if the zero eigenvalue of the matrix $$\widehat{\P}_\N(x)-\lambda_{\min}(\widehat{\P}_\N(x))\cdot 2\Gamma(x)$$ has multiplicity at least $2$.
\end{theorem}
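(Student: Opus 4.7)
The plan is to deduce Theorem \ref{thm:lb} directly from the SDP reformulation in Theorem \ref{thm:Sharpness}. The key preliminary observation is that $\widehat{\P}_\N(x)\mathbf{1}=0$ (remarked right after Definition \ref{defn:PN}), so $0$ is an eigenvalue of $\widehat{\P}_\N(x)$ and therefore $\lambda_{\min}(\widehat{\P}_\N(x))\le 0$. I would set $\lambda_0:=-\lambda_{\min}(\widehat{\P}_\N(x))\ge 0$ and show that $\lambda_0$ is feasible for the SDP in Theorem \ref{thm:Sharpness}, i.e.\ $\widehat{\P}_\N(x)+\lambda_0\cdot 2\Gamma(x)\ge 0$; the lower bound \eqref{eq:lb} is then immediate.

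To verify feasibility, I would decompose $\mathbb{R}^{d_x+1}=\mathrm{span}(\mathbf{1})\oplus\mathbf{1}^\perp$. On $\mathrm{span}(\mathbf{1})$ both matrices vanish, so the inequality is trivial. On $\mathbf{1}^\perp$, the spectral theorem yields $\widehat{\P}_\N(x)\ge\lambda_{\min}(\widehat{\P}_\N(x))\,I$, while Proposition \ref{prop:Gamma} gives $2\Gamma(x)\ge I$. Since $\lambda_0\ge 0$, multiplying the latter bound by $\lambda_0$ preserves the inequality, and adding gives $\widehat{\P}_\N(x)+\lambda_0\cdot 2\Gamma(x)\ge\bigl(\lambda_{\min}(\widehat{\P}_\N(x))+\lambda_0\bigr)I=0$ on $\mathbf{1}^\perp$, as required.

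For the sharpness statement, equality in \eqref{eq:lb} is equivalent to $\lambda_0$ being the optimal (smallest feasible) value of $\lambda$ in the SDP, which, by the equivalence (i)$\Leftrightarrow$(ii) in Theorem \ref{thm:Sharpness} applied with $\lambda=\lambda_0$, is equivalent to $\widehat{\P}_\N(x)+\lambda_0\cdot 2\Gamma(x)=\widehat{\P}_\N(x)-\lambda_{\min}(\widehat{\P}_\N(x))\cdot 2\Gamma(x)$ being positive semidefinite and having zero eigenvalue of multiplicity at least $2$. Since we have already verified positive semidefiniteness, the criterion reduces precisely to the multiplicity condition stated in the theorem.

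I do not anticipate any real obstacle here: once Theorem \ref{thm:Sharpness} is available, the argument reduces to a one-line spectral-gap estimate on $\mathbf{1}^\perp$. The only delicate point is that the sign $\lambda_0\ge 0$ is essential in order to push the inequality $2\Gamma(x)\ge I$ through multiplication by $\lambda_0$, which is exactly why the kernel observation $\widehat{\P}_\N(x)\mathbf{1}=0$ (forcing $\lambda_{\min}(\widehat{\P}_\N(x))\le 0$) is the crucial starting point.
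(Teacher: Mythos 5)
Your argument is correct and follows essentially the same route as the paper: both reduce the lower bound to the feasibility claim $\widehat{\P}_\N(x)-\lambda_{\min}(\widehat{\P}_\N(x))\cdot 2\Gamma(x)\ge 0$ via Theorem \ref{thm:Sharpness}, verify it on $\mathbf{1}^\perp$ using $\lambda_{\min}(\at{2\Gamma(x)}{\mathbf{1}^\perp})\ge 1$ together with $\lambda_{\min}(\widehat{\P}_\N(x))\le 0$, and obtain the sharpness criterion from the multiplicity equivalence in Theorem \ref{thm:Sharpness}. Your uniform treatment even absorbs the case $d_x=1$, which the paper handles separately by noting $\widehat{\P}_\N(x)$ is then the zero matrix.
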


Recall in Corollary \ref{cor:read_cur_dx} that we can read the degree $d_x$ from the curvature function $\K_{G,x}$. With the help of Theorem \ref{thm:lb}, we can further read the average out degree $av_1^+(x)$.
\begin{corollary}\label{cor:read_cur_av1}
Let $G=(V,E)$ be a locally finite simple graph and $\K_{G,x}:(0,\infty]\to \mathbb{R}$ be the curvature function of $x\in V$.
Then
$$av_1^+(x)=3+d_x-2\lim_{\N\to 0}\left(\K_{G,x}(\N)+\frac{2d_x}{\N}\right).$$
\end{corollary}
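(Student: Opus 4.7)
The plan is to pass to the limit in the double inequality that bounds $h(\N):=\K_{G,x}(\N)+2d_x/\N$. Theorem~\ref{thm:ub} gives $h(\N)\le \K_\infty^0(x)$, and Theorem~\ref{thm:lb} gives $h(\N)\ge \K_\infty^0(x)+\tfrac12\lambda_{\min}(\widehat{\P}_\N(x))$. Since $\widehat{\P}_\N(x)\mathbf{1}=0$, we always have $\lambda_{\min}(\widehat{\P}_\N(x))\le 0$, so proving the corollary reduces to showing that $\lim_{\N\to 0^+}\lambda_{\min}(\widehat{\P}_\N(x))=0$. Once this limit is in hand, the identity $\K_\infty^0(x)=(3+d_x-av_1^+(x))/2$ from \eqref{eq:K0short} yields the claimed formula after a one-line rearrangement.

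To control $\lambda_{\min}(\widehat{\P}_\N(x))$, I would exploit the decomposition $\widehat{\P}_\N(x)=\widehat{\P}_\infty(x)+\tfrac{4}{\N}Q$ from Definition~\ref{defn:PN}, where $Q$ is the $(d_x+1)\times(d_x+1)$ block matrix whose only non-zero block is $d_xI_{d_x}-J_{d_x}$. A short direct check shows that $Q\ge 0$, that $\ker Q$ is the two-dimensional subspace $K=\mathrm{span}(\mathbf{1}_{d_x+1},e_1)$, and that $Q$ restricted to $K^\perp$ acts as $d_x\cdot I$. The crucial structural observation is that the quadratic form associated with $\widehat{\P}_\infty(x)$ vanishes identically on $K$: by Definition~\ref{defn:Phat} we have $\widehat{\P}_\infty(x)\mathbf{1}=0$, and the $(1,1)$-entry of $\widehat{\P}_\infty(x)$ is zero, while symmetry together with $\widehat{\P}_\infty(x)\mathbf{1}=0$ handles the cross term $e_1^\top\widehat{\P}_\infty(x)\mathbf{1}$.

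With this in hand I would split a unit vector as $v=v_K+v_{K^\perp}$ and estimate
$$v^\top\widehat{\P}_\N(x)v \;\ge\; -2M\|v_{K^\perp}\|-M\|v_{K^\perp}\|^2+\tfrac{4d_x}{\N}\|v_{K^\perp}\|^2,$$
where $M:=\|\widehat{\P}_\infty(x)\|_{\mathrm{op}}$. Minimizing the right-hand side over $t=\|v_{K^\perp}\|$ then yields $\lambda_{\min}(\widehat{\P}_\N(x))\ge -M^2/(4d_x/\N-M)$ for sufficiently small $\N$, and this bound tends to $0$ as $\N\to 0^+$. The main obstacle is identifying the correct kernel $K$ and verifying that $\widehat{\P}_\infty(x)$ vanishes there; without this observation the perturbative estimate would retain a non-zero contribution from the $e_1$-direction of $K$, and one could not recover $\K_\infty^0(x)$ on the nose. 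Once the vanishing is established, the remainder is a routine one-variable optimization.
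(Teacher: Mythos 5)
Your proposal is correct, and its overall skeleton is exactly the paper's: sandwich $h(\N)=\K_{G,x}(\N)+2d_x/\N$ between $\K_\infty^0(x)+\tfrac12\lambda_{\min}(\widehat{\P}_\N(x))$ and $\K_\infty^0(x)$ using Theorems \ref{thm:lb} and \ref{thm:ub}, then reduce everything to $\lim_{\N\to 0}\lambda_{\min}(\widehat{\P}_\N(x))=0$ and invoke \eqref{eq:K0short}. Where you differ is in how that limit is established. The paper restricts to $\mathbf{1}^\perp$ and argues pointwise: for the single direction $v=(-d_x,1,\dots,1)^\top$ the form $v^\top\widehat{\P}_\N(x)v$ is identically zero, and for every other $v\in\mathbf{1}^\perp$ the coefficient $d_x\sum v_i^2-(\sum v_i)^2$ is strictly positive so the form blows up to $+\infty$. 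As written this is a statement about each fixed $v$, and passing from it to convergence of the infimum over the unit sphere requires an additional (if routine) compactness or monotonicity argument. Your version replaces this with a uniform perturbation estimate: you identify $\ker Q=\mathrm{span}(\mathbf{1},e_1)$ (which contains the paper's special direction as its intersection with $\mathbf{1}^\perp$), verify that the quadratic form of $\widehat{\P}_\infty(x)$ vanishes on that kernel because the $(1,1)$-entry is zero and $\widehat{\P}_\infty(x)\mathbf{1}=0$, and then optimize $-2Mt+(4d_x/\N-M)t^2$ over $t=\|v_{K^\perp}\|$ to get the explicit bound $\lambda_{\min}(\widehat{\P}_\N(x))\ge -M^2/(4d_x/\N-M)$. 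All of these claims check out (including the identification of $Q|_{K^\perp}=d_x I$ and the vanishing of the cross term $e_1^\top\widehat{\P}_\infty(x)\mathbf{1}$), and the degenerate case $d_x=1$, where $Q=0$ and hence $\widehat{\P}_\infty(x)=0$, causes no trouble. What your route buys is a quantitative rate of convergence and a cleanly uniform argument; what the paper's buys is brevity, at the cost of leaving the interchange of limit and infimum implicit.
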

\begin{proof}
Combining the upper bound in Theorem \ref{thm:ub} and the lower bound in Theorem \ref{thm:lb}, we obtian
$$2\K_\infty^0(x)+\lambda_{\min}(\widehat{\P}_\N(x))\leq 2\left(\K_{G,x}(\N)+\frac{2d_x}{\N}\right)\leq 2\K_\infty^0(x)=3+d_x-av_1^+(x).$$
Therefore, it remains to prove $\lim_{\N\to 0}\lambda_{\min}(\widehat{\P}_\N(x))=0$. Since $\widehat{\P}_\N(x)\mathbf{1}=0$, we only need to show for any $v=(v_0,v_1,\ldots,v_{d_x})^\top\in \mathbf{1}^\perp$, $\lim_{\N\to 0}v^\top\widehat{\P}_\N(x)v\geq 0$.

By (\ref{eq:PN}), we calculate
\begin{align*}
v^\top\widehat{\P}_\N(x)v=&2v_0\sum_{i=1}^d(d_{y_i}^+-av_1^+(x))v_i+\begin{pmatrix}
v_1 & \cdots & v_{d_x}
\end{pmatrix}\P_\infty \begin{pmatrix}
v_1 \\ \vdots \\ v_{d_x}
\end{pmatrix}\\
&+\frac{4}{\N}\left[d_x\sum_{i=1}^{d_x}v_i^2-\left(\sum_{i=1}^{d_x}v_i\right)^2\right].
\end{align*}
If $v=(-d,1,\ldots,1)^\top$, we can check from above $v^\top\widehat{\P}_\N(x)v=0$. Otherwise, we have $d_x\sum_{i=1}^{d_x}v_i^2-\left(\sum_{i=1}^{d_x}v_i\right)^2>0$, and therefore $\lim_{\N\to 0}v^\top\widehat{\P}_\N(x)v=\infty$.
\end{proof}



When $G$ is $S_1$-out regular at $x$, we obtain, from Theorem \ref{thm:lb}, a precise formula for calculating the curvature function $\K_{G,x}$.
Note in this case, we have $\P_\infty(x)\mathbf{1}=0$, and, therefore, $\lambda_{\min}(\P_\infty(x))\leq 0$.
\begin{theorem}\label{thm:OutRegularFormula}
Let $G=(V,E)$ be a locally finite simple graph and let $x\in V$.
Assume that $G$ is $S_1$-out regular at $x$, i.e.,
\begin{equation}\label{eq:outRegularity}
d_{y_i}^+=av_1^+(x), \,\,\text{for any}\,\, y_i\in S_1(x).
\end{equation} Then, we have for any $\N\in (0,\infty]$
\begin{equation}\label{eq:OutRegularFormula}
\K_{G,x}(\N)=\K_\infty^0(x)-\frac{2d_x}{\N}+\frac{1}{2}\lambda_{\min}\left(\P_\infty(x)+\frac{4}{\N}(d_xI_{d_x}-J_{d_x})\right).
\end{equation}
More explicitly, we have
\begin{equation}\label{eq:step_fucntion}
\K_{G,x}(\N)=\left\{
              \begin{aligned}
                &\K_{\infty}^0(x)-\frac{2d_x}{\N}, &&\hbox{if $0<\N\leq \N_0(x)$;} \\
                &\K_{\infty}^0(x)-\frac{2d_x}{\N_0(x)}, &&\hbox{if $\N>\N_0(x)$,}
              \end{aligned}
            \right.
\end{equation}
where
\begin{equation}\label{eq:N0}
\N_0(x):=\frac{4d_x}{-\lambda_{\min}\left(\P_\infty(x)\right)}.
\end{equation}
When $\lambda_{\min}\left(\P_\infty(x)\right)=0$, (\ref{eq:N0}) reads as $\N_0(x)=\infty$.
\end{theorem}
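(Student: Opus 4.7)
The plan is to combine Theorem~\ref{thm:lb} with its sharpness clause, exploiting a block decoupling of $\widehat{\P}_\N(x)$ that $S_1$-out regularity forces.

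First I would observe that under $S_1$-out regularity, all entries $d_{y_i}^+-av_1^+(x)$ in the first row and column of $\widehat{\P}_\infty(x)$ vanish, and the $(0,0)$-entry is already zero by Definition~\ref{defn:Phat}. Since the correction term $\frac{4}{\N}(d_xI_{d_x}-J_{d_x})$ in \eqref{eq:PN} only touches the bottom-right $d_x\times d_x$ block, the matrix $\widehat{\P}_\N(x)$ decouples:
$$\widehat{\P}_\N(x)=(0)\oplus Q_\N(x),\qquad Q_\N(x):=\P_\infty(x)+\tfrac{4}{\N}(d_xI_{d_x}-J_{d_x}).$$
Plugging $d_{y_i}^+=av_1^+(x)$ into (\ref{eq:Pii}) yields $\P_\infty(x)\mathbf{1}_{d_x}=0$, so $Q_\N(x)\mathbf{1}_{d_x}=0$, giving $\lambda_{\min}(Q_\N(x))\le 0$ and $\lambda_{\min}(\widehat{\P}_\N(x))=\lambda_{\min}(Q_\N(x))$.

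Next I would apply Theorem~\ref{thm:lb} to obtain the $\ge$ direction of \eqref{eq:OutRegularFormula} and then verify sharpness. By the sharpness clause of Theorem~\ref{thm:lb}, it suffices to produce two linearly independent kernel vectors of $\widehat{\P}_\N(x)-\lambda_{\min}(Q_\N(x))\cdot 2\Gamma(x)$. One is $\mathbf{1}_{d_x+1}$, which is annihilated by both summands. For a second, choose an eigenvector $v^*\in\mathbb{R}^{d_x}$ of $Q_\N(x)$ for the eigenvalue $\lambda_{\min}(Q_\N(x))$; when this eigenvalue is strictly negative it may be taken orthogonal to $\mathbf{1}_{d_x}$, so $\sum_i v^*_i=0$. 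Let $\tilde v:=(0,v^*)^\top\in\mathbb{R}^{d_x+1}$. A direct computation using the explicit form \eqref{eq:Gamma} of $2\Gamma(x)$ together with $\sum_i v^*_i=0$ yields $2\Gamma(x)\tilde v=\tilde v$, while the block form of $\widehat{\P}_\N(x)$ gives $\widehat{\P}_\N(x)\tilde v=\lambda_{\min}(Q_\N)\tilde v$. Hence $(\widehat{\P}_\N(x)-\lambda_{\min}(Q_\N)\cdot 2\Gamma(x))\tilde v=0$ and $\tilde v$ is independent of $\mathbf{1}_{d_x+1}$ (its $0$-th coordinate is zero). In the degenerate case $\lambda_{\min}(Q_\N)=0$, the claimed value equals the upper bound $\K_\infty^0(x)-2d_x/\N$ from Theorem~\ref{thm:ub}, so sharpness is automatic. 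This proves \eqref{eq:OutRegularFormula}.

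Finally, to obtain \eqref{eq:step_fucntion} I would restrict $Q_\N(x)$ to $\mathbf{1}_{d_x}^\perp$, on which $J_{d_x}$ vanishes, giving $Q_\N|_{\mathbf{1}^\perp}=\P_\infty|_{\mathbf{1}^\perp}+\tfrac{4d_x}{\N}I$. Combined with $\lambda_{\min}(\P_\infty)\le 0$ and $\lambda_{\min}(Q_\N)\le 0$, this yields
$$\lambda_{\min}(Q_\N(x))=\min\bigl\{0,\lambda_{\min}(\P_\infty(x))+\tfrac{4d_x}{\N}\bigr\},$$
which is $0$ exactly for $\N\le \N_0(x)=-4d_x/\lambda_{\min}(\P_\infty(x))$ and equals $\lambda_{\min}(\P_\infty)+4d_x/\N$ for $\N>\N_0(x)$; the case $\lambda_{\min}(\P_\infty)=0$ gives $\N_0=\infty$. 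Substitution into \eqref{eq:OutRegularFormula} then produces the piecewise formula. The main obstacle is the sharpness step: it hinges on the padded eigenvector $\tilde v$ acting as an eigenvector of $2\Gamma(x)$ with eigenvalue $1$, which requires $\sum_i v^*_i=0$, which in turn requires $\P_\infty(x)\mathbf{1}_{d_x}=0$. This is precisely where the $S_1$-out regularity hypothesis is essential: without it the first row and column of $\widehat{\P}_\infty(x)$ are nonzero, the block decoupling breaks, and the second kernel vector is no longer available by this clean construction.
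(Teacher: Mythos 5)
Your proposal is correct and follows essentially the same route as the paper's own proof: the same block decoupling of $\widehat{\P}_\N(x)$ under $S_1$-out regularity, the same appeal to Theorem \ref{thm:lb} together with its sharpness clause, and the same second kernel vector $(0,v^*)^\top$ with $2\Gamma(x)$-eigenvalue $1$ (the paper's derivation of \eqref{eq:step_fucntion} from \eqref{eq:OutRegularFormula} appears in the remark following the theorem, matching your final paragraph). No gaps.
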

\begin{remark}
When $d_x>1$, $\mathbf{1}_{d_x}^\perp\neq \emptyset$, then $\P_\infty(x)\mathbf{1}=0$ implies
\begin{align*}
\frac{1}{2}\lambda_{\min}&\left(\P_\infty(x)+\frac{4}{\N}(d_xI_{d_x}-J_{d_x})\right)
=\min\left\{0, \frac{1}{2}\lambda_{\min}\left(\at{\P_\infty(x)}{\mathbf{1}^\perp}\right)+\frac{2}{\N}d_x\right\}\\
&=\left\{\begin{aligned}
                &0, &&\hbox{if $0<\N\leq \overline{\N}_0(x)$;} \\
                &-\frac{2d_x}{\N_0(x)}+\frac{2d_x}{\N}, &&\hbox{if $\N>\overline{\N}_0(x)$,}
              \end{aligned}
            \right.
\end{align*}
where
$$
\overline{\N}_0(x):=\left\{\begin{aligned}
                &\frac{4d_x}{-\lambda_{\min}\left(\at{\P_\infty(x)}{\mathbf{1}^\perp}\right)}, &&\hbox{if $\lambda_{\min}\left(\at{\P_\infty(x)}{\mathbf{1}^\perp}\right)<0$;} \\
                &\infty, &&\hbox{if $\lambda_{\min}\left(\at{\P_\infty(x)}{\mathbf{1}^\perp}\right)\geq 0$.}
              \end{aligned}
            \right.$$
Since $\lambda_{\min}(\P_\infty(x))=\min\{0,\lambda_{\min}\left(\at{\P_\infty(x)}{\mathbf{1}^\perp}\right)\}$, we have $\N_0(x)=\overline{\N}_0(x)$. Therefore (\ref{eq:step_fucntion}) is a reformulation of (\ref{eq:OutRegularFormula}). When $d_x=1$, we observe $\P_\infty(x)$ and $d_xI_{d_x}-J_{d_x}$ are both one by one zero matrices, and, therefore, (\ref{eq:step_fucntion}) coincides with (\ref{eq:OutRegularFormula}).
\end{remark}

\begin{example}[Leaves]\label{example:leaf}Let $G=(V,E)$ be a locally finite simple graph. Let $x\in V$ be a leaf of $G$, i.e., $d_x=1$. Then, since both $\P_\infty(x)$ and $d_xI_{d_x}-J_{d_x}$ are one by one zero matrices, we have for any $\N\in(0,\infty]$,
\begin{equation}\label{eq:leaf}
\K_{G,x}(\N)=2-\frac{av_1^+(x)}{2}-\frac{2}{\N}.
\end{equation}
\end{example}

Theorem \ref{thm:OutRegularFormula} tells that, when $G$ is $S_1$-out regular at $x$, there always exists $\N_0(x)\in (0,\infty]$, such that $x$ is $\N$-curvature sharp for any $\N\in (0,\N_0(x)]$, and $\K_{G,x}(\N)=\K_{G,x}(\N_0(x))$ is constant for $\N\in [\N_0(x),\infty]$. In fact, this property is a characterization of the $S_1$-out regularity of $x$.

\begin{corollary}\label{cor:Outregular_characterization}
Let $G=(V,E)$ be a locally finite simple graph. Then $G$ is $S_1$-out regular at $x$ if and only if there exists $\N\in(0,\infty]$ such that $x$ is $\N$-curvature sharp.
\end{corollary}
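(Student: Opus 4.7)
The plan is to establish the two implications separately, with the forward direction being an immediate consequence of Theorem \ref{thm:OutRegularFormula} and the reverse direction relying on Theorem \ref{thm:Sharpness} together with a basic structural fact about positive semidefinite matrices.

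For the forward direction, suppose $G$ is $S_1$-out regular at $x$. Then Theorem \ref{thm:OutRegularFormula} applies directly: formula \eqref{eq:step_fucntion} shows that $\K_{G,x}(\N) = \K_\infty^0(x) - 2d_x/\N$ holds for every $\N \in (0, \N_0(x)]$, and in particular for any fixed $\N$ in this range $x$ is $\N$-curvature sharp. (If $\N_0(x) = \infty$, then $x$ is $\N$-curvature sharp for all $\N \in (0,\infty]$; otherwise, sharpness holds at least up to $\N_0(x)$.)

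For the reverse direction, assume there exists $\N \in (0,\infty]$ such that $x$ is $\N$-curvature sharp, meaning
\[
\K_{G,x}(\N) = \K_\infty^0(x) - \frac{2d_x}{\N}.
\]
By Theorem \ref{thm:Sharpness} applied with $\lambda = 0$, this is equivalent to
\[
\widehat{\P}_\N(x) \geq 0.
\]
Now inspect the matrix $\widehat{\P}_\N(x)$ from Definitions \ref{defn:Phat} and \ref{defn:PN}. The $(0,0)$-entry of $\widehat{\P}_\infty(x)$ equals $0$, and the correction term $\frac{4}{\N}(d_x I_{d_x} - J_{d_x})$ in \eqref{eq:PN} only modifies the $S_1$-block. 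Hence the $(0,0)$-entry of $\widehat{\P}_\N(x)$ is also $0$, while the first row and column carry the entries $d_{y_i}^+ - av_1^+(x)$ for $i \in [d_x]$.

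The key observation is the elementary fact that a positive semidefinite matrix with a zero on the diagonal must have zero in the entire corresponding row and column; this follows by restricting the associated quadratic form to the $2\times 2$ principal minor indexed by that diagonal entry and any other index, whose non-negativity forces the off-diagonal entry to vanish. Applying this to the first row of $\widehat{\P}_\N(x)$ yields
\[
d_{y_i}^+ - av_1^+(x) = 0 \quad \text{for every } i \in [d_x],
\]
which is precisely the $S_1$-out regularity of $G$ at $x$. This completes the proof. The only non-routine point is recognizing that the $(0,0)$-entry of $\widehat{\P}_\N(x)$ vanishes identically, so that the PSD condition collapses the first-row off-diagonal entries to zero; once this is noted, the rest is immediate from Theorem \ref{thm:Sharpness}.
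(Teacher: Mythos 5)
Your proposal is correct and follows essentially the same route as the paper: the forward direction is read off from Theorem \ref{thm:OutRegularFormula}, and the reverse direction deduces $\widehat{\P}_\N(x)\geq 0$ from Theorem \ref{thm:Sharpness} and then kills the entries $d_{y_i}^+-av_1^+(x)$ using the zero $(0,0)$-entry. The paper phrases this last step as Sylvester's criterion applied to the $2\times 2$ principal minor $\det\begin{pmatrix} 0 & d_{y_i}^+-av_1^+(x)\\ d_{y_i}^+-av_1^+(x) & * \end{pmatrix}=-(d_{y_i}^+-av_1^+(x))^2\geq 0$, which is exactly the computation behind your ``zero diagonal entry forces zero row/column'' observation.
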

\begin{proof}
Assume that $x$ is $\N$-curvature sharp for $\N\in(0,\infty]$. Then we obtain, by Theorem \ref{thm:Sharpness}, $\widehat{\P}_\N(x)\geq 0$. Therefore, by Sylvester's criterion, we have for any $y_i\in S_1(x)$,
$$\det\begin{pmatrix}
0 & d_{y_i}^+-av_1^+(x) \\
d_{y_i}^+-av_1^+(x) & \P_\infty(x)_{ii}+\frac{4d_x}{\N}-1
\end{pmatrix}=-(d_{y_i}^+-av_1^+(x))^2\geq 0.
$$
This implies $d_{y_i}^+=av_1^+(x)$ for any $y_i\in S_1(x)$.

The other direction is a straightforward consequence of Theorem \ref{thm:OutRegularFormula}.
\end{proof}
Corollary \ref{cor:Outregular_characterization} and Theorem \ref{thm:OutRegularFormula} implies the following characterization.
\begin{corollary}\label{cor:full_sharpness_characterization}
Let $G=(V,E)$ be a locally finite simple graph and $x\in V$. Then $x$ is $\infty$-curvature sharp if and only if $G$ is $S_1$-out regular at $x$ and the matrix $\P_\infty(x)\geq 0$.
\end{corollary}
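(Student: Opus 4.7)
The plan is to derive both directions directly from the two results cited, namely Corollary \ref{cor:Outregular_characterization} together with the explicit formula of Theorem \ref{thm:OutRegularFormula}. The key observation driving both implications is that, when $G$ is $S_1$-out regular at $x$, one has $\P_\infty(x)\mathbf{1}_{d_x}=0$, so $\lambda_{\min}(\P_\infty(x))\le 0$, with equality precisely when $\P_\infty(x)\ge 0$.

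For the forward direction, I would start by assuming that $x$ is $\infty$-curvature sharp. Since $\infty$-curvature sharpness implies $\N$-curvature sharpness for some (indeed every) $\N\in(0,\infty]$ by Proposition \ref{prop:cur_sharp_before}, Corollary \ref{cor:Outregular_characterization} immediately gives that $G$ is $S_1$-out regular at $x$. Plugging $\N=\infty$ into formula \eqref{eq:OutRegularFormula} of Theorem \ref{thm:OutRegularFormula} then yields
\begin{equation*}
\K_\infty^0(x) \;=\; \K_{G,x}(\infty) \;=\; \K_\infty^0(x) + \tfrac{1}{2}\lambda_{\min}(\P_\infty(x)),
\end{equation*}
so $\lambda_{\min}(\P_\infty(x))=0$, and combined with $\P_\infty(x)\mathbf{1}=0$ this forces $\P_\infty(x)\ge 0$.

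For the converse, assume $G$ is $S_1$-out regular at $x$ and $\P_\infty(x)\ge 0$. Then $\lambda_{\min}(\P_\infty(x))=0$, so the threshold $\N_0(x)$ in \eqref{eq:N0} is $\infty$. Consequently \eqref{eq:step_fucntion} of Theorem \ref{thm:OutRegularFormula} gives $\K_{G,x}(\N)=\K_\infty^0(x)-\frac{2d_x}{\N}$ on the entire interval $(0,\infty]$; taking $\N=\infty$ shows $x$ is $\infty$-curvature sharp.

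I do not anticipate a genuine obstacle here: both directions are essentially bookkeeping on the formula of Theorem \ref{thm:OutRegularFormula}. The only point requiring a line of care is the identification $\lambda_{\min}(\P_\infty(x))=0$ under $S_1$-out regularity, which follows because $\mathbf{1}_{d_x}$ lies in the kernel of $\P_\infty(x)$, a fact already recorded in the remark following Definition \ref{defn:Phat} (cf.\ \eqref{eq:Pii}).
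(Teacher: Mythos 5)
Your proof is correct and follows essentially the same route as the paper, which derives this corollary directly from Corollary \ref{cor:Outregular_characterization} and Theorem \ref{thm:OutRegularFormula} exactly as you do (the appeal to Proposition \ref{prop:cur_sharp_before} is unnecessary, since $\infty$-curvature sharpness is already the case $\N=\infty$ of the hypothesis in Corollary \ref{cor:Outregular_characterization}). The key identity $\P_\infty(x)\mathbf{1}=0$ under $S_1$-out regularity, which you correctly use to equate $\P_\infty(x)\ge 0$ with $\lambda_{\min}(\P_\infty(x))=0$, is recorded in the paper in the sentence immediately preceding Theorem \ref{thm:OutRegularFormula}.
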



In the following example, which will play again a role in Example \ref{example:inffam6reg}, we illustrate how the above results can be applied to calculate explicit curvature functions.
\begin{example} \label{ex:norbexample}
  Figure \ref{Fnorbexample} shows a $4$-regular graph $G=(V,E)$ with
  two types of vertices, denoted by $x_1,x_2,x_3$ and
  $y_1,y_2,y_3,y_4$. We will now calculate the curvature functions
  $\K_{G,x_i}$ and $\K_{G,y_j}$ explicitly. The symmetries of the
  graph imply that these functions do not depend on $i$ or $j$ and it suffices
  to calculate $\K_{G,x_1}$ and $\K_{G,y_1}$.

  \begin{figure}[h]
    \centering
    \includegraphics[width=0.4\textwidth]{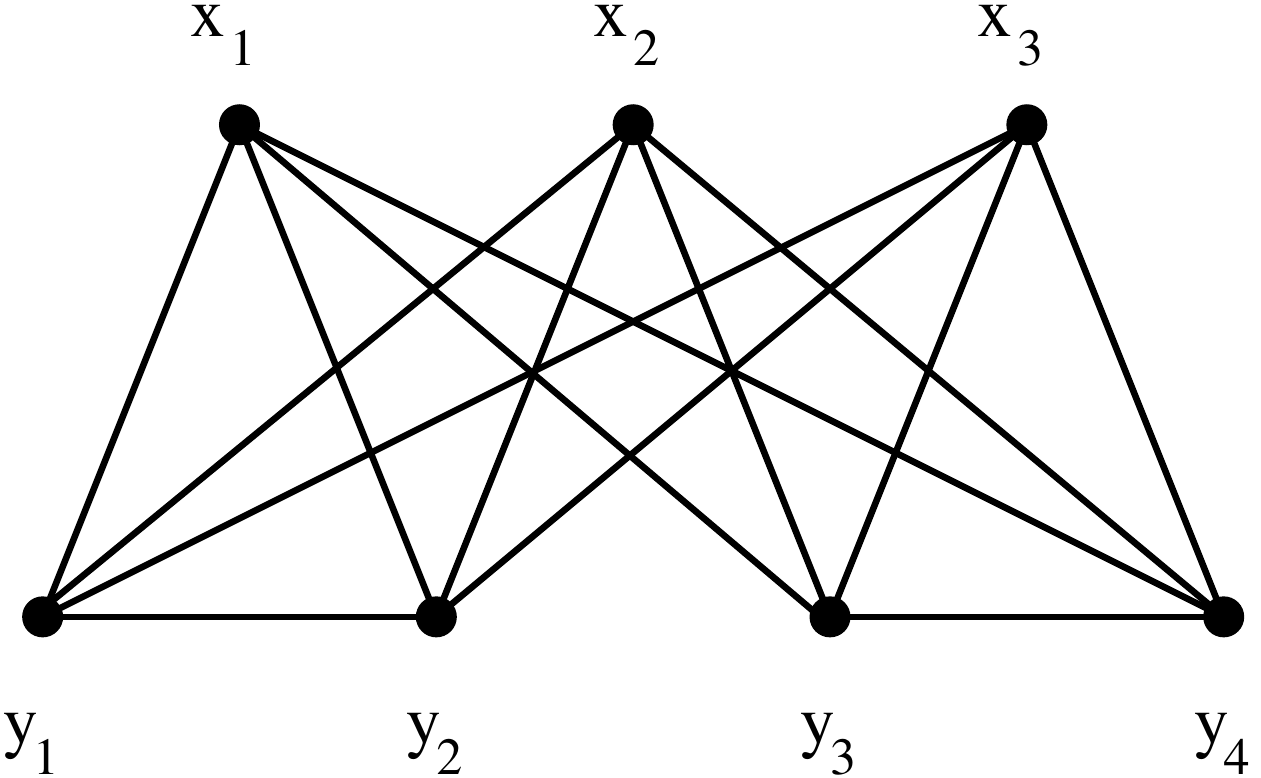}
    \caption{A $4$-regular graph with two types of vertices $x_i$ and
      $y_j$\label{Fnorbexample}}
  \end{figure}

  For the calculation of $\K_{G,x_1}$, we observe that $x_1$ is
  $S_1$-out regular and apply Theorem \ref{thm:OutRegularFormula}.
  We have
  $av_1^+(x_1) = 2$ and \eqref{eq:K0short} yields
  $\K_\infty^0(x_1) = \frac{5}{2}$. The matrix $\P_\infty(x_1)$ takes
  the simple form
  $$ \P_\infty(x_1) = \begin{pmatrix} 4 & -4 & 0 & 0 \\ -4 & 4 & 0 & 0 \\
    0 & 0 & 4 & -4 \\ 0 & 0 & -4 & 4 \end{pmatrix} $$
  and has $\lambda_{\min}(\P_\infty(x_1))=0$ since it is diagonal dominant.
  Therefore
  $\N_0(x_1)$, defined in \eqref{eq:N0}, is equal to infinity and we
  have
  $$ \K_{G,x_1}(\N) = \frac{5}{2} - \frac{8}{\N}. $$

  \begin{figure}[h]
    \centering
    \includegraphics[width=0.4\textwidth]{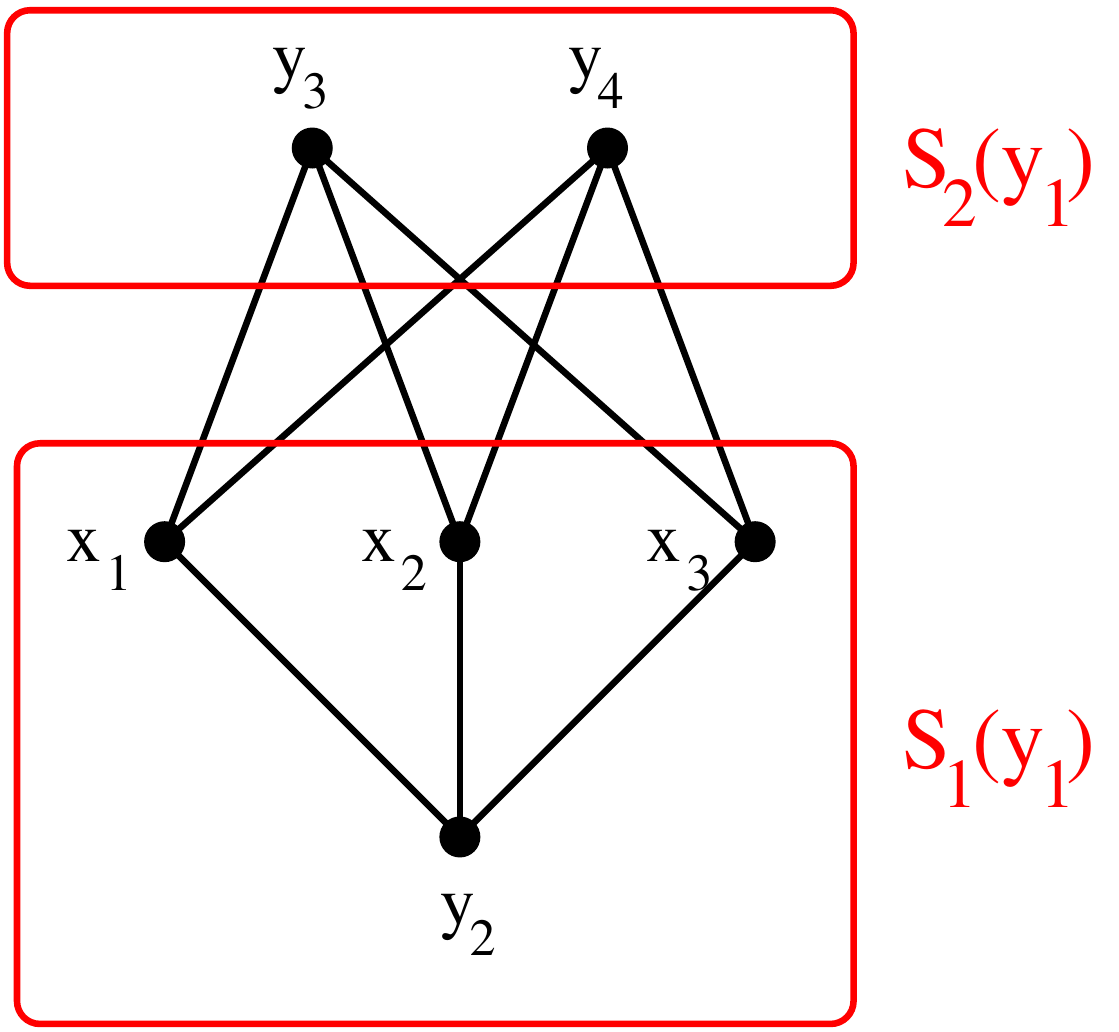}
    \caption{Illustration of $\mathring{B}_2(y_1)$\label{Fnorbex2}}
  \end{figure}

  The calculation of $\K_{G,y_1}$ is much more involved and, since
  $y_1$ is not $S_1$-out regular, we need to employ Theorem
  \ref{thm:Sharpness}. Figure \ref{Fnorbex2} illustrates the punctured
  $2$-ball $\mathring{B}_2(y_1)$. From this we can read off
  $av_1^+(y_1) = 3/2$ and $\K_\infty^0(y_1) = 11/4$. Therefore, we
  have
  \begin{equation} \label{eq:Ky1-norbex}
  \K_{G,y_1}(\N) = \frac{11}{4} - \frac{8}{\N} - \frac{\lambda}{2},
  \end{equation}
  with $\lambda \ge 0$ chosen such that
  $\widehat \P_\N(y_1) + \lambda \cdot 2\Gamma(y_1)$ is positive
  semidefinite with zero eigenvalue of multiplicity at least
  $2$. Recall that $\widehat \P_\N(y_1)$ was defined in \eqref{eq:PN}
  via $\widehat \P_\infty(y_1)$. $\widehat \P_\infty(y_1)$ and
  $2\Gamma(y_1)$, as matrices with entries corresponding to
  $y_1,y_2,x_1,x_2,x_3$, are given by
  $$ \widehat \P_\infty(y_1) = \begin{pmatrix}
  0 & -\frac{3}{2} & \frac{1}{2} & \frac{1}{2} & \frac{1}{2} \\
  -\frac{3}{2} & \frac{15}{2} & -2 & -2 & -2 \\
  \frac{1}{2} & -2 & \frac{17}{6} & -\frac{2}{3} & -\frac{2}{3} \\
  \frac{1}{2} & -2 & -\frac{2}{3} & \frac{17}{6} & \frac{-2}{3} \\
  \frac{1}{2} & -2 & -\frac{2}{3} & -\frac{2}{3} & \frac{17}{6}
  \end{pmatrix}, \quad \text{and} \,\,\, 2\Gamma(y_1) =
  \begin{pmatrix} 4 & -1 & -1 & -1 & -1 \\ -1 & 1 & 0 & 0 & 0 \\
  -1 & 0 & 1 & 0 & 0 \\ -1 & 0 & 0 & 1 & 0 \\ -1 & 0 & 0 & 0 & 1
  \end{pmatrix}. $$
  The eigenvalues of $\widehat \P_\N(y_1) + \lambda \cdot 2\Gamma(y_1)$
  are $0$, $(32 + 2\lambda \N +7\N)/(2\N) > 0$ with multiplicity $2$, and
  $$ \frac{f(\lambda,\N) \pm \sqrt{f^2(\lambda,\N) + g(\lambda,\N)}}{2\N} $$
  with
  $$ f(\lambda,\N) = 6\N \lambda + 9\N + 16 > 0,\ \quad \text{and} \,\,\,
  g(\lambda,\N) = 5\N (-4\N \lambda^2 - (36\N + 64)\lambda + 3\N). $$
  For all eigenvalues to be non-negative and two of them to be $0$,
  we need to have $g(\lambda,\N) = 0$, leading to
  $$ \lambda = \frac{-(9\N+16)+\sqrt{3\N^2 + (9\N+16)^2}}{2\N}. $$
  Plugging this into \eqref{eq:Ky1-norbex}, we obtain
  $$ \K_{G,y_1}(\N) = 5 - \frac{8+\sqrt{21\N^2 + 72\N + 64}}{2\N}. $$
  In particular, we have $\K_{G,x_i}(\infty) = 5/2$ for
  $i \in \{1,2,3\}$ and $\K_{G,y_j}(\infty) = 5 - \sqrt{21}/2 = 2.7087\dots$ for
  $j \in \{1,2,3,4\}$.

  Even though this can also be derived from the symmetries of the
  graph, the fact that the curvatures at $x_i$ and $y_j$ are different
  implies that there is no graph automorphism mapping a vertex $x_i$
  to a vertex $y_j$.
\end{example}

\subsection{Proofs of main results}

In the following, we prove Theorems \ref{thm:Sharpness}, \ref{thm:lb}, and \ref{thm:OutRegularFormula}.
Let us first show the following general result.
\begin{proposition}\label{prop:MatrixVersion1}
Let $M$ be a symmetric square matrix such that
\begin{equation*}
M=\begin{pmatrix}
M_{11} & M_{12}\\
M_{21} & M_{22}
\end{pmatrix},
\end{equation*}
where $M_{11}, M_{22}$ are two square submatrices of orders $m_1, m_2$, respectively, and $M_{22}>0$.
Denote by $\Q(M)$ the matrix
\begin{equation}\label{eq:QM}
\Q(M):=M_{11}-M_{12}M_{22}^{-1}M_{21}.
\end{equation}
Then,
\begin{itemize}
\item[(i)]
$M\geq 0$ if and only if $\Q(M)\geq 0$;
\item[(ii)]
$M$ has zero eigenvalue of multiplicity at least $2$ if and only if $\Q(M)$ has zero eigenvalue of multiplicity at least $2$.
\item[(iii)]
If $M\mathbf{1}_{m_1+m_2}=0$, then $\Q(M)\mathbf{1}_{m_1}=0$.
\end{itemize}

\end{proposition}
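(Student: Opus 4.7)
My plan is to reduce everything to the Schur complement / block-LDU decomposition of $M$ and then read off all three statements from the congruence it provides.

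The starting point is the identity
\begin{equation*}
M \;=\; L^\top \begin{pmatrix} \Q(M) & 0 \\ 0 & M_{22} \end{pmatrix} L, \qquad L := \begin{pmatrix} I_{m_1} & 0 \\ M_{22}^{-1} M_{21} & I_{m_2} \end{pmatrix},
\end{equation*}
which follows by direct multiplication from $\Q(M) = M_{11} - M_{12}M_{22}^{-1}M_{21}$ together with $M_{21} = M_{12}^\top$ (coming from $M$ being symmetric). Note that $L$ is lower-triangular with $1$'s on the diagonal, hence invertible; in particular $M$ is congruent to $\mathrm{diag}(\Q(M), M_{22})$.

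For part (i), since $L$ is invertible, $M \ge 0$ holds if and only if $\mathrm{diag}(\Q(M), M_{22}) \ge 0$, and because $M_{22} > 0$ by hypothesis, the latter is equivalent to $\Q(M) \ge 0$. For part (ii), Sylvester's law of inertia, applied to the congruence above, says that $M$ and $\mathrm{diag}(\Q(M), M_{22})$ share the same numbers of positive, negative, and zero eigenvalues. Since $M_{22} > 0$ contributes no zero eigenvalues, the multiplicity of the eigenvalue $0$ of $M$ equals that of $\Q(M)$; in particular one is $\ge 2$ iff the other is.

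For part (iii), write $\mathbf{1}_{m_1+m_2} = \begin{pmatrix} \mathbf{1}_{m_1} \\ \mathbf{1}_{m_2} \end{pmatrix}$. The condition $M\mathbf{1}_{m_1+m_2} = 0$ gives the two block equations
\begin{equation*}
M_{11}\mathbf{1}_{m_1} + M_{12}\mathbf{1}_{m_2} = 0, \qquad M_{21}\mathbf{1}_{m_1} + M_{22}\mathbf{1}_{m_2} = 0.
\end{equation*}
Using invertibility of $M_{22}$, the second equation yields $\mathbf{1}_{m_2} = -M_{22}^{-1}M_{21}\mathbf{1}_{m_1}$, and substituting into the first gives
\begin{equation*}
\bigl(M_{11} - M_{12}M_{22}^{-1}M_{21}\bigr)\mathbf{1}_{m_1} = \Q(M)\mathbf{1}_{m_1} = 0.
\end{equation*}

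There is essentially no obstacle here; the only thing to double-check is that $M_{21} = M_{12}^\top$ so the LDU factorization is a genuine congruence (symmetry of $M$ is assumed), and that $M_{22}^{-1}$ exists (guaranteed by $M_{22}>0$). Everything else is bookkeeping via Sylvester's law of inertia.
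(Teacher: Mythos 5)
Your proof is correct. Both your argument and the paper's hinge on the Schur complement, but the packaging differs: the paper writes $M_{22}=A_0A_0^\top$ and decomposes $M$ \emph{additively} as a positive semidefinite rank-factorized piece $\bigl(\begin{smallmatrix} C_0 \\ A_0\end{smallmatrix}\bigr)\bigl(\begin{smallmatrix} C_0^\top & A_0^\top\end{smallmatrix}\bigr)$ plus $\mathrm{diag}(\Q(M),0)$, and then handles (i) and (ii) by constructing explicit witness vectors $w=-(A_0^\top)^{-1}C_0^\top v$ and checking linear independence of the resulting kernel vectors by hand; you instead use the \emph{multiplicative} block-LDU congruence $M=L^\top\,\mathrm{diag}(\Q(M),M_{22})\,L$ and dispatch (i) by invariance of positive semidefiniteness under congruence and (ii) by Sylvester's law of inertia. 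Your route is shorter and in fact yields a slightly stronger conclusion for (ii) -- the zero-eigenvalue multiplicities of $M$ and $\Q(M)$ are exactly equal, not merely simultaneously $\ge 2$ -- at the cost of invoking inertia as a black box; the paper's construction is more elementary and has the side benefit that the explicit kernel correspondence $v\mapsto(v,w)$ is reused later (e.g.\ part (iii) falls out of it, and the same vectors reappear in the proof of Theorem \ref{thm:Sharpness}). Your part (iii) is a clean direct computation equivalent to the paper's. The two hypotheses you flag -- symmetry giving $M_{21}=M_{12}^\top$ and invertibility of $M_{22}$ -- are indeed the only things to check, and both hold.
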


\begin{proof}
Since $M_{22}>0$, there exist a matrix $A_0>0$, such that $M_{22}=A_0A_0^\top$. Set $C_0^\top:=~A_0^{-1}M_{21}$. Then we have $A_0C_0^\top=M_{21}$, and
\begin{equation}\label{eq:CCT}
C_0C_0^\top=M_{12}(A_0^{-1})^\top A_0^{-1}M_{21}=M_{12}M_{22}^{-1}M_{21}.
\end{equation}
Therefore, we obtain
\begin{equation}\label{eq:MCAQ}
M=\begin{pmatrix}
C_0 \\ A_0
\end{pmatrix}\begin{pmatrix}
C_0^\top & A_0^\top
\end{pmatrix}+\begin{pmatrix}
\Q(M) & 0 \\
0 & 0
\end{pmatrix}.
\end{equation}

Observing the first matrix on the RHS of (\ref{eq:MCAQ}) is positive semidefinite, we conclude that $\Q(M)\geq 0$ implies $M\geq 0$.

Conversely, if $\Q(M)\not\geq 0$,
then there exists a vector $v$, such that $v^\top \Q(M)v<0$. Set
\begin{equation}\label{eq:w}
w:=-(A_0^\top)^{-1}C_0^\top v.
\end{equation} We calculate
\begin{equation}\label{eq:CAvw=0}
\begin{pmatrix}
C_0^\top & A_0^\top
\end{pmatrix}\begin{pmatrix}
v \\ w
\end{pmatrix}=C_0^\top v+A_0^\top w=0.
\end{equation}
By (\ref{eq:MCAQ}), this implies
\begin{equation*}
\begin{pmatrix}
v^\top & w^\top
\end{pmatrix} M \begin{pmatrix}
v \\ w
\end{pmatrix}=v^\top \Q(M)v<0.
\end{equation*}
Hence, $M\not\geq 0$. This finishes the proof of (i).

Now we prove (ii). Let $v_1,v_2$ be two linearly independent eigenvectors of $\Q(M)$ corresponding to zero. We define the corresponding vectors $w_i,i=1,2$ via (\ref{eq:w}). Due to (\ref{eq:CAvw=0}), we have
$$M\begin{pmatrix}
v_i \\ w_i
\end{pmatrix}=\Q(M)v_i=0,\,\,i=1,2.
$$
That is, we find two independent eigenvectors of $M$ corresponding to eigenvalue zero.

Conversely, let $\begin{pmatrix}
v_i\\w_i
\end{pmatrix}, i=1,2$ be two linearly independent eigenvectors of $M$ corresponding to eigenvalue zero. We have
\begin{equation}\label{eq:gerneral3}
M\begin{pmatrix}
v_i \\ w_i
\end{pmatrix}=0, \,\,\text{ implies }\,\, \Q(M)v_i=0,
\end{equation}
by (\ref{eq:MCAQ}) and
\begin{equation*}
\begin{pmatrix}
C_0^\top & A_0^\top
\end{pmatrix}\begin{pmatrix}
v_i \\ w_i
\end{pmatrix}=A_0^{-1}\begin{pmatrix}
M_{21} & M_{22}
\end{pmatrix}\begin{pmatrix}
v_i \\ w_i
\end{pmatrix}=0.
\end{equation*}
It remains to prove $v_1$ and $v_2$ are linearly independent. Suppose $v_1$ and $v_2$ are linearly dependent. W.o.l.g., we can assume $v_1=v_2$. Then we get
$$M\begin{pmatrix}
0 \\ w_1-w_2
\end{pmatrix}=0, \,\,\text{and, in particular, }\,\, M_{22}(w_1-w_2)=0.
$$
Recall $M_{22}>0$. We have $w_1=w_2$. This contradicts to the fact that $\begin{pmatrix}
v_i\\w_i
\end{pmatrix}, i=1,2$ are linearly independent. We now finish the proof of (ii).

(iii) is a particular case of (\ref{eq:gerneral3}).
\end{proof}



The following lemma is a key observation to apply Proposition \ref{prop:MatrixVersion1} to our situation.

\begin{lemma}\label{lemma:QM=P}
Let $G=(V,E)$ be a locally finite simple graph and let $x\in V$. Then we have
\begin{align}
\widehat{\P}_\infty(x)
=(4\Gamma_2)_{B_1, B_1}-(4\Gamma_2)_{B_1,S_2} (4\Gamma_2)_{S_2, S_2}^{-1}(4\Gamma_2)_{S_2, B_1}-2\K_\infty^0\cdot 2\Gamma.\label{eq:QMK0N}
\end{align}
Note we drop the dependence on $x$ in the RHS of (\ref{eq:QMK0N}) for convenience.
\end{lemma}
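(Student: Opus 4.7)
The plan is to prove the identity by a direct block-matrix (Schur complement) computation. The key observation, which makes everything tractable, is that by \eqref{eq:Gamma2S2S2} the block $(4\Gamma_2)_{S_2,S_2}$ is the diagonal matrix $\operatorname{diag}(d_{z_1}^-,\dots,d_{z_{|S_2|}}^-)$ with strictly positive entries, so its inverse is immediately available and the Schur complement $(4\Gamma_2)_{B_1,S_2}(4\Gamma_2)_{S_2,S_2}^{-1}(4\Gamma_2)_{S_2,B_1}$ can be computed entrywise.

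First I would compute this Schur complement block by block, using the explicit formulas \eqref{eq:Gamma2xS2} and \eqref{eq:Gamma2S1S2} together with the global identity
$$\sum_{z\in S_2(x)} d_z^- \;=\; \sum_{y\in S_1(x)} d_y^+ \;=\; d_x\cdot av_1^+(x),$$
which is just the double counting of radial edges between $S_1(x)$ and $S_2(x)$. This yields that the Schur complement has $(x,x)$-entry equal to $d_x\cdot av_1^+(x)$, $(x,y_i)$-entry equal to $-2 d_{y_i}^+$, off-diagonal $(y_i,y_j)$-entry equal to $4\sum_{z}\tfrac{w_{y_iz}w_{zy_j}}{d_z^-}$, and diagonal $(y_i,y_i)$-entry equal to $4\sum_{z}\tfrac{w_{y_iz}^2}{d_z^-}$.

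Next I would subtract the Schur complement from $(4\Gamma_2)_{B_1,B_1}$ using \eqref{eq:Gamma2xS1} and \eqref{eq:Gamma2S1S1}, and then subtract $2\K_\infty^0\cdot 2\Gamma$, using the form \eqref{eq:Gamma} of $2\Gamma$ and the reformulation $2\K_\infty^0(x) = 3+d_x-av_1^+(x)$ from \eqref{eq:K0short}. An entry-by-entry check then verifies the claimed identity: the $(x,x)$-entry collapses to $0$, matching the top-left corner of $\widehat\P_\infty(x)$; the $(x,y_i)$-entry becomes $-3-d_x-d_{y_i}^+ + 2d_{y_i}^+ + (3+d_x-av_1^+) = d_{y_i}^+ - av_1^+(x)$, matching the border of $\widehat\P_\infty(x)$; the off-diagonal $(y_i,y_j)$-entry reproduces \eqref{eq:Pij}; and the diagonal $(y_i,y_i)$-entry, after collecting $5-d_x+3d_{y_i}^++4d_{y_i}^0 - 4\sum_z \tfrac{w_{y_iz}^2}{d_z^-} - (3+d_x-av_1^+)$, matches the reformulation of \eqref{eq:Pii} noted in the remark following Definition \ref{defn:Phat}.

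The only real obstacle is bookkeeping: organizing the entries so that the global counting identity $\sum_z d_z^- = d_x \cdot av_1^+(x)$ is invoked precisely where it cancels the $d_x\cdot av_1^+(x)$ arising from $2\K_\infty^0 \cdot 2\Gamma$ at the $(x,x)$ spot, and similarly that the $-2d_{y_i}^+$ contribution at the $(x,y_i)$ spot cancels against the $-3-d_x-d_{y_i}^+$ from \eqref{eq:Gamma2xS1} and the $+(3+d_x-av_1^+)$ from $2\K_\infty^0\cdot 2\Gamma$. Once one sets up the four cases cleanly, each verification is routine algebra.
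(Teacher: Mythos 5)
Your proposal is correct and follows essentially the same route as the paper: the paper likewise exploits that $(4\Gamma_2)_{S_2,S_2}$ is diagonal, computes the Schur complement $(4\Gamma_2)_{B_1,S_2}(4\Gamma_2)_{S_2,S_2}^{-1}(4\Gamma_2)_{S_2,B_1}$ entrywise, computes $(4\Gamma_2)_{B_1,B_1}-2\K_\infty^0\cdot 2\Gamma$, and matches the difference against $\widehat{\P}_\infty(x)$ using $\sum_{z\in S_2(x)}d_z^-=\sum_{y\in S_1(x)}d_y^+=d_x\cdot av_1^+(x)$. All of your claimed entries check out, including the diagonal matching the reformulation of \eqref{eq:Pii}.
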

\begin{remark}
Let us write
\begin{equation}\label{eq:MK0N}
\Gamma_2-\K_\infty^0\Gamma=\begin{pmatrix}
(\Gamma_2)_{B_1,B_1}-\K_\infty^0 \Gamma & (\Gamma_2)_{B_1,S_2}\\
(\Gamma_2)_{S_2,B_1} & (\Gamma_2)_{S_2,S_2}
\end{pmatrix}
\end{equation}
Then by (\ref{eq:QM}), we can reformulate (\ref{eq:QMK0N}) as
\begin{equation}\label{eq:PinftyQ}
\widehat{\P}_\infty=4\Q(\Gamma_2-\K^0_\infty\Gamma).
\end{equation}
\end{remark}
\begin{proof}

Recall (\ref{eq:Gamma2xS2}), (\ref{eq:Gamma2S1S2}), and (\ref{eq:Gamma2S2S2}). We calculate
\begin{align}
&(4\Gamma_2)_{B_1,S_2} (4\Gamma_2)_{S_2, S_2}^{-1}(4\Gamma_2)_{S_2,B_1}\notag\\
=&\begin{pmatrix}
d_z^- & \cdots & d^-_{z_{|S_2|}}\\
-2w_{y_1z_1} & \cdots & -2w_{y_1z_{|S_2|}}\\
\vdots & \vdots & \vdots \\
-2w_{y_{d_x}z_1} & \cdots & -2w_{y_{d_x}z_{|S_2|}}
\end{pmatrix}
\begin{pmatrix}
\frac{1}{d^-_{z_1}} & \cdots & 0 \\
\vdots & \ddots & \vdots \\
0 & \cdots & \frac{1}{d^-_{z_{|S_2|}}}
\end{pmatrix}\begin{pmatrix}
d_z^- & \cdots & d^-_{z_{|S_2|}}\\
-2w_{y_1z_1} & \cdots & -2w_{y_1z_{|S_2|}}\\
\vdots & \vdots & \vdots \\
-2w_{y_{d_x}z_1} & \cdots & -2w_{y_{d_x}z_{|S_2|}}
\end{pmatrix}^\top\notag\\
=&\begin{pmatrix}
\sum_{z\in S_2}d_z^- & -2d_{y_1}^+ & \cdots & -2d_{y_{d_x}}^+\\
-2d_{y_1}^+ & 4\sum_{z\in S_2(x)}\frac{w_{y_1z}^2}{d^-_z} & \cdots & 4\sum_{z\in S_2(x)}\frac{w_{y_1z}w_{zy_{d_x}}}{d^-_z}\\
\vdots & \vdots & \ddots & \vdots \\
-2d_{y_{d_x}}^+ & 4\sum_{z\in S_2(x)}\frac{w_{y_{d_x}z}w_{zy_1}}{d^-_z} & \cdots & 4\sum_{z\in S_2(x)}\frac{w_{y_{d_x}z}^2}{d^-_z}
\end{pmatrix}.\label{eq:GammaCCT}
\end{align}

On the other hand, using (\ref{eq:Gamma}), (\ref{eq:Gamma2xS1}), (\ref{eq:Gamma2S1S1}), and (\ref{eq:K0short}), we calculate
\begin{align}\label{eq:GammaM11}
&(4\Gamma_2)_{B_1, B_1}-2\K_\infty^0\cdot 2\Gamma\notag\\
=&\begin{pmatrix}
d_x\cdot av_1^+(x) & -d_{y_1}^+-av_1^+(x) & -d_{y_2}^+-av_1^+(x)& \cdots & -d_{y_{d_x}}^+-av_1^+(x)\\
-d_{y_1}^+-av_1^+(x) & (4\Gamma_2)_{y_1,y_1}-2\K_\infty^0 & 2-4w_{y_1y_2}& \cdots & 2-4w_{y_{1}y_{d_x}}\\
-d_{y_2}^+-av_1^+(x) & 2-4w_{y_2y_1} & (4\Gamma_2)_{y_2,y_2}-2\K_\infty^0 & \cdots & 2-4w_{y_2y_{d_x}}\\
\vdots & \vdots &\vdots &\ddots & \vdots\\
-d_{y_{d_x}}^+-av_1^+(x) & 2-4w_{y_{d_x}y_{1}} & 2-4w_{y_{d_x}y_2} &\cdots & (4\Gamma_2)_{y_{d_x},y_{d_x}}-2\K_\infty^0
\end{pmatrix},
\end{align}
where for any $y_i\in S_1$,
\begin{align*}
(4\Gamma_2)_{y_i,y_i}-2\K_\infty^0=&(5-d_x+3d_{y_i}^++4d_{y_i}^0)-(3+d_x-av_1^+(x))\\
=&-2(d_x-1)+3d_{y_i}^++av_1^+(x)+4d_{y_i}^0.
\end{align*}
Observe that
$$d_x\cdot av_1^+(x)=\sum_{y\in S_1(x)}d_y^+=\sum_{z\in S_2(x)}d_z^-.$$
Therefore, subtracting (\ref{eq:GammaCCT}) from (\ref{eq:GammaM11}) produces the matrix $\widehat{\P}_{\infty}(x)$.
\end{proof}

\begin{proof}[Proof of Theorem \ref{thm:Sharpness}]
Recall Proposition \ref{prop:LMP}: $\K_{G,x}(\N)$ is the solution of the following semidefinite programming problem:
\begin{align*}
 &\text{maximize}\,\,\, K\\
&\text{subject to}\,\,\,\Gamma_2(x)-\frac{1}{\N}\Delta(x)^\top\Delta(x)\geq K\Gamma(x).
\end{align*}
We change the variable $K$ in the above problem to $\lambda$, which is given by
\begin{equation}\label{eq:lambdaK}
\lambda:=2\left(\K_\infty^0-\frac{2d_x}{\N}-K\right).
\end{equation}
Let us write
$$M_{K,\N}:=\Gamma_2-\frac{1}{\N}\D^\top\D-K\Gamma=\begin{pmatrix}
(\Gamma_2)_{B_1,B_1}-\frac{1}{\N}\D^\top\D-K\Gamma & (\Gamma_2)_{B_1,S_2}\\
(\Gamma_2)_{S_2,B_1} & (\Gamma_2)_{S_2,S_2}
\end{pmatrix}.
$$
Recalling (\ref{eq:QM}), we have
\begin{align*}
4\Q(M_{K,\N})=&4\Q(\Gamma_2-\K_\infty^0\Gamma)+4(\K_\infty^0-K)\Gamma-\frac{4}{\N}\D^\top\D\\
=&\widehat{\P}_\infty+\frac{4}{\N}\left(d_x\cdot 2\Gamma-\D^\top\D\right)+\lambda\cdot 2\Gamma.
\end{align*}
In the second equality above, we used Lemma \ref{lemma:QM=P}. Recalling (\ref{eq:GammaDeltaIdJd}),
we have, by (\ref{eq:PN}),
\begin{equation}\label{eq:QMPN}
4\Q(M_{K,\N})=\widehat{\P}_{\N}+\lambda\cdot 2\Gamma.
\end{equation}
Applying Proposition \ref{prop:MatrixVersion1} (i), we have $M_{K,\N}\geq 0$ if and only if $4\Q(M_{K,\N})\geq 0$. Hence, (\ref{eq:QMPN}) implies that the semidefinite programming problem is equivalent to the one in Theorem \ref{thm:Sharpness}.

Using (\ref{eq:lambdaK}) and (\ref{eq:QMPN}), the equivalence of (i) and (ii) in Theorem \ref{thm:Sharpness} is then a straightforward consequence of Corollary \ref{cor:preciseCur} and Proposition \ref{prop:MatrixVersion1}.
%
\end{proof}

We remark that, by (\ref{eq:QMPN}), the fact $\widehat{\P}_{\N}\mathbf{1}=0$ can also be derived from $M_{K,\N}\mathbf{1}=0$ and Proposition \ref{prop:MatrixVersion1}.

\begin{proof}[Proof of Theorem \ref{thm:lb}] Applying Theorem \ref{thm:Sharpness}, we only need to show
\begin{equation}\label{eq:goal}
\widehat{\P}_\N(x)-\lambda_{\min}(\widehat{\P}_\N(x))\cdot 2\Gamma(x)\geq 0.
\end{equation}
Let us denote the above matrix by $L$ for short. If $d_x=1$, we have $d_y^+=av_1^+(x)$ for the neighbor $y$ of $x$. Recall $\widehat{\P}_\N(x)\mathbf{1}=0$. We have $\widehat{\P}_\N(x)\mathbf{1}$ is a zero matrix. Therefore, (\ref{eq:goal}) is true.
If $d_x>1$,
(\ref{eq:goal}) is true because $\widehat{\P}_\N(x)\mathbf{1}=0$, $2\Gamma(x)\mathbf{1}=0$, and
\begin{align*}
\lambda_{\min}(\at{L}{\mathbf{1}^\perp})\geq &\lambda_{\min}\left(\at{\widehat{\P}_\N(x)}{\mathbf{1}^\perp}\right)-\lambda_{\min}(\widehat{\P}_\N(x))\cdot\lambda_{\min}(\at{2\Gamma(x)}{\mathbf{1}^\perp})\\
= & \lambda_{\min}\left(\at{\widehat{\P}_\N(x)}{\mathbf{1}^\perp}\right)-\lambda_{\min}(\widehat{\P}_\N(x))\geq 0.
\end{align*}
In the first inequality above, we used the fact $\lambda_{\min}(\widehat{\P}_\N)\leq 0$, which follows from $\widehat{\P}_\N(x)\mathbf{1}=0$, and in the subsequent equality, we used Proposition \ref{prop:Gamma}.
\end{proof}

\begin{proof}[Proof of Theorem \ref{thm:OutRegularFormula}]
Since $d_{y_i}^+=av_1^+(x)$ for any $y_i\in S_1(x)$, we have
\begin{equation}\label{eq:PNhatOutRegular}
\widehat{\P}_\N(x)=\left(\begin{array}{cccc}
0 & 0 & \cdots & 0\\
0 & \multicolumn{3}{c}{\multirow{3}{*}{\raisebox{1mm}{\scalebox{1}{$\P_\infty(x)+\frac{4}{\N}(d_xI_{d_x}-J_{d_x})$}}}}  \\
\vdots & \\
0 &
\end{array}
\right).
\end{equation}
Hence $\lambda_{\min}(\widehat{\P}_\N(x))=\lambda_{\min}\left(\P_\infty(x)+\frac{4}{\N}(d_xI_{d_x}-J_{d_x})\right)$. If $\lambda_{\min}(\widehat{\P}_\N(x))=0$, then the equality (\ref{eq:OutRegularFormula}) follows from Theorem \ref{thm:lb} and the upper bound in Theorem \ref{thm:ub}.

Otherwise, we have $\lambda_{\min}(\widehat{\P}_\N(x))<0$.
By Theorem \ref{thm:lb}, it remains to show the matrix
\begin{equation}
\widehat{\P}_\N(x)-\lambda_{\min}(\widehat{\P}_\N)(x)\cdot 2\Gamma(x)
\end{equation}
has at least two independent zero eigenvectors.
Recall the constant vector $\mathbf{1}_{d_x+1}$ is one zero eigenvector. Since $\lambda_{\min}(\widehat{\P}_\N(x))<0$, there exist $v=(v_0,v_1,\ldots, v_{d_x})\in \mathbf{1}_{d_x+1}^\perp$
which is the eigenvector of $\widehat{\P}_N$ corresponding to $\lambda_{\min}(\widehat{\P}_\N)$. By (\ref{eq:PNhatOutRegular}), we can assume $v_0=0$. Then we check $2\Gamma v=v$. (Recall Proposition \ref{prop:Gamma}).
Therefore, $v$ is another zero eigenvector of
$\widehat{\P}_\N-\lambda_{\min}(\widehat{\P}_\N)\cdot 2\Gamma$.
\end{proof}

\subsection{Families of examples}\label{section:families_examples}

We now employ our results to discuss several families of examples.

\begin{example}[Regular trees]\label{example:dtree} Let $T_d=(V,E)$ be a $d$-regular tree and $x\in V$. We have
\begin{equation}\label{eq:dtree}
\K_{T_d,x}(\N)=\left\{
              \begin{aligned}
                &2-\frac{2d}{\N}, &&\hbox{if $0<\N\leq 2$;} \\
                &2-d, &&\hbox{if $\N>2$.}
              \end{aligned}
            \right.
\end{equation}
\end{example}
\begin{proof}
For any $y\in S_1(x)$, we have $d_y^+=av_1^+(x)=d-1$. Hence $T_d$ is $S_1$-out regular at $x$ and we apply Theorem \ref{thm:OutRegularFormula}.
Note all the off-diagonal entries of $\P_\infty(x)$ equal $2$. Then by the property $\P_\infty(x)\mathbf{1}=0$, we obtain
\begin{equation}
\P_\infty(x)=-2(dI_d-J_d).
\end{equation}
Observe that the set of eigenvalues of the matrix $dI_d-J_d$ is
\begin{equation}\label{eq:dI-J}
\sigma(dI_d-J_d)=\{0,\underbrace{d,\ldots,d}_{d-1}\}.
\end{equation}
Therefore, $\lambda_{\min}(\P_\infty(x))=-2d$, and $\N_0(x)$, defined in (\ref{eq:N0}), is 2.
Noticing that
$$\K_\infty^0(x)=\frac{3+d_x-av_1^+(x)}{2}=2,$$
we obtain (\ref{eq:dtree}) from (\ref{eq:step_fucntion}).
\end{proof}

While regular trees are $\N$-curvature sharp only for $\N\in (0,2]$, we will see the complete graphs are curvature sharp for any $\N\in (0,\infty]$.

\begin{example}[Complete graphs]\label{example:Kn} Let $K_n=(V,E)$ be the complete graph on $n\geq 2$ vertices and $x\in V$. For any $\N\in (0,\infty]$, we have
\begin{equation}\label{eq:Kn}
\K_{K_n,x}(\N)=\frac{n+2}{2}-\frac{2(n-1)}{\N}.
\end{equation}
\end{example}
We remark that (\ref{eq:Kn}) has been obtained in \cite[Proposition 3]{JL14} via different calculating method. Below we show (\ref{eq:Kn}) follows immediately from Theorem \ref{thm:OutRegularFormula}.
\begin{proof}
We check $d_x=n-1$ and $d_y^+=av_1^+(x)=0$, for any $y\in S_1(x)$. Therefore,
\begin{equation}
\K_{\infty}^0(x)=\frac{3+d_x-av_1^+(x)}{2}=\frac{n+2}{2}.
\end{equation}
Since $w_{y_iy_j}=1$ for any pair of vertices in $S_1(x)$, all off-diagonal entries of $\P_\infty(x)$ equal $-2$. By the property $\P_\infty(x)\mathbf{1}=0$, we know
\begin{equation}
P_\infty(x)=2(n-1)I_{n-1}-2J_{n-1}.
\end{equation}
Recall (\ref{eq:dI-J}), we have $\lambda_{\min}(\P_\infty(x))=0$. Hence, we have $\N_0(x)$, defined in (\ref{eq:N0}), equal $\infty$. By (\ref{eq:step_fucntion}), we obtain (\ref{eq:Kn}).
\end{proof}

Next, we consider the family of complete bipartite graphs, which are possibly irregular, but are still $S_1$-out regular.

\begin{example}[Complete bipartite graphs]\label{example:Kmn}
Let $K_{m,n}=(V,E)$ be a complete bipartite graph. Let $x\in V$ be a vertex with degree $d_x=n$.
If $n=1$ or $n\leq 2m-2$, we have for any $\N\in (0,\infty]$
\begin{equation}\label{eq:KmnCase1}
\K_{K_{m,n},x}(\N)=\frac{4+n-m}{2}-\frac{2n}{\N}.
\end{equation}
If, otherwise, $n\neq 1$ and $n>2m-2$, we have
\begin{equation}\label{eq:KmnCase2}
\K_{K_{m,n},x}(\N)=\left\{
              \begin{aligned}
                &\frac{4+n-m}{2}-\frac{2n}{\N}, &&\hbox{if $0<\N\leq \frac{2n}{n-2m+2}$;} \\
                &\frac{3m-n}{2}, &&\hbox{if $\N>\frac{2n}{n-2m+2}$.}
              \end{aligned}
            \right.
\end{equation}
In particular, we have $\K_{K_{1,1}}(\infty)=2$ and, when $(n,m)\neq (1,1)$,
\begin{equation}
\K_{K_{m,n},x}(\infty)=\frac{m+2-|n-2m+2|}{2}.
\end{equation}
\end{example}

\begin{table}[h]
\centering 
\begin{tabular}{c| cccccccccc}
\hline
\hline\\[-2ex]
$m\diagdown n$ & 1 & 2 & 3 & 4 & 5 & 6 & 7 & 8 & 9 & 10 \\
\hline\\[-1.3ex]         
1 & 2 & 0.5 & 0 & -0.5 & -1 & -1.5 & -2 & -2.5 & -3 & -3.5\\
2 & 1.5 & 2 & 1.5 &  1 & 0.5 & 0 & -0.5 & -1 &-1.5 & -2 \\
3 & 1 & 1.5 & 2 &  2.5 & 2 & 1.5 & 1 & 0.5 & 0 & -0.5\\
4 & 0.5 & 1 & 1.5 &  2 & 2.5 & 3 & 2.5 & 2 & 1.5 & 1 \\
5 & 0 & 0.5 & 1 & 1.5 & 2 & 2.5 & 3 & 3.5 & 3 & 2.5\\
6 & -0.5 & 0 & 0.5 & 1 & 1.5 & 2 & 2.5 & 3 & 3.5 & 4 \\
7 & -1 & -0.5 & 0 & 0.5 & 1 & 1.5 & 2 & 2.5 & 3 & 3.5\\
8 & -1.5 & -1 & -0.5 & 0 & 0.5 & 1 & 1.5 & 2 & 2.5 & 3 \\
9 & -2 & -1.5 & -1 & -0.5 & 0 & 0.5 & 1 & 1.5 & 2 & 2.5\\
10 & -2.5 & -2 & -1.5 & -1 & -0.5 & 0 & 0.5 & 1 & 1.5 & 2
\\[1ex] 
\hline                          
\end{tabular}
\label{tab:CurvatureCompleteBipartite}
\caption{$\K_{K_{m,n},x}(\infty)$ at $x\in V$ with $d_x=n$} 
\end{table}

\begin{remark}\label{remark:K26}
We remark that $K_{2,6}$ has constant flat curvature, i.e., $\K_{K_{2,6},x}(\infty)=0 \,\,\forall x$. But at each vertex $x$ with degree $2$, $\K_{K_{2,6},x}(\N)<0$ for any finite dimension $\N$. This has already been observed in Example \ref{example:K26}. On the other hand, at each vertex $y$ with degree $6$, we see $\K_{K_{2,6},y}(\N)=0$, for any $\N\in [3,\infty]$.
\end{remark}

\begin{proof} Let $x\in V$ be a vertex such that $d_x=n$. Then, we have $d_y^+=av_1^+(x)=m-1,d_y^0=0$, for any $y\in S_1(x)$. Therefore, we obtain
\begin{equation}
\K_\infty^0(x)=\frac{3+d_x-av_1^+(x)}{2}=\frac{4+n-m}{2}.
\end{equation}
Note that $d_z^-=n$, for any $y\in S_1(x)$, and there are $(m-1)$ $2$-paths connecting any two vertices $y_i,y_j\in S_1(x)$ via a vertex in $S_2(x)$. Therefore, each off-diagonal entry of $\P_\infty(x)$ equals $2-\frac{4}{n}(m-1)$. By the property that $\P_\infty(x)\mathbf{1}=0$, we have
\begin{equation*}
\P_\infty(x)=-\frac{2}{n}(n-2m+2)(nI_n-J_n).
\end{equation*}
Recalling (\ref{eq:dI-J}), we have $\sigma(\P_\infty(x))=\{0,-2(n-2m+2),\ldots, -2(n-2m+2)\}$.

If $n=1$ or $n\leq 2m-2$, i.e., if $n=m=1$ or $n\leq 2m-2$, we have $\lambda_{\min}(\P_\infty(x))=0$. Hence, $\N_0(x)=\infty$. We obtain (\ref{eq:KmnCase1}) by (\ref{eq:step_fucntion}).
If, otherwise, $n\neq 1$ and $n>2m-2$, we have $\lambda_{\min}(\P_\infty(x))=-2(n-2m+2)$. Hence, $\N_0(x)=\frac{2n}{n-2m+2}$.
Noticing that
$$\K_\infty^0(x)-\frac{2d_x}{\N_0(x)}=\frac{4+n-m}{2}-(n-2m-2)=\frac{3m-n}{2},$$
we obtain (\ref{eq:KmnCase2}) by (\ref{eq:step_fucntion}).
\end{proof}

Particularly, we have the curvature function for star graphs $Star_n=K_{1,n}$. We can suppose $n\geq 2$. (Recall when $n=1$, $Star_1=K_{1,1}=K_2$.)
\begin{example}[Star graphs]\label{example:Starn} Let $Star_n=(V,E), n\geq 2$ be a star graph. For the vertex $x$ with $d_x=n$, we have
\begin{equation}
\K_{Star_n,x}(\N)=\left\{
              \begin{aligned}
                &\frac{3+n}{2}-\frac{2n}{\N}, &&\hbox{if $0<\N\leq 2$;} \\
                &\frac{3-n}{2}, &&\hbox{if $\N>2$.}
              \end{aligned}
            \right.
\end{equation}
For any leaf $y$, we have $av_1^+(y)=n-1$, and hence, by Example \ref{example:leaf},
\begin{equation}
\K_{Star_n,y}(\N)=\frac{5-n}{2}-\frac{2n}{\N}\,\,\,\,\,\,\,\forall\,\, \N\in(0,\infty].
\end{equation}
\end{example}

From the above examples, we can derive the curvature function for cycles.
\begin{example}[Cycles]\label{example:Cn} Let $C_n=(V,E)$ be a cycle graph with $n$ vertices and $x\in V$. Since $C_3=K_3$, we have, by Example \ref{example:Kn}, for any $\N\in (0,\infty]$,
\begin{equation}\label{eq:C3}
\K_{C_3,x}(\N)=\frac{5}{2}-\frac{4}{\N}.
\end{equation}
Since $C_4=K_{2,2}$, we have, by Example \ref{example:Kmn}, for any
$\N\in(0,\infty]$,
\begin{equation}\label{eq:C4}
\K_{C_4,x}(\N)=2-\frac{4}{\N}.
\end{equation}
When $n\geq 5$, the local subgraph $B_2(x)$ is isomorphic to that of a vertex in a $2$-regular tree (i.e., infinite path). Therefore, we have, by Example \ref{example:dtree},
\begin{equation}\label{eq:C5greater}
\K_{C_n,x}(\N)=\left\{
              \begin{aligned}
                &2-\frac{4}{\N}, &&\hbox{if $0<\N\leq 2$;} \\
                &0, &&\hbox{if $\N>2$,}
              \end{aligned}
            \right.\,\,\,\,\,\text{for }\,\,n\geq 5.
\end{equation}
\end{example}

From Example \ref{example:Kmn}, we see $K_{n,n}$ is $\infty$-curvature sharp. Next we see when $n\geq 4$, it is still $\infty$-curvature sharp after removing the edges of a perfect matching.

\begin{example}[Crown graphs]\label{example:Crown} The crown graph $\mathrm{Crown}(n,n)$ is the graph with vertex set $V:=\{x_1,\ldots, x_n,x_{n+1},\ldots, x_{2n}\}$ and edge set $E:=\{\{x_i,x_{n+j}\}:i,j\in [n], i\neq j\}$. $\mathrm{Crown}(1,1)$ is an empty graph, $\mathrm{Crown}(2,2)$ is two copies of $K_2$, and $\mathrm{Crown}(3,3)$ is the cycle $C_6$. Let $x\in V$. When $n\geq 4$, we have for any $\N\in(0,\infty]$,
\begin{equation}\label{eq:Crown}
\K_{\mathrm{Crown}(n,n),x}(\N)=2-\frac{2(n-1)}{\N}.
\end{equation}
\end{example}
\begin{proof}
Since $x$ is $S_1$-out regular with $d_x=n-1,\, av_1^+(x)=n-2$, we have $\K_\infty^0(x)=2$. The off-diagonal entry of $\P_\infty(x)$ equals $2-\frac{n-3}{n-2}$. By the property that $\P_\infty\mathbf{1}=0$, we have
$$\P_\infty(x)=\frac{2(n-4)}{n-2}((n-2)I_{n-2}-J_{n-2}).$$
Therefore, we have $\lambda_{\min}(\P_\infty(x))=0$, which implies $\N_0(x)=\infty$ and then (\ref{eq:Crown}).
\end{proof}


Next, we calculating the curvature functions of paths $P_n$ with $n$ vertices. We will assume $n\geq 4$. (Recall $P_2=K_2$, $P_3=K_{1,2}=Star_2$.)
\begin{example}[Paths]\label{example:Pn} Let $P_n=(V,E), n\geq 4$ be a path graph. For the two leaves $x_i,i=1,2$, we have $av_1^+(x_i)=1$, and hence, by Example \ref{example:leaf}, for any $\N\in (0,\infty]$,
\begin{equation}\label{eq:Pnleaf}
\K_{P_n, x_i}(\N)=\frac{3}{2}-\frac{2}{\N}.
\end{equation}
For the two inner vertices next to leaves, i.e., $y_i\in V$ with $\{y_i,x_i\}\in E$, $i=1,2$, we have, for any $\N\in (0,\infty]$,
\begin{equation}\label{eq:Pn_next_to_leaf}
\K_{P_n,y_i}(\N) = \frac{5}{4} - \frac{8+\sqrt{\N^2+(4\N-8)^2}}{4\N}.
\end{equation}
For the remaining vertices $z_i,i=1,\ldots, n-4$, the curvature function agrees with that of a $2$-regular tree. Hence, we have, by Example \ref{example:dtree},
\begin{equation}\label{eq:Pn_other_inner_vertices}
\K_{P_n,z_i}(\N)=\left\{
              \begin{aligned}
                &2-\frac{4}{\N}, &&\hbox{if $0<\N\leq 2$;} \\
                &0, &&\hbox{if $\N>2$,}
              \end{aligned}
            \right.
\end{equation}
\end{example}
\begin{proof}
We now prove (\ref{eq:Pn_next_to_leaf}).
For an inner vertex $y_i$ next to a leaf, we have $d_{y_i}=2$, $av_1^+(y_i)=1/2$, and therefore $\K_\infty^0(y_i)=9/4$. Applying Theorem \ref{thm:Sharpness}, $$ \K_{G,y_i}(\N) = \frac{9}{4} - \frac{\lambda}{2} - \frac{4}{\N}, $$
where $\lambda \ge 0$ is chosen such that the matrix
$$ \widehat \P_\N(y_i) + \lambda \cdot 2\Gamma(y_i) =
\begin{pmatrix} 0 & \frac{1}{2} & -\frac{1}{2} \\
\frac{1}{2} & -\frac{5}{2} + \frac{4}{\N} & 2 - \frac{4}{\N} \\
-\frac{1}{2} & 2 - \frac{4}{\N} & -\frac{3}{2} + \frac{4}{\N} \end{pmatrix} +
\lambda \begin{pmatrix} 2 & -1 & -1 \\ -1 & 1 & 0 \\ -1 & 0 & 1 \end{pmatrix}$$
is positive semidefinite and has zero eigenvalue of multiplicity at least $2$.
This condition leads to
$$ \lambda = \frac{4\N-8+\sqrt{\N^2+(4\N-8)^2}}{2\N}, $$
and the curvature functions is therefore (\ref{eq:Pn_next_to_leaf}).
\end{proof}

\section{Curvature and connectedness of $\mathring{B}_2(x)$}\label{section:connComp}
In this section, we prove relations between the curvature function
$\K_{G,x}$ at a vertex $x \in V$ and topological properties of the
punctured $2$-ball $\mathring{B}_2(x)$. More precisely, we show that
\begin{description}
\item[(a)] The curvature $\K_{G,x}(\infty)$ is -- with very few
  exceptions -- always negative if $\mathring{B}_2(x)$ consists of
  more than one connected component.
\item[(b)] The curvature function $\K_{G,x}$ does not decrease under
  adding edges in $S_1(x)$, or merging two vertices in $S_2(x)$ which
  do not have common neighbours. Obviously, these
  operation increases the connectedness of $\mathring{B}_2(x)$.
\end{description}

\subsection{Connected components and negative curvature}

Let $G=(V,E)$ be a locally finite simple graph, $x \in V$ be a vertex
and $d =d_x$ its degree. Henceforth we assume that we have chosen a
specific connected component of $\mathring{B}_2(x)$. We denote the
vertices of this connected component in $S_1(x)$ and $S_2(x)$ by
$y_1,\dots,y_r$ and $z_1,\dots,z_s$, respectively. Figure
\ref{Ftwoball} illustrates connected components of
$\mathring{B}_2(x)$. Note that the punctured $2$-ball
$\mathring{B}_2(x)$ has more than one component if and only if
$d > r$.

\begin{figure}[h]
    \centering
    \includegraphics[width=0.5\textwidth]{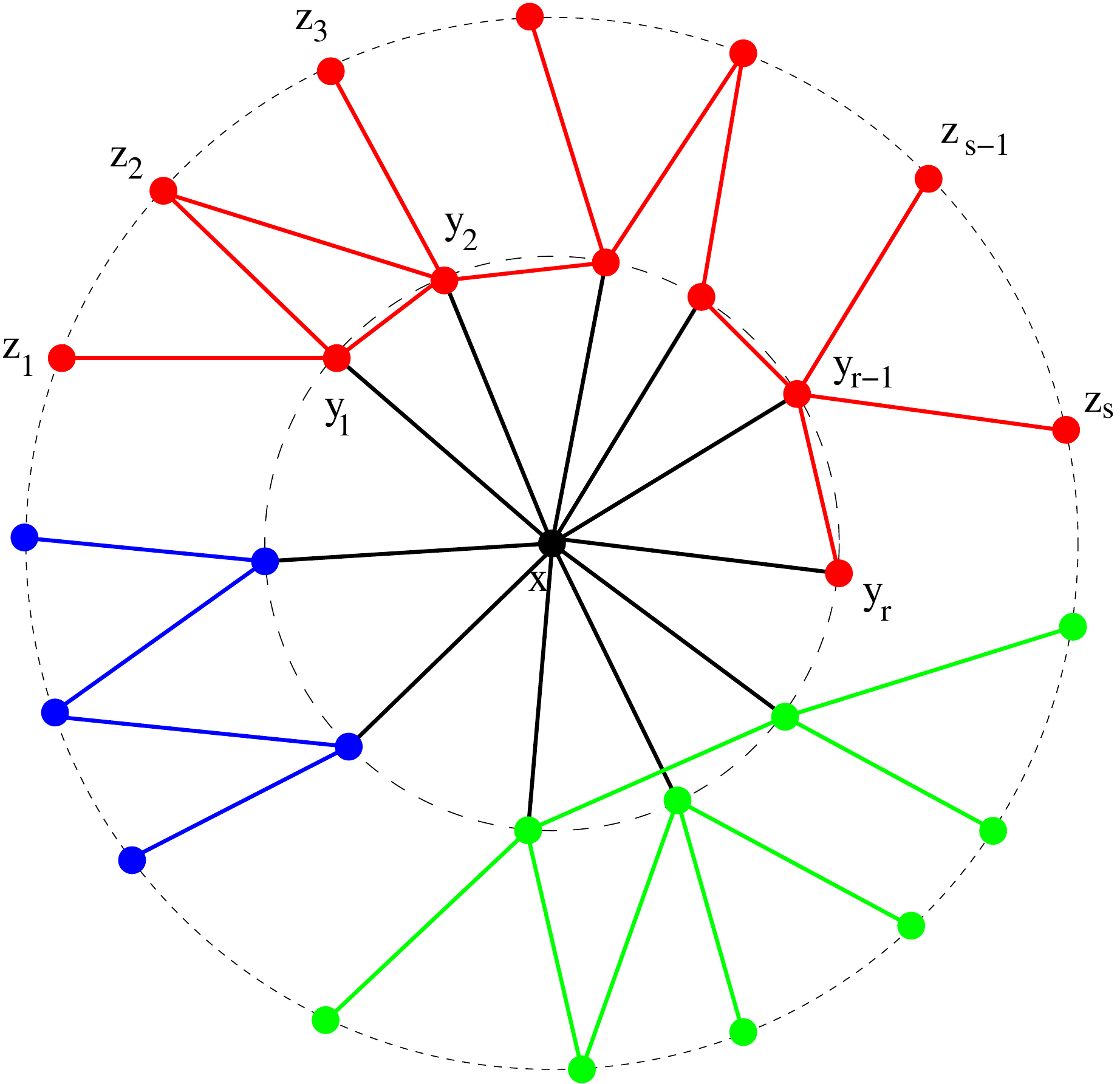}
    \caption{Connected components of $\mathring{B}_2(x)$
    in different colours; choosing the red connected component leads to
    $d = 11$ and $r=6$.\label{Ftwoball}}
\end{figure}

Our first two results assume that $\mathring{B}_2(x)$ has more than
one connected component and distinguish the cases $s > 0$ (our
connected component has vertices in $S_2(x)$) and $s = 0$ (our
connected component consists entirely of vertices in $S_1(x)$). Our
first result deals with the case that our connected component of
$\mathring{B}_2(x)$ has only vertices in $S_1(x)$.

\begin{lemma} \label{lem:snuld4}
  Assume that $\mathring{B}_2(x)$ has more than one connected
  component (i.e., $d > r$) and that $s = 0$ and
  $d \ge 4$. Then we have $\K_{G,x}(\infty) < 0$.
\end{lemma}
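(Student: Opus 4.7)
The strategy is to exploit the characterization $\K_{G,x}(\infty) \le \Gamma_2(f)(x)/\Gamma(f)(x)$ for any test function $f$ with $\Gamma(f)(x) > 0$, by constructing a simple explicit test function that is essentially the indicator of the chosen connected component $C = \{y_1,\dots,y_r\}$ in $S_1(x)$. My candidate is
$$ f(y_i) = 1 \text{ for } i \in [r], \quad f(x) = \frac{r}{d}, \quad f \equiv 0 \text{ on } S_1(x)\setminus C \text{ and on } S_2(x). $$
The particular value $f(x)=r/d$ is chosen so that $\Delta f(x)=0$, which makes the computation cleaner but is not strictly needed for the $\N=\infty$ bound.

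I will evaluate the two quadratic forms in matrix form using the blocks displayed in \eqref{eq:Gamma} and \eqref{eq:Gamma2xS1}--\eqref{eq:Gamma2S1S1}. For $2\Gamma(x)$ the only nonzero contributions come from indices in $\{x\}\sqcup S_1(x)$, and a direct sum yields $4\Gamma(f)(x) = 2r(d-r)/d$, which is strictly positive because $d > r$ and $r\ge 1$. For $4\Gamma_2(f)(x)$ the nontrivial contributions come from the $(x,x)$ entry, the $(x,y_i)$ entries with $y_i\in C$, and the $(y_i,y_j)$ entries with $y_i,y_j\in C$. The two crucial structural inputs are: (i) since $s=0$ we have $d_{y_i}^{+}=0$ for every $y_i\in C$, so the $(x,y_i)$ entries reduce to $-3-d$; and (ii) since $C$ is a connected component of $\mathring{B}_2(x)$, every $S_1(x)$-neighbour of a vertex of $C$ still lies in $C$, hence $\sum_{i\in[r]} d_{y_i}^0 = 2|E(C)|$. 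Because $\sum_{i\ne j\in[r]}(2-4w_{y_iy_j})$ contains exactly a matching $-4\cdot 2|E(C)|$ term, the $|E(C)|$-dependence cancels cleanly.

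After this cancellation the computation reduces to a single rational expression in $r$ and $d$, yielding
$$ 4\Gamma_2(f)(x) = \frac{r(r-d)(d-3)}{d}. $$
Dividing by $4\Gamma(f)(x) = 2r(d-r)/d$ gives the ratio
$$ \frac{\Gamma_2(f)(x)}{\Gamma(f)(x)} = \frac{3-d}{2}, $$
and therefore $\K_{G,x}(\infty)\le (3-d)/2$, which is strictly negative under the hypothesis $d\ge 4$, completing the proof.

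The only step requiring care is the bookkeeping in the $\Gamma_2$ calculation, in particular verifying the cancellation of the $|E(C)|$ contributions; this is where the two structural assumptions (the component choice keeping all relevant spherical edges inside $C$, and $s=0$ killing the out-degrees) enter in an essential way. Everything else is a routine matrix expansion, and the hypothesis $d\ge 4$ is used only at the very last step to guarantee strict inequality.
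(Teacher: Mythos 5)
Your proof is correct, and it is essentially the same argument as the paper's: both work in the two-dimensional subspace spanned by the indicator of $\{x\}$ and the indicator of the chosen component $C\subset S_1(x)$, where the paper shows the compressed $2\times 2$ matrix $A_0=CAC^\top$ has negative determinant $-(3+d)r(d-3)(d-r)$, while you evaluate the same compressed quadratic form at the single vector $(r/d,1)$ and obtain the Rayleigh quotient $(3-d)/2$. Your computations check out (the $|E(C)|$ cancellation and the final expression $4\Gamma_2(f)(x)=r(r-d)(d-3)/d$ are right), and as a small bonus your version yields the explicit bound $\K_{G,x}(\infty)\le (3-d)/2$ rather than just negativity.
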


For the proofs, it is useful to introduce the notions ${\bf 0}_t$ and
${\bf 1}_t$ for the all-zero and the all-one column vector of size $t$.

\begin{proof}
  Let $A$ be the submatrix of $4\Gamma_2(x)$ corresponding to the
  vertices $x$, $y_1, \dots, y_r$. Let $C$ be the $(2 \times (r+1))$
  matrix
  $$ C = \begin{pmatrix} 1 & {\bf 0}_r^\top \\  0 & {\bf 1}_r^\top \end{pmatrix}. $$
  Then the $(2 \times 2)$ matrix $A_0 =CAC^\top$ has the form
  $$ A_0 = \begin{pmatrix} 3d+d^2 & -r(3+d) \\ -r(3+d) & r(5-d)+2r(r-1)
  \end{pmatrix}. $$
  Since
  \begin{equation} \label{eq:detA0} \det A_0 = - (3+d) r (d-3) (d-r),
  \end{equation}
  $d \ge 4$ implies $\det A_0 < 0$ and $A_0$ cannot be positive
  semidefinite. This implies $\K_{G,x}(\infty) < 0$.
\end{proof}

Our second result reads as follows.

\begin{lemma} \label{lem:sposd3}
  Assume that $\mathring{B}_2(x)$ has more than one connected
  component (i.e., $d > r$) and that $s > 0$ and
  $d \ge 3$. Then we have $\K_{G,x}(\infty) < 0$.
\end{lemma}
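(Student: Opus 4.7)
The plan is to follow the same strategy as in Lemma~\ref{lem:snuld4}, namely to exhibit a principal submatrix of $4\Gamma_2(x)$ which fails to be positive semidefinite. Indeed, since $\Gamma(x)\ge 0$, any $K\ge 0$ with $\Gamma_2(x)\ge K\Gamma(x)$ would force $\Gamma_2(x)\ge 0$, so once $4\Gamma_2(x)\not\ge 0$ is shown, the conclusion $\K_{G,x}(\infty)<0$ follows at once.

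We work with the principal submatrix $A$ of $4\Gamma_2(x)$ indexed by $\{x,y_1,\dots,y_r,z_1,\dots,z_s\}$, and set $t:=\sum_{i=1}^r d^+_{y_i}=\sum_{k=1}^s d^-_{z_k}$, the number of radial edges internal to the chosen connected component (in particular $t\ge s\ge 1$). Using the $3\times(1+r+s)$ compression
$$
C=\begin{pmatrix}
1 & \mathbf{0}_r^\top & \mathbf{0}_s^\top\\
0 & \mathbf{1}_r^\top & \mathbf{0}_s^\top\\
0 & \mathbf{0}_r^\top & \mathbf{1}_s^\top
\end{pmatrix},
$$
a direct calculation from the block formulas \eqref{eq:Gamma2xS1}--\eqref{eq:Gamma2S2S2} (exploiting that every spherical edge of $y_1,\dots,y_r$ and every radial edge from $y_1,\dots,y_r$ into $S_2(x)$ stays inside the chosen component) gives
$$
A_0:=CAC^\top=
\begin{pmatrix}
d(3+d) & -r(3+d)-t & t\\
-r(3+d)-t & r(5-d)+3t+2r(r-1) & -2t\\
t & -2t & t
\end{pmatrix}.
$$
Since $v^\top A_0 v=(C^\top v)^\top A(C^\top v)$ for every $v$, it suffices to show $A_0\not\ge 0$.

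We then split into two cases. For $d=3$ the hypothesis $d>r$ forces $r\in\{1,2\}$, and expansion along the third row of $A_0$ yields the clean factorization
$$
\det A_0=-2t^2(r-3)^2<0,
$$
so $A_0\not\ge 0$ by Sylvester's criterion. For $d\ge 4$ we instead test against $v=(r/d,1,1)^\top$: the $t$-dependent contributions to $v^\top A_0 v$ all cancel, and the expression collapses to
$$
v^\top A_0 v=-\frac{r(d-3)(d-r)}{d}<0
$$
by $d\ge 4$ and $d>r$. Either way $A_0\not\ge 0$, and hence $\K_{G,x}(\infty)<0$.

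The main obstacle will be the case $d=3$. The vector $v=(r/d,1,1)^\top$, which is the direct analogue of the witness used in Lemma~\ref{lem:snuld4}, satisfies $v^\top A_0 v\equiv 0$ when $d=3$, so the argument is forced to exploit the extra freedom of weighting $C\cap S_1(x)$ and $C\cap S_2(x)$ differently---precisely what the full $3\times 3$ matrix $A_0$ encodes. The neat factorization $\det A_0=-2t^2(r-3)^2$ is what rescues the argument; its verification, while routine, requires some careful bookkeeping of the entries in \eqref{eq:Gamma2xS1}--\eqref{eq:Gamma2S2S2}.
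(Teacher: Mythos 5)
Your proposal is correct and follows essentially the same route as the paper: the same principal submatrix, the same compression matrix $C$, and the identical $3\times 3$ matrix $A_0$ (your $t$ is the paper's $S$); both your witness computation $v^\top A_0 v=-r(d-3)(d-r)/d$ for $v=(r/d,1,1)^\top$ and your factorization $\det A_0=-2t^2(r-3)^2$ at $d=3$ check out. The only divergence is the final certificate that $A_0\not\ge 0$: the paper avoids your case split by testing against the single vector $(r,d,d+1/S)$, which gives $v^\top A_0 v=-rd(d-3)(d-r)-2(d-r)+\frac{1}{S}<0$ uniformly for all $d\ge 3$, whereas you handle $d\ge 4$ with a test vector and rescue $d=3$ (where that vector degenerates) via the determinant.
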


\begin{proof}
  Let $A$ be the submatrix of $4\Gamma_2(x)$ corresponding to the
  vertices $x$, $y_1, \dots, y_r$, $z_1, \dots, z_s$. Let $C$ be the
  $(3 \times (1+r+s))$ matrix
  $$ C = \begin{pmatrix} 1 & {\bf 0}_r^\top & {\bf 0}_s^\top \\
    0 & {\bf 1}_r^\top & {\bf 0}_s^\top \\
    0 & {\bf 0}_r^\top & {\bf 1}_s^\top \end{pmatrix}. $$
  Then the $(3 \times 3)$ matrix $A_0 = C A C^\top$ has the form
  \begin{equation} \label{eq:A0two}
  A_0 = \begin{pmatrix} 3d+d^2 & -(3+d)r - S & S \\
    -(3+d)r - S & r(5-d) + 3S + 2 r(r-1) & -2S \\
    S & -2S & S \end{pmatrix}
  \end{equation}
  with $S = \sum_{i=1}^r d_{y_i}^{out} = \sum_{j=1}^s d_{z_j}^{in} \ge
  s$. Choosing the row vector $v = (r,d,d+1/S)$, we obtain
  \begin{equation} \label{eq:vA0v} v^\top A_0 v = -r d (d-3) (d-r) -
    2(d-r) + \frac{1}{S} \le \frac{1}{S} -2(d-r) \le \frac{1}{S} - 4
    \le -3 < 0.
  \end{equation}
  This shows that $A$ is not positive semidefinite and, therefore,
  $\K_{G,x}(\infty) < 0$.
\end{proof}

In the situation described in Lemma \ref{lem:sposd3}, we can only have
$\K_{G,x}(\infty) > 0$ if $d=2$ and, consequently, $r=1$. In this
case, there is only one vertex of our connected component, denoted by
$y$, in $S_1(x)$, and the next result tells us that
$\K_{G,x}(\infty) < 0$ unless the out degree of $y$ satisfies
$d_y^+ = 1$.

\begin{lemma} \label{lem:sd2}
  Assume that $d=2$ and $\mathring{B}_2(x)$ has two connected
  components, i.e., $S_1(x) = \{y,y'\}$ and $y \not\sim y'$. Then we
  have $\K_{G,x}(\infty) < 0$ if $d_y^+ \ge 2$ or $d_{y'}^+ \ge 2$.
\end{lemma}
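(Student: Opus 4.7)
The plan is to adapt the strategy from the proof of Lemma \ref{lem:sposd3}: exhibit a principal submatrix of $4\Gamma_2(x)$ which fails to be positive semidefinite, which forces $\K_{G,x}(\infty) < 0$. By the obvious symmetry in the roles of $y$ and $y'$, I may assume $d_y^+ \ge 2$. Since $y \not\sim y'$ and $\mathring{B}_2(x)$ has two connected components, the component containing $y$ consists exactly of $y$ together with its out-neighbours $z_1,\dots,z_s \in S_2(x)$, where $s = d_y^+ \ge 2$. In particular, each $z_j$ has no neighbour in $S_1(x) \setminus \{y\}$ (otherwise $y'$ and $y$ would lie in the same component), so $d_{z_j}^- = 1$ and $\sum_j d_{z_j}^- = s = S$.

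I would take $A$ to be the principal submatrix of $4\Gamma_2(x)$ on the vertices $\{x,y,z_1,\dots,z_s\}$ and apply the same projection matrix $C$ used in the proof of Lemma \ref{lem:sposd3}, collapsing the $z_j$-entries into a single coordinate. With $d=2$, $r=1$, and $S = d_y^+$, the formula \eqref{eq:A0two} specialises to
\begin{equation*}
A_0 = C A C^\top =
\begin{pmatrix}
10 & -5-S & S \\
-5-S & 3+3S & -2S \\
S & -2S & S
\end{pmatrix}.
\end{equation*}
A direct cofactor expansion gives
\begin{equation*}
\det A_0 \;=\; 10(3S-S^2) + (5+S)(S^2-5S) + S(7S-S^2) \;=\; S(5-3S).
\end{equation*}
For $S \ge 2$ this determinant is strictly negative, so $A_0$ is not positive semidefinite; taking $v$ to be any vector with $v^\top A_0 v < 0$ and pulling it back via $C^\top$ exhibits a vector $w$ with $w^\top A w < 0$. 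Hence $4\Gamma_2(x) \not\ge 0$, and since $\K_{G,x}(\infty) \ge 0$ would require $\Gamma_2(x) \ge 0$, we conclude $\K_{G,x}(\infty) < 0$.

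The one point of care is that the generic test vector $v = (r,d,d+1/S)$ used in Lemma \ref{lem:sposd3} does not work here: with $d=2$, $r=1$ one computes $v^\top A_0 v = 1/S > 0$, which is why the hypothesis $d \ge 3$ was needed there. The remedy is simply to replace the test-vector argument by a determinantal one; the sign of $\det A_0 = S(5-3S)$ already detects the non-positivity for every $S \ge 2$, which is exactly the range imposed by the hypothesis $d_y^+ \ge 2$. This is where the bound $d_y^+ \ge 2$ is essential: if $d_y^+ = 1$, then $\det A_0 = 2 > 0$ and the submatrix method alone cannot decide the sign of $\K_{G,x}(\infty)$.
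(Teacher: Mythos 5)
Your proposal is correct and follows essentially the same route as the paper: the paper's own proof also specialises the matrix $A_0$ from \eqref{eq:A0two} to $(d,r)=(2,1)$, $S=d_y^+$, computes $\det A_0 = S(5-3S)$, and concludes non-positive-semidefiniteness for $S \ge 2$. Your additional observations (that $d_{z_j}^-=1$ forces $S=s$, and that the test vector from Lemma \ref{lem:sposd3} fails when $d=2$, which is why a determinant criterion is used instead) are accurate and consistent with the paper.
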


\begin{proof} Following the proof of Lemma \ref{lem:sposd3}, we are in
  the special case $(d,r)=(2,1)$ and $S = d_y^+$. The matrix $A_0$
  from \eqref{eq:A0two} then simplifies to
  $$ A_0 = \begin{pmatrix} 10 & -(5+S) & S \\ -(5+S) & 3+3S & -2S \\
  S & -2S & S \end{pmatrix}, $$
  and we have $\det A_0 = S(5-3S)$. If $d_y^+ = S \ge 2$, we obtain
  $\det A_0 < 0$, i.e., $4\Gamma_0$ cannot be positive semidefinite and
  we have $\K_{G,x}(\infty) < 0$. The same holds true when we replace $y$
  by $y'$, finishing the proof.
\end{proof}

\begin{theorem} \label{thm:conncomp} Let $G = (V,E)$ be a locally
  finite simple graph and $x \in V$. If $\mathring{B}_2(x)$ has more
  than one connected component then $\K_{G,x}(\infty) < 0$, except for
  one of the five cases (a)-(e) presented in Figure \ref{Fexceptions}.
\end{theorem}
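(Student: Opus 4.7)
The plan is to prove the theorem by case analysis on $d := d_x$, applying Lemmas \ref{lem:snuld4}, \ref{lem:sposd3}, and \ref{lem:sd2} to an appropriately chosen connected component of $\mathring{B}_2(x)$ in each case. First, the case $d = 1$ is vacuous: with $|S_1(x)| = 1$, the punctured ball $\mathring{B}_2(x)$ is always connected (a single vertex with its out-neighbours in $S_2(x)$), so the hypothesis of the theorem fails. Hence we may assume $d \ge 2$.

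For $d \ge 4$, pick any connected component of $\mathring{B}_2(x)$ with $r$ vertices in $S_1(x)$ and $s$ vertices in $S_2(x)$; the hypothesis of more than one component gives $d > r$. If $s > 0$, Lemma \ref{lem:sposd3} yields $\K_{G,x}(\infty) < 0$; if $s = 0$, Lemma \ref{lem:snuld4} does the same. Thus no exceptions occur when $d \ge 4$. For $d = 3$, if some component has $s > 0$, applying Lemma \ref{lem:sposd3} to that component again gives $\K_{G,x}(\infty) < 0$. The remaining situation is when every component has $s = 0$, equivalently $S_2(x) = \emptyset$; then the components of $\mathring{B}_2(x)$ are precisely the components of the subgraph induced on $S_1(x)$. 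Since $|S_1(x)| = 3$ and we have more than one component, the only possibilities are three isolated vertices, or one spherical edge plus an isolated vertex -- two of the five exceptional configurations.

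For $d = 2$, the hypothesis forces $S_1(x) = \{y_1, y_2\}$ with $y_1 \not\sim y_2$. Lemma \ref{lem:sd2} dispatches the case $\max(d_{y_1}^+, d_{y_2}^+) \ge 2$. Otherwise, both out-degrees lie in $\{0, 1\}$, and enumerating the pair $(d_{y_1}^+, d_{y_2}^+)$ up to symmetry yields three exceptional configurations: $(0,0)$, $(1,0)$, and $(1,1)$ with distinct $S_2$-neighbours. The subcase $(1,1)$ with a common $S_2$-neighbour $z_1 = z_2$ is excluded, because then $y_1 \sim z \sim y_2$ makes $\mathring{B}_2(x)$ connected, contradicting our hypothesis. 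Combined with the two $d = 3$ exceptions, this gives exactly the five configurations of Figure \ref{Fexceptions}. The heavy technical work is already contained in the three preparatory lemmas; the only subtle point in the case analysis itself is recognising that the $(1,1)$-subcase with shared neighbour must be excluded rather than listed as an exception, which is precisely why the enumeration yields five rather than six configurations.
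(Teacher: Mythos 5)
Your proof is correct and follows essentially the same route as the paper: reduce to $d\ge 2$, dispatch $d\ge 4$ via Lemmas \ref{lem:snuld4} and \ref{lem:sposd3}, use Lemma \ref{lem:sposd3} again when $d=3$ and $S_2(x)\neq\emptyset$, use Lemma \ref{lem:sd2} for $d=2$, and enumerate the surviving configurations. Your write-up is merely a little more explicit than the paper's in the terminal enumeration (e.g.\ the exclusion of the shared-$S_2$-neighbour subcase for $d=2$, which the paper leaves implicit).
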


\begin{figure}[h]
    \centering
    \includegraphics[width=\textwidth]{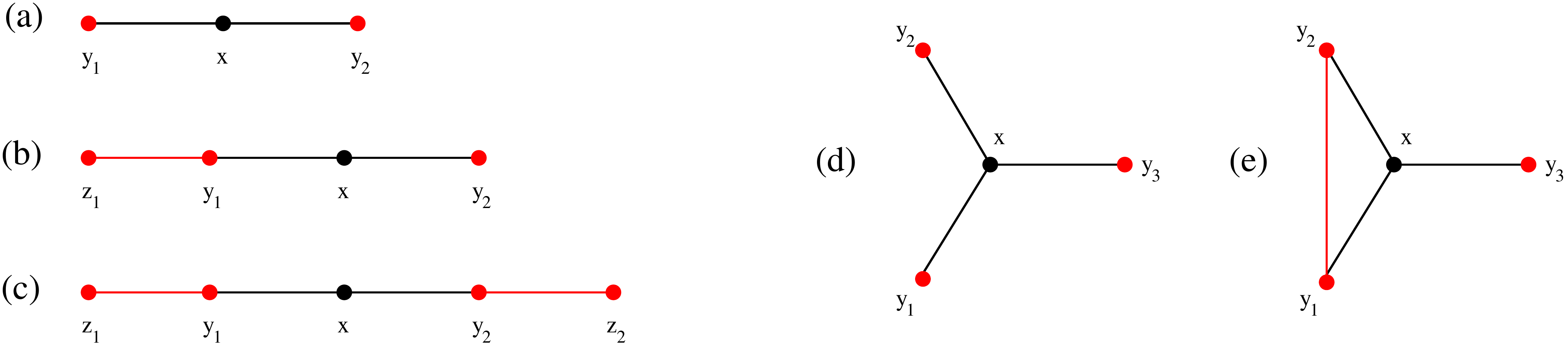}
    \caption{$2$-balls $B_2(x)$ with $\K_{G,x}(\infty) \ge 0$. The
      corresponding punctured $2$-balls $\mathring{B}_2(x)$ are
      red.\label{Fexceptions}}
\end{figure}

\begin{proof} We assume that $\mathring{B}_2(x)$ has at least two
  connected components and, therefore, $d \ge 2$. If $d \ge 4$ we have
  $\K_{G,x}(\infty) < 0$ by Lemmata \ref{lem:snuld4} and
  \ref{lem:sposd3}. So it only remains to investigate $2 \le d \le 3$.

  If $S_2(x) \neq \emptyset$, there exists a connected component of
  $\mathring{B}_2(x)$ with a vertex in $S_2(x)$ and Lemma
  \ref{lem:sposd3} implies $\K_{G,x}(\infty) < 0$ if $d=3$. So we are
  left with $d=2$ and, in view of Lemma \ref{lem:sd2}, the only
  remaining possibilities for $B_2(x)$ to achieve
  $\K_{G,x}(\infty) \ge 0$ in the case $S_2(x) \neq \emptyset$ are (b)
  or (c).

  In the case $S_2(x) = \emptyset$, $d=2$ and $\K_{G,x}(\infty) \ge 0$
  lead necessarily to the configuration (a) for
  $B_2(x)$. Similarly, $d=3$ and $\K_{G,x}(\infty) \ge 0$ lead
  necessarily to the configurations (d) anf (e).
\end{proof}

\begin{examples}\label{example:exceptions} Let us study the curvature
  functions $\K_{G,x}$ for the exceptional $2$-balls $B_2(x)$ in
  Figure \ref{Fexceptions} (a)-(e).
\begin{description}
\item[(a)] Here we have $G = K_{1,2}$ and
$$ \K_{G,x}(\N) = \begin{cases} \frac{5}{2} - \frac{4}{\N}, & \text{if $0 < \N \le 2$,} \\ \frac{1}{2}, & \text{if $\N > 2$.} \end{cases} $$
\item[(b)] It follows from Example \ref{example:Pn} that the curvature function
is given by
$$ \K_{G,x}(\N) = \frac{5}{4} - \frac{8+\sqrt{\N^2+(4\N-8)^2}}{4\N}. $$
In particular, we have $\K_{G,x}(\infty) = \frac{5-\sqrt{17}}{4} = 0.219\dots$.
\item[(c)] Here the curvature function $\K_{G,x}$ agrees with the
  curvature function of the tree $T_2$, i.e.,
$$ \K_{G,x}(\N) = \begin{cases} 2 - \frac{4}{\N}\ & \text{if $0 < \N \le 2$,} \\
0, & \text{if $\N > 2$.} \end{cases} $$
\item[(d)] Here we have $G = K_{1,3}$ and
$$ \K_{G,x}(\N) = \begin{cases} 3 - \frac{6}{\N}, & \text{if $0 < \N \le 2$,} \\ 0, & \text{if $\N > 2$.} \end{cases} $$
\item[(e)] Since $S_2(x) = \emptyset$, the vertex $x \in V$ is
$S_1$-out regular. Therefore, Theorem \ref{thm:OutRegularFormula}
yields
$$ \K_{G,x}(\N) = 3 - \frac{6}{\N} - \frac{1}{2} \lambda_{\min} \left[
\begin{pmatrix} 0 & -2 & 2 \\ -2 & 0 & 2 \\ 2 & 2 & -4 \end{pmatrix} +
\frac{4}{\N} \begin{pmatrix} 2 & -1 & -1 \\ -1 & 2 & -1 \\
-1 & -1 & 2 \end{pmatrix} \right]. $$
Since the spectrum of the involved $(3 \times 3)$ matrix is
$\{ 0, -6 + 12/N, 12 + 2/N\}$, we conclude
$$ \K_{G,x}(\N) =  \begin{cases} 3 - \frac{6}{\N}, & \text{if $0 < \N \le 2$,}
\\ 0, & \text{if $\N > 2$,} \end{cases} $$
which agrees with the curvature function in (d).
\end{description}
\end{examples}

Theorem \ref{thm:conncomp} and the curvature calculations for the
exceptional cases have the following immediate consequences.

\begin{definition} Let $G = (V,E)$ be be a locally finite simple
  graph. An edge $e \in E$ is called an \emph{$(r,s)$-bridge} if the
  graph $G$ decomposes after removal of the edge $e = \{x,y\}$ into
  two separate non-empty components and if the degrees of the vertices
  $x$ and $y$ in each of the components after removal of $e$ are $r$
  and $s$, respectively.
\end{definition}

\begin{corollary} Let $G = (V,E)$ be be a locally finite simple
  graph and $e =\{x,y\} \in E$ be an $(r,s)$-bridge.
  \begin{itemize}
  \item[(a)] If $r=0$, $x$ is a leaf and the curvature function
    $\K_{G,x}$ is given by
    $$ \K_{G,x}(\N) = 2 - \frac{s}{2} - \frac{2}{\N}. $$
    In particular, we have $\K_{G,x}(\infty) \ge 0$ iff $s \le 4$.
  \item[(b)] If $r=1$, we can only have $\K_{G,x}(\infty) \ge 0$ if
    $s \in \{0,1\}$.
  \item[(c)] If $r=2$, we have $\K_{G,x}(\infty) \le 0$, and we can
    only have equality if $s=0$.
  \item[(d)] If $r \ge 3$, we have always $\K_{G,x}(\infty) < 0$.
  \end{itemize}
\end{corollary}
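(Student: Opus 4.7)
The plan is to first note the structural consequences of the bridge hypothesis: $d_x = r+1$, and the $s$ neighbours of $y$ other than $x$ must all lie in $S_2(x)$ and be adjacent to no other vertex of $S_1(x)$ (otherwise removing $e$ would leave $x$ and $y$ in the same component). Consequently $y$, together with its $s$ neighbours in $S_2(x)$, forms an entire connected component of $\mathring{B}_2(x)$, with $d_y^+ = s$ relative to the centre $x$.

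With this setup, three of the four cases reduce directly to results already established in this section. Part~(a) is Example~\ref{example:leaf} applied with $d_x = 1$ and $av_1^+(x) = s$. Part~(b) follows from Lemma~\ref{lem:sd2}: here $d_x = 2$, the two neighbours $y, y'$ of $x$ are non-adjacent since they sit in distinct components of $G-e$, and $d_y^+ = s \ge 2$ then forces $\K_{G,x}(\infty) < 0$. Part~(d) splits on $s$: if $s \ge 1$ then the component containing $y$ has a vertex in $S_2(x)$ and Lemma~\ref{lem:sposd3} applies (since $d_x = r+1 \ge 4$); if $s = 0$ then $y$ alone forms a connected component of $\mathring{B}_2(x)$, placing us in the hypotheses of Lemma~\ref{lem:snuld4} (taking $r_{\text{lemma}}=1$, again with $d_x \ge 4$).

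The main obstacle is part~(c): when $(r,s)=(2,0)$ neither Lemma~\ref{lem:snuld4} (which needs $d \ge 4$) nor Lemma~\ref{lem:sposd3} (which needs $s \ge 1$) applies. For $s\ge 1$, Lemma~\ref{lem:sposd3} still gives strict negativity. For the remaining subcase $(r,s)=(2,0)$ I propose a direct $2 \times 2$ argument: since $y$ is then a leaf of $G$ with $d_y^+ = d_y^0 = 0$, the $\{x,y\}$-principal submatrix of $4\Gamma_2(x)$ has entries $(4\Gamma_2)_{x,x}=18$, $(4\Gamma_2)_{x,y}=-6$, $(4\Gamma_2)_{y,y}=2$. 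Setting $K := \K_{G,x}(\infty)$, positive semidefiniteness of $4\Gamma_2(x) - 4K\Gamma(x)$ forces
\[
\begin{pmatrix} 18 - 6K & -6 + 2K \\ -6 + 2K & 2 - 2K \end{pmatrix} \ge 0,
\]
whose bottom-right entry gives $K \le 1$ and whose determinant $8K(K-3)$ must be nonnegative, forcing $K \le 0$ or $K \ge 3$; the intersection is $K \le 0$. Combining the two subcases yields $\K_{G,x}(\infty) \le 0$ throughout part~(c), with equality possible only when $s=0$, as asserted.
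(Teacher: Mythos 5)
Your proof is correct. Parts (a), (b), and (d) follow essentially the paper's route, except that where the paper simply cites Theorem \ref{thm:conncomp}, you descend to the underlying Lemmata \ref{lem:snuld4}, \ref{lem:sposd3} and \ref{lem:sd2} and verify their hypotheses directly from the bridge structure (your observation that $\{y\}\cup\bigl(N(y)\setminus\{x\}\bigr)$ is an entire connected component of $\mathring{B}_2(x)$ with $d_y^+=s$ is exactly what makes this work). The genuine divergence is in part (c) for the subcase $(r,s)=(2,0)$: the paper obtains $\K_{G,x}(\infty)\le 0$ by combining Theorem \ref{thm:conncomp} with the explicit curvature computations for the exceptional configurations (d) and (e) in Examples \ref{example:exceptions}, which show those curvatures equal $0$ exactly; you instead give a self-contained argument via the $\{x,y\}$-principal minor of $4\Gamma_2(x)-4K\Gamma(x)$, whose entries $18-6K$, $-6+2K$, $2-2K$ and determinant $8K(K-3)$, together with $K\le 1$, force $K\le 0$. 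Your computation checks out against \eqref{eq:Gamma2xS1}, \eqref{eq:Gamma2S1S1} and \eqref{eq:Gamma}, and it buys independence from the exceptional-case examples at the cost of a small extra calculation; the paper's version buys brevity given that Theorem \ref{thm:conncomp} and Examples \ref{example:exceptions} are already in place.
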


\begin{proof} The case $r=0$ follows from Example \ref{example:leaf}.
  In all other cases, $\mathring{B}_2(x)$ has at least two connected
  components and we can directly apply Theorem \ref{thm:conncomp}.
\end{proof}

\begin{definition} Let $G = (V,E)$ be be a locally finite simple graph
  and $e \in E$. The \emph{girth of $e$}, denoted by ${\rm girth}(e)$
  is the length of the shortest circuit in $G$ containing $e$. If $e$
  is not contained in any circuit, we define
  ${\rm girth}(e) = \infty$.
\end{definition}

\begin{corollary} Let $G = (V,E)$ be be a locally finite simple graph
  and $e = \{x,y\} \in E$. If $5 \le {\rm girth}(e) < \infty$, we have
  $\K_{G,x}(\infty), \K_{G,y}(\infty) \le 0$.

  If ${\rm girth}(e) = \infty$, the only exceptions for
  $\K_{G,x}(\infty) < 0$ are: (i) $d_x \ge 2$ and we have one of the
  situations (a)-(e) in Figure \ref{Fexceptions} for $B_2(x)$, (ii) $x$ is a
  leaf and the other vertex has degree $\le 5$.
\end{corollary}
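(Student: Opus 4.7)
The plan is to derive both parts from Theorem~\ref{thm:conncomp} by translating the girth hypotheses into statements about the connectedness of $\mathring{B}_2(x)$. The key observation is that the girth of the edge $e=\{x,y\}$ controls the local structure around $y$ inside $\mathring{B}_2(x)$. Namely, ${\rm girth}(e) \ge 4$ is equivalent to $y$ having no spherical neighbours in $S_1(x)$ (no triangle contains $e$); ${\rm girth}(e) \ge 5$ additionally forces no out-neighbour of $y$ to be adjacent to any vertex of $S_1(x) \setminus \{y\}$ (no $4$-cycle contains $e$), so that the connected component of $y$ in $\mathring{B}_2(x)$ consists only of $\{y\}$ together with its out-neighbours in $S_2(x)$; and ${\rm girth}(e) = \infty$ (the bridge case) means that $y$ has no path in $G \setminus \{e\}$ to any vertex of $S_1(x) \setminus \{y\}$, hence no such path in $\mathring{B}_2(x)$ either. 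In each case, whenever $d_x \ge 2$, the punctured $2$-ball $\mathring{B}_2(x)$ splits into at least two connected components.

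For the first part, suppose $5 \le {\rm girth}(e) < \infty$. Finite girth puts $e$ on a cycle, forcing $d_x, d_y \ge 2$, and by the key observation $\mathring{B}_2(x)$ is disconnected. Theorem~\ref{thm:conncomp} then yields $\K_{G,x}(\infty) < 0$ unless $B_2(x)$ is one of the five configurations (a)--(e) of Figure~\ref{Fexceptions}. Of these, only (a) and (b) produce $\K_{G,x}(\infty) > 0$; cases (c), (d), (e) give $\K_{G,x}(\infty) = 0$, already consistent with the desired conclusion $\K_{G,x}(\infty) \le 0$. I rule out (a) and (b) by direct inspection of the pictures: in (a), $B_2(x) = K_{1,2}$, so every neighbour of $x$ is a leaf, contradicting $d_y \ge 2$; in (b), one neighbour of $x$ is a leaf while the other has a single out-neighbour, so any cycle through an edge incident to $x$ would have to return through the leaf, which has no further connections -- contradicting ${\rm girth}(e) < \infty$. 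The bound for $\K_{G,y}(\infty)$ follows by symmetry.

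For the second part, assume ${\rm girth}(e) = \infty$, so $e$ is a bridge. If $d_x = 1$, then $x$ is a leaf and Example~\ref{example:leaf} gives
\[
\K_{G,x}(\infty) \;=\; 2 - \frac{av_1^+(x)}{2} \;=\; \frac{5-d_y}{2},
\]
which is non-negative precisely when $d_y \le 5$, producing exception (ii). If instead $d_x \ge 2$, the bridge property of $e$ means that no $y' \in S_1(x) \setminus \{y\}$ is reachable from $y$ in $G \setminus \{e\}$, and therefore not in $\mathring{B}_2(x)$ either; so $\mathring{B}_2(x)$ again has at least two connected components. Theorem~\ref{thm:conncomp} then gives $\K_{G,x}(\infty) < 0$ outside the configurations (a)--(e) of Figure~\ref{Fexceptions}, which is precisely exception (i).

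The main technical point is verifying that cases (a) and (b) of Figure~\ref{Fexceptions} are incompatible with $5 \le {\rm girth}(e) < \infty$; this amounts to a direct inspection of the explicit pictures, combined with the remark that finite girth forces both endpoints of $e$ to have degree at least $2$. Everything else is a routine application of results already in hand: Theorem~\ref{thm:conncomp} in the disconnected case and Example~\ref{example:leaf} for the leaf formula.
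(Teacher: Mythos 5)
Your proposal is correct and follows essentially the same route as the paper's proof: translate the girth hypotheses into disconnectedness of $\mathring{B}_2(x)$, apply Theorem \ref{thm:conncomp}, observe that the exceptional configurations (a) and (b) force ${\rm girth}(e)=\infty$ while (c)--(e) give curvature $0$, and handle the leaf case via Example \ref{example:leaf}. You merely spell out in more detail the two steps the paper leaves implicit (why ${\rm girth}(e)\ge 5$ isolates the component of $y$ in $\mathring{B}_2(x)$, and why (a),(b) are incompatible with finite girth), and these verifications are accurate.
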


\begin{proof} The girth condition $5 \le {\rm girth}(e) < \infty$
  implies that $\mathring{B}_2(x)$ has at least two connected
  components and we can apply Theorem \ref{thm:conncomp}. So we only
  have to consider the exceptional cases. The exceptional cases (a)
  and (b) imply ${\rm girth}(e) = \infty$, and we have in the cases
  (c), (d), (e) that $\K_{G,x}(\infty) = 0$. The same holds true for
  the vertex $y$.

  Now we assume ${\rm girth}(e) = \infty$. If $d_x \ge 2$,
  $\mathring{B}_2(x)$ must have at least two connected components and
  we can apply Theorem \ref{thm:conncomp}. The only exceptions of
  $\K_{G,x}(\infty) < 0$ are then situations (a)-(e) in Figure
  \ref{Fexceptions}. If $d_x=1$, we know from Example
  \ref{example:leaf} that it has only non-negative curvature if the
  degree of its neighbour is $\le 5$, which is precisely the case
  (ii).
\end{proof}

\begin{remark}
  In \cite{HL16}, B. Hua and Y. Lin classified graphs with girth at least
  $5$ and satisfying $CD(0,\infty)$. Their work was carried out
  independently, provides related but different results, and
  considers, in contrast to our context, the normalized case.
\end{remark}

\subsection{Operations that do not decrease the curvature}

In this subsection, we discuss operations that does not decrease the curvature. The first one is adding new spherical edges in $S_1(x)$.

\begin{proposition}
Let $G=(V,E)$ be a graph and $x\in V$ be a vertex. Let $G'=(V, E')$ be the graph obtained from $G$ by adding a new spherical edge in $S_1(x)$. Then we have for any $\N\in (0,\infty]$,
$$\K(G',x;\N)\geq \K(G,x;\N).$$
\end{proposition}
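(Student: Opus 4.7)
The plan is to show directly that adding a spherical edge $\{y_i,y_j\}$ with $y_i,y_j\in S_1(x)$ changes the local matrix $\Gamma_2(x)$ by a positive semidefinite increment, while leaving $\Gamma(x)$ and $\Delta(x)$ unchanged. The conclusion will then follow immediately from the semidefinite-programming formulation of the curvature in Proposition \ref{prop:LMP}.

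First I would observe that adding the edge $\{y_i,y_j\}$ does not alter any distance $\mathrm{dist}(x,v)$: every $v$ at distance $r$ from $x$ in $G$ still has a path of length $r$ in $G'$, and the new edge $\{y_i,y_j\}$ connects two vertices already at distance $1$ from $x$, so it cannot create any shortcut. Hence the spheres $S_1(x)$ and $S_2(x)$ as sets, as well as all out-degrees $d_v^{x,+}$, in-degrees $d_v^{x,-}$ and the quantity $av_1^+(x)$, are identical in $G$ and $G'$. The only local data that change are the two spherical degrees $d_{y_i}^{x,0}$, $d_{y_j}^{x,0}$ (each increased by $1$) and the edge indicator $w_{y_iy_j}$ (from $0$ to $1$).

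Next I would read off from formulas \eqref{eq:Gamma2xS1}--\eqref{eq:Gamma2S2S2} exactly which entries of $4\Gamma_2(x)$ are affected. The blocks $(4\Gamma_2)_{x,x}$, $(4\Gamma_2)_{x,S_1}$, $(4\Gamma_2)_{x,S_2}$, $(4\Gamma_2)_{S_1,S_2}$ and $(4\Gamma_2)_{S_2,S_2}$ depend only on $d_x$, the $d_y^+$, the $d_z^-$ and the indicators $w_{y_\ell z_k}$, all of which are unchanged. Within $(4\Gamma_2)_{S_1,S_1}$ in \eqref{eq:Gamma2S1S1}, the only entries affected are the two diagonal entries indexed by $y_i,y_j$ (each increasing by $4$, from the term $4d_{y_i}^0$, resp.\ $4d_{y_j}^0$) and the two symmetric off-diagonal entries indexed by $(y_i,y_j)$ and $(y_j,y_i)$ (each decreasing by $4$, from the term $-4w_{y_iy_j}$). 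Thus
$$ 4\Gamma_2^{G'}(x)-4\Gamma_2^{G}(x) \;=\; 4\, v v^{\top}, $$
where $v\in\mathbb{R}^{|B_2(x)|}$ has entry $+1$ at $y_i$, $-1$ at $y_j$, and $0$ elsewhere. In particular, this increment is positive semidefinite. Moreover, $\Gamma(x)$ and $\Delta(x)$ depend only on $B_1(x)$ and remain unchanged.

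Setting $\K:=\K(G,x;\N)$, Proposition \ref{prop:LMP} gives
$$ \Gamma_2^{G}(x)-\tfrac{1}{\N}\Delta(x)^{\top}\Delta(x)-\K\,\Gamma(x)\;\geq\;0, $$
and adding the positive semidefinite matrix $v v^{\top}$ preserves semidefiniteness, so
$$ \Gamma_2^{G'}(x)-\tfrac{1}{\N}\Delta(x)^{\top}\Delta(x)-\K\,\Gamma(x)\;\geq\;0. $$
Applying Proposition \ref{prop:LMP} to $G'$ then yields $\K(G',x;\N)\geq \K(G,x;\N)$. There is no real obstacle here; the only point that requires some care is the bookkeeping confirming that the changes in $\Gamma_2(x)$ are exactly the four entries identified above, so that the increment reduces to the rank-one matrix $4\,vv^{\top}$.
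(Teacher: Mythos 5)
Your proof is correct and is essentially the paper's own argument: the paper likewise observes that $\Delta(x)$ and $\Gamma(x)$ are unchanged and that $\Gamma_2'(x)-\Gamma_2(x)$ is supported on the $(S_1,S_1)$-block, where by \eqref{eq:Gamma2S1S1} it equals the positive semidefinite rank-one matrix $vv^\top$ with $v$ having entries $+1$ and $-1$ at $y_i,y_j$. Your extra bookkeeping confirming that distances and out-degrees are unaffected is a sound (and slightly more explicit) justification of the same computation.
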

\begin{proof}
Suppose that $G'$ is obtained from $G$ by adding a new edge $\{y_1,y_2\}$, where $y_1,y_2\in S_1(x)$. Then we have
$$\Delta'(x)=\Delta (x), \,\, \Gamma'(x)=\Gamma(x),$$
and
$$\Gamma_2'(x)-\Gamma_2(x)=\begin{pmatrix}
0 & 0 & 0 \\
0 & (\Gamma_2)_{S_1,S_1}'-(\Gamma_2)_{S_1,S_1} & 0\\
0 & 0 & 0
\end{pmatrix}.$$
By (\ref{eq:Gamma2S1S1}), we obtain that
\begin{equation}\label{eq:differenceMatrix}
(\Gamma_2)_{S_1,S_1}'-(\Gamma_2)_{S_1,S_1}=\begin{pmatrix}
1 & -1 &\cdots &0\\
-1 & 1 & \cdots & 0\\
\vdots &\vdots &\ddots &\vdots\\
0 & 0 & \cdots & 0
\end{pmatrix},
\end{equation}
is positive semidefinite. This completes the proof.
\end{proof}
\begin{remark} By Corollary \ref{cor:preciseCur}, $\K(G',x;\N)=\K(G,x;\N)$ if and only if the multiplicity of zero eigenvalue of $\Gamma_2'(x)-\frac{1}{\N}\Delta'(x)^\top\Delta'(x)-\K(G,x;\N)\Gamma'(x)$ is no smaller than $2$. Recall this happens in Example \ref{example:exceptions} \textbf{(e)}.
%
\end{remark}

Merging two vertices $z_1,z_2$ in $S_2(x)$ also does not decrease the curvature. Here we assume the two vertices $z_1,z_2$ do not have common neighbours. By merging two vertices $z_1,z_2$, we mean the following operations: Remove the possible edge connecting $z_1$ and $z_2$ and identify $z_1,z_2$ as a new vertex $z$, where edges incident to $z$ each correspond to an edge incident to either $z_1$ or $z_2$. The assumption that $z_1,z_2$ do not have common neighbours ensures that no multi-edge is produced by this operation.

\begin{proposition}\label{prop:S2merg}
Let $G=(V,E)$ be a graph and $x\in V$ be a vertex. Let $G''=(V'', E'')$ be the graph obtained from $G$ by merging two vertices in $S_2(x)$ which do not have common neighbours. Then we have for any $\N\in (0,\infty]$,
$$\K(G'',x;\N)\geq \K(G,x;\N).$$
\end{proposition}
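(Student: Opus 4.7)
The plan is to compare $M_{K,\N}(x)$ and $M_{K,\N}''(x)$ through their Schur complements with respect to the $(S_2,S_2)$-block, exploiting that this block is diagonal and positive definite (formula \eqref{eq:Gamma2S2S2}), so Proposition \ref{prop:MatrixVersion1} applies. I would begin by checking which pieces of $\Gamma_2(x)$, $\Delta(x)$, and $\Gamma(x)$ are actually disturbed by the merge. The matrices $\Delta(x)$ and $\Gamma(x)$ depend only on data inside $B_1(x)$ and so are manifestly invariant. For the $(B_1,B_1)$-block of $\Gamma_2(x)$, the entries in \eqref{eq:Gamma2xS1} and \eqref{eq:Gamma2S1S1} only see $d_x$, the out-degrees $d_{y_i}^+$, the spherical degrees $d_{y_i}^0$, and the spherical adjacencies $w_{y_iy_j}$. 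The no-common-neighbours hypothesis on $z_1,z_2$ is precisely what guarantees every $y_i\in S_1(x)$ adjacent to $\{z_1,z_2\}$ is adjacent to exactly one of them, so $d_{y_i}^+$ is unchanged when $z_k$ is absorbed into the merged vertex $z$. Thus only the $(B_1,S_2)$- and $(S_2,S_2)$-blocks of $\Gamma_2(x)$ are altered by the merge.

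Set $K:=\K(G,x;\N)$, so that $M_{K,\N}(x)\succeq 0$, equivalently $\Q(M_{K,\N}(x))\succeq 0$ by Proposition \ref{prop:MatrixVersion1}(i). The strategy is to show
$$\Q(M_{K,\N}''(x))-\Q(M_{K,\N}(x))\succeq 0,$$
because then $\Q(M_{K,\N}''(x))\succeq 0$, hence $M_{K,\N}''(x)\succeq 0$ by the same proposition, which is exactly $\K(G'',x;\N)\geq K$. By the invariance above, this difference localises to the change in the Schur subtraction $(\Gamma_2)_{B_1,S_2}(\Gamma_2)_{S_2,S_2}^{-1}(\Gamma_2)_{S_2,B_1}$. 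Diagonality of the $(S_2,S_2)$-block decomposes this quantity as a sum $\sum_{z_j\in S_2(x)} c_j^{-1}B_jB_j^\top$, where $c_j = d_{z_j}^-/4$ is the diagonal entry of $(\Gamma_2)_{S_2,S_2}$ at $z_j$, and $B_j$ is the column of $(\Gamma_2)_{B_1,S_2}$ indexed by $z_j$, read off from \eqref{eq:Gamma2xS2} and \eqref{eq:Gamma2S1S2}. Under the merge, $d_z^-=d_{z_1}^-+d_{z_2}^-$ and $w_{y_iz}=w_{y_iz_1}+w_{y_iz_2}$ (again by the no-common-neighbours condition), so the new column and weight satisfy $B_z=B_1+B_2$ and $c_z=c_1+c_2$, while all other $z_j$ contribute identically. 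Therefore the difference collapses to the purely two-term expression
$$\Q(M_{K,\N}(x))-\Q(M_{K,\N}''(x)) \;=\; \tfrac{1}{c_1}B_1B_1^\top+\tfrac{1}{c_2}B_2B_2^\top-\tfrac{1}{c_1+c_2}(B_1+B_2)(B_1+B_2)^\top.$$

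The crux of the proof, and the only real computation, is the rank-one identity
$$\tfrac{1}{c_1}B_1B_1^\top+\tfrac{1}{c_2}B_2B_2^\top-\tfrac{1}{c_1+c_2}(B_1+B_2)(B_1+B_2)^\top \;=\; \tfrac{1}{c_1c_2(c_1+c_2)}(c_2B_1-c_1B_2)(c_2B_1-c_1B_2)^\top,$$
which is verified by direct expansion of both sides. The right-hand side is manifestly positive semidefinite, so $\Q(M_{K,\N}''(x))\succeq \Q(M_{K,\N}(x))\succeq 0$, and the conclusion $\K(G'',x;\N)\geq \K(G,x;\N)$ follows. The only potential obstacle is checking that the no-common-neighbours hypothesis indeed yields exactly the additive structure $B_z=B_1+B_2$, $c_z=c_1+c_2$ that makes this convexity-type rank-one identity applicable; without that hypothesis, $w_{y_iz_1}+w_{y_iz_2}$ could exceed $1$, producing multi-edges in $G''$ and destroying the simple additive behaviour used here.
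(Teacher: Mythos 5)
Your argument is correct, but it takes a genuinely different route from the paper. The paper's proof is a one-line congruence: with $C''$ the $(|B_2(x)|-1)\times|B_2(x)|$ matrix that sums the two coordinates being merged, one checks directly from \eqref{eq:Gamma2xS2}--\eqref{eq:Gamma2S2S2} that $\Gamma_2''(x)=C''\,\Gamma_2(x)\,(C'')^\top$ while $\Delta$ and $\Gamma$ are untouched, so $M''_{K,\N}(x)=C''M_{K,\N}(x)(C'')^\top\succeq 0$ follows immediately from $M_{K,\N}(x)\succeq 0$ -- no computation with Schur complements is needed, and the no-common-neighbours hypothesis enters only to ensure $G''$ is simple and that the entries add as claimed. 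You instead pass to the reduced matrix $\Q(M_{K,\N})$ of Proposition \ref{prop:MatrixVersion1} and show that the merge \emph{increases} the Schur complement by the explicit rank-one positive semidefinite matrix $\tfrac{1}{c_1c_2(c_1+c_2)}(c_2B_1-c_1B_2)(c_2B_1-c_1B_2)^\top$; this is the standard joint-convexity identity for $(B,c)\mapsto c^{-1}BB^\top$, and your verification of the invariance of the $(B_1,B_1)$-block and of the additivity $B_z=B_1+B_2$, $c_z=c_1+c_2$ is sound. Your route is more computational but buys a quantitative statement: it exhibits exactly how much the matrix $\widehat{\P}_\N(x)$ (which by \eqref{eq:QMPN} is $4\Q(M_{K,\N})$ up to the $\lambda\cdot 2\Gamma$ term) grows under the merge, and in particular characterises equality as $c_2B_1=c_1B_2$. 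One small slip: in your displayed ``difference collapses to'' equation the left-hand side should read $\Q(M''_{K,\N}(x))-\Q(M_{K,\N}(x))$ rather than the reverse order, as your stated strategy and final conclusion make clear; with that sign corrected the proof is complete.
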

\begin{proof}
Suppose that $G''$ is obtained from $G$ by merging the two vertices $z_{|S_2(x)|-1},z_{|S_2(x)|}$ in $S_2(x)$ which do not have common neighbours. Then we have $\D''(x)=\D(x)$, $\Gamma''(x)=\Gamma(x)$. Let $C''$ be a $|B_2(x)|-1$
by $|B_2(x)|$ matrix
$$C''=\begin{pmatrix}
I_{|B_2(x)|-2} & \mathbf{0}_{|B_2(x)|-2}& \mathbf{0}_{|B_2(x)|-2}\\
\mathbf{0}_{|B_2(x)|-2}^\top & 1 & 1
\end{pmatrix}.
$$
Then we have
$$\Gamma_2''(x)=C''\Gamma_2(x)(C'')^\top.$$
Therefore, we have
\begin{align*}
&\Gamma_2''(x)-\frac{1}{\N}(\D''(x))^\top\D''(x)-\K(G,x,\N)\Gamma''(x)\\
=&C''(\Gamma_2(x)-\frac{1}{\N}\D(x)^\top\D(x)-\K(G,x,\N)\Gamma(x))(C'')^\top\geq 0.
\end{align*}
This finishes the proof.
\end{proof}

We believe the following property is true.
\begin{conjecture}
Let $G=(V,E)$ be a graph and $x\in V$ be a vertex. Let $G'''=(V''', E''')$ be the graph obtained from $G$ by one of the following two operations:
\begin{itemize}
\item Delete a leaf in $S_2(x)$ and its incident edge.
\item Delete $z\in S_2(x)$ and its incident edges $\{\{y,z\}\in E: y\in S_1(x)\}$; Adding an new edge between every two of $\{y\in S_1(x): \{y,z\}\in E\}$.
\end{itemize}
Then we have for any $\N\in (0,\infty]$,
$$\K(G''',x;\N)\geq \K(G,x;\N).$$
\end{conjecture}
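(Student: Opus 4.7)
The plan is to prove both operations via function-extension. In both cases $z$ is the only removed vertex and it lies in $S_2(x)$, so $B_1(x)$ is unchanged, which forces $\Delta(x)$ and $\Gamma(x)$ to be identical for $G$ and $G'''$. Therefore, if for every $f : V(G''') \to \mathbb{R}$ one can produce an extension $\tilde f : V(G) \to \mathbb{R}$ with $\tilde f|_{V(G''')} = f$ and $\Gamma_2^G(\tilde f)(x) \le \Gamma_2^{G'''}(f)(x)$, then $CD(K,\N)$ at $x$ for $G$ applied to $\tilde f$ immediately yields $CD(K,\N)$ at $x$ for $G'''$ applied to $f$, giving $\K(G''',x;\N) \ge \K(G,x;\N)$. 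Since $(4\Gamma_2^G)_{z,z} = d_z^- > 0$ by \eqref{eq:Gamma2S2S2}, the natural choice is the value of $c$ minimizing the quadratic-in-$c$ expression $4\Gamma_2^G(\tilde f_c)(x)$ with $\tilde f_c(z) = c$.

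For the first operation, let $z$ be the leaf with unique neighbor $y \in S_1(x)$. Then the minimizing $c$ is $c^\ast = 2f(y) - f(x)$. Using the formulas \eqref{eq:Gamma2xS1}--\eqref{eq:Gamma2S2S2}, the only differences between $4\Gamma_2^G$ and $4\Gamma_2^{G'''}$ are in the $(x,y)$ and $(y,y)$ entries (reflecting $d_y^{+,G'''} = d_y^{+,G}-1$) together with the additional $z$ row/column in $G$; a direct computation then yields the clean identity
\begin{equation*}
 4\Gamma_2^G(\tilde f_{c^\ast})(x) - 4\Gamma_2^{G'''}(f)(x) = -(f(x) - f(y))^2 \le 0,
\end{equation*}
which closes the first case.

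For the second operation I would first apply the preceding ``adding spherical edges in $S_1(x)$'' proposition to complete $N := \{y \in S_1(x) : \{y,z\} \in E\}$ to a clique in $S_1(x)$, obtaining an intermediate graph $G_1$ with $\K(G_1,x;\N) \ge \K(G,x;\N)$. It then remains to show that deleting $z$ from $G_1$ (where its $S_1(x)$-neighborhood $N$ is already a clique) does not decrease $\K(\cdot,x;\N)$. The analogue of the leaf extension, $\tilde f(z) = \frac{2}{|N|}\sum_{y \in N} f(y) - f(x)$, which is again the minimizing $c$, now gives
\begin{equation*}
 4\Gamma_2^{G_1}(\tilde f)(x) - 4\Gamma_2^{G'''}(f)(x) = 3\,\mathrm{Var}_N(f) - |N|\bigl(f(x) - \overline{f}_N\bigr)^2,
\end{equation*}
where $\overline{f}_N = \frac{1}{|N|}\sum_{y\in N} f(y)$ and $\mathrm{Var}_N(f) = \sum_{y \in N}(f(y) - \overline{f}_N)^2$. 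The right-hand side can be strictly positive (for example when $f(x) = \overline{f}_N$ but $f$ is non-constant on $N$), so the pointwise function-extension method alone cannot close the argument in this clique case.

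The main obstacle is therefore this last clique step. Small-case computations (for example $G_1$ obtained by joining an extra vertex $z$ to $|N|$ of the vertices of $K_{d_x+1}$ other than $x$) consistently give the shift $\K(G''',x;\N) - \K(G_1,x;\N) = |N|/(2d_x)$, which is precisely the shift in $\K_\infty^0(x)$ produced by removing $z$. This suggests that the correct proof in the clique step should be sought at the matrix level via Theorem \ref{thm:Sharpness}: directly compare $\widehat{\P}_\N^{G_1}$ with $\widehat{\P}_\N^{G'''}$ and show that the smallest-eigenvalue lower bound improves by exactly $|N|/(2d_x)$. The obvious Schur-complement monotonicity is not available, because the correction matrix $D - bb^\top/|N|$ (with $D$ encoding the $B_1\times B_1$ change in $4\Gamma_2$ and $b$ the $z$-column) is indefinite; the two most promising alternatives are an induction on $|N|$ based on splitting $z$ into two vertices of smaller out-neighborhoods (so that the base case $|N|=1$ reduces to the first operation already handled above), or the construction of an explicit dual SDP witness that certifies $M_{K,\N}^{G'''} \ge 0$ from a corresponding witness for $M_{K,\N}^{G_1} \ge 0$.
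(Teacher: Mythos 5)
First, a point of reference: the paper itself offers no proof of this statement --- it is explicitly posed as a conjecture (``We believe the following property is true''), so there is no argument of the authors' to compare yours against. Your proposal should therefore be judged on its own terms, and on those terms it establishes the first operation but not the second. The first half is sound: since $z\in S_2(x)$, the matrices $\Delta(x)$ and $\Gamma(x)$ are identical for $G$ and $G'''$, so an extension $\tilde f$ of $f$ with $\Gamma_2^G(\tilde f)(x)\le \Gamma_2^{G'''}(f)(x)$ does transfer the $CD$ inequality from $G$ to $G'''$. I checked your computation for the leaf case: with the changed entries $(4\Gamma_2)_{x,y}$ (shift $-1$), $(4\Gamma_2)_{y,y}$ (shift $+3$) and the $z$-row entries $(1,-2,1)$, the minimizing value $c^\ast=2f(y)-f(x)$ indeed yields $4\Gamma_2^G(\tilde f_{c^\ast})(x)-4\Gamma_2^{G'''}(f)(x)=-(f(x)-f(y))^2\le 0$. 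This is a correct and complete proof of the first bullet point, and a genuine partial contribution beyond what the paper records.

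The gap is the second operation, and you have identified it yourself. The reduction via the spherical-edge proposition to the case where $N=\{y:\{y,z\}\in E\}$ is already a clique is legitimate, and your identity $4\Gamma_2^{G_1}(\tilde f)(x)-4\Gamma_2^{G'''}(f)(x)=3\,\mathrm{Var}_N(f)-|N|\bigl(f(x)-\overline{f}_N\bigr)^2$ is correct (I verified it); since this is positive for $f(x)=\overline{f}_N$ with $f$ non-constant on $N$, the pointwise extension with the optimal $c$ provably cannot work, and no other choice of $\tilde f(z)$ can do better than the minimizer. The two escape routes you name --- induction by splitting $z$, or an explicit dual SDP witness transferring positive semidefiniteness of $M_{K,\N}$ --- are only sketched, not executed, so the second bullet point of the conjecture remains unproven in your write-up. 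One further degree of freedom you have not exploited is that the transfer argument only requires $\tilde f=f$ on $B_1(x)$, so you are free to perturb $f$ on the surviving vertices of $S_2(x)$ as well; whether that extra freedom can absorb the $3\,\mathrm{Var}_N(f)$ term is unclear, but as it stands the proposal does not prove the full statement.
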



\section{Curvature functions of Cartesian products}\label{section:Cartesian}
In this section, we show that the curvature functions of a Cartesian product can be explicitly determined from curvature functions of the factors.

\subsection{$*$-product of functions}
We first discuss an abstract product between functions. Let us denote by $\mathcal{FK}$ the set of continuous, monotone non-decreasing functions $f:(0,\infty]\to \mathbb{R}$ with $$\lim_{\N\to 0}f(\N)=-\infty.$$
Recalling Proposition \ref{prop:cur_function}, curvature functions lie in $\FK$.

For any $\N\in(0,\infty)$, let $D_k(\N)$ be the following set:
\begin{equation}\label{eq:DkN}
D_k(\N):=\left\{(\N_1,\ldots,\N_k): \N_j > 0 \,\,\text{ for all }\,\,j\,\,\text{and}\,\,\N_1+\cdots+\N_k=\N.\right\}.
\end{equation}

\begin{definition}[$*$-product]\label{defn:star_operation} For two functions $f_1, f_2\in\FK$, we define $f_1*f_2:(0,\infty]\to \mathbb{R}$ as follows. For any $\N\in (0,\infty)$, let
\begin{equation}\label{eq:star_oper_defn}
f_1*f_2(\N):=f_1(\N_1)=f_2(\N_2),
\end{equation}
where $(\N_1,\N_2)\in D_2(\N)$ is chosen such that
\begin{equation}\label{eq:star_oper_h}
f_1(\N_1)=f_2(\N_2).
\end{equation}
For $\N=\infty$, we define
\begin{equation}\label{eq:star_oper_infty}
f_1*f_2(\infty):=\lim_{\N\to \infty}f_1* f_2(\N).
\end{equation}
\end{definition}

\begin{remark}\label{remark:well_definedness}
Let us first verify that the \emph{$*$-product is well defined}. 

For two functions $f_1, f_2\in\FK$ and $\N\in~(0,\infty)$, there always exists a pair $(\N_1,\N_2)\in D_2(\N)$ satisfying (\ref{eq:star_oper_h}): We assume 
$f_1(\N/2) \le f_2(\N/2)$ without loss of generality (otherwise, we interchange the functions).
By the Intermediate Value Theorem, the monotonicity of $f_1$, and $\lim_{h \to \N/2} f_2(\N/2-h) = -\infty$, we can find
$0 < h < \N/2$ such that 
$$ f_1(\N_1 + \delta) = f_2(\N_2 - \delta). $$
Note that $(\N_1+\delta,\N_2-\delta) \in D_2(\N)$ is then the required pair.

Secondly, if there exist two pairs $(\N_1,\N_2), (\N_1', \N_2')\in D_2(\N)$ such that (\ref{eq:star_oper_h}) holds, then we have
\begin{equation}\label{eq:uniqueness}
f_1(\N_1)=f_1(\N_1') \,\, \text{and} \,\, f_2(\N_2)=f_2(\N_2').
\end{equation}
Due to the definition of $D_2(\N)$, we can choose $i,j$ such that $\{i,j\} = \{1,2\}$ and $\N_i\leq \N_i'$, $\N_j\geq \N_j'$. Then (\ref{eq:uniqueness}) follows from the monotonicity of $f_1$ and $f_2$.

Thirdly, the limit $\lim_{\N\to \infty}f_1*f_2(\N)$ exists. For any $0<\N_1\leq \N_2<\infty$, Let $(\N_{11}, \N_{12})\in D_2(\N_1), (\N_{21}, \N_{22})\in D_2(\N_2)$ be two tuples such that
$$f_1*f_2(\N_i)=f_1(\N_{i1})=f_2(\N_{i2}),\,\,\,i=1,2.$$
Since there always exists $j\in \{1,2\}$ with $\N_{1j}\leq \N_{2j}$, we have, from the monotonicity of $f_j$,
$$f_1*f_2(\N_1) = f_j(\N_{1j}) \le f_j(\N_{2j}) = f_1*f_2(\N_2) \le f_j(\infty).$$
That is, $f_1*f_2$ is a monotone non-decreasing function, bounded above by $f_j(\infty) < \infty$. Therefore, the limit exists.
\end{remark}

We have the following equivalent definition of the $*$-product.
\begin{proposition}\label{prop:product_min_max}
Let $f_1, f_2\in \FK$ and $F:(0,\infty]\to \mathbb{R}$. Then $F=f_1*f_2$ if and only if  we have, for any $\N_1,\N_2\in (0,\infty)$,
\begin{equation}\label{eq:product_min_max}
\min\{f_1(\N_1),f_2(\N_2)\}\leq F(\N_1+\N_2)\leq \max \{f_1(\N_1),f_2(\N_2)\},
\end{equation}
and $F(\infty)=\lim_{\N\to \infty}F(\N)$.
\end{proposition}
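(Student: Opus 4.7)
My plan is to prove the equivalence by exploiting the defining property of the $*$-product: for a given $\N$, the pair $(\N_1^*,\N_2^*) \in D_2(\N)$ that realizes $f_1 * f_2(\N)$ is characterized by the balance condition $f_1(\N_1^*) = f_2(\N_2^*)$. Both directions will be short because the inequality \eqref{eq:product_min_max} is essentially a consequence of monotonicity together with this balance condition.

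For the forward direction, I assume $F = f_1 * f_2$ and fix arbitrary $\N_1, \N_2 \in (0,\infty)$, setting $\N := \N_1 + \N_2$. Pick $(\N_1^*, \N_2^*) \in D_2(\N)$ with $f_1 * f_2(\N) = f_1(\N_1^*) = f_2(\N_2^*)$, whose existence is guaranteed by Remark \ref{remark:well_definedness}. Since $\N_1 + \N_2 = \N_1^* + \N_2^*$, at least one of the two inequalities $\N_1 \le \N_1^*$ or $\N_2 \le \N_2^*$ must hold, and correspondingly the other is reversed. Applying the monotonicity of $f_1$ and $f_2$ to the corresponding components immediately yields that $F(\N)$ lies between $f_1(\N_1)$ and $f_2(\N_2)$, giving \eqref{eq:product_min_max}. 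The condition $F(\infty) = \lim_{\N \to \infty} F(\N)$ then holds by definition \eqref{eq:star_oper_infty}.

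For the reverse direction, assume \eqref{eq:product_min_max} holds for all $\N_1, \N_2 \in (0,\infty)$ and $F(\infty) = \lim_{\N \to \infty} F(\N)$. Given any $\N \in (0,\infty)$, again select $(\N_1^*, \N_2^*) \in D_2(\N)$ with $f_1(\N_1^*) = f_2(\N_2^*) = f_1 * f_2(\N)$. Plugging this specific pair into the hypothesis forces
\[
f_1 * f_2(\N) = \min\{f_1(\N_1^*),f_2(\N_2^*)\} \le F(\N) \le \max\{f_1(\N_1^*),f_2(\N_2^*)\} = f_1 * f_2(\N),
\]
so $F(\N) = f_1 * f_2(\N)$ on $(0,\infty)$. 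Passing to the limit $\N \to \infty$ using the assumption on $F(\infty)$ together with \eqref{eq:star_oper_infty} extends the equality to $\N = \infty$.

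There is no significant obstacle here; the only point requiring mild care is the case distinction in the forward direction on which of $\N_1, \N_2$ exceeds its starred counterpart, and the appeal to Remark \ref{remark:well_definedness} to know that a balanced pair realizing $f_1 * f_2(\N)$ always exists so that the squeeze argument in the reverse direction can be applied.
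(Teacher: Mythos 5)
Your proof is correct and follows essentially the same route as the paper: both directions hinge on selecting a balanced pair $(\N_1^*,\N_2^*)$ with $f_1(\N_1^*)=f_2(\N_2^*)=f_1*f_2(\N)$ (whose existence comes from Remark \ref{remark:well_definedness}), using monotonicity plus the pigeonhole observation on $\N_1+\N_2=\N_1^*+\N_2^*$ for the forward direction, and the squeeze from \eqref{eq:product_min_max} for the converse. No gaps.
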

\begin{proof}
If $F=f_1*f_2$, there exists a tuple $(\N_1', \N_2')\in D_2(\N_1+\N_2)$ such that
$$F(\N_1+\N_2)=f_1(\N_1')=f_2(\N_2').$$
Recall there always exist $\{i,j\}= \{1,2\}$ such that $\N_i\leq \N_i'$ and $\N_j\geq \N_j'$. Then we obtain (\ref{eq:product_min_max}) by the monotonicity of $f_1,\ldots, f_k$. $F(\infty)=\lim_{\N\to \infty}F(\N)$ follows directly from Definition \ref{defn:star_operation}.

Let $F:(0,\infty]\to \mathbb{R}$ be a function satisfying (\ref{eq:product_min_max}) and $F(\infty)=\lim_{\N\to \infty}F(\N)$. By Remark \ref{remark:well_definedness}, for any $\N\in (0,\infty)$, there exists a pair $(\N_1,\N_2)\in D_2(\N)$, such that
$$f(\N_1)=f_2(\N_2).$$
Therefore, (\ref{eq:product_min_max}) implies $F(\N)=f_1*f_2(\N)$, finishing the proof.
\end{proof}

\begin{corollary}\label{cor:product_infty}
Let $f_1,f_2\in \FK$. Then for $\N\in (0,\infty)$, we have
\begin{equation}\label{eq:product_infty_1}
f_1*f_2(\N)\leq \min\{f_1(\N), f_2(\N)\}.
\end{equation}
For $\N=\infty$, we have
\begin{equation}\label{eq:product_infty_2}
f_1*f_2(\infty)=\min\{f_1(\infty), f_2(\infty)\}.
\end{equation}
\end{corollary}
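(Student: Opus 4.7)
The proof is short and follows almost immediately from Definition \ref{defn:star_operation} and Proposition \ref{prop:product_min_max}, together with the monotonicity inherent to $\FK$. I would organize it in three short steps.

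\textbf{Step 1: Upper bound on $(0,\infty)$.} For $\N \in (0,\infty)$, by Definition \ref{defn:star_operation} there exists $(\N_1,\N_2) \in D_2(\N)$ such that
$$f_1*f_2(\N) = f_1(\N_1) = f_2(\N_2).$$
Since $\N_1, \N_2 > 0$ and $\N_1 + \N_2 = \N$, both $\N_1 < \N$ and $\N_2 < \N$. By monotonicity of $f_1$ and $f_2$,
$$f_1*f_2(\N) = f_1(\N_1) \le f_1(\N) \quad\text{and}\quad f_1*f_2(\N) = f_2(\N_2) \le f_2(\N),$$
which yields \eqref{eq:product_infty_1}.

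\textbf{Step 2: Upper bound at $\N=\infty$.} Passing to the limit $\N \to \infty$ in \eqref{eq:product_infty_1} and using the continuity of $f_1$ and $f_2$ at $\infty$, together with the definition $f_1*f_2(\infty) = \lim_{\N \to \infty} f_1 * f_2(\N)$, gives $f_1*f_2(\infty) \le \min\{f_1(\infty), f_2(\infty)\}$.

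\textbf{Step 3: Lower bound at $\N=\infty$.} Set $c := \min\{f_1(\infty), f_2(\infty)\}$ and fix $\eps > 0$. Since $f_i$ is continuous, monotone non-decreasing, and $f_i(\infty) \ge c > c-\eps$, there exist $\N_i^* > 0$ with $f_i(\N_i^*) \ge c - \eps$ for $i = 1,2$. For any $\N \ge \N_1^* + \N_2^*$, choose a decomposition $\N = \N_1' + \N_2'$ with $\N_i' \ge \N_i^*$. Then by monotonicity $f_i(\N_i') \ge f_i(\N_i^*) \ge c-\eps$, and Proposition \ref{prop:product_min_max} gives
$$f_1*f_2(\N) \ge \min\{f_1(\N_1'), f_2(\N_2')\} \ge c - \eps.$$
Letting $\N \to \infty$ yields $f_1*f_2(\infty) \ge c - \eps$, and then $\eps \to 0$ finishes the proof.

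The argument is essentially routine; the only mild subtlety is the lower bound in Step 3, where one needs to invoke the lower estimate from Proposition \ref{prop:product_min_max} rather than the definition of $*$-product directly (since the defining decomposition $(\N_1,\N_2)$ may not provide $\N_i \ge \N_i^*$ a priori). Once the decomposition is chosen appropriately, the inequality follows.
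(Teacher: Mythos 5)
Your proof is correct and follows essentially the same route as the paper: the upper bound comes from monotonicity applied to the defining decomposition, and the lower bound at $\N=\infty$ from the lower estimate in Proposition \ref{prop:product_min_max} followed by a limit. The paper shortens your Step 3 by simply taking the symmetric decomposition $(\N/2,\N/2)$, which gives $\min\{f_1(\N/2),f_2(\N/2)\}\leq f_1*f_2(\N)$ and converges directly to $\min\{f_1(\infty),f_2(\infty)\}$ without the $\eps$-bookkeeping, but your version is equally valid.
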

\begin{proof}
For $\N \in (0,\infty)$, assume $f_1(\N) \le f_2(\N)$ without loss of generality. Note from the definition of the $*$-product that there exists $\N_1 \in (0,N]$ with
$$ f_1*f_2(\N) = f_1(\N_1). $$
Using monotonicity of $f_1$, we conclude that
$$ f_1*f_2(\N) \le f_1(\N) = \min\{ f_1(\N),f_2(\N) \}, $$
proving (\ref{eq:product_infty_1}). Furthermore, we have, by (\ref{eq:product_min_max}) and (\ref{eq:product_infty_1}),
$$\min\{f_1(\N/2),f_2(\N/2)\}\leq f_1*f_2(\N)\leq \min\{f_1(\N),f_2(\N)\}.$$
Letting $\N\to \infty$, we prove (\ref{eq:product_infty_2}).
\end{proof}

We further have the following basic properties of the $*$-product.
\begin{proposition}\label{prop:closeness}
Let $f_1,f_2\in \FK$. Then we have the following properties:
\begin{itemize}
\item [(i)] (Commutativity) $f_1*f_2=f_2*f_1$.
\item [(ii)](Closedness) $f_1*f_2\in \FK$.
\end{itemize}
\end{proposition}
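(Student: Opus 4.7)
\medskip

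\noindent
\textbf{Proof proposal for Proposition \ref{prop:closeness}.}

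For part (i), the commutativity is built into the definition. Given $\N \in (0,\infty)$ and a pair $(\N_1,\N_2) \in D_2(\N)$ with $f_1(\N_1) = f_2(\N_2)$, the pair $(\N_2,\N_1) \in D_2(\N)$ serves equally well for $f_2*f_1$ and produces the same common value, so $f_1*f_2(\N) = f_2*f_1(\N)$ on $(0,\infty)$. Passing to the limit as $\N \to \infty$ then gives the identity at $\infty$.

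For part (ii), there are three things to check: monotonicity, the limit $\lim_{\N\to 0}(f_1*f_2)(\N) = -\infty$, and continuity on $(0,\infty]$. The monotonicity of $F := f_1*f_2$ was already established in Remark \ref{remark:well_definedness}. The limit at $0$ is immediate from Corollary \ref{cor:product_infty}: since $F(\N) \le \min\{f_1(\N),f_2(\N)\}$ on $(0,\infty)$ and both $f_1(\N), f_2(\N) \to -\infty$ as $\N \to 0$, we have $F(\N) \to -\infty$. Continuity at $\infty$ is also immediate, since $F(\infty)$ is defined as $\lim_{\N\to\infty} F(\N)$ and $F$ is monotone on $(0,\infty)$.

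The main obstacle is continuity on the open interval $(0,\infty)$. My plan is to prove sequential continuity at a fixed point $\N \in (0,\infty)$. Take any sequence $\N^{(k)} \to \N$, and for each $k$ select $(\N_1^{(k)}, \N_2^{(k)}) \in D_2(\N^{(k)})$ realizing $f_1(\N_1^{(k)}) = f_2(\N_2^{(k)}) = F(\N^{(k)})$, whose existence is guaranteed by Remark \ref{remark:well_definedness}. Since $\N_1^{(k)} + \N_2^{(k)} = \N^{(k)}$ is bounded, after passing to a subsequence we may assume $\N_i^{(k)} \to \N_i^* \in [0,\N]$ with $\N_1^* + \N_2^* = \N$. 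The crucial observation is that neither $\N_i^*$ can be zero: if, say, $\N_1^* = 0$, then $f_1(\N_1^{(k)}) \to -\infty$ while $f_2(\N_2^{(k)}) \to f_2(\N_2^*) = f_2(\N) \in \mathbb{R}$, contradicting the equality $f_1(\N_1^{(k)}) = f_2(\N_2^{(k)})$. Hence $(\N_1^*, \N_2^*) \in D_2(\N)$ with $f_1(\N_1^*) = f_2(\N_2^*)$, so by the well-definedness of the $*$-product this common value is $F(\N)$, and $F(\N^{(k)}) \to F(\N)$ along the subsequence. Applying this argument to every subsequence of the original sequence shows $F(\N^{(k)}) \to F(\N)$ unconditionally, giving continuity at $\N$.
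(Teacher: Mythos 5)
Your proposal is correct, and parts (i), the monotonicity, and the limit at $0$ match the paper's treatment (the paper cites Proposition \ref{prop:product_min_max} rather than Corollary \ref{cor:product_infty} for the limit at $0$, but these are interchangeable here). Where you genuinely diverge is the continuity step on $(0,\infty)$. The paper's argument is a direct squeeze: taking the pair $(\N_1,\N_2)$ realizing $F(\N)$, it applies Proposition \ref{prop:product_min_max} to the splitting $(\N_1,\N_2+\epsilon)$ of $\N+\epsilon$ to get $f_2(\N_2)\leq F(\N+\epsilon)\leq f_2(\N_2+\epsilon)$, and concludes from the continuity of $f_2$ alone. Your argument instead extracts convergent subsequences of the realizing pairs $(\N_1^{(k)},\N_2^{(k)})$, rules out degeneration to the boundary of the simplex via the $\lim_{t\to 0}f_i(t)=-\infty$ hypothesis, and invokes well-definedness to identify the limit; this is sound, including the final every-subsequence-has-a-further-subsequence step. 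The paper's route is shorter and uses only the min--max sandwich plus continuity of one factor; yours is a bit longer but bypasses Proposition \ref{prop:product_min_max} entirely for this step and additionally shows that the realizing splitting points themselves converge (any limit point of realizing pairs for $\N^{(k)}$ is a realizing pair for $\N$), which is a slightly stronger structural fact than continuity of $F$ alone.
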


\begin{proof}
(i) is obvious from the definition.

For (ii), recall first that we have shown monotonicity of $f_1*f_2$ in Remark \ref{remark:well_definedness}.

For any $\N\in (0,\infty)$, let $(\N_1,\N_2)\in D_2(\N)$ be the pair with
$$f_1*f_2(\N)=f_1(\N_1)=f_2(\N_2).$$
Then by monotonicity of $f_2$ and Proposition \ref{prop:product_min_max}, we have for any $\epsilon>0$,
$$f_2(\N_2)\leq f_1*f_2(\N+\epsilon)\leq f_2(\N_2+\epsilon).$$
By continuity of $f_2$, we see $f_1*f_2$ is continuous at $\N$. Since by definition, $f_1*f_2(\infty):=\lim_{\N\to \infty}f_1*f_2(\N)$, we prove that $f_1*f_2:(0,\infty]\to\mathbb{R}$ is continuous.

The property $\lim_{\N\to 0}f_1*f_2(\N)=-\infty$ follows directly from Proposition \ref{prop:product_min_max} and $\lim_{\N\to 0}f_i(\N)=-\infty$, $i=1,2$. This finishes the proof of $f_1*f_2\in \FK$.
\end{proof}

The following proposition shows associativity of the $*$-product and implies, therefore,
the well-definedness of $k$-fold products. 

\begin{proposition}\label{prop:higherprod}
For $f_1,f_2,f_3 \in \FK$, we have associativity
$$ (f_1*f_2)*f_3  = f_1*(f_2*f_3). $$
For functions $f_1,\dots,f_k \in \FK$, $k \ge 2$, and $\N \in (0,\infty)$, we have
$$ f_1*\cdots*f_k(\N) = f_1(\N_1) = \cdots f_k(\N_k), $$
for any tuple $(\N_1,\dots,\N_k) \in D_k(\N)$ with
$$ f_1(\N_1) = \cdots =f_k(\N_k). $$
Moreover, such a tuple always exists. For $\N = \infty$, we have
\begin{equation}\label{eq:higherprod_infty}
f_1*\cdots*f_k(\infty) = \min\{ f_1(\infty),\cdots,f_k(\infty) \}. 
\end{equation}
In particular, we have
\begin{equation} \label{eq:higherprod} 
\underbrace{f*\cdots *f}_{k}(\N)=f(\N/k), \quad \forall\,\, \N\in (0,\infty]. 
\end{equation}
\end{proposition}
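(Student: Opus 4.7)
The plan is to establish associativity first, then inductively define the $k$-fold product and read off its characterization, the value at $\N=\infty$, and the diagonal case.

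For associativity at $\N\in(0,\infty)$, I would unfold both $(f_1*f_2)*f_3(\N)$ and $f_1*(f_2*f_3)(\N)$ via Definition \ref{defn:star_operation}. Each unfolding produces a triple $(\M_1,\M_2,\M_3)\in D_3(\N)$ ``equalizing'' the three functions, i.e.\ with $f_1(\M_1)=f_2(\M_2)=f_3(\M_3)$, and this common value is exactly the corresponding product. Thus it suffices to prove the following uniqueness lemma: \emph{if $(\N_1,\N_2,\N_3)$ and $(\N_1',\N_2',\N_3')$ in $D_3(\N)$ are both equalizing triples, then their common values agree}. Since both tuples sum to $\N$, it cannot happen that $\N_j>\N_j'$ for every $j$; hence some $j_0$ satisfies $\N_{j_0}\leq \N_{j_0}'$, and monotonicity of $f_{j_0}$ combined with the equalizing property forces the unprimed common value to be $\leq$ the primed one. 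The reverse inequality follows by symmetry.

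Given associativity, I would define the $k$-fold product inductively as $f_1*\cdots*f_k := (f_1*\cdots*f_{k-1})*f_k$, noting closedness of $\FK$ under $*$ by Proposition \ref{prop:closeness}(ii). The characterization is then proved by induction on $k$: existence of an equalizing tuple follows from the inductive hypothesis (which supplies the first $k-1$ coordinates for any chosen value of $f_1*\cdots*f_{k-1}$) together with the $k=2$ existence case recorded in Remark \ref{remark:well_definedness} applied to the pair $(f_1*\cdots*f_{k-1}, f_k)$ (which supplies the last coordinate); and the uniqueness of the common value is exactly the same pigeonhole-plus-monotonicity argument, now applied to $k$-tuples. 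The formula \eqref{eq:higherprod_infty} is immediate by induction from Corollary \ref{cor:product_infty}: one has $f_1*\cdots*f_k(\infty)=\min\{(f_1*\cdots*f_{k-1})(\infty), f_k(\infty)\}$, and the inner minimum is then reduced by the inductive hypothesis. Finally, \eqref{eq:higherprod} follows by observing that $(\N/k,\dots,\N/k)\in D_k(\N)$ trivially equalizes $f,\dots,f$ for $\N\in(0,\infty)$, while at $\N=\infty$ both sides equal $f(\infty)$ under the convention $\infty/k=\infty$.

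I expect the main obstacle to be the uniqueness step: checking that \emph{every} equalizing tuple in $D_k(\N)$ yields the same common value of $f_1,\dots,f_k$. Once this lemma is isolated and proved via the simple pigeonhole ``some coordinate must be no larger'' together with monotonicity of the relevant $f_{j_0}$, all other pieces collapse into a routine induction built on the already established $k=2$ theory.
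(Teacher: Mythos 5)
Your proposal is correct and follows essentially the same route as the paper: unfold both bracketings into equalizing triples in $D_3(\N)$, reduce to the $k$-fold statement, and handle $\N=\infty$ via Corollary \ref{cor:product_infty} and the diagonal case via the tuple $(\N/k,\dots,\N/k)$. The only difference is that you explicitly isolate and prove the uniqueness lemma (every equalizing tuple yields the same common value, by pigeonhole plus monotonicity), a step the paper leaves implicit as an extension of the $k=2$ argument in Remark \ref{remark:well_definedness}; making it explicit is a welcome clarification but not a different method.
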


\begin{proof} Let $f_1,f_2,f_3 \in \FK$ and $\N \in (0,\infty)$. Closedness implies that we have $f_1 * f_2 \in \FK$ and, by Definition \ref{defn:star_operation}, we have $\N_1',\N_3 > 0$ with $\N_1' + \N_3 = \N$ such that
$$ (f_1*f_2) * f_3 (\N) = f_1*f_2(\N_1') = f_3(\N_3). $$
Applying Definition \ref{defn:star_operation} again, we find $\N_1, \N_2 > 0$ with
$\N_1 + \N_2 = \N_1'$ such that
$$ f_1*f_2(\N_1') = f_1(\N_1) = f_2(\N_2). $$
Combining these facts, we obtain
$$ (f_1*f_2) * f_3 (\N) = f_1(\N_1) = f_2(\N_2) = f_3(\N_3) $$
with $\N_1 + \N_2 + \N_3 = \N$. Similarly, we show the same result for the product
$f_1*(f_2*f_3)$, which implies associativity for arguments $\N \in (0,\infty)$. The same arguments lead to the the statement about the $*$-product of $k$ functions $f_1,\dots,f_k \in \FK$ for arguments $\N \in (0,\infty)$ in the proposition. Associativity and the statements about $*$-products of $k$ functions for the argument $\N = \infty$ in the proposition are easy applications of (\ref{eq:product_infty_2}). 

Finally, \eqref{eq:higherprod} can be shown by taking the tuple $(\N/k,\ldots, \N/k)\in D_k(\N)$ when $\N\in (0,\infty)$, and by using \eqref{eq:higherprod_infty} when $\N=\infty$.
\end{proof}


\begin{proposition}\label{prop:product_distinguish}
Let $f_1,f_2,g\in \FK$. The following are equivalent:
\begin{itemize}
\item [(i)]$f_1*g\neq f_2*g$.
\item [(ii)] There exist $\N_0\in(0,\infty]$ such that
$f_1(\N_0)\neq f_2(\N_0)$ and $f_1(\N_0),f_2(\N_0)\leq g(\infty)$.
\end{itemize}
\end{proposition}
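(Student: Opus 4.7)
The plan is to prove the two implications separately, exploiting the realizing decomposition from Proposition~\ref{prop:product_min_max}: for finite $\N$, $f_i*g(\N) = f_i(\N_1) = g(\N_2)$ for some $\N_1+\N_2=\N$, and this common value lies in $(-\infty,g(\infty)]$. For $\N=\infty$, Corollary~\ref{cor:product_infty} gives $f_i*g(\infty) = \min\{f_i(\infty),g(\infty)\}$.

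For $(ii) \Rightarrow (i)$, I would assume without loss of generality $f_1(\N_0) < f_2(\N_0) \le g(\infty)$, and first reduce to the case where $f_2(\N_0') < g(\infty)$ strictly. This reduction is automatic unless $f_2(\N_0) = g(\infty)$, in which case I slide $\N_0$ just below $\N_2^{\mathrm{low}}:=\inf\{\N : f_2(\N) = g(\infty)\}$; the gap $f_2(\N_0') - f_1(\N_0')$ stays strictly positive by continuity since $f_1(\N_2^{\mathrm{low}}) \le f_1(\N_0) < g(\infty) = f_2(\N_2^{\mathrm{low}})$. (If $f_2$ never attains $g(\infty)$ finitely, any sufficiently large finite $\N_0'$ works.) Next, using the IVT on $g$ together with $\lim_{\N\to 0}g = -\infty$, pick the smallest $\N_2^* > 0$ with $g(\N_2^*) = f_2(\N_0')$; this $\N_2^*$ is finite. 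Setting $\N := \N_0' + \N_2^*$, the decomposition $(\N_0',\N_2^*)$ realizes $f_2*g(\N) = f_2(\N_0')$. For the $f_1$-crossing at the same $\N$, monotonicity together with $f_1(\N_0') < g(\N_2^*)$ forces the crossing point $\N_1$ to satisfy $\N_1 > \N_0'$, hence $\N_2 = \N - \N_1 < \N_2^*$, and the minimality of $\N_2^*$ yields $g(\N_2) < f_2(\N_0')$. Therefore $f_1*g(\N) < f_2*g(\N)$, giving (i).

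For $(i) \Rightarrow (ii)$, I would argue the contrapositive. Suppose for every $\N_0 \in (0,\infty]$ either $f_1(\N_0) = f_2(\N_0)$ or $\max\{f_1(\N_0),f_2(\N_0)\} > g(\infty)$, and put $T_i := \sup\{\N : f_i(\N) \le g(\infty)\}$, $T^* := \min(T_1,T_2)$. On $(0,T^*]$ the maximum of $f_1,f_2$ is $\le g(\infty)$, so the first alternative must hold and $f_1 \equiv f_2$ there. Now fix a finite $\N$ and a crossing $(\N_1,\N_2)$ realizing $v := f_1*g(\N) = f_1(\N_1) = g(\N_2)$. When $v < g(\infty)$, monotonicity forces $\N_1 < T^*$, giving $f_2(\N_1)=v$ and $f_2*g(\N) = v$. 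When $v = g(\infty)$, both $f_1$ and $f_2$ attain $g(\infty)$ at a common earliest point $\N_0^*$ (by $f_1 \equiv f_2$ on $(0,T^*]$ and continuity), and $g$ must also plateau at $g(\infty)$ starting from some finite $\N_g \le \N_2$; then either $\N_1 \le T_2$ (so $(\N_1,\N_2)$ realizes $f_2*g(\N)$ directly) or $\N_1 > T_2$, in which case the substitute pair $(T_2, \N - T_2)$ is a valid $f_2$-crossing since $\N - T_2 > \N_2 \ge \N_g$ and $f_2(T_2) = g(\N - T_2) = g(\infty)$. The case $\N = \infty$ follows from Corollary~\ref{cor:product_infty} together with a short case distinction on the signs of $f_i(\infty) - g(\infty)$ under the contrapositive hypothesis.

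The main technical obstacle is the boundary case $v = g(\infty)$ in the contrapositive direction: the $f_1$-crossing $\N_1$ may land in the interval $(T_2,T_1]$ where $f_1 = g(\infty)$ but $f_2 > g(\infty)$, so one cannot directly reuse the pair $(\N_1,\N_2)$ for $f_2*g$. Handling this requires the level-set bookkeeping described above, exploiting the fact that whenever a finite crossing realizes $g(\N_2) = g(\infty)$ the function $g$ must be constant on $[\N_g,\infty]$ for some finite $\N_g \le \N_2$, leaving enough slack to shift the crossing back to the common plateau $[\N_0^*, T_2]$ where $f_2 = g(\infty)$.
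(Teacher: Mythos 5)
Your proof of (ii) $\Rightarrow$ (i) is essentially the paper's argument: reduce to $f_1(\N_0')<f_2(\N_0')<g(\infty)$ at a finite $\N_0'$, take the \emph{smallest} $\N_2^*$ with $g(\N_2^*)=f_2(\N_0')$, and use its minimality to force $f_1*g(\N_0'+\N_2^*)<f_2*g(\N_0'+\N_2^*)$. The only loose end is the sub-case $\N_0=\infty$ with $f_2(\infty)<g(\infty)$: your reduction then keeps $\N_0'=\infty$, where the realizing-decomposition framework does not literally apply; the paper dispatches this in one line via $f_i*g(\infty)=\min\{f_i(\infty),g(\infty)\}$, and you could equally well push $\N_0'$ down to a large finite value by continuity.

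For (i) $\Rightarrow$ (ii) your contrapositive argument reaches the right conclusion but by a genuinely different and much heavier route than the paper's. The paper takes the two realizing pairs $(\M_1,\N-\M_1)$ and $(\M_2,\N-\M_2)$ for $f_1*g(\N)$ and $f_2*g(\N)$, assumes $\M_1<\M_2$, and observes the chain $g(\infty)\ge f_1(\M_1)=g(\N-\M_1)\ge g(\N-\M_2)=f_2(\M_2)\ge f_2(\M_1)$; applying the hypothesis once, at $\M_1$, collapses the chain to equalities. This avoids entirely your case split on $v<g(\infty)$ versus $v=g(\infty)$, the thresholds $T_i$, and the plateau bookkeeping. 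Your route does work, but one step is under-justified: in the case $v<g(\infty)$ the claim that ``monotonicity forces $\N_1<T^*$'' is not a consequence of monotonicity alone. If $T_2\le \N_1$ were possible one would have $f_2(T_2)=g(\infty)$ while $f_1(T_2)\le f_1(\N_1)=v<g(\infty)$, and it is precisely the contrapositive hypothesis applied at the point $T_2$ (both values $\le g(\infty)$ yet unequal) that excludes this. With that justification inserted, and with the $\N=\infty$ case either spelled out or replaced by the paper's simpler device of letting $\N\to\infty$ in the finite-$\N$ identity, your proof is complete; the level-set analysis in your $v=g(\infty)$ case is correct as written.
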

\begin{proof}
(ii) $\Rightarrow$ (i). W.o.l.g., we can suppose $f_1(\N_0)<f_2(\N_0)$. We can further suppose
$$f_2(\N_0)<g(\infty).$$
This is because, if $f_2(\N_0)=g(\infty)$, we can choose an $\N_0'<\N_0$, such that $f_2(\N_0')\in (f_1(\N_0),g(\infty))$. Then we have
$f_1(\N_0')\leq f_1(\N_0)<f_2(\N_0')<g(\infty)$.

When $\N_0\in(0,\infty)$, let
$\M_0=\min\{\M'\in (0,\infty): g(\M')=f_2(\N_0)\}.$
Then we have
$$f_2*g(\N_0+\M_0)=f_2(\N_0)=g(\M_0).$$
By assumption, we know
$f_1(\N_0)<g(\M_0)$.
Hence, there exists $\epsilon>0$ such that
$$f_1*g(\N_0+\M_0)=f_1(\N_0+\epsilon)=g(\M_0-\epsilon).$$
By the choice of $M_0$, we have
$$f_1*g(\N_0+\M_0)=g(\M_0-\epsilon)<g(\M_0)=f_2*g(\N_0+\M_0).$$

When $\N_0=\infty$, we have, by Corollary \ref{cor:product_infty},
$$f_1*g(\infty)=f_1(\infty)<f_2(\infty)=f_2*g(\infty).$$
This finishes the proof of $(i)$ assuming (ii).

(i) $\Rightarrow$ (ii). We prove this by showing its contrapositive. Suppose, for any $\N\in (0,\infty]$, we have
\begin{equation}\label{eq:product_assumption}
f_1(\N)=f_2(\N)\,\,\text{or}\,\,\max\{f_1(\N),f_2(\N)\}>g(\infty).
\end{equation}
%

For any $\N\in (0,\infty)$, there exists $\M_1,\M_2\in (0,\N)$ such that
$$f_i*g(\N)=f_i(\M_i)=g(\N-\M_i),\,\,i=1,2.$$
If $\M_1=\M_2$, then $f_1*g(\N)=f_2*g(\N)$. Otherwise, we suppose $\M_1<\M_2$ without loss of generality. By monotonicity, we have $$f_1(\M_1)=g(\N-\M_1)\geq g(\N-\M_2)=f_2(\M_2).$$
This implies $g(\infty)\geq f_1(\M_1)\geq f_2(\M_2)\geq f_2(\M_1)$.
By (\ref{eq:product_assumption}), we have $f_1(\M_1)=f_2(\M_1)=f_2(\M_2)$ and therefore, $f_1*g(\N)=f_2*g(\N)$.

Letting $\N\to \infty$, we obtain $f_1*g(\infty)=f_2*g(\infty)$. This finishes the proof.
\end{proof}

\begin{example}\label{example:Product_cur_sharp}
Given $a\in \mathbb{R}, b_1,b_2\in (0,\infty)$, let $f_1,f_2\in FK$ be the following two functions:
$$f_1(\N):=a-\frac{b_1}{\N}\,\,\,\text{and}\,\,\,f_2(\N):=a-\frac{b_2}{\N}.$$
Then we have
\begin{equation}\label{eq:Product_cur_sharp}
f_1*f_2(\N)=a-\frac{b_1+b_2}{\N},\,\,\,\forall \,\,\N\in (0,\infty].
\end{equation}
This can be verified by taking the tuple $\left(\frac{b_1}{b_1+b_2}\N, \frac{b_2}{b_1+_2}\N\right)\in D_2(\N)$ when $\N\in(0,\infty)$.
\end{example}

\subsection{Main results}
Given two locally finite simple graphs $G_1=(V_1, E_1)$ and
$G_2=(V_2, E_2)$, their Cartesian product $G_1\times
G_2=(V_1\times V_2, E_{12})$ is a locally finite graph with vertex
set $V_1\times V_2$ and edge set $E_{12}$ given by the following
rule. Two vertices $(x_1,y_1), (x_2, y_2)\in V_1\times V_2$ are
connected by an edge in $E_{12}$ if $$x_1=x_2, \{y_1, y_2\}\in E_2\,\,\,\text{ or }\,\,\,\{x_1, x_2\}\in E_1, y_1=y_2.$$

We have the following result.

\begin{theorem}\label{thm:CartesianFormula}
Let $G_i=(V_i,E_i), i=1,2$ be two locally finite simple graphs. Then for any $x\in V_1$ and $y\in V_2$, we have
\begin{equation}
\K_{G_1\times G_2, (x,y)}=\K_{G_1,x}*\K_{G_2,y}.
\end{equation}
\end{theorem}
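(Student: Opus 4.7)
The plan is to prove both inequalities $\K_{G_1 \times G_2, (x,y)}(\N) \ge \K_{G_1,x}*\K_{G_2,y}(\N)$ and its reverse by exploiting a product decomposition of $\Delta$, $\Gamma$, and $\Gamma_2$ on the Cartesian product. The cornerstone is the following identity. For any $f: V_1\times V_2 \to \mathbb{R}$, introduce the axial slices $g(u) := f(u,y)$ on $V_1$ and $h(v) := f(x,v)$ on $V_2$. Since every neighbour of $(x,y)$ lies on exactly one of the two axes, direct computation from Definition \ref{defn:GammaGamma2} yields $\Delta f(x,y) = \Delta^{G_1}g(x) + \Delta^{G_2}h(y)$, $\Gamma(f)(x,y) = \Gamma^{G_1}(g)(x) + \Gamma^{G_2}(h)(y)$, and the key formula
$$\Gamma_2(f)(x,y) = \Gamma_2^{G_1}(g)(x) + \Gamma_2^{G_2}(h)(y) + \frac{1}{2}\sum_{x'\sim x,\, y'\sim y}\bigl(f(x',y')-f(x',y)-f(x,y')+f(x,y)\bigr)^2.$$
The $\Gamma_2$ identity is obtained by expanding $2\Gamma_2 = \Delta\Gamma(f) - 2\Gamma(f,\Delta f)$ with $\Delta = \Delta^{G_1}+\Delta^{G_2}$, separating the diagonal and cross contributions, and simplifying the cross terms to the manifest sum of squared mixed differences. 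The correction term is non-negative and vanishes exactly when $f(u,v) = g(u)+h(v)$ is a sum of axial functions.

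For the lower bound, let $K := \K_{G_1,x}*\K_{G_2,y}(\N)$ and choose, as guaranteed by Definition \ref{defn:star_operation}, $\N_1,\N_2 > 0$ with $\N_1+\N_2 = \N$ and $\K_{G_1,x}(\N_1) = \K_{G_2,y}(\N_2) = K$. Summing the factor CD inequalities applied to the slices $g$ and $h$ and dropping the non-negative mixed-difference term yields
$$\Gamma_2(f)(x,y) \ge \frac{(\Delta^{G_1}g(x))^2}{\N_1} + \frac{(\Delta^{G_2}h(y))^2}{\N_2} + K\,\Gamma(f)(x,y).$$
The Cauchy-Schwarz inequality $a^2/\N_1 + b^2/\N_2 \ge (a+b)^2/(\N_1+\N_2)$, applied with $a = \Delta^{G_1}g(x)$ and $b = \Delta^{G_2}h(y)$, then gives CD$(K,\N)$ at $(x,y)$; the case $\N = \infty$ is handled analogously using Corollary \ref{cor:product_infty} without invoking Cauchy-Schwarz.

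For the upper bound I will produce test functions saturating CD$(K,\N)$. By Corollary \ref{cor:preciseCur}, fix non-constant SDP witnesses $g^*$ on $B_2^{G_1}(x)$ and $h^*$ on $B_2^{G_2}(y)$ realizing equality in CD$(K,\N_1)$ and CD$(K,\N_2)$ respectively. For $f(u,v) := \alpha g^*(u) + \beta h^*(v)$ the mixed-difference term vanishes exactly, and the CD-deficit at curvature $K$ simplifies to
$$\frac{(\alpha\Delta g^*(x))^2}{\N_1} + \frac{(\beta\Delta h^*(y))^2}{\N_2} - \frac{(\alpha\Delta g^*(x)+\beta\Delta h^*(y))^2}{\N},$$
which is non-negative by Cauchy-Schwarz and vanishes precisely when $\alpha\Delta g^*(x)/\N_1 = \beta\Delta h^*(y)/\N_2$. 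Solving this linear condition for $(\alpha,\beta)$, when both $\Delta g^*(x)$ and $\Delta h^*(y)$ are non-zero, produces a non-constant $f$ for which CD$(K+\varepsilon,\N)$ fails for every $\varepsilon > 0$, giving $\K_{G_1\times G_2,(x,y)}(\N) \le K$.

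The main obstacle is the degenerate possibility that every SDP witness at one of the factors, say $g^*$ at $x$, satisfies $\Delta g^*(x) = 0$, in which case the Cauchy-Schwarz equality above cannot be arranged by rescaling. I will address this by first observing that this can only occur when $K = \K_{G_1,x}(\infty)$: otherwise $\Gamma_2^{G_1}(g^*)(x) = K\Gamma^{G_1}(g^*)(x)$ with $g^*$ non-constant would force $\K_{G_1,x}(\infty) \le K$, contradicting $K < \K_{G_1,x}(\infty)$. In the residual case it suffices to use the one-sided test $f(u,v) := g^*(u)$, for which $\Delta f(x,y) = 0$, $\Gamma(f)(x,y) = \Gamma^{G_1}(g^*)(x) > 0$, and $\Gamma_2(f)(x,y) = \Gamma_2^{G_1}(g^*)(x) = K\Gamma(f)(x,y)$; this immediately gives the desired upper bound $\K_{G_1\times G_2,(x,y)}(\N) \le K$, which by Corollary \ref{cor:product_infty} coincides with the $*$-product value in this regime. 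Continuity of the curvature functions (Proposition \ref{prop:cur_function}(ii)) then ties the two regimes together.
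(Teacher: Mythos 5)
Your proposal is correct and follows essentially the same route as the paper: the same product decomposition of $\Gamma_2$ with the non-negative mixed-difference correction (Lemma \ref{lemma:cartesianproduct}), the same Cauchy--Schwarz rescaling of extremal witnesses for the upper bound (including the degenerate case $\Delta g^*(x)=0$, which the paper handles by setting one coefficient to zero), and the same dropping of the correction term for the lower bound. The only difference is organizational: the paper proves the two-sided sandwich $\min\{\K_{G_1,x}(\N_1),\K_{G_2,y}(\N_2)\}\le \K_{G_1\times G_2,(x,y)}(\N_1+\N_2)\le \max\{\K_{G_1,x}(\N_1),\K_{G_2,y}(\N_2)\}$ for all splits and then invokes the abstract characterization of the $*$-product (Proposition \ref{prop:product_min_max}), whereas you work directly at the balanced split where the two factor curvatures agree.
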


Theorem \ref{thm:CartesianFormula} is derived from the following two Theorems, using Proposition \ref{prop:product_min_max}, .

\begin{theorem}[\cite{LP14}]\label{thm:CarProlb}
Let $G_i=(V_i,E_i), i=1,2$ be two locally finite simple graphs. Then we have, for any $x\in V_1, y\in V_2$ and $\N_1, \N_2\in (0,\infty]$,
\begin{equation}
\K_{G_1\times G_2, (x,y)}(\N_1+\N_2)\geq \min\{\K_{G_1,x}(\N_1), \K_{G_2,y}(\N_2)\}.
\end{equation}
\end{theorem}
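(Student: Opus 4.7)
The plan is to exploit the additive decomposition of $\Delta$, $\Gamma$, and $\Gamma_2$ on the Cartesian product $G_1 \times G_2$, then apply the two factor-wise $CD$ inequalities and a simple convexity estimate to combine them.

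For $f: V_1 \times V_2 \to \mathbb{R}$ and $(x,y) \in V_1 \times V_2$, let $f_y(\cdot) := f(\cdot, y): V_1 \to \mathbb{R}$ and $f^x(\cdot) := f(x, \cdot): V_2 \to \mathbb{R}$ be the two ``slice'' functions, and let $\Delta^{(i)}$, $\Gamma^{(i)}$, $\Gamma_2^{(i)}$ denote the corresponding $G_i$ operators acting in the $i$-th variable. A direct expansion from the definition of the Cartesian product edge set yields the additive identities
\begin{equation*}
\Delta^{G_1 \times G_2} f(x,y) = a_1 + a_2, \qquad \Gamma^{G_1 \times G_2}(f)(x,y) = \Gamma^{G_1}(f_y)(x) + \Gamma^{G_2}(f^x)(y),
\end{equation*}
where $a_1 := \Delta^{G_1} f_y(x)$ and $a_2 := \Delta^{G_2} f^x(y)$.

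The key step is to prove the $\Gamma_2$ super-additivity
\begin{equation*}
\Gamma_2^{G_1 \times G_2}(f)(x,y) \;\geq\; \Gamma_2^{G_1}(f_y)(x) + \Gamma_2^{G_2}(f^x)(y).
\end{equation*}
Substituting $\Delta^{G_1 \times G_2} = \Delta^{(1)} + \Delta^{(2)}$ and $\Gamma^{G_1 \times G_2} = \Gamma^{(1)} + \Gamma^{(2)}$ into $2\Gamma_2^{G_1 \times G_2}(f) = \Delta^{G_1 \times G_2}\Gamma^{G_1 \times G_2}(f) - 2\Gamma^{G_1 \times G_2}(f, \Delta^{G_1 \times G_2} f)$ produces four diagonal contributions that recombine into $2\Gamma_2^{G_1}(f_y)(x) + 2\Gamma_2^{G_2}(f^x)(y)$, plus two symmetric cross terms of the form $\Delta^{(i)}\Gamma^{(j)}(f) - 2\Gamma^{(j)}(f, \Delta^{(i)} f)$ with $i \neq j$. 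Writing $c_{u,y,y'} := f(u,y') - f(u,y)$ and introducing the discrete mixed difference
\begin{equation*}
b_{x',y'} := c_{x',y,y'} - c_{x,y,y'} = [f(x',y') - f(x,y')] - [f(x',y) - f(x,y)],
\end{equation*}
the algebraic identity $c_{x',y,y'}^2 - c_{x,y,y'}^2 = 2c_{x,y,y'} b_{x',y'} + b_{x',y'}^2$ reveals, after a short expansion, that each cross term evaluates to $\tfrac12 \sum_{x' \sim x,\, y' \sim y} b_{x',y'}^2 \geq 0$. This bookkeeping is the main technical obstacle of the argument, but it is completely mechanical once the mixed difference $b_{x',y'}$ is isolated.

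With super-additivity in hand, I apply $CD(\K_{G_1,x}(\N_1), \N_1)$ to $f_y$ at $x$ and $CD(\K_{G_2,y}(\N_2), \N_2)$ to $f^x$ at $y$, sum the two inequalities, and bound both curvature coefficients below by $K := \min\{\K_{G_1,x}(\N_1), \K_{G_2,y}(\N_2)\}$ to obtain
\begin{equation*}
\Gamma_2^{G_1 \times G_2}(f)(x,y) \;\geq\; \frac{a_1^2}{\N_1} + \frac{a_2^2}{\N_2} + K\,\Gamma^{G_1 \times G_2}(f)(x,y).
\end{equation*}
Finally, the Cauchy--Schwarz-type inequality $\frac{a_1^2}{\N_1} + \frac{a_2^2}{\N_2} \geq \frac{(a_1+a_2)^2}{\N_1+\N_2}$, which rearranges to $(\N_2 a_1 - \N_1 a_2)^2 \geq 0$ and remains valid under the convention $1/\infty = 0$ when some $\N_i = \infty$, converts the first two terms on the right-hand side into $\tfrac{1}{\N_1+\N_2}(\Delta^{G_1 \times G_2} f(x,y))^2$. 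Since $f$ was arbitrary, this establishes $CD(K, \N_1+\N_2)$ at $(x,y)$, which is the claimed lower bound on $\K_{G_1 \times G_2, (x,y)}(\N_1+\N_2)$.
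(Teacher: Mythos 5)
Your proposal is correct and follows essentially the same route as the proof cited from \cite{LP14}: your $\Gamma_2$ super-additivity step is exactly the content of Lemma \ref{lemma:cartesianproduct} (your cross-term computation recovers the non-negative remainder $\tfrac12\sum_{x_i\sim x}\sum_{y_k\sim y}(F(x_i,y_k)-F(x,y_k)-F(x_i,y)+F(x,y))^2$), and the combination via $\tfrac{a_1^2}{\N_1}+\tfrac{a_2^2}{\N_2}\ge\tfrac{(a_1+a_2)^2}{\N_1+\N_2}$ is the standard dimension-additivity argument, the same inequality used (in reverse) in the proof of Theorem \ref{thm:CarProub}.
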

Theorem \ref{thm:CarProlb} has been shown in \cite[Theorem 2.5]{LP14}.
In this section, we further prove the following estimate.
\begin{theorem}\label{thm:CarProub}
Let $G_i=(V_i,E_i), i=1,2$ be two locally finite simple graphs. Then we have, for any $x\in V_1, y\in V_2$ and $\N_1, \N_2\in (0,\infty]$,
\begin{equation}\label{eq:CarProub}
\K_{G_1\times G_2, (x,y)}(\N_1+\N_2)\leq \max \{\K_{G_1,x}(\N_1), \K_{G_2,y}(\N_2)\}.
\end{equation}
\end{theorem}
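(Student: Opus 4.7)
The strategy is to exhibit a single test function on $V_1 \times V_2$ that directly witnesses the claimed upper bound on $\K_{G_1 \times G_2, (x,y)}(\N_1+\N_2)$. Setting $K_i := \K_{G_i, x_i}(\N_i)$ for $i=1,2$ (with $x_1=x, x_2=y$), the first step is to produce an \emph{extremizer} $f_i: V_i \to \mathbb{R}$ for each factor, i.e., a function satisfying the $CD(K_i, \N_i)$ inequality at $x_i$ with equality and with $b_i := \Gamma(f_i)(x_i) > 0$. Such an $f_i$ arises, via Corollary \ref{cor:preciseCur}, from a zero eigenvector of $M_{K_i, \N_i}(x_i)$ linearly independent of $\mathbf{1}$; by the same argument as Lemma \ref{lemma:gammagamma2} (which relied only on invertibility of the $(S_2,S_2)$-block, a property inherited by $M_{K_i,\N_i}$), this eigenvector can be chosen non-constant on $B_1(x_i)$, which forces $b_i > 0$ by Proposition \ref{prop:Gamma}.

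Next, for real parameters $\alpha_1, \alpha_2$, I will form the separated test function $F(u,v) := \alpha_1 f_1(u) + \alpha_2 f_2(v)$ on $V_1 \times V_2$ and compute the relevant local quantities at $(x,y)$. Direct computation from the definitions yields the tensorization identities
\begin{align*}
\Delta F(x,y) &= \alpha_1 a_1 + \alpha_2 a_2, \\
\Gamma(F)(x,y) &= \alpha_1^2 b_1 + \alpha_2^2 b_2, \\
\Gamma_2(F)(x,y) &= \alpha_1^2 \Gamma_2(f_1)(x) + \alpha_2^2 \Gamma_2(f_2)(y),
\end{align*}
where $a_i := \Delta f_i(x_i)$. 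The only non-trivial point is the $\Gamma_2$-identity, which follows from expanding $2\Gamma_2(F) = \Delta\Gamma(F) - 2\Gamma(F, \Delta F)$ and noting that all cross terms $\Gamma(h_1, h_2)$ with $h_1$ depending only on the $V_1$-coordinate and $h_2$ only on the $V_2$-coordinate vanish, by the edge structure of the Cartesian product.

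Substituting the tightness relation $\Gamma_2(f_i)(x_i) = a_i^2/\N_i + K_i b_i$ into the curvature-dimension inequality for $F$ at $(x,y)$ with dimension $\N_1+\N_2$ and $\K := \K_{G_1 \times G_2, (x,y)}(\N_1+\N_2)$, the bound reduces after rearrangement to
\begin{equation*}
\alpha_1^2 K_1 b_1 + \alpha_2^2 K_2 b_2 - \K(\alpha_1^2 b_1 + \alpha_2^2 b_2) \,\ge\, \frac{(\alpha_1 a_1 + \alpha_2 a_2)^2}{\N_1+\N_2} - \frac{\alpha_1^2 a_1^2}{\N_1} - \frac{\alpha_2^2 a_2^2}{\N_2}.
\end{equation*}
The right-hand side is non-positive by Cauchy-Schwarz and can be made to vanish for a suitable $(\alpha_1, \alpha_2) \neq (0,0)$: take $\alpha_i := \N_i/a_i$ when both $a_i \neq 0$ and both $\N_i$ are finite; when $a_j = 0$ or $\N_j = \infty$ for some index $j$, instead set $\alpha_i := 0$ for the other index and $\alpha_j$ arbitrary nonzero. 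With this choice the denominator $\alpha_1^2 b_1 + \alpha_2^2 b_2$ remains strictly positive, so one concludes
\begin{equation*}
\K \,\le\, \frac{\alpha_1^2 b_1 K_1 + \alpha_2^2 b_2 K_2}{\alpha_1^2 b_1 + \alpha_2^2 b_2} \,\le\, \max\{K_1, K_2\},
\end{equation*}
as claimed.

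I expect the main technical hurdle to be verifying the $\Gamma_2$-tensorization cleanly (a routine but somewhat tedious expansion) together with the existence of the extremizer $f_i$ with $b_i > 0$; the remaining Cauchy-Schwarz and rearrangement steps are essentially mechanical once the setup is in place, and the edge cases (vanishing $a_i$ or infinite $\N_i$) collapse naturally by reducing $F$ to depend on only one of the two factors.
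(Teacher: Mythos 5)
Your proof is correct and follows essentially the same route as the paper's: build extremizers $f_i$ on each factor, form the separated test function $c_1f_1+c_2f_2$ on the product, use the tensorization $\Gamma_2(F)(x,y)=c_1^2\Gamma_2(f_1)(x)+c_2^2\Gamma_2(f_2)(y)$ (the paper cites \cite[Lemma 2.6]{LP14} for this), and choose the coefficients so that Cauchy--Schwarz in the $\frac{1}{\N}(\Delta\cdot)^2$ terms holds with equality. Two minor remarks: you supply a justification (via Corollary \ref{cor:preciseCur} and the $M_{K,\N}$-analogue of Lemma \ref{lemma:gammagamma2}) for the existence of an extremizer with $\Gamma(f_i)(x_i)>0$, which the paper simply asserts, and your coefficient choice $\alpha_i=\N_i/a_i$ satisfies the correct equality condition $\alpha_1a_1/\N_1=\alpha_2a_2/\N_2$, whereas the paper's displayed $c_1=\N_1^2\Delta f_2(y)/(\N_2^2\Delta f_1(x))$ appears to be a typo for $\N_1\Delta f_2(y)/(\N_2\Delta f_1(x))$.
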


We first recall a technical lemma from \cite[Lemma 2.6]{LP14}.
Let $F:V_1\times V_2\rightarrow\mathbb{R}$ be a function on the
product graph. For fixed $y\in V_2$, we will write
$F_y(\cdot):=F(\cdot,y)$ as a function on $V_1$. Similarly,
$F^x(\cdot):=F(x,\cdot)$. It is straightforward to check
\begin{equation}\label{eq:Car_D_Gamma}
\D F(x,y)=\D F_y(x)+\D F^x(y)\,\,\,\,\text{and}\,\,\,\,\Gamma(F)(x,y)=\Gamma(F_y)(x)+\Gamma(F^x)(y).
\end{equation}

\begin{lemma}[\cite{LP14}]\label{lemma:cartesianproduct}
  For any function $F:V_1\times V_2\rightarrow\mathbb{R}$ and any
  $(x,y)\in V_1\times V_2$, we have
  \begin{align}
    \Gamma_2(F)(x,y)= &\Gamma_2(F_y)(x)+\Gamma_2(F^x)(y)\notag\\
    &+\frac{1}{2}\sum_{x_i\sim x}\sum_{y_k\sim y}\left(F(x_i,y_k)-F(x,y_k)-F(x_i,y)+F(x,y)\right)^2.
  \end{align}
  where the operators $\Gamma_2$ are understood to be on different
  graphs according to the functions they are acting on.
\end{lemma}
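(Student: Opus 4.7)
The plan is to start from the fundamental identity
\[ 2\Gamma_2(F)(x,y) = \Delta \Gamma(F)(x,y) - 2\Gamma(F,\Delta F)(x,y) \]
and expand both sides using the product structure: in $G_1 \times G_2$, the neighbours of $(x,y)$ are exactly $(x_i, y)$ with $x_i \sim x$ and $(x, y_k)$ with $y_k \sim y$. Combined with \eqref{eq:Car_D_Gamma}, this lets us split the Laplacian in the product graph into a ``horizontal'' and a ``vertical'' piece, while keeping track of slices.

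First I would expand $\Delta \Gamma(F)(x,y)$. Writing it as a sum over the two types of neighbours and substituting $\Gamma(F)(x',y') = \Gamma(F_{y'})(x') + \Gamma(F^{x'})(y')$, the diagonal pieces give $\Delta \Gamma(F_y)(x) + \Delta \Gamma(F^x)(y)$. The off-diagonal pieces arise because as $x_i$ varies over $S_1(x)$, the slice $F^{x_i}$ differs from $F^{x}$, and similarly as $y_k$ varies the slice $F_{y_k}$ differs from $F_y$. Using $2\Gamma(f)(y) = \sum_{y_k \sim y}(f(y_k)-f(y))^2$, these two leftover contributions combine into a cross double-sum $\mathrm{CS}_1$ over pairs $(x_i, y_k)$ with $x_i \sim x$, $y_k \sim y$.

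Next I would expand $2\Gamma(F,\Delta F)(x,y)$ analogously, substituting $\Delta F(x',y') = \Delta F_{y'}(x') + \Delta F^{x'}(y')$ and applying the identity $2\Gamma(f,g)(x) = \sum_{y\sim x}(f(y)-f(x))(g(y)-g(x))$. Again the diagonal parts reproduce $2\Gamma(F_y, \Delta F_y)(x) + 2\Gamma(F^x, \Delta F^x)(y)$, leaving a cross double-sum $\mathrm{CS}_2$ over the same index set $(x_i,y_k)$, each of whose summands factors through the mixed difference $\eta_{x_i,y_k} := F(x_i,y_k) - F(x_i,y) - F(x,y_k) + F(x,y)$.

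Subtracting, the diagonal contributions reassemble into $2\Gamma_2(F_y)(x) + 2\Gamma_2(F^x)(y)$, and the identity reduces to showing $\mathrm{CS}_1 - 2\,\mathrm{CS}_2 = \sum_{x_i, y_k} \eta_{x_i,y_k}^2$. This is a pointwise polynomial identity in the four values $a = F(x,y)$, $b = F(x,y_k)$, $c = F(x_i,y)$, $d = F(x_i, y_k)$: one needs to verify
\[ (d-c)^2 - (b-a)^2 + (d-b)^2 - (c-a)^2 - 2(b+c-2a)(d+a-b-c) = 2(d+a-b-c)^2, \]
which follows by direct expansion. The main obstacle is purely bookkeeping: carefully isolating the ``slice-change'' cross-terms from the intrinsic Laplacian-of-$\Gamma$ and $\Gamma(\cdot,\Delta\cdot)$ pieces, and not accidentally double-counting contributions where $(x_i, y_k)$ and $(x, y_k)$ both appear in the outer sum. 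Once the cross-terms are cleanly identified, the remaining algebra is a short scalar identity.
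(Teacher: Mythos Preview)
The paper does not give its own proof of this lemma; it simply cites \cite[Lemma 2.6]{LP14} and states the result. Your direct-computation approach is exactly the natural one and is correct: splitting $\Delta\Gamma(F)$ and $\Gamma(F,\Delta F)$ along the two factor directions, the diagonal pieces reassemble into $\Gamma_2(F_y)(x)+\Gamma_2(F^x)(y)$, and the cross-terms reduce to the scalar identity you wrote, which indeed holds (set $u=c-a$, $v=b-a$, $w=d-a$; both sides equal $2(w-u-v)^2$). There is a minor inconsistency in your factors of two between the claim ``$\mathrm{CS}_1-2\mathrm{CS}_2=\sum\eta^2$'' and the displayed identity whose right-hand side is $2\eta^2$, but this is purely notational and does not affect the argument.
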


\begin{proof}[Proof of Theorem \ref{thm:CarProub}] Let $f_1:V_1\to \mathbb{R}$ be a function with $\Gamma(f_1)(x)\neq 0$, such that
\begin{equation}\label{eq:CD_G_1}
\Gamma_2(f_1)(x)=\frac{1}{\N_1}(\D f_1(x))^2+\K_{G_1,x}(\N_1)\Gamma(f_1)(x).
\end{equation}
Let $f_2:V_2\to \mathbb{R}$ be a function with $\Gamma(f_2)(x)\neq 0$, such that
\begin{equation}\label{eq:CD_G_2}
\Gamma_2(f_2)(x)=\frac{1}{\N_2}(\D f_2(x))^2+\K_{G_2,x}(\N_2)\Gamma(f_2)(x).
\end{equation}
Let $c_1, c_2$ be two nonnegative real number. We define a function $F:V_1\times F_2\to \mathbb{R}$ such that
\begin{equation}\label{eq:Car_F_from_f_i}
F_y(\cdot):=c_1f_1(\cdot)+c_2f_2(y),\,\,\,\,F^x(\cdot):=c_2f_2(\cdot)+c_1f_1(x),
\end{equation}
and, for any $\{x_i,x\}\in E_1$ and any $\{y_k,y\}\in E_2$,
\begin{equation}\label{eq:Car_Fik_from_f_i}
F(x_i,y_k):=c_1f_1(x_i)+c_2f_2(y_k).
\end{equation}
Then we obtain, from Lemma \ref{lemma:cartesianproduct},
\begin{equation*}
\Gamma_2(F)(x,y)=\Gamma_2(F_y)(x)+\Gamma_2(F^x)(y)=c_1^2\Gamma_2(f_1)(x)+c_2^2\Gamma_2(f_2)(y).
\end{equation*}
Inserting (\ref{eq:CD_G_1}) and (\ref{eq:CD_G_2}), we have
\begin{align}
\Gamma_2(F)(x,y)=\frac{1}{\N_1}\D&(c_1f_1(x))^2+\frac{1}{\N_2}\D(c_2f_2(y))^2\notag\\
&+\K_{G_1,x}(\N_1)\Gamma(c_1f_1)(x)+\K_{G_1,y}(\N_2)\Gamma(c_2f_2)(y).\label{eq:Gamma2Fxy}
\end{align}
If $0<\N_1,\N_2<\infty$ and $\D f_1(x)$, $\D f_2(y)$ are not simultaneously zero, say $\D f_1(x)\neq 0$, we set
$$c_1=\frac{\N_1^2\D f_2(y)}{\N_2^2\D f_1(x)}, \,\,\,\, c_2=1.$$
Then we have
$$\frac{\N_2}{\N_1}c_1\D f_1(x)=\frac{\N_1}{\N_2}c_2\D f_2(y),$$
and hence
\begin{equation}\label{eq:Delta_Cauchy_Schwarz}
\frac{1}{\N_1}\D(c_1f_1(x))^2+\frac{1}{\N_2}\D(c_2f_2(y))^2=\frac{1}{\N_1+\N_2}\left(c_1\D f_1(x)+c_2\D f_2(x)\right)^2
\end{equation}
Using (\ref{eq:Car_D_Gamma})and (\ref{eq:Delta_Cauchy_Schwarz}), we derive from (\ref{eq:Gamma2Fxy})
\begin{equation}
\Gamma_2(F)(x,y)\leq \frac{1}{\N_1+\N_2}\left(\D F(x,y)\right)^2+\max\left\{\K_{G_1,x}(\N_1),\K_{G_1,y}(\N_2)\right\}\Gamma(F)(x,y).
\end{equation}
This implies (\ref{eq:CarProub}) in this case.

If, otherwise, $\D f_1(x)=\D f_2(y)=0$ or at least one of $\N_1,\N_2$ equals $\infty$, say, $\N_1=\infty$, we set
$$c_1=1,\,\,\,\,c_{2}=0.$$
Then we show (\ref{eq:CarProub}) in this case similarly.
\end{proof}

By Corollary \ref{cor:product_infty}, the following result is a straightforward consequence of Theorem \ref{thm:CartesianFormula}.
\begin{corollary}
Let $G_i=(V_i,E_i), i=1,2$ be two locally finite  simple graphs. Then we have, for any $x\in V_1, y\in V_2$ and $\N\in (0,\infty]$,
\begin{equation}
\K_{G_1\times G_2, (x,y)}(\N)\leq \min\{\K_{G_1,x}(\N), \K_{G_2,y}(\N)\}.
\end{equation}
When $\N=\infty$, the equality holds, i.e.,
\begin{equation}
\K_{G_1\times G_2, (x,y)}(\infty)=\min \{\K_{G_1,x}(\infty), \K_{G_2,y}(\infty)\}.
\end{equation}
\end{corollary}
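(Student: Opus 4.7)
The plan is to observe that this corollary is nothing more than a direct translation of the two statements in Corollary \ref{cor:product_infty} via the product formula established in Theorem \ref{thm:CartesianFormula}. No further graph-theoretic or analytic work is required; the abstract $*$-product inequalities already encode exactly what is being asserted.

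First I would invoke Theorem \ref{thm:CartesianFormula} to rewrite
$$ \K_{G_1\times G_2,(x,y)}(\N) = \K_{G_1,x} * \K_{G_2,y}(\N) $$
for every $\N \in (0,\infty]$. Since curvature functions belong to $\FK$ by Proposition \ref{prop:cur_function}, the hypotheses of Corollary \ref{cor:product_infty} are met with $f_1 = \K_{G_1,x}$ and $f_2 = \K_{G_2,y}$.

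Next I would apply the two parts of Corollary \ref{cor:product_infty}. The inequality \eqref{eq:product_infty_1} yields, for every finite $\N > 0$,
$$ \K_{G_1\times G_2,(x,y)}(\N) = \K_{G_1,x}*\K_{G_2,y}(\N) \le \min\{\K_{G_1,x}(\N),\K_{G_2,y}(\N)\}, $$
and the equality \eqref{eq:product_infty_2} yields
$$ \K_{G_1\times G_2,(x,y)}(\infty) = \K_{G_1,x}*\K_{G_2,y}(\infty) = \min\{\K_{G_1,x}(\infty),\K_{G_2,y}(\infty)\}. $$
The bound at $\N = \infty$ in the first assertion is also covered by this equality, so the two displayed conclusions of the corollary follow at once.

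There is no real obstacle here; the entire content has been packaged into Corollary \ref{cor:product_infty} and Theorem \ref{thm:CartesianFormula}. The only thing worth flagging for the reader is the asymmetry between the finite and infinite cases: equality at $\N=\infty$ is special and relies on the definition $f_1 * f_2(\infty) := \lim_{\N\to\infty} f_1*f_2(\N)$ together with the sandwich $\min\{f_1(\N/2),f_2(\N/2)\} \le f_1*f_2(\N) \le \min\{f_1(\N),f_2(\N)\}$ that was used to prove \eqref{eq:product_infty_2}, while for finite $\N$ one generically expects strict inequality (as illustrated by the examples in Figures \ref{figure:Cur_fct} and \ref{figure:Cur_fct_2d}).
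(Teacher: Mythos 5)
Your proposal is correct and matches the paper's own argument exactly: the paper derives this corollary as an immediate consequence of Theorem \ref{thm:CartesianFormula} combined with Corollary \ref{cor:product_infty}, which is precisely your route. Nothing is missing.
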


By Example \ref{example:Product_cur_sharp}, we derive the following result.
\begin{corollary}\label{cor:Cartesian_cur_sharp}
Let $G_i=(V_i,E_i), i=1,2$ be two locally finite  simple $d$-regular, $\infty$-curvature sharp graphs. If either $G_1, G_2$ are both triangle free, or, $G_1=G_2$ with $\#_{\D}(x)\equiv const.\,\,\forall x$,
then $G_1\times G_2$ is also $\infty$-curvature sharp.
\end{corollary}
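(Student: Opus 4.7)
\medskip

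\noindent\textbf{Proof plan for Corollary \ref{cor:Cartesian_cur_sharp}.} The overall strategy is to combine three ingredients: the product formula $\K_{G_1\times G_2,(x,y)} = \K_{G_1,x} * \K_{G_2,y}$ from Theorem \ref{thm:CartesianFormula}; the explicit computation of the $*$-product of two affine-in-$1/\N$ functions with the same constant term from Example \ref{example:Product_cur_sharp}; and the upper bound $\K_{G,x}(\N)\le \K_\infty^0(x)-2d_x/\N$ from Theorem \ref{thm:ub} together with Proposition \ref{prop:cur_sharp_before}, which says that $\infty$-curvature sharpness propagates to all finite $\N$. Given these, the task reduces to a compatibility check between the common value of $\K_\infty^0$ on the factors and the value of $\K_\infty^0$ at $(x,y)$ in the product.

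\medskip

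\noindent First I would fix $x\in V_1$, $y\in V_2$ and write, using Proposition \ref{prop:cur_sharp_before},
\begin{equation*}
\K_{G_i,x_i}(\N) \;=\; \K_\infty^0(x_i) - \frac{2d}{\N}, \qquad \N \in (0,\infty],
\end{equation*}
for $i=1,2$ with $x_1=x$, $x_2=y$. The key observation is that under either hypothesis, $\K_\infty^0$ is a constant on each factor with the \emph{same} value $a$ at $x$ and $y$: in the triangle-free case, $d$-regularity gives $\K_\infty^0 \equiv 2$ by \eqref{eq:K0}, while in the case $G_1=G_2$ with $\#_\Delta(\cdot) \equiv t$, we obtain $\K_\infty^0 \equiv 2 + t/d$. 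Example \ref{example:Product_cur_sharp} then yields
\begin{equation*}
\K_{G_1,x} * \K_{G_2,y}(\N) \;=\; a - \frac{4d}{\N}, \qquad \N\in (0,\infty],
\end{equation*}
and Theorem \ref{thm:CartesianFormula} gives $\K_{G_1\times G_2,(x,y)}(\N) = a - 4d/\N$.

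\medskip

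\noindent Next I would verify that this agrees with the curvature sharpness upper bound at $(x,y)$. The product is $2d$-regular, so the degree of $(x,y)$ equals $2d$ and $av_1((x,y))=2d$. The crucial local observation is that every triangle in $G_1\times G_2$ containing $(x,y)$ must lie entirely in a horizontal slice $\{x\}\times V_2$ or entirely in a vertical slice $V_1\times \{y\}$, since a mixed triangle would force both coordinates to simultaneously coincide and be adjacent, a contradiction. Consequently $\#_\Delta((x,y)) = \#_\Delta(x) + \#_\Delta(y)$. Plugging into \eqref{eq:K0} gives
\begin{equation*}
\K_\infty^0((x,y)) \;=\; 2 + 0 + \frac{\#_\Delta(x)+\#_\Delta(y)}{2d} \;=\; a
\end{equation*}
in both cases (since $\#_\Delta(x)=\#_\Delta(y)$ under our assumptions). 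Thus the computed curvature function equals $\K_\infty^0((x,y)) - 2 d_{(x,y)}/\N$, proving $\infty$-curvature sharpness of $(x,y)$; running this argument for every $(x,y) \in V_1\times V_2$ finishes the proof.

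\medskip

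\noindent The only substantive step is the bookkeeping for $\#_\Delta((x,y))$ and the requirement $\K_\infty^0(x)=\K_\infty^0(y)$. This is exactly why the two clauses in the hypothesis (triangle-freeness of both factors, or $G_1=G_2$ with a constant triangle count) are stated as they are: the first clause trivializes both issues, while the second ensures that $\K_\infty^0$ is constant on the single vertex set $V$ so that the two factor values match regardless of the choice of $x,y$. Without such a hypothesis, the $*$-product of two linear functions with \emph{different} constant terms is no longer linear in $1/\N$, and the argument collapses.
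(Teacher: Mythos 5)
Your proof is correct and follows essentially the same route as the paper: combine Theorem \ref{thm:CartesianFormula} with Example \ref{example:Product_cur_sharp} to compute the product curvature function, then match it against the upper bound $\K_\infty^0((x,y))-2d_{(x,y)}/\N$. Your explicit verification that every triangle through $(x,y)$ lies in a single slice, so $\#_\Delta((x,y))=\#_\Delta(x)+\#_\Delta(y)$, is precisely the bookkeeping the paper leaves implicit in its "the other case can be shown similarly."
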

\begin{proof} If $G_1,G_2$ are both triangle free, we have for $x\in V_1$, $y\in V_2$
$$\K_{G_1,x}(\N)=2-\frac{2d_x}{\N},\,\,\,\K_{G_2,y}(\N)=2-\frac{2d_y}{\N}.$$
Theorem \ref{thm:CartesianFormula} and  Example \ref{example:Product_cur_sharp} tells
$$\K_{G_1\times G_2, (x,y)}=2-\frac{2(d_x+d_y)}{\N}.$$
That is, $(x,y)$ is $\infty$-curvature sharp, since $G_1\times G_2$ is also triangle free. The other case can be shown similarly.
\end{proof}

Corollary \ref{cor:Cartesian_cur_sharp} implies, for example, the Cartesian product of the crown graph $\mathrm{Crown}(n,n)$ and the complete bipartite graph $K_{m,m}$ is $\infty$-curvature sharp. The Cartesian product of $K_n,n\geq 2$ with itself is $\infty$-curvature sharp. In fact, the latter is the line graph of $K_{n,n}$, which will be discussed in detail in the next subsection.



\subsection{Examples}\label{subsec:examples}
In this subsection, we present examples illustrating how Theorem \ref{thm:CartesianFormula} can be used to calculate explicit curvature functions.
\begin{example}[Hypercubes]\label{example:hypercube} An $n$-dimensional hypercube $Q^n=(V,E)$, $n\geq 1, n\in \mathbb{Z}$, is the Cartesian product $\underbrace{K_2\times \cdots\times K_2}_{n}$. For any $x\in V$, we have
\begin{equation}\label{eq:hypercube}
\K_{Q^n,x}(\N)=2-\frac{2n}{\N}\,\,\,\,\forall \,\,\N\in (0,\infty].
\end{equation}
\end{example}
\begin{proof}
Recall from Example \ref{example:Kn} that the two vertices in $K_2$ has the same curvature function
$$\K_{K_2,\cdot}(\N)=2-\frac{2}{\N}\,\,\,\,\forall \,\,\N\in (0,\infty].$$
By Theorem \ref{thm:CartesianFormula}, and the associativity of $*$-product, we know $\K_{Q^n,x}=\K_{K_2,\cdot}*\cdots *\K_{K_2,\cdot}$. Hence, (\ref{eq:hypercube}) follows from \eqref{eq:higherprod}.
\end{proof}

Given a graph $G$, its \emph{line graph} $L(G)$ is defined in the following way:
Each vertex of L(G) represents an edge of G;
Two vertices of L(G) are adjacent if and only if their corresponding edges are incident in $G$. For other names of line graphs and references, we refer to \cite[Chapter 8]{Harary69}.

\begin{example}[Line graphs of complete bipartite graphs]\label{example:LKmn} The line graph $L(K_{m,n})=(V,E)$ of $K_{m,n}$ is the Cartesian product of $K_m$ and $K_n$. When one of $n,m$ equals $1$, the line graph is a cycle, whose curvature functions has been given in Example \ref{example:Cn}.
When $n,m\geq 2$, we have, for any $x\in V,\N\in(0,\infty]$,
\begin{align}
\K_{L(K_{m,n}),x}&(\N)\notag\\
=&\frac{n+m+4}{4}-\frac{4(n+m-2)+\sqrt{16(n+m-2)^2+(n-m)^2\N(\N-8)}}{4\N}.\label{eq:LKmn}
\end{align}
\end{example}

We remark that $L(K_{m,n})$ is not $S_1$-out regular at any vertex when $n\neq m$, in which case Theorem \ref{thm:OutRegularFormula} can not be applied.

\begin{proof}
Recall from Example \ref{example:Kn}, each vertex in $K_n$, $n\geq 2$ has the same curvature function
$$\K_{K_n,\cdot}(\N)=\frac{n+2}{2}-\frac{2(n-1)}{\N}\,\,\,\,\forall\,\,\N\in(0,\infty].$$
Theorem \ref{thm:CartesianFormula} tells
$$\K_{L(K_{m,n}),x}=\K_{K_n,\cdot}*\K_{K_m,\cdot}.$$
When $n=m$, we have, by \eqref{eq:higherprod},
$$\K_{L(K_{n,n}),x}(\N)=\K_{K_n,\cdot}(\N/2)=\frac{n+2}{2}-\frac{4(n-1)}{\N}\,\,\,\,\forall\,\,\N\in(0,\infty].$$
It remains to prove (\ref{eq:LKmn}) when $n\neq m$.
It is sufficient to check the case $m<n$, and $\N\in (0,\infty)$.
We are going to find $|h|<\frac{\N}{2}$, such that $\K_{K_n,\cdot}(\N/2-h)=\K_{K_m,\cdot}(\N/2+h)$, i.e.,
$$\frac{n+2}{2}-\frac{4(n-1)}{\N-2h}=\frac{m+2}{2}-\frac{4(m-1)}{\N+2h}.$$
Solving $h$, we arrive at
$$4(n-m)h^2+16(n+m-2)h+(n-m)(8\N-\N^2)=0.$$
Recall from Remark \ref{remark:well_definedness} such a $h$ satisfying the above equation with $|h|<\frac{\N}{2}$ always exists. Hence we can pick the one , with smaller absolute value, of two solutions of the above equation:
$$h=\frac{-4(n+m-2)+\sqrt{16(n+m-2)^2+(n-m)^2(\N^2-8\N)}}{2(n-m)}.$$
Hence we obtain
\begin{align}
&\K_{L(K_{m,n}),x}(\N)=\K_{K_n,\cdot}(\N/2-h)\notag\\
=&\frac{n+2}{2}-\frac{4(n-1)(n-m)}{\N(n-m)+4(n+m-2)-\sqrt{16(n+m-2)^2-(n-m)^2\N(8-\N)}}\notag\\
=&\frac{n+2}{2}-\frac{\N(n-m)+4(n+m-2)+\sqrt{16(n+m-2)^2+(n-m)^2\N(\N-8)}}{4\N}.\notag
\end{align}
This proves (\ref{eq:LKmn}) for $m<n$ and $\N\in (0,\infty]$, finishing the proof.
\end{proof}

\begin{remark}\label{remark:iso_curvature_function}
Curvature functions are useful isomorphism invariants.
We see, from Examples \ref{example:Kn} and \ref{example:LKmn}, although vertices in the complete graph $\K_n$ and the line graph of $K_{n,n}$, $n\geq 2$, share the same value at $\infty$: $\K_{K_n,\cdot}(\infty)=\K_{L(K_{n,n}),\cdot}(\infty)=(n+2)/2$, their curvature functions are distinguished.

But it can happen that non-isomorphic graphs share the same curvature function. We see, from Examples \ref{example:Kmn} and \ref{example:hypercube}, vertices in  the complete bipartite graph $K_{n,n}$ and the $n$-dimensional hypercube $Q^n$, $n\geq 1$, share the same curvature function:
$$f(\N)=2-\frac{2n}{\N}\,\,\,\,\forall \,\,\N\in(0,\infty].$$
By Corollaries \ref{cor:read_cur_dx}, \ref{cor:read_cur_av1},and \ref{cor:Outregular_characterization} we know, from their curvature function, the vertices in $K_{n,n}$ and $Q^n$ share the same vertex degree $n$, the same average $S_1$-out degree $n-1$, and they are all $S_1$-out regular.
\end{remark}

\section{Global structure of graphs satisfying $CD(0,\infty)$}\label{gc}

Embarrassingly, we know very little about the global structure of
graphs satisfying $CD(K,\infty)$ for a fixed $K \ge 0$. Let us start with the
following two fundamental facts:

\begin{itemize}
\item[(a)] The class $CD(K,\infty)$ is closed under taking Cartesian products.
\item[(b)] All abelian Cayley graphs lie in the class $CD(0,\infty)$ (\cite{CY96, LY10,KKRT16}).
\end{itemize}

Some local properties of graphs in the class $CD(0,\N)$ for finite
dimension $\N$ were already discussed in Section
\ref{sec:graphsCD0N}. One might hope that every graph in
$CD(0,\infty)$ lies also in $CD(0,\N)$ for some large enough finite
$\N$, but we saw that the complete bipartite graph $K_{2,6}$ is a
counterexample (see example \ref{example:K26}). Other local
obstructions to satisfy $CD(0,\infty)$ were discussed in Section
\ref{section:connComp}.

It is natural to investigate whether known global results for Riemannian manifolds
with lower bounds on their Ricci curvature have analogues in the graph theoretical setting.
The following result on the global structure of positively curved graphs can be found in \cite{LMP17} and can be viewed as some analogue of a Bonnet-Myers' Theorem. (It was included as a conjecture  in an earlier version of this paper before it became a theorem; back then it was Conjecture 8.1.) 

\begin{theorem}[Corollary 2.2, \cite{LMP17}]
Let $G = (V,E)$ be a graph satisfying $CD(K,\infty)$ for $K>0$. If $G$ has bounded vertex degree $\le d_{\max}$, then $G$ is a finite graph and we have
$${\rm{diam(G)}}\leq \frac{2d_{\max}}{K}.$$
\end{theorem}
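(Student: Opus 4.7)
My plan is to bound the diameter via a heat semigroup argument, leveraging the gradient estimate that follows from $CD(K,\infty)$. Without loss of generality, assume $G$ is connected. Fix $x,y\in V$ with $\dist(x,y)=D_0$ and consider the truncated distance function
\[
f(z):=\min\{\dist(x,z),D_0\},
\]
which is bounded, satisfies $f(y)-f(x)=D_0$, and is $1$-Lipschitz with respect to the graph distance, so that $\Gamma(f)(z)=\tfrac12\sum_{w\sim z}(f(w)-f(z))^2\le d_z/2\le d_{\max}/2$ everywhere.

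Next I would invoke the standard Bakry--\'Emery gradient estimate: differentiating $\phi(s):=P_s\bigl(\Gamma(P_{t-s}f)\bigr)$ for the heat semigroup $P_s=e^{s\Delta}$ associated with the non-normalized Laplacian gives $\phi'(s)=2P_s\Gamma_2(P_{t-s}f)\ge 2K\phi(s)$ by $CD(K,\infty)$, which integrates to
\[
\Gamma(P_t f)(z)\le e^{-2Kt}P_t\bigl(\Gamma(f)\bigr)(z)\le \tfrac{d_{\max}}{2}\,e^{-2Kt}.
\]
Cauchy--Schwarz applied to $\Delta P_t f(z)=\sum_{w\sim z}\bigl(P_tf(w)-P_tf(z)\bigr)$ then yields the pointwise bound $|\Delta P_t f(z)|^2\le d_z\cdot 2\Gamma(P_tf)(z)\le d_{\max}^2\,e^{-2Kt}$, i.e.\ $|\Delta P_t f(z)|\le d_{\max}\,e^{-Kt}$.

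The crucial step is the telescoping identity
\[
P_tf(y)-P_tf(x)-D_0=\int_0^t\bigl(\Delta P_sf(y)-\Delta P_sf(x)\bigr)\,ds.
\]
Sending $t\to\infty$, the left-hand side tends to $-D_0$: indeed, chaining the bound $|g(w)-g(z)|\le\sqrt{2\Gamma(g)(z)}$ along a shortest $xy$-path gives $|P_tf(y)-P_tf(x)|\le D_0\sqrt{d_{\max}}\,e^{-Kt}\to 0$. Bounding the right-hand side absolutely by $\int_0^\infty 2d_{\max}e^{-Ks}\,ds=2d_{\max}/K$ yields $D_0\le 2d_{\max}/K$. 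Since $x,y$ were arbitrary, $\diam(G)\le 2d_{\max}/K$, and finiteness then follows from the growth estimate $|V|\le 1+d_{\max}+\cdots+d_{\max}^{\diam(G)}$.

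I expect the main obstacle to be the rigorous justification of the semigroup machinery in the infinite, locally finite setting: stochastic completeness of $P_t$ on bounded functions, termwise differentiability of $\phi(s)$, and interchange of $\Delta$ with $P_s$ (all of which require uniform bounds that are secured by the degree bound $d_{\max}<\infty$). Once these technicalities are dispatched -- standard for bounded-degree graphs with the non-normalized Laplacian -- the rest of the argument is robust.
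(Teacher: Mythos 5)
Your argument is correct, and it is essentially the proof of the cited source: the paper itself gives no proof of this statement but quotes it as Corollary 2.2 of \cite{LMP17}, whose argument is exactly this semigroup gradient estimate $\Gamma(P_tf)\le e^{-2Kt}P_t\Gamma(f)$ applied to a truncated distance function, followed by integrating $\frac{d}{ds}P_sf=\Delta P_sf$. The technical points you flag are indeed harmless here: for bounded degree the non-normalized $\Delta$ is a bounded operator on $\ell^\infty(V)$, so $P_t=e^{t\Delta}$ is given by a norm-convergent series, $P_t\mathbf{1}=\mathbf{1}$, and positivity preservation (needed to pass from $\Gamma_2\ge K\Gamma$ to $\phi'(s)\ge 2K\phi(s)$) follows from writing $e^{t\Delta}=e^{-td_{\max}}e^{t(\Delta+d_{\max}I)}$ with $\Delta+d_{\max}I$ entrywise nonnegative.
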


Examples of finite regular graphs in $CD(2,\infty)$
with increasing diameters are the hypercubes $Q_n$ which are Cartesian
products of $n$ copies of $K_2$. Both the vertex degree and diameter
of $Q_n$ is $n$, showing that the diameter bound in the above theorem must depend on $d_{\max}$.

%

The following conjecture can be viewed as some analogue of Bishop's
Comparison Theorem:

\begin{conjecture} \label{conj:Bish} Let $d_{max} \in \NN$. Then
  every graph $G=(V,E)$ with $d_x \le d_{\max}$ for all $x \in V$
  satisfying $CD(0,\infty)$ has polynomial volume growth. Moreover,
  there are constants $C_1, C_2 > 0$, only depending on $d_{max}$
  that for all $x \in V$ and $r \in \NN$.
  $$ | B_r(x) | \le C_1 (1 + r^{C_2}). $$
\end{conjecture}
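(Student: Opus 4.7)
My plan would be to attack this conjecture via heat kernel estimates, following the well-trodden route from Bakry-Émery curvature to gradient estimates to Harnack inequalities to volume bounds, while adapting each step to the discrete bounded-degree setting. Since $G$ has bounded degree, $\Delta$ is a bounded operator and the heat semigroup $P_t = e^{t\Delta}$ is well-defined; the associated heat kernel $p_t(x,y) := P_t \delta_y(x)$ satisfies $\sum_y p_t(x,y) = 1$, so in particular $p_t(x,y) \le 1$.

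The first step is to upgrade the pointwise $CD(0,\infty)$ hypothesis to the semigroup gradient estimate
\begin{equation*}
\Gamma(P_t f)(x) \le P_t(\Gamma(f))(x) \qquad \text{for all } x \in V,\ t \ge 0.
\end{equation*}
This is the standard "integrated Bochner" argument: one differentiates $s \mapsto P_{t-s}\Gamma(P_s f)(x)$ and uses $\Gamma_2 \ge 0$. The bounded degree assumption is what guarantees that the necessary manipulations of $P_t$ are legitimate on appropriate function classes.

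The second step would be to convert the gradient estimate into a Harnack-type lower bound for $p_t(x,y)$. The natural target is an inequality of the form $p_t(x,y) \ge \phi(t, \mathrm{dist}(x,y), d_{\max})\, p_t(x,x)$, with $\phi$ polynomial in $\mathrm{dist}(x,y)$ when $t \sim \mathrm{dist}(x,y)^2$; together with an on-diagonal upper bound $p_t(x,x) \le C t^{-\alpha}$ and the conservation identity $\sum_y p_t(x,y)=1$, this would yield the desired polynomial volume bound by summing over $B_r(x)$ and optimizing $t$ in terms of $r$.

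The main obstacle, and the reason I would not expect this plan to go through verbatim, is precisely Step 2. It is well known that in the discrete setting, pure Bakry-Émery $CD(0,\infty)$ is not strong enough to yield Li-Yau-type or parabolic Harnack inequalities; the results of Bauer, Horn, Lin, Liu and Yau require the strictly stronger exponential curvature-dimension condition $CDE(0,\N)$ or $CDE'(0,\N)$, which delivers volume doubling and Gaussian heat kernel bounds. Hence a proof of Conjecture \ref{conj:Bish} would likely have to proceed either by (i) showing that, under the additional bounded-degree hypothesis, $CD(0,\infty)$ suffices to derive a weaker (but still polynomial) Harnack-type inequality, with the exponent $C_2$ absorbing a dependence on $d_{\max}$, or (ii) bypassing heat-kernel methods entirely and using a transport/entropy approach à la Erbar--Maas, Ollivier, or a discrete Bishop--Gromov argument exploiting the local structure results of Sections \ref{section:ub}--\ref{section:connComp}. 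A plausible intermediate target, substantially weaker than the conjecture but already nontrivial, would be to rule out strict exponential growth, i.e., to show that $\liminf_{r \to \infty} r^{-1}\log|B_r(x)| = 0$; this might follow from the gradient estimate alone via a Varopoulos-type dichotomy between growth and spectral gap.
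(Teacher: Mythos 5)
This statement is labelled as a \emph{conjecture} in the paper (Conjecture \ref{conj:Bish}); the authors offer no proof of it, only the remark that abelian Cayley graphs provide examples and that the dependence on $d_{\max}$ is forced by Cartesian products. So there is nothing in the paper to compare your argument against, and your text should not be read as a proof: you yourself flag that Step 2 fails, and that assessment is correct. The concrete gap is exactly where you locate it. The gradient estimate $\Gamma(P_t f) \le P_t \Gamma(f)$ does follow from $CD(0,\infty)$ by the standard interpolation argument (and bounded degree does make the semigroup manipulations legitimate), but this commutation inequality is an $L^\infty$-type statement that by itself gives neither an on-diagonal upper bound $p_t(x,x) \le C t^{-\alpha}$ nor an off-diagonal lower bound of the Harnack type you need; in the discrete setting those require volume doubling plus a Poincar\'e inequality, or the chain-rule-free $CDE(0,\N)$ / $CDE'(0,\N)$ conditions of Bauer--Horn--Lin--Lipp\-ner--Mangoubi--Yau and Horn--Lin--Liu--Yau, which are strictly stronger than and not known to follow from $CD(0,\infty)$. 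Assuming volume doubling at that stage would be circular, since polynomial volume growth is the conclusion sought.

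Your fallback suggestions are reasonable directions but also do not close the gap: the Erbar--Maas entropic curvature and Ollivier curvature are not implied by Bakry-\'Emery $CD(0,\infty)$, and the local structure results of Sections \ref{section:ub}--\ref{section:connComp} constrain only $2$-balls, which is far from controlling $|B_r(x)|$ for large $r$. Even your weaker intermediate target (ruling out exponential growth) is, to my knowledge, open under $CD(0,\infty)$ with bounded degree. In short: the statement remains a conjecture, your plan correctly identifies why the obvious heat-kernel route does not work, and no step of the proposal actually establishes the claimed volume bound.
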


Examples of infinite regular graphs in
$CD(0,\infty)$ with polynomial volume growth are abelian Cayley
graphs. The dependence on the maximal degree $d_{max}$
follows easily from the construction of taking Cartesian products.

The following conjecture would be a direct consequence of Conjecture
\ref{conj:Bish}, since the Cheeger isoperimetric constants $h(G_n)$ of
increasing $d$-regular graphs $G_n$ with uniform upper polynomial
volume growth must tend to zero.

\begin{conjecture} \label{conj:exp} Let $d \in \NN$. No infinite
  family of finite increasing $d$-regular graphs satisfying
  $CD(0,\infty)$ can be a family of expander graphs.
\end{conjecture}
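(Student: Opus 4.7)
The plan is to derive Conjecture \ref{conj:exp} from Conjecture \ref{conj:Bish}, following the hint immediately preceding the statement: a uniform polynomial ball bound in a family of finite bounded-degree graphs forces the Cheeger constants to tend to zero, which precludes being an expander family.

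First I would fix a sequence $\{G_n=(V_n,E_n)\}$ of finite $d$-regular graphs satisfying $CD(0,\infty)$ with $|V_n|\to\infty$, and invoke Conjecture \ref{conj:Bish} to obtain constants $C_1,C_2>0$ depending only on $d$ with $|B_r(x)|\le C_1(1+r^{C_2})$ for every $n$, every $x\in V_n$, and every $r\in\NN$. For each $n$ I would pick a basepoint $x_n\in V_n$ and set
\[ R_n:=\max\{r\in\NN:\,|B_r(x_n)|\le |V_n|/2\}. \]
The polynomial bound together with $|V_n|/2 < |B_{R_n+1}(x_n)|\le C_1(1+(R_n+1)^{C_2})$ forces $R_n\to\infty$.

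Next I would run a standard telescoping/pigeonhole argument. From
\[ \prod_{r=0}^{R_n-1}\frac{|B_{r+1}(x_n)|}{|B_r(x_n)|}=|B_{R_n}(x_n)|\le C_1(1+R_n^{C_2}), \]
there must exist some $r_n\in\{0,1,\dots,R_n-1\}$ with
\[ \frac{|B_{r_n+1}(x_n)|}{|B_{r_n}(x_n)|}\le\bigl(C_1(1+R_n^{C_2})\bigr)^{1/R_n}, \]
and the right-hand side tends to $1$ as $R_n\to\infty$. Taking $S_n:=B_{r_n}(x_n)$ as a test set for the Cheeger constant (admissible because $|S_n|\le|V_n|/2$), and using that every edge in $\partial S_n$ joins $S_{r_n}(x_n)$ to $S_{r_n+1}(x_n)$ so that $|\partial S_n|\le d(|B_{r_n+1}(x_n)|-|B_{r_n}(x_n)|)$, I would obtain
\[ \frac{|\partial S_n|}{|S_n|}\le d\left(\frac{|B_{r_n+1}(x_n)|}{|B_{r_n}(x_n)|}-1\right)\longrightarrow 0. \]
Hence $h(G_n)\to 0$, so $\{G_n\}$ is not a family of expanders.

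The principal obstacle is clearly Conjecture \ref{conj:Bish} itself: establishing a Bishop-type volume comparison with uniform polynomial constants (depending only on $d_{\max}$) for all bounded-degree graphs in $CD(0,\infty)$. Once such a bound is in hand, the remainder of the argument is a classical Coulhon--Saloff-Coste type pigeonhole estimate linking polynomial volume growth to vanishing isoperimetric ratio, and the dependence on $d_{\max}$ is absorbed into the polynomial volume constants.
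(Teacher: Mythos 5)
You should first note that the statement you were asked to prove is labelled a \emph{conjecture} in the paper, and the paper offers no proof of it: the authors only remark that Conjecture \ref{conj:exp} ``would be a direct consequence of Conjecture \ref{conj:Bish}, since the Cheeger isoperimetric constants $h(G_n)$ of increasing $d$-regular graphs $G_n$ with uniform upper polynomial volume growth must tend to zero.'' Your proposal is exactly that reduction, carried out in detail. The details themselves are sound: the choice of $R_n$ with $|B_{R_n}(x_n)|\le |V_n|/2$ together with the uniform polynomial bound forces $R_n\to\infty$; the telescoping/pigeonhole step produces a radius $r_n$ at which the volume ratio $|B_{r_n+1}|/|B_{r_n}|$ is at most $\bigl(C_1(1+R_n^{C_2})\bigr)^{1/R_n}\to 1$; and bounding $|\partial B_{r_n}(x_n)|$ by $d\bigl(|B_{r_n+1}(x_n)|-|B_{r_n}(x_n)|\bigr)$ gives $h(G_n)\to 0$. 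This is the standard Coulhon--Saloff-Coste type argument, and it correctly establishes the implication ``Conjecture \ref{conj:Bish} $\Rightarrow$ Conjecture \ref{conj:exp}'' that the paper asserts in one sentence.

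The genuine gap is the one you yourself identify: Conjecture \ref{conj:Bish} is open, so what you have written is a conditional argument, not a proof of the statement. Nothing in the paper supplies the missing uniform polynomial volume bound for bounded-degree graphs in $CD(0,\infty)$ (the only unconditional global structure result quoted there is the Bonnet--Myers type diameter bound from \cite{LMP17}, which requires strictly positive curvature and says nothing in the $K=0$ case). So your proposal should be read as a correct and fully detailed version of the paper's remark that \ref{conj:Bish} implies \ref{conj:exp}, rather than as a proof of Conjecture \ref{conj:exp} itself; the conjecture remains open exactly to the extent that \ref{conj:Bish} does.
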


A corresponding statement for $d$-regular abelian Cayley graphs is
well known (\cite{AR94}). Therefore, it is natural to ask whether $CD(0,\infty)$ is
really a substantially larger class than the class of all abelian
Cayley graphs. To finish this section, we present an infinite family
of finite increasing $6$-regular non-Cayley graphs $G_n$ satisfying
$CD(0,\infty)$.

\begin{example}\label{example:inffam6reg}
  [Infinite family of $6$-regular non-Cayley graphs
  satisfying $CD(0,\infty)$]\label{example:family_non_Cay} Let $G = (V,E)$ be the $4$-regular graph
  introduced in Example \ref{ex:norbexample}. The curvature
  calculations there showed that $G$ satisfies $CD(5/2,\infty)$. Let
  $G_n$ be the Cartesian product of $G$ with the cycle $C_n$. Since
  $C_n$ satisfies $CD(0,\infty)$, the graphs $G_n$ are a family of
  increasing $6$-regular graphs satisfying also $CD(0,\infty)$. Each of the graphs $G_n$ is not a Cayley graph.
\end{example}
\begin{proof} From Example \ref{example:Cn}, we see $0\leq \K_{C_n,\cdot}(\infty)$.
Recall from Example \ref{ex:norbexample},
there are vertices in $G$ with different curvature functions, i.e.,
$$\K_{G,x_1}(\N)=\frac{5}{2}-\frac{8}{\N}\,\,\text{and}\,\,\K_{G,y_1}(\N)=5-\frac{8+\sqrt{21\N^2+72\N+64}}{2\N}.$$
Note that $\K_{G,x_1}(1)<\K_{G,y_1}(1)<0\leq \K_{C_n,\cdot}(\infty)$. We have, by Proposition \ref{prop:product_distinguish},
$$\K_{G,x_1}*\K_{C_n,\cdot}\neq \K_{G,y_1}*\K_{C_n,\cdot}.$$
  Therefore, there
  are vertices in $G_n = G \times C_n$ with different curvature
  functions. This rules out
  that any of the graphs $G_n$ is a Cayley graph.
\end{proof}

\section{Curvature and spectral gaps at $\mathbf{S_1}$-out regular
  vertices}
\label{sec:S1outreg}

Let $G=(V,E)$ be a locally finite simple graph and $x \in V$. We
assume henceforth that $x$ is an $S_1$-out regular vertex of degree
$d=d_x \ge 2$, i.e., $d_{y_j}^+ = av_1^+(x)$ for all neighbours
$y_1, y_2, \dots, y_d$ of $x$ in $G$. Recall from Corollary
\ref{cor:full_sharpness_characterization} that $S_1$-out regularity of
$x$ is equivalent to $\N$-curvature sharpness for some
$\N \in (0,\infty]$.

Recall also that we provided an explicit expression for the curvature
function at an $S_1$-out regular vertex $x$ in Theorem
\ref{thm:OutRegularFormula} in terms of the lowest eigenvalue of a
matrix $\P_\infty(x)$. Our aim in this section is to express
$\P_\infty(x)$ via suitable Laplacians, defined on graphs with vertex
set $S_1(x)$. Our main result is Theorem \ref{thm:curvs1out}
below. This theorem implies a quantitative version of Theorem
\ref{thm:conncomp} in the special case of $S_1$-out regularity (see
Corollary \ref{cor:curvS1out}). We will also use this result to deal
with the example of Johnson graphs and will finish this section with
specific cardinality estimates of $2$-spheres $S_2(x)$ of $S_1$-out
regular vertices $x \in V$.

Let $\Delta_{S_1(x)}$ be the non-norma\-lized Laplacian of the subgraph
$S_1(x)$ induced by the vertices $\{ y_1,\dots, y_d \}$, i.e., written
as an operator,
$$ \Delta_{S_1(x)} f(y_i) = \left( \sum_{j \neq i} w_{y_i y_j}f(y_j) \right) -
d_{y_i}^0 f(y_i). $$
Let $S_1'(x)$ be the graph with the same vertex set
$\{ y_1,\dots, y_d \}$ and an edge between $y_i$ and $y_j$ iff
$\# \{ z \in S_2(x) \mid y_i \sim z \sim y_j \} \ge 1$, where $\sim$
describes adjacency in the original graph $G$. We introduce the
following weights $w_{y_i y_j}'$ on the edges of $S_1'(x)$:
$$ w_{y_i y_j}' = \sum_{z \in S_2(x)} \frac{w_{y_i z}w_{z y_j}}{d_z^-}. $$
The corresponding weighted Laplacian is then given by
$$ \Delta_{S_1'(x)} f(y_i) = \left( \sum_{j \neq i} w_{y_i y_j}' f(y_j) \right)
- d_{y_i}' f(y_i), $$
where
$$ d_{y_i}' = \sum_{z \in S_2(x)} \frac{w_{y_i z}}{d_z^-} \sum_{j \neq i} w_{z y_j} =
av_1^+(x) - \sum_{z \in S_2(x), z \sim y_i} \frac{1}{d_z^-}. $$ Let
$S_1''(x) = S_1(x) \cup S_1'(x)$, i.e., the vertex set of $S_1''(x)$
is $\{ y_1,\dots, y_{d_x} \}$ and the edge set is the union of the
edge sets of $S_1(x)$ and $S_1'(x)$. Then the sum
$\Delta_{S_1(x)} + \Delta_{S_1'(x)}$ can be understood as the weighted
Laplacian $\Delta_{S_1''(x)}$ on $S_1''(x)$ with weights
$w'' = w + w'$. Note that all our Laplacians $\Delta$ are defined on
functions on the vertex set of $S_1(x)$. They are non-positive
operators, by definition, and we refer to their eigenvalues $\lambda$
as solutions of $\Delta f + \lambda f = 0$, to make these eigenvalues
non-negative. We denote and order these eigenvalues (with their
multiplicities) by
$$ 0 = \lambda_0(\Delta) \le \lambda_1(\Delta) \le \cdots \le
\lambda_{d-1}(\Delta). $$
The second-smallest eigenvalue $\lambda_1(\Delta) \ge 0$ is also called
the \emph{spectral gap} of $\Delta$ and plays an important role in
spectral graph theory. With these operators in place, we have
$$ \P_\infty(x) = - 4 \Delta_{S_1''(x)} - 2 (d I_d - J_d), $$
and Theorem \ref{thm:OutRegularFormula} implies that
$$ \K_{G,x}(\N) = \frac{3+d-av_1^+(x)}{2} - \frac{2d}{\N} +
2 \underbrace{\lambda_{\min}\left( -\Delta_{S_1''(x)} + \left(
      \frac{1}{\N} - \frac{1}{2} \right)(d I_d - J_d) \right)}_{\le
  0}. $$
Then we have
$$ \lambda_{\min}\left( -\Delta_{S_1''(x)} + \left(
      \frac{1}{\N} - \frac{1}{2} \right)(d I_d - J_d) \right) =
\begin{cases} 0, & \text{if $\lambda_1 \ge \frac{d}{2} - \frac{d}{\N}$,} \\
\lambda_1 + \left( \frac{d}{\N} - \frac{d}{2} \right), & \text{if $\lambda_1 < \frac{d}{2} - \frac{d}{\N}$.} \end{cases}
$$
This leads directly to the following result.

\begin{theorem} \label{thm:curvs1out} Let $G=(V,E)$ be a locally finite
  simple graph and $x \in V$ be an $S_1$-out regular vertex of degree
  $d \ge 2$. Let $\Delta_{S_1''(x)}$ be the weighted Laplacian defined
  above with second-smallest eigenvalue
  $\lambda_1= \lambda_1(\Delta_{S_1''(x)})$.
  \begin{itemize}
  \item[(a)] The case $\lambda_1 \ge \frac{d}{2}$ is equivalent to
  $\infty$-curvature sharpness of $x$. Then we have
  $$ \K_{G,x}(\N) = \frac{3+d-av_1^+(x)}{2} - \frac{2d}{\N}. $$
  \item[(b)] If $\lambda_1 < \frac{d}{2}$, we have
  $$ \K_{G,x}(\N) = \begin{cases}
    \frac{3+d-av_1^+(x)}{2} - \frac{2d}{\N}, & \text{if $\N \le \frac{2d}{d-2\lambda_1}$,} \\
    \frac{3-d-av_1^+(x)}{2} + 2 \lambda_1, & \text{if
      $\N > \frac{2d}{d-2\lambda_1}$.} \end{cases}
  $$
  \end{itemize}
\end{theorem}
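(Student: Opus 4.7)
The plan is to combine Theorem \ref{thm:OutRegularFormula} with the identification of $\P_\infty(x)$ as a weighted Laplacian that was established in the discussion immediately preceding the theorem, namely
$$\P_\infty(x) = -4\Delta_{S_1''(x)} - 2(dI_d - J_d).$$
Substituting this into the formula of Theorem \ref{thm:OutRegularFormula}, and using $\K_\infty^0(x) = (3+d-av_1^+(x))/2$ from \eqref{eq:K0short}, one arrives at
$$\K_{G,x}(\N) = \frac{3+d-av_1^+(x)}{2} - \frac{2d}{\N} + 2\,\lambda_{\min}(M(\N)),$$
where $M(\N) := -\Delta_{S_1''(x)} + \bigl(\tfrac{1}{\N}-\tfrac{1}{2}\bigr)(dI_d - J_d)$. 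The entire argument then reduces to computing $\lambda_{\min}(M(\N))$ explicitly in terms of $\lambda_1$.

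The decisive observation is that the two summands defining $M(\N)$ commute and are therefore simultaneously diagonalisable: both are symmetric matrices annihilating the constant vector $\mathbf{1}_d$, and on $\mathbf{1}_d^\perp$ the matrix $dI_d - J_d$ acts as multiplication by $d$. Writing the eigenvalues of $-\Delta_{S_1''(x)}$ in increasing order as $0 = \lambda_0 \le \lambda_1 \le \cdots \le \lambda_{d-1}$, the spectrum of $M(\N)$ is therefore $\{0\} \cup \{\lambda_j + d/\N - d/2 : 1 \le j \le d-1\}$, so that
$$\lambda_{\min}(M(\N)) = \min\Bigl\{0,\ \lambda_1 + \tfrac{d}{\N} - \tfrac{d}{2}\Bigr\}.$$

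For part (a), if $\lambda_1 \ge d/2$ then $\lambda_1 + d/\N - d/2 \ge 0$ for every $\N \in (0,\infty]$, hence $\lambda_{\min}(M(\N)) = 0$ and the curvature coincides for every $\N$ with the upper bound of Theorem \ref{thm:ub}; in particular $x$ is $\infty$-curvature sharp. The converse follows from Corollary \ref{cor:full_sharpness_characterization}: $\infty$-curvature sharpness forces $\P_\infty(x) \ge 0$, which when restricted to $\mathbf{1}_d^\perp$ reads $-4\Delta_{S_1''(x)} - 2d\,\mathrm{Id} \ge 0$, equivalently $\lambda_1 \ge d/2$.

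For part (b), when $\lambda_1 < d/2$ the sign of $\lambda_1 + d/\N - d/2$ flips exactly at $\N_* := 2d/(d - 2\lambda_1)$. For $\N \le \N_*$ the minimum is $0$, reproducing the upper bound from Theorem \ref{thm:ub}; for $\N > \N_*$ the minimum equals $\lambda_1 + d/\N - d/2$, and substituting this into the displayed formula for $\K_{G,x}(\N)$ the $2d/\N$ terms cancel, leaving the $\N$-independent value $(3 - d - av_1^+(x))/2 + 2\lambda_1$. The only ingredient beyond direct substitution is the appeal to Corollary \ref{cor:full_sharpness_characterization} to close the equivalence in (a); the rest is arithmetic and the simultaneous-diagonalisation observation, which is the one step that genuinely needs to be verified rather than quoted.
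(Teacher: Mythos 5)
Your proposal is correct and follows essentially the same route as the paper: the paper also substitutes $\P_\infty(x) = -4\Delta_{S_1''(x)} - 2(dI_d - J_d)$ into Theorem \ref{thm:OutRegularFormula} and reads off $\lambda_{\min}$ from the joint spectral decomposition on $\mathbf{1}^\perp$, where $dI_d - J_d$ acts as $d\cdot\mathrm{Id}$. Your explicit justification of the simultaneous diagonalisation and the appeal to Corollary \ref{cor:full_sharpness_characterization} for the equivalence in (a) only make explicit what the paper leaves implicit.
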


\begin{remark} In certain cases, it is advantageous to use the
  decomposition $\Delta_{S_1''(x)} = \Delta_{S_1(x)} + \Delta_{S_1'(x)}$ and to
  investigate the spectral gaps $\lambda_1(\Delta_{S_1(x)})$ and
  $\lambda_1( \Delta_{S_1'(x)})$ separately, making use of specific
  geometric properties of the graphs $S_1(x)$ and $S_1'(x)$. The following
  examples illustrate this point.
\end{remark}

\begin{examples} \label{examples:spectralgaps}
  Let $x$ be a $S_1$-out regular vertex of the graph $G=(V,E)$ of
  degree $d \ge 2$.

  (a) In our first example we assume $S_1(x) = K_d$ and
  $S_2(x) = \emptyset$. (This means that $G = K_{d+1}$.) In this case,
  $S_1'(x)$ is totally disconnected and
  $\Delta_{S_1''(x)} = \Delta_{S_1(x)}$. The eigenvalues of the
  non-normalized Laplacian $\Delta_{K_n}$ on the complete graph $K_d$
  are known to be $\lambda_0(\Delta_{K_n}) = 0$ and
  $\lambda_1(\Delta_{K_n}) = \dots = \lambda_{d-1}(\Delta_{K_n}) = d$
  and, therefore, Theorem \ref{thm:curvs1out} yields
  $$ \K_{G,x}(\N) = \frac{3+d}{2} - \frac{2d}{\N}, $$
  confirmed by Example \ref{example:Kn}.

  (b) Next, we assume $S_1(x)$ to be totally disconnected (i.e.,
  $\#_{\Delta}(x) = 0$) and $|S_2(x)| = {d \choose 2}$, where every
  pair of vertices $y,y' \in S_1(x)$ is connected to precisely one
  vertex $z \in S_2$ with $d_z^- = 2$. Then
  $\Delta_{S_1''(x)} = \Delta_{S_1'(x)}$ and $\Delta_{S_1'(x)}$ agrees
  with $\frac{1}{2} \Delta_{K_n}$. Therefore, we have
  $\lambda_1(\Delta_{S_1''(x)}) = d/2$ and $av_1^+(x) = d-1$, and
  Theorem \ref{thm:curvs1out} yields
  $$ \K_{G,x}(\N) = 2 - \frac{2d}{\N}. $$

  (c) Finally, we assume $S_1(x)$ to be totally disconnected and
  $|S_2(x)| = 1$, i.e., $S_2(x) = \{ z \}$, where every vertex
  $y \in S_1(x)$ is connected to $z$. Then
  $\Delta_{S_1''(x)} = \Delta_{S_1'(x)}$ and $\Delta_{S_1'(x)}$ agrees
  with $\frac{1}{d} \Delta_{K_n}$. Therefore, we have
  $\lambda_1(\Delta_{S_1''(x)}) = 1 \le d/2$ and $av_1^+(x) = 1$, and
  Theorem \ref{thm:curvs1out} yields
  $$ \K_{G,x}(\N) = \begin{cases} \frac{2+d}{2} - \frac{2d}{\N} &
  \text{if $\N \le \frac{2d}{d-2}$,} \\ \frac{6-d}{2} &
  \text{if $\N > \frac{2d}{d-2}$.} \end{cases} $$
\end{examples}

An immediate consequence of Theorem \ref{thm:curvs1out} is a stronger
quantitative version of Theorem \ref{thm:conncomp} from Section
\ref{section:connComp} in the case of $S^1$-out regularity.

\begin{corollary} \label{cor:curvS1out} Let $G=(V,E)$ be a locally finite simple graph and
  $x \in V$ an \emph{$S_1$-out regular} vertex. If $\mathring{B}_2(x)$ has
  more than one connected component then
  $$ \K_{G,x}(\N) =  \begin{cases}
    \frac{3+d-av_1^+(x)}{2} - \frac{2d}{\N}, & \text{if $\N \le 2$,} \\
    \frac{3-d-av_1^+(x)}{2}, & \text{if
      $\N > 2$.} \end{cases}
  $$
  In particular, we have
  $$ \K_{G,x}(\infty) = \frac{3-d-av_1^+(x)}{2}, $$
  and $\K_{G,x}(\infty) < 0$ except for the four cases (a), (c), (d), and (e)
  presented in Figure \ref{Fexceptions}.
\end{corollary}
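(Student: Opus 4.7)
The plan is to derive this corollary as a direct consequence of Theorem \ref{thm:curvs1out}. The key observation that bridges the connectedness hypothesis on $\mathring{B}_2(x)$ with the spectral setup of Theorem \ref{thm:curvs1out} is that the spectral gap $\lambda_1(\Delta_{S_1''(x)})$ vanishes precisely when $S_1''(x)$ is disconnected, and the connectivity of $S_1''(x)$ mirrors that of $\mathring{B}_2(x)$ (restricted to $S_1(x)$-vertices).

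First I would establish the translation between the two notions of connectivity. Recall that $\mathring{B}_2(x)$ has vertex set $S_1(x) \sqcup S_2(x)$, with spherical edges inside $S_1(x)$ and radial edges between $S_1(x)$ and $S_2(x)$; every vertex $z \in S_2(x)$ has at least one neighbour in $S_1(x)$, so $z$ is never isolated and every connected component of $\mathring{B}_2(x)$ meets $S_1(x)$. Two vertices $y_i, y_j \in S_1(x)$ lie in the same component of $\mathring{B}_2(x)$ if and only if they are connected by a walk alternating between $S_1$-edges (which are edges of $S_1(x) \subset S_1''(x)$) and pairs of radial edges $y \sim z \sim y'$ with $z \in S_2(x)$ (each such pair yields the weight $w_{yz}w_{zy'}/d_z^- > 0$ in $S_1'(x) \subset S_1''(x)$). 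Hence the partition of $S_1(x)$ induced by the components of $\mathring{B}_2(x)$ coincides with the partition induced by the components of $S_1''(x)$. Under the hypothesis, $\mathring{B}_2(x)$ has more than one component, so $S_1''(x)$ is disconnected and its weighted Laplacian has $\lambda_1(\Delta_{S_1''(x)}) = 0$.

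Second I would observe that the hypothesis forces $d = d_x \ge 2$: if $d=1$, the single vertex of $S_1(x)$ is connected to every vertex of $S_2(x)$, making $\mathring{B}_2(x)$ connected. Hence Theorem \ref{thm:curvs1out} applies, and since $\lambda_1 = 0 < d/2$, part (b) of that theorem yields exactly the formula in the corollary statement, with the threshold $2d/(d - 2\lambda_1) = 2$.

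Third, for the last claim, I would read off from the formula that
\begin{equation*}
\K_{G,x}(\infty) = \frac{3-d-av_1^+(x)}{2},
\end{equation*}
so $\K_{G,x}(\infty) \ge 0$ iff $d + av_1^+(x) \le 3$. I would then cross-reference with Theorem \ref{thm:conncomp}: only the configurations (a)--(e) of Figure \ref{Fexceptions} can have $\K_{G,x}(\infty) \ge 0$ when $\mathring{B}_2(x)$ is disconnected. Among these, case (b) has $d=2$ with neighbours of distinct out-degrees ($0$ and $1$) and is \emph{not} $S_1$-out regular, hence excluded by our hypothesis. The remaining cases (a), (c), (d), (e) are $S_1$-out regular, with parameters $(d, av_1^+)$ equal to $(2,0), (2,1), (3,0), (3,0)$ respectively, giving $\K_{G,x}(\infty) = 1/2, 0, 0, 0$; each is consistent with the explicit curvature calculations performed in Examples \ref{example:exceptions}. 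The only routine step that requires care is confirming the exclusion of case (b) via the $S_1$-out regularity hypothesis, which I expect to be the only mildly subtle point in an otherwise direct deduction.
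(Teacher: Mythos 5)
Your proposal is correct and follows essentially the same route as the paper: deduce that $S_1''(x)$ inherits the disconnectedness of $\mathring{B}_2(x)$ (so $\lambda_1(\Delta_{S_1''(x)})=0<d/2$ with $d\ge 2$), apply Theorem \ref{thm:curvs1out}(b) to get the threshold $2d/(d-2\lambda_1)=2$, and then run the case analysis of Theorem \ref{thm:conncomp} for the sign of $\K_{G,x}(\infty)$. Your added details — the explicit walk-alternation argument identifying the component partitions, and the explicit exclusion of case (b) via the $S_1$-out regularity hypothesis — are exactly the points the paper leaves implicit, and they check out.
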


\begin{proof} Note that if $\mathring{B}_2(x)$ has more than one
  connected component then $d \ge 2$ and $S_1''(x)$ has also more than
  one connected component. This implies that the eigenvalue $0$ if
  $\Delta_{S_1''(x)}$ has multiplicity at least $2$. Then we can apply
  Theorem \ref{thm:curvs1out} with $\lambda_1 = 0 < d/2$. The case
  analysis for $3-d-av_1^+(x) \ge 0$ is very similar as in the proof
  of Theorem \ref{thm:conncomp}.
\end{proof}

Another straightforward consequence of Theorem \ref{thm:conncomp} is
the following.

\begin{corollary} \label{cor:poscurvS1reg}Let $G=(V,E)$ be a locally finite
  simple graph and $x \in V$ be an $S_1$-out regular vertex of degree
  $d \ge 2$. Let $\lambda_1 \ge 0 $ be the second-smallest eigenvalue
  of $\Delta_{S_1''(x)}$. Then $\K_{G,x}(\infty) \ge 0$ is equivalent to
  \begin{equation} \label{eq:nonnegcurveq} av_1^+(x) \le 3+d \quad
    \text{and} \,\,\, \lambda_1 \ge \frac{d+av_1^+(x)-3}{4}.
  \end{equation}
  In particular, we have $\K_{G,x}(\infty) \ge 0$ if
  $av_1^+(x) \le 3+d$ and $\lambda_1 \ge d/2$. (Note that the second
  condition is equivalent to $\infty$-curvature sharpness.)
\end{corollary}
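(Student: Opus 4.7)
The plan is to apply Theorem~\ref{thm:curvs1out} directly and carry out a case analysis on the dichotomy $\lambda_1 \ge d/2$ versus $\lambda_1 < d/2$ that governs which of the two branches (a), (b) of that theorem is active. Since $\K_{G,x}(\infty)$ is read off from Theorem~\ref{thm:curvs1out} by taking $\N=\infty$ (equivalently, looking at the constant plateau for $\N$ large), the whole statement reduces to manipulating linear inequalities in $\lambda_1$ and $av_1^+(x)$.

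First, I would treat the regime $\lambda_1 \ge d/2$, which by Theorem~\ref{thm:curvs1out}(a) is equivalent to $\infty$-curvature sharpness and gives
\[
\K_{G,x}(\infty) \;=\; \frac{3+d-av_1^+(x)}{2}.
\]
Here $\K_{G,x}(\infty)\ge 0$ is equivalent to $av_1^+(x)\le 3+d$, and the second required inequality is then automatic, since $\frac{d+av_1^+(x)-3}{4}\le \frac{d+(3+d)-3}{4}=\frac{d}{2}\le \lambda_1$. This simultaneously establishes the ``in particular'' clause.

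Next, I would treat the regime $\lambda_1 < d/2$, where Theorem~\ref{thm:curvs1out}(b) at $\N=\infty$ gives
\[
\K_{G,x}(\infty) \;=\; \frac{3-d-av_1^+(x)}{2} + 2\lambda_1.
\]
Non-negativity here rearranges to $\lambda_1 \ge \frac{d+av_1^+(x)-3}{4}$. To recover the $av_1^+(x) \le 3+d$ condition in this regime, I would combine $\lambda_1 \ge \frac{d+av_1^+(x)-3}{4}$ with $\lambda_1 < d/2$, obtaining $d+av_1^+(x)-3 < 2d$, i.e.\ $av_1^+(x) < 3+d$.

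Finally, I would merge the two cases: the forward direction shows that $\K_{G,x}(\infty)\ge 0$ in either regime implies both $av_1^+(x) \le 3+d$ and $\lambda_1 \ge \frac{d+av_1^+(x)-3}{4}$; conversely, if both inequalities hold then either we are in regime (a) (in which case non-negativity is built in from $av_1^+(x)\le 3+d$) or in regime (b) (in which case the second inequality precisely gives $\K_{G,x}(\infty)\ge 0$). There is no real obstacle here -- the corollary is essentially bookkeeping on top of Theorem~\ref{thm:curvs1out}; the only point requiring care is verifying that the automatic inequality in case (a) matches the threshold appearing in case (b), which is ensured by evaluating both expressions at the boundary $\lambda_1=d/2$.
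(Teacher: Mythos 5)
Your proposal is correct and is exactly the argument the paper intends: the corollary is stated there as a "straightforward consequence" of Theorem \ref{thm:curvs1out} with no written proof, and your case analysis on $\lambda_1 \ge d/2$ versus $\lambda_1 < d/2$, together with the observation that the threshold $\frac{d+av_1^+(x)-3}{4}$ is automatically met in case (a) when $av_1^+(x)\le 3+d$, fills in precisely the intended bookkeeping.
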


\begin{remark} The importance of Corollary \ref{cor:poscurvS1reg} is
  that it relates \emph{non-negativity of the curvature} at a vertex
  $x \in V$ to a large enough \emph{spectral gap} of the weighted
  Laplacian $\Delta_{S_1''(x)}$ on the 1-sphere $S_1(x)$ in the case
  of $S_1$-out regularity.
\end{remark}

Using Theorem \ref{thm:curvs1out}, we can show all Johnson graphs are $\infty$-curvature sharp, which is an extension of Example \ref{example:Kn}.

\begin{example}[Johnson graphs]\label{example:Johnson}
Johnson graph $J(n,k)$ is the graph with the set of $k$-element subsets of an $n$-element set as the vertex set, where two $k$-element subsets are adjacent when they have $k-1$ elements in common, see, e.g., \cite[Section 12.4.2]{BH12}. In particular, $J(n,1)$ is the complete graph $K_n$, and $J(n,2)$ is the line graph $L(K_n)$ of $K_n$. Each Johnson graph $J(n,k)$ is $\infty$-curvature sharp. Moreover, for any vertex $x$ of $J(n,k)$, the curvature function is given by
\begin{equation}\label{eq:Johnson}
\K_{J(n,k),x}(\N)=\frac{n+2}{2}-\frac{2k(n-k)}{\N}\,\,\,\,\forall\,\,\N\in(0,\infty].
\end{equation}
\end{example}
\begin{proof}
Denote the $n$-element set by $[n]:=\{1,2,\ldots,n\}$.
Let vertex $x$ be the $k$-element subset $\{i_1,i_2,\ldots,i_k\}\subset [n]$. Then the vertices in $S_1(x)$ can be listed as
$$y_{\ell,m}:=\{i_1,\ldots,\widehat{i}_{\ell},\ldots, i_k,j_m\}, \,\,\,\,\ell\in [k],\,\, m\in[n-k],$$ where $\{j_{1},\ldots, j_{n-k}\}$ is the complement of $x$ in $[n]$. By a hat, we mean to delete the corresponding element from the set. Therefore, we have $d:=d_x=k(n-k)$.

Any vertex $y_{\ell,m}\in S_1(x)$ is adjacent to the following vertices in $S_2(x)$:
$$z_{\ell,\ell',m,m'}:=\{i_1,\ldots,\widehat{i}_{\ell},\ldots,\widehat{i}_{\ell'},\ldots, i_k,j_m,j_{m'}\},\,\,\ell'\in [k]\setminus\{\ell\},\,\,m'\in [n-k]\setminus \{m\}.$$ Therefore, $J(n,k)$ is $S_1$-out regular at $x$ with $av_1^+(x)=(k-1)(n-k-1)$. This implies
$$\K_\infty^0(x):=\frac{3+d_x-av_1^+(x)}{2}=\frac{n+2}{2}.$$

Any vertex $z_{\ell,\ell',m,m'}\in S_2(x)$ is adjacent to  $y_{\ell,m},\,y_{\ell',m},\,y_{\ell,m'},\,y_{\ell',m'}\in S_1(x)$. That is, vertices in $S_2(x)$ have constant in degree $4$.

We now figure out the weighted Laplacian $\D_{S''_1(x)}$. First observe that two vertices $y_{\ell_1,m_1}, y_{\ell_2,m_2}\in S_1(x)$ are adjacent if and only if
$$\ell_1=\ell_2, \,m_1\neq m_2\,\,\,\,\text{ or }\,\,\,\,\ell_1\neq \ell_2,\, m_1=m_2.$$
That is, the subgraph $S_1(x)$ is the Cartesian product $K_k\times K_{n-k}$  of two complete graphs. When $\ell_1=\ell_2:=\ell, \,m_1\neq m_2$, $y_{\ell_1,m_1}, y_{\ell_2,m_2}$ have $k-1$ common neighbours in $S_2(x)$: $z_{\ell,\ell',m_1,m_2}, \,\ell'\in [k]\setminus \{\ell\}$. When $\ell_1\neq \ell_2,\, m_1=m_2:=m$, $y_{\ell_1,m_1}, y_{\ell_2,m_2}$ have $n-k-1$ common neighbours in $S_2(x)$: $z_{\ell_1,\ell_2,m,m'}, \,m'\in[n-k]\setminus\{m\}$.

Furthermore, any two vertices $y_{\ell_1,m_1}, y_{\ell_2,m_2}\in S_1(x)$ with $\ell_1\neq \ell_2,\, m_1\neq m_2$ has exactly one common neighbour in $S_2(x)$, that is, $z_{\ell_1,\ell_2,m_1,m_2}$. We like to mention that pairs of vertices of this kind are not adjacent in the subgraph $S_1(x)$.

We denote the weighted complete graph $K_k$ with constant edge weight $c_1$ by $(K_k,c_1)$. Let $(K_k,c_1)\times (K_{n-k},c_2)$ be the weighted Cartesian product $K_k\times K_{n-k}$, whose edge weights are naturally inherited from $(K_k,c_1)$ and $(K_{n-k},c_2)$. Then we have
\begin{align*}
\D_{S_1''(x)}&=\D_{\left(K_{d},\frac{1}{4}\right)}+\D_{\left(K_k,1+\frac{n-k-1}{4}-\frac{1}{4}\right)\times \left(K_{n-k},1+\frac{k-1}{4}-\frac{1}{4}\right)}\\
&=-\frac{1}{4}(dI_{d}-J_d)+\D_{\left(K_k,\frac{n-k+2}{4}\right)\times \left(K_{n-k},\frac{k+2}{4}\right)}.
\end{align*}
Recalling that the Laplacian eigenvalues of the Cartesian product are given by all possible sums of the Laplacian eigenvalues of the two original graphs, it is straightforward to check
$$\lambda_1(\D_{S_1''(x)})
=\min\left\{\frac{k(n-k+1)}{2},\frac{(n-k)(k+1)}{2}\right\}> \frac{k(n-k)}{2}=\frac{d}{2}.$$
Therefore, we conclude from Theorem \ref{thm:curvs1out} that $J(n,k)$ is $\infty$-curvature sharp, and the curvature function (\ref{eq:Johnson}) then follows immediately.
\end{proof}
\begin{remark}
From (\ref{eq:Johnson}), we see the graphs $J(n,k)$ and $J(n,n-k)$ share the same curvature function. In fact, they are isomorphic. The isomorphism is given by sending a $k$-element subset to its complement.
Johnson graphs have many other interesting properties. They are distance-regular, but not always strongly regular. The diameter of $J(n,k)$ is $\min\{k,n-k\}$. Johnson graphs are important in translating many combinatorial problems about sets into graph theory. We refer to \cite[Section 12.4.2]{BH12}, \cite[Section 1.6]{GR01} for more discussions.
\end{remark}

We finish this section by a discussion of upper bounds for the
cardinality of $S_2(x)$. A trivial upper bound, assuming nothing
besides the $S_1$-regularity of $x$, is given by
\begin{equation} \label{eq:trivS2card}
|S_2(x)| \le d \cdot av_1^+(x).
\end{equation}
This estimate does not take into account that different vertices of $S_1(x)$
may be connected to the same vertex in $S_2(x)$. The next result gives another
upper bound in terms of the spectral gap. This result is sometimes better than
\eqref{eq:trivS2card} and, at other times, worse.

\begin{proposition} \label{prop:S2card}
  Let $G=(V,E)$ be a locally finite simple graph and
  $x \in V$ be an $S_1$-out regular vertex of degree $d \ge 2$. Let
  $\lambda_1 \ge 0 $ be the second-smallest eigenvalue of
  $\Delta_{S_1''(x)}$. Then we have
  $$ |S_2(x)| \le \left( \sum_{j=1}^d (d_{y_j}-1) \right) - (d-1)\lambda_1. $$
  Note that $\sum_{j=1}^d (d_{y_j} -1)$ would be the cardinality of
  $|S_2(x)|$ in the case when $B_2(x)$ were a tree.
\end{proposition}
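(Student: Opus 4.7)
The plan is to derive the upper bound on $|S_2(x)|$ by computing the trace of $-\Delta_{S_1''(x)}$ and comparing it against $(d-1)\lambda_1$, since the trace of a positive semidefinite matrix on $\RR^d$ with lowest eigenvalue $0$ is at least $(d-1)$ times the second-smallest eigenvalue.

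Concretely, I would first observe that the diagonal entries of $-\Delta_{S_1''(x)}$ are the weighted degrees $d''_{y_j} = d^0_{y_j} + d'_{y_j}$, so
\begin{equation*}
  \mathrm{tr}(-\Delta_{S_1''(x)}) \;=\; \sum_{j=1}^d d^0_{y_j} + \sum_{j=1}^d d'_{y_j}.
\end{equation*}
The first sum equals $2\#_{\Delta}(x)$ by definition. For the second sum I would use the reformulation $d'_{y_j} = av_1^+(x) - \sum_{z \in S_2(x),\,z\sim y_j} 1/d_z^-$ given in the paper, and then switch the order of summation, so that $\sum_j \sum_{z\sim y_j} 1/d_z^- = \sum_{z\in S_2(x)} d_z^-/d_z^- = |S_2(x)|$. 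Using also $S_1$-out regularity, $d \cdot av_1^+(x) = \sum_j d_{y_j}^+$, this gives
\begin{equation*}
  \mathrm{tr}(-\Delta_{S_1''(x)}) \;=\; 2\#_{\Delta}(x) + \sum_{j=1}^d d_{y_j}^+ - |S_2(x)|.
\end{equation*}

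The key identity now is $d_{y_j} = 1 + d_{y_j}^0 + d_{y_j}^+$, valid because every $y_j \in S_1(x)$ has exactly one in-neighbour (namely $x$). Substituting $d_{y_j}^+ = d_{y_j} - 1 - d_{y_j}^0$ into the trace formula makes the $2\#_{\Delta}(x)$-terms cancel, yielding the clean expression
\begin{equation*}
  \mathrm{tr}(-\Delta_{S_1''(x)}) \;=\; \sum_{j=1}^d (d_{y_j}-1) - |S_2(x)|.
\end{equation*}

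Finally, since $-\Delta_{S_1''(x)}$ is positive semidefinite with eigenvalues $0 = \lambda_0 \le \lambda_1 \le \cdots \le \lambda_{d-1}$, its trace is at least $(d-1)\lambda_1$. Combining with the displayed trace identity and rearranging gives the claimed bound. The concluding remark that equality essentially occurs when $B_2(x)$ is a tree is immediate: in that case there are no triangles at $x$, no spherical edges in $S_1(x)$, and distinct $y_j$ share no neighbour in $S_2(x)$, so $|S_2(x)| = \sum_j d_{y_j}^+ = \sum_j(d_{y_j}-1)$, while $\Delta_{S_1''(x)} \equiv 0$ makes $\lambda_1 = 0$. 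There is no real obstacle here; the only thing to be careful about is bookkeeping, particularly the double-counting when expressing $\sum_j d'_{y_j}$ as a sum over $S_2(x)$.
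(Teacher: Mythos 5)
Your proof is correct and follows essentially the same route as the paper's: compute $\mathrm{tr}(-\Delta_{S_1''(x)})=\sum_j d_{y_j}^0+\sum_j d_{y_j}'$, evaluate the second sum by swapping the order of summation to produce $d\,av_1^+(x)-|S_2(x)|$, identify the total with $\sum_j (d_{y_j}-1)-|S_2(x)|$ via $d_{y_j}=1+d_{y_j}^0+d_{y_j}^+$, and bound the trace below by $(d-1)\lambda_1$. The only difference is that you spell out the bookkeeping step with $2\#_{\Delta}(x)$ explicitly, which the paper leaves implicit.
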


\begin{proof} Let
  $\lambda_0 = 0 \le \lambda_1 \le \cdots \le \lambda_{d-1}$ be the
  eigenvalues of $\Delta_{S_1''(x)}$. Then we have
  $$ (d-1) \lambda_1 \le \sum_{j=1}^{d-1} \lambda_j = {\rm Trace}(- \Delta_{S_1''(x)}) = \left( \sum_{j=1}^d d_{y_j}^0 \right) + \left( \sum_{j=1}^d d_{y_j}' \right). $$
  For the second sum on the right hand side we obtain
  \begin{multline*}
  \sum_{j=1}^d d_{y_j}' = \sum_{j=1}^d \left( av_1^+(x) - \sum_{z \in _2(x), z \sim y_j} \frac{1}{d_z^-} \right) = \\ = d av_1^+(x) - \sum_{z \in S_2(x)} \sum_{j: y_j \sim z} \frac{1}{d_z^-} = d av_1^+(x) - |S_2(x)|.
  \end{multline*}
  Combination with the first sum on the right hand side leads to
  \begin{equation} \label{eq:lambda1}
  (d-1)\lambda_1 \le \left( \sum_{j=1}^d (d_{y_j}-1) \right) - |S_2(x)|.
  \end{equation}
  Rearranging this inequality finishes the proof.
\end{proof}

Combining Proposition \ref{prop:S2card} with inequality \eqref{eq:nonnegcurveq}
leads directly to the following result in the case of non-negative curvature.

\begin{corollary} Let $G=(V,E)$ be a locally finite simple graph and
  $x \in V$ be an $S_1$-out regular vertex of degree $d \ge 2$ satisfying
  $\K_{G,x}(\infty) \ge 0$. Then we have $av_1^+(x) \le d + 3$ and
  $$ |S_2(x)| \le \left( \sum_{j=1}^d (d_{y_j}-1) \right) -
  (d-1) \frac{d+ av_1^+(x)-3}{4}. $$
  This simplifies in the case of a $d$-regular graph to
  $$ |S_2(x)| \le (d-1) \left( \frac{3}{4}d - \frac{av_1^+(x)-3}{4} \right). $$
\end{corollary}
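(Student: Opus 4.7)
The plan is to combine the two key results already established in this subsection: Proposition \ref{prop:S2card}, which gives an upper bound for $|S_2(x)|$ in terms of the spectral gap $\lambda_1 = \lambda_1(\Delta_{S_1''(x)})$, and Corollary \ref{cor:poscurvS1reg}, which translates the curvature hypothesis $\K_{G,x}(\infty) \ge 0$ into a lower bound for $\lambda_1$. Since Proposition \ref{prop:S2card}'s bound is \emph{decreasing} in $\lambda_1$, a lower bound on $\lambda_1$ yields the desired upper bound on $|S_2(x)|$.

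First I would invoke Corollary \ref{cor:poscurvS1reg}: the hypothesis $\K_{G,x}(\infty) \ge 0$, together with $S_1$-out regularity, gives immediately the first conclusion $av_1^+(x) \le d+3$, as well as the spectral gap estimate
\begin{equation*}
\lambda_1 \;\ge\; \frac{d + av_1^+(x) - 3}{4}.
\end{equation*}
Next I would plug this inequality into the conclusion of Proposition \ref{prop:S2card}, namely $|S_2(x)| \le \sum_{j=1}^d (d_{y_j}-1) - (d-1)\lambda_1$, to obtain the first displayed inequality. No auxiliary lemma is needed, since both ingredients are already in place.

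For the $d$-regular specialization, I would simply observe that $d_{y_j} = d$ for all $j \in [d]$, hence $\sum_{j=1}^d (d_{y_j}-1) = d(d-1)$. Factoring out $(d-1)$ then gives
\begin{equation*}
|S_2(x)| \;\le\; (d-1)\left[d - \frac{d + av_1^+(x) - 3}{4}\right] \;=\; (d-1)\left[\frac{3d}{4} - \frac{av_1^+(x) - 3}{4}\right],
\end{equation*}
which is the claimed form. There is no serious obstacle here; the result is essentially a one-line consequence of the preceding two results, and the only task is arranging the algebra cleanly. The conceptual content lies entirely in the previously established connection between Bakry-\'Emery curvature and the spectral gap of the auxiliary weighted Laplacian on the $1$-sphere.
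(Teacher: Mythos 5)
Your proposal is correct and follows exactly the paper's route: the paper states that the corollary follows by "combining Proposition \ref{prop:S2card} with inequality \eqref{eq:nonnegcurveq}," which is precisely the spectral-gap lower bound from Corollary \ref{cor:poscurvS1reg} that you substitute into the $|S_2(x)|$ bound. The algebra in the $d$-regular specialization is also handled the same way.
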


Note in comparison that the $2$-sphere of a $d$-regular tree has
cardinality $(d-1)d$. Another immediate consequence is the following result.

\begin{corollary} Let $G=(V,E)$ be a locally finite simple graph and
  $x \in V$ be an $S_1$-out regular vertex of degree $d \ge 2$. If $x$
  is \emph{$\infty$-curvature sharp}, we have
  $$ |S_2(x)| \le \left( \sum_{j=1}^d (d_{y_j}-1) \right) - \frac{d(d-1)}{2}. $$
  This, together with our trivial estimate \eqref{eq:trivS2card}
  yields, in the case of a $d$-regular graph
  $$ |S_2(x)| \le \min \left\{ \frac{d(d-1)}{2}, d \cdot av_1^+(x) \right\}. $$
\end{corollary}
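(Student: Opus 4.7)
The proof is a direct application of Proposition \ref{prop:S2card} combined with the spectral characterization of $\infty$-curvature sharpness obtained earlier in this section. My plan is as follows.

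First, I would invoke Proposition \ref{prop:S2card}, which states that for any $S_1$-out regular vertex $x$ of degree $d\ge 2$ we have
$$ |S_2(x)| \le \left( \sum_{j=1}^d (d_{y_j}-1) \right) - (d-1)\lambda_1, $$
where $\lambda_1 = \lambda_1(\Delta_{S_1''(x)})$. This gives us the desired inequality in essentially final form, modulo a lower bound on $\lambda_1$.

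Next, I would use the hypothesis that $x$ is $\infty$-curvature sharp. By Theorem \ref{thm:curvs1out}(a) (equivalently Corollary \ref{cor:poscurvS1reg}), $\infty$-curvature sharpness of an $S_1$-out regular vertex $x$ with $d\ge 2$ is equivalent to $\lambda_1 \ge d/2$. Plugging this into the bound above yields
$$ |S_2(x)| \le \left( \sum_{j=1}^d (d_{y_j}-1) \right) - (d-1)\cdot\frac{d}{2} = \left( \sum_{j=1}^d (d_{y_j}-1) \right) - \frac{d(d-1)}{2}, $$
which is the first displayed inequality.

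For the second claim, in the $d$-regular case we have $d_{y_j}=d$ for every neighbour $y_j$, so $\sum_{j=1}^d (d_{y_j}-1)=d(d-1)$, and the bound just derived becomes $|S_2(x)|\le d(d-1)/2$. Combining this with the trivial counting bound \eqref{eq:trivS2card}, namely $|S_2(x)|\le d\cdot av_1^+(x)$, which holds for any $S_1$-out regular vertex without further assumption, yields
$$ |S_2(x)| \le \min\left\{ \frac{d(d-1)}{2},\ d\cdot av_1^+(x) \right\}, $$
as required. There is no real obstacle here; the only point worth emphasising is the use of the equivalence ``$\infty$-curvature sharp $\Longleftrightarrow$ $\lambda_1(\Delta_{S_1''(x)})\ge d/2$'', which was established via Theorem \ref{thm:curvs1out} earlier in this section and is what lets us turn the spectral inequality of Proposition \ref{prop:S2card} into a purely combinatorial bound.
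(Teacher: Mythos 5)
Your proof is correct and follows exactly the paper's argument: invoke the equivalence of $\infty$-curvature sharpness with $\lambda_1(\Delta_{S_1''(x)})\ge d/2$ and substitute this into Proposition \ref{prop:S2card}, then specialize to the $d$-regular case and combine with \eqref{eq:trivS2card}. Nothing is missing.
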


\begin{proof} Recall that $\infty$-curvature sharpness of $x$ is
  equivalent to $\lambda_1 \ge d/2$. Then apply Proposition \ref{prop:S2card}.
\end{proof}

\begin{remark} (a) The $\infty$-curvature sharpness condition at a
  vertex $x$ implies for $d$-regular graphs that the cardinality of
  $|S_2(x)|$ is at most half as large as the corresponding $2$-sphere
  of a $d$-regular tree. Tightness of this estimate was obtained in
  Example \ref{examples:spectralgaps}(b).

  (b) Note that in the case of a $d$-regular graph $G=(V,E)$ the
  inequalities \eqref{eq:nonnegcurveq} and \eqref{eq:lambda1} can also
  be understood as the following bounds of the spectral gap of
  $\Delta_{S_1''(x)}$ at an $S_1$-out regular vertex $x \in V$ with
  $\K_{G,x}(\infty) \ge 0$:
  $$ \frac{d}{4} + \frac{av_1^+(x)-3}{4} \le \lambda_1 \le
  d - \frac{1}{d-1} |S_2(x)|. $$
  Moreover, the upper bound for $\lambda_1$ holds generally without any
  curvature restriction.
\end{remark}

\section{Curvature of Cayley graphs}\label{section:Cayley}

It is a well known general fact that all abelian Cayley graphs or,
more generally, all Ricci-flat graphs lie in the class
$CD(0,\infty)$ \cite{CY96,LY10}. In this section, we
will consider more subtle curvature properties of abelian and
especially also non-abelian Cayley graphs.

Generally, a Cayley graph $G = {\rm Cay}(\Gamma,S)$ is determined by a
discrete group $\Gamma$ and a finite set of generators
$S \subset \Gamma$ which is symmetric (i.e., if $s \in S$ then also
$s^{-1} \in S$). The vertices of a Cayley graph are the elements of
$\Gamma$ and every directed edge can be labelled by one of the
generators in $S$.

Henceforth, $e \in \Gamma$ denotes the identity element and $S$ does
never contain $e$ (which would give rise to a loop). Note that if a
generator $s \in S$ has order $2$, i.e., if $s = s^{-1}$, this element
is contained only once in $S$ and gives rise to a single edge
emanating from any given vertex, and labelled in both directions by
$s$. Any non-empty word $r(S)$ in the generators $S$ is called a {\em
  relation} if it represents the identity element. Geometrically,
every relation corresponds one-to-one to a closed walk in the
corresponding Cayley graph starting from and returning to, say, the
identity $e \in \Gamma$. We call a relation {\em reduced}, if it does
not contain a subword of the form $s s^{-1}$. Geometrically, the
corresponding closed walk is without backtracking. Note that Cayley
graphs are $|S|$-regular and vertex transitive. Since every regular
graph without triangles is $S^1$-out regular, the same holds true for
Cayley graphs ${\rm Cay}(\Gamma,S)$ which do not have relations of
length $3$. Most of our examples of Cayley graphs will have this
property.

Since all vertices have the same curvature function, we can drop the
reference to the vertex and simply write $\K_{G}$ for $\K_{G,e}$.

We start by presenting a general family of abelian Cayley graphs
having precisely zero curvature at infinity.

\begin{theorem} \label{thm:abcay} Let $G = {\rm Cay}(\Gamma,S)$ be an
  abelian Cayley graph with $2m = |S| \ge 2$. If all reduced relations
  of $S$ have length $\ge 5$ except for the relations
  $s_1 s_2 s_1^{-1} s_2^{-1}$ expressing the commutativity, then we
  have for all $g \in \Gamma$
  $$ \K_G(\N) = \begin{cases}
    2 - \frac{4m}{\N}, & \text{if $\N \le 2m$,} \\
    0, & \text{if $\N > 2m$.} \end{cases}  $$
  In particular, we have $\K_G(\infty) = 0$.
\end{theorem}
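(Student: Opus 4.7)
The plan is to reduce to a single vertex by vertex transitivity and then apply Theorem~\ref{thm:OutRegularFormula} directly. Working at the identity $e\in\Gamma$, the first observation is that the absence of reduced relations of length $2$ or $3$ forces that no generator is its own inverse (so $S = \{s_1,s_1^{-1},\ldots,s_m,s_m^{-1}\}$) and that the Cayley graph is triangle-free. Consequently every neighbour $y\in S_1(e)$ has $d_y^0 = 0$ and $d_y^+ = 2m-1$; in particular, $e$ is $S_1$-out regular with $av_1^+(e) = 2m-1$, which gives
$$\K_\infty^0(e) = \frac{3+2m-(2m-1)}{2} = 2.$$

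Next I would compute the off-diagonal entries of $\P_\infty(e)$ via \eqref{eq:Pij}. For distinct $s,t\in S$, a common neighbour $z\in S_2(e)$ of $s$ and $t$ means $z = s v_1 = t v_2$ with $v_1,v_2\in S$, which (using commutativity) amounts to a cyclically reduced length-$4$ word $s\,v_1\,v_2^{-1}\,t^{-1}$ representing the identity. By hypothesis such a word must be a cyclic permutation of a commutator $aba^{-1}b^{-1}$. A short case check (using that no generator has order $2$) shows that if $t = s^{-1}$ this commutator pattern cannot be matched, so $s$ and $s^{-1}$ have no common neighbour in $S_2(e)$; otherwise the only solution is $(v_1,v_2)=(t,s)$, giving a unique common neighbour $z = st = ts$ with in-degree $d_z^- = 2$.

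Plugging these counts into \eqref{eq:Pij}--\eqref{eq:Pii}, and ordering $S$ so that $s_{m+i} = s_i^{-1}$, one obtains the clean block form
$$ \P_\infty(e) = 2\begin{pmatrix} -I_m & I_m \\ I_m & -I_m \end{pmatrix}, $$
whose eigenvalues are $0$ (on vectors $(v,v)^\top$) and $-4$ (on vectors $(v,-v)^\top$), each of multiplicity $m$. Hence $\lambda_{\min}(\P_\infty(e)) = -4$, so by \eqref{eq:N0} the threshold is $\N_0(e) = \tfrac{4\cdot 2m}{4} = 2m$, and \eqref{eq:step_fucntion} of Theorem~\ref{thm:OutRegularFormula} produces exactly the claimed piecewise formula, with $\K_G(\infty) = 2 - \tfrac{4m}{2m} = 0$.

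The main obstacle is the case analysis ensuring that, for $s\neq t^{\pm 1}$, the only common neighbour of $s$ and $t$ in $S_2(e)$ is $st$: this is the step where the precise hypothesis on reduced length-$4$ relations is used, and care must be taken with the cyclic structure of the commutator pattern and with ruling out degenerate matchings that would require a generator of order $2$ or a reducible subword.
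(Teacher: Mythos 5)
Your proof is correct and follows essentially the same route as the paper: the core content in both is the identical analysis of $\mathring{B}_2(e)$ (triangle-freeness, the unique common neighbour $st$ with $d_{st}^-=2$ for $s\neq t^{\pm 1}$, and no common neighbour for the pair $s,s^{-1}$), all extracted from the hypothesis on reduced relations of length $\le 4$. The only difference is cosmetic: you feed this data into Theorem \ref{thm:OutRegularFormula} via the explicit block matrix $\P_\infty(e)=2(P-I)$, whereas the paper identifies $S_1''(e)$ as the cocktail party graph and applies Theorem \ref{thm:curvs1out}(b) using its known Laplacian spectrum — equivalent computations related by $\P_\infty(x)=-4\Delta_{S_1''(x)}-2(dI_d-J_d)$.
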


\begin{proof}
  First recall that $G$ is $d$-regular with $d = 2m$.

  If $S$ contains an element $s \in S$ of order $k \le 4$, the word
  $s^k$ is a reduced relation of length $\le 4$. Therefore, all
  elements of $S$ have oder at least $5$.

  Next we describe the structure of $\mathring{B}_2(e)$: We have
  $S_1(e) = S$ with no edges between two different vertices of
  $S_1(e)$. A pair of vertices $s_1, s_2 \in S_1(e)$ is connected via
  a unique vertex $s_1 s_2 = s_2 s_1$ in $S_2(e)$ iff
  $s_1 \neq s_2^{-1}$. Otherwise $s_1$ and $s_2$ are not connected via
  a vertex in $S_2(e)$. Moreover, every vertex $s \in S_1(e)$ is
  adjacent to its square $s^2 \in S_2(x)$. Since $G$ is $S_1$-out
  regular (it does not have relations of length $3$), we can
  apply the results of section \ref{sec:S1outreg}. In particular, we
  have $av_1^+(e) = 2m-1$ and $d_{s_1 s_2}^- = 2$ and $d_{s^2}^- = 1$
  for $s_1, s_2, s \in S$ with $s_1 \neq s_2^{\pm 1}$. As a
  consequence, $S_1'(e) = S_1''(e)$ is the {\em cocktail party graph}
  of $2m=|S|$ vertices. The spectrum of the adjacency matrix
  $A_{S_1''(e)}$ of $S_1''(e)$ is known to be (see \cite[Section
  8.1]{BH12})
  $$ \sigma(A_{S_1''(e)}) = \{ \underbrace{-2,\dots,-2}_{m-1},
  \underbrace{0,\dots,0,}_m, 2m-2 \}. $$
  The weights of the weighted Laplacian $\Delta_{S_1''(e)}$ are all
  equal to $1/2$ and, therefore, we have
  $\lambda_1(\Delta_{S_1''(e)}) = m-1 < d/2 = m$.  Note that
  $3+d-av_1^+(x) = 4$ and Theorem \ref{thm:curvs1out}(b) tells us that
  $$ \K_G(\N) = \begin{cases}
    2 - \frac{4m}{\N}, & \text{if $\N \le 2m$,} \\
    0, & \text{if $\N > 2m$.} \end{cases}  $$
\end{proof}

\begin{remark} In fact, there exists an absolute constant $C$ such that for any abelian Cayley graph $G=Cay(\Gamma, S)$ with degree $d=|S|$ and size $N$, we have
\begin{equation}\label{eq:Lich_Friedman}
0\leq \K_{G}(\infty)\leq CdN^{-\frac{4}{d}}.
\end{equation}
In particular, when the size $N\to \infty$, $\K_G(\infty)$ tends to zero. This follows from two estimates for the eigenvalue $\lambda_1(\D)$ of the Laplacian $\D$ of $G$: A Lichnerowicz type estimate (see \cite{CLY14, KKRT16}) tells $\lambda_1(\D)\geq \K_G(\infty)$; Friedman, Murty, and Tillich \cite{FMT06} prove that there exists an absolute constant $C$ such that for any abelian Cayley graph, $\lambda_1(\D)\leq CdN^{-\frac{4}{d}}$.

\end{remark}

Now we move on to general Cayley graphs which are no longer assumed to
be abelian. All our groups $\Gamma$ are finitely presented, i.e.,
$\Gamma = \langle S \mid R \rangle$, where
$S \subset \Gamma \setminus \{e\}$ is finite and symmetric and $R$
consists of finitely many words $r_1(S), \dots, r_N(S)$, the so-called
{\em defining relations} of (the presentation of) $\Gamma$. We
conjecture that the curvature function behaves monotonically under
addition of defining relations:

\begin{conjecture} \label{conj:monrel}Let $\Gamma$ be given by
$$ \Gamma = \langle S \mid r_1(S),\dots,r_N(S) \rangle, $$
and $\Gamma'$ be another group, differing from $\Gamma$ by one
additional defining relation:
$$ \Gamma' = \langle S \mid r_1(S),\dots,r_N(S),r'(S) \rangle. $$
Assume that none of the generators in $S \subset \Gamma$ is
redundant in $\Gamma'$, i.e., for any two different generators
$s_1, s_2 \in S$ in the original group $\Gamma$ we also have $s_1 \neq s_2$
in $\Gamma'$ and $s_1 \neq e_{\Gamma'} \neq s_2$.

Then the associated Cayley graphs $G = {\rm Cay}(\Gamma,S)$ and
$G' = {\rm Cay}(\Gamma',S)$ are both $|S|$-regular and we have:
$$ \K_{G'}(\N) \ge \K_G(\N) \quad \text{for all $\N \in (0,\infty]$.} $$
\end{conjecture}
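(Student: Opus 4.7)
My plan is to prove the conjecture by realizing the natural group quotient $q:\Gamma\to\Gamma'$ as a graph covering $\pi:G\to G'$ and showing that $CD(K,\N)$ inequalities descend along such coverings via function lifting. The first step is to observe that the hypothesis of the conjecture is precisely what makes $\pi$ a covering in the sense that, for every $x\in V(G)$, the restriction of $\pi$ to the neighborhood $\{xs:s\in S\}$ of $x$ is a bijection onto the neighborhood $\{\pi(x)[s]:s\in S\}$ of $\pi(x)$. Indeed, injectivity of this neighborhood map fails iff $[s_1]=[s_2]$ for distinct $s_1,s_2\in S$, and $\pi(xs)=\pi(x)$ iff $[s]=e_{\Gamma'}$; both possibilities are ruled out by the hypothesis.

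The second step is the key technical one: for every $f:V(G')\to\mathbb{R}$, the pull-back $\tilde f:=f\circ\pi$ satisfies
\[
\Delta\tilde f(x)=\Delta f(\pi(x)),\quad \Gamma(\tilde f)(x)=\Gamma(f)(\pi(x)),\quad \Gamma_2(\tilde f)(x)=\Gamma_2(f)(\pi(x))
\]
for every $x\in V(G)$. The first two identities are immediate from the neighborhood-bijection property, since $\Delta$ and $\Gamma$ at $x$ are pointwise sums over the neighbors of $x$. For the third, I plan to apply the definition $2\Gamma_2(g)(x)=\Delta\Gamma(g)(x)-2\Gamma(g,\Delta g)(x)$ and use the neighborhood bijection at $x$ together with the already-established identities for $\Delta$ and $\Gamma$ evaluated at the neighbors of $x$.

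The third step is then to conclude. Setting $K:=\K_G(\N)$, the definition of $\K_G$ gives $\Gamma_2(\tilde f)(x)\ge \frac{1}{\N}(\Delta\tilde f(x))^2+K\Gamma(\tilde f)(x)$ for every function $\tilde f$ on $V(G)$; specializing to lifted functions $\tilde f=f\circ\pi$ and rewriting via the identities above yields the same inequality for every $f$ on $V(G')$ at every $\pi(x)\in V(G')$. Surjectivity of $\pi$ then gives $\K_{G'}(\N)\ge K=\K_G(\N)$.

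The main obstacle I anticipate is the $\Gamma_2$ identity, since $\pi$ is \emph{not} in general injective on the $2$-ball $B_2(x)$: a new defining relation of length $3$ folds a vertex of $S_2(x)$ into $S_1(\pi(x))$, while a relation of length $4$ merges two vertices of $S_2(x)$ into a common vertex of $S_2(\pi(x))$, and longer relations leave $B_2$ unchanged. One might therefore expect the argument to split by length of $r'$, invoking operations such as Proposition~\ref{prop:S2merg} for $S_2$-mergings. The saving observation is that $\Gamma_2$ is built entirely from iterated neighborhood sums, so the neighborhood-bijection property is already enough to push the computation through uniformly, and no injectivity on $B_2(x)$ is needed.
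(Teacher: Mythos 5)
You should first be aware that this statement is posed in the paper as a \emph{conjecture}: the authors only prove the special case in which every new reduced relation of $\Gamma'$ has length at least $4$ (the proposition immediately following it), by realizing the passage from $\mathring{B}_2^G(e)$ to $\mathring{B}_2^{G'}(e)$ as a succession of mergings of pairs of $S_2$-vertices without common neighbours and applying Proposition \ref{prop:S2merg}. Your covering argument is a genuinely different route, and I cannot find a gap in it; if it survives a fully written-out check it would settle the conjecture, so it deserves to be written with complete care. The points that make it work are exactly the ones you identify: the non-redundancy hypothesis is a purely group-theoretic condition on $S$ (namely $[s_1]\neq [s_2]$ for distinct $s_1,s_2\in S$ and $[s]\neq e_{\Gamma'}$), so the neighbourhood map $ys\mapsto [y][s]$ is a bijection from the neighbours of $y$ onto the neighbours of $\pi(y)$ at \emph{every} vertex $y$, not just at the base point; hence $\Delta\tilde f(y)=\Delta f(\pi(y))$ and $\Gamma(\tilde f)(y)=\Gamma(f)(\pi(y))$ hold everywhere, and since $2\Gamma_2(g)(x)=\Delta\Gamma(g)(x)-2\Gamma(g,\Delta g)(x)$ is obtained by one further neighbourhood sum at $x$ of these quantities, the identity $\Gamma_2(\tilde f)(x)=\Gamma_2(f)(\pi(x))$ follows with no injectivity of $\pi$ on $B_2(x)$ required. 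This is precisely where your approach is stronger than the paper's: a new relation of length $3$ identifies a vertex of $S_2(e)$ with a vertex of $S_1(e)$, which falls outside the scope of Proposition \ref{prop:S2merg}, whereas the pull-back computation never looks at where identified vertices land. Two things to make fully explicit in the write-up: (i) the verification that the hypothesis also forbids any identification \emph{within} a $1$-ball (i.e.\ $[s]=e$ and $[s_1]=[s_2]$ with $s_1\neq s_2$, including the case $s_2=s_1^{-1}$, are all excluded), so that $G'$ is simple and $|S|$-regular and $\pi$ is a genuine graph covering; and (ii) the term-by-term expansion of $\Gamma_2(\tilde f)(x)$, checked against a concrete length-$3$ instance such as $\mathbb{Z}\to\mathbb{Z}/3\mathbb{Z}$ with $S=\{a,a^{-1}\}$, where $T_2$ covers $K_3$ and indeed $\K_{K_3}(\N)\geq\K_{T_2}(\N)$ for all $\N$.
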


\begin{example} The Cayley graph $G$ of the free abelian group
  $\Gamma = \langle \{a^{\pm 1}\} \mid \emptyset \rangle$ is the
  $2$-regular tree $T_2$.  Theorem \ref{thm:abcay} tells us that
  $$ \K_G(\N) = \K_{T_2}(\N) = \begin{cases}
    2 - \frac{4}{\N}, & \text{if $\N \le 2$,} \\
    0, & \text{if $\N > 2$.} \end{cases} $$
  Let
  \begin{equation} \label{eq:exGamma'}
  \Gamma' = \langle \{a^{\pm 1}\} \mid a^k \rangle
  \end{equation}
  for some $k \ge 3$.  The corresponding Cayley graph agrees with the
  $k$-cycle $C_k$, and Example \ref{example:Cn} shows that
  $$ \K_{C_3}(\N) \ge \K_{C_4}(\N) \ge \K_{C_n}(\N) = \K_{T_2}(\N)$$
  for all $n \ge 5$. If we choose $k=1$ or $k=2$ in
  \eqref{eq:exGamma'}, the original generator set
  $S = \{ a^{\pm 1} \}$ becomes redundant in $\Gamma'$ since then
  $a=e$ or $a=a^{-1}$, respectively.
\end{example}

Here is another fact supporting the above conjecture.

\begin{proposition} Let $\Gamma$ and $\Gamma'$ be defined as in
  Conjecture \ref{conj:monrel}. If every reduced relation $r(S)$ of
  $\Gamma'$ which is not a relation of $\Gamma$ (i.e., does not
  represent the identity in $\Gamma$) has length at least $\ge 4$,
  then the statement of the conjecture is true. (We refer to these
  relations as {\em new relations} in $\Gamma'$. Note this property is
  required for {\em all} relations of $\Gamma'$ and not only for the
  defining ones.)
\end{proposition}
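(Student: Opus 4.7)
The plan is to exploit the fact that the curvature function $\K_{G}(\N) = \K_{G,e}(\N)$ depends only on the local structure $B_2(e)$, and to realise the passage from $G$ to $G'$ as a finite sequence of $S_2$-mergings of pairs without common neighbours, so that Proposition~\ref{prop:S2merg} can be applied iteratively. Since both Cayley graphs are vertex-transitive, it suffices to compare $B_2(e,G)$ with $B_2(e,G')$. The quotient map $\pi\colon\Gamma\to\Gamma'$ induces a graph homomorphism $\bar\pi\colon G\to G'$, and I will show that under the hypotheses $\bar\pi$ restricted to $B_2(e,G)$ is precisely such a sequence of $S_2$-mergings.

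First I would verify that $\bar\pi$ is injective on $B_1(e,G)$ and introduces no new spherical edges in $S_1(e)$. Injectivity on $B_1(e,G)$ is immediate from the non-redundancy assumption, which excludes identifications of generators with each other or with the identity. The hypothesis that every new reduced relation has length at least $4$ excludes new length-$3$ relations, hence no new triangles through $e$ and no new spherical edges in $S_1(e,G')$. The same hypothesis implies that no vertex of $S_2(e,G)$ is identified under $\bar\pi$ with $e$ or with an element of $S_1(e,G)$, since such identifications would correspond to reduced relations of length $\leq 3$. An analogous local analysis shows that $S_2(e,G')=\bar\pi(S_2(e,G))$ and that the radial edges between $S_1(e,G')$ and $S_2(e,G')$ are exactly the $\bar\pi$-images of those between $S_1(e,G)$ and $S_2(e,G)$; no new such edges appear. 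Hence the only effect of $\bar\pi$ on $B_2(e,G)$ is to identify $\pi$-fibres inside $S_2(e,G)$.

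The key step is to show that any two distinct $g_1,g_2\in S_2(e,G)$ with $\pi(g_1)=\pi(g_2)$ have no common neighbour in $S_1(e,G)$. If $s\in S_1(e,G)$ were such a common neighbour, then $g_i=s\,t_i$ with $t_i\in S$; since $g_1\neq g_2$ we have $t_1\neq t_2$ in $\Gamma$, hence $t_1\neq t_2$ in $\Gamma'$ by non-redundancy, contradicting $\pi(g_1)=\pi(g_2)$. I would then partition $S_2(e,G)$ into $\pi$-fibres and perform the identifications one pair at a time. Because the set of $S_1$-neighbours of an intermediate merged vertex is the disjoint union of the neighbour sets of its constituents and the no-common-neighbour property holds for every pair within a fibre, this property is preserved throughout the iterative procedure. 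Each individual merging does not decrease the curvature function by Proposition~\ref{prop:S2merg}, yielding $\K_{G'}(\N)\geq\K_G(\N)$ for all $\N\in(0,\infty]$.

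The main obstacle I anticipate is verifying that the simple graphs produced at each step of the iterative merging procedure agree, as subgraphs of the type used in the definition of $B_2(e)$ (with the paper's edge conventions, in particular the exclusion of $S_2$-spherical edges from the computation of $\K_{G,e}$), with the quotient of $B_2(e,G)$ under the corresponding partial equivalence relation, and that no silently discarded multi-edges or unexpected new adjacencies arise. Once the no-common-neighbour property has been established for every fibre, this is a bookkeeping exercise, but it is the one place where the hypotheses on the length of new relations and on non-redundancy must be used in combination.
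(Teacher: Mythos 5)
Your proposal is correct and follows essentially the same route as the paper: realise the quotient map $\Gamma\to\Gamma'$ on the punctured $2$-ball of $e$ as a succession of mergings of $S_2$-vertices without common neighbours (no identifications touching $B_1(e)$ or creating new edges, by the length-$\ge 4$ hypothesis and non-redundancy), and then apply Proposition \ref{prop:S2merg} repeatedly. Your explicit edge-labelling argument for the no-common-neighbour property and your care with intermediate merged vertices within a fibre are exactly the details the paper's shorter proof leaves implicit.
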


\begin{proof}
  The additional defining relation $r'(S)$ of $\Gamma'$ leads to a
  surjective group homomorphism $\Gamma \to \Gamma'$. Correspondingly,
  there is a map from the vertices of the Cayley graph $G$ to the
  vertices of the Cayley graph $G'$. The non-redundancy condition on
  the set $S$ of generators implies that this map is a bijection
  between the $1$-balls $B_1^G(e)$ and $B_1^{G'}(e)$. Generally,
  different elements in $\Gamma$ may be mapped to the same element in
  $\Gamma'$. On the Cayley graph level, this corresponds to a
  merging/identification of different vertices in $G$ to end up with
  the graph $G'$. For our curvature consideration, we are only
  concerned with identifications appearing inside the punctured
  $2$-balls of the identities. The condition on the length of new
  relations guarantees that such an identification affects only
  vertices in the $2$-spheres $S_2^G(e)$ and $S_2^{G'}(e)$ (since any
  identification of a vertex in $B_2^G(e)$ with a vertex in $B_1^G(e)$
  would correspond to a new relation of length $\le 3$ in
  $\Gamma'$). Therefore, the transition from $\mathring{B}_2^G(x)$ to
  $\mathring{B}_2^{G'}(x)$ can be described via a succession of
  mergings of pairs of vertices in the $2$-sphere. Since the directed
  edges of both Cayley graphs $G$ and $G'$ are labelled by elements of
  $S$, two different vertices can only be merged if they do not have
  common neighbours. This allows us to apply Proposition
  \ref{prop:S2merg} repeatedly and we conclude that we have
  $$ \K_{G'}(\N) \ge \K_G(\N) $$
  for any $\N \in (0,\infty]$.
\end{proof}

The Cayley graph of a free group of $k \ge 2$ generators is the
$(2k)$-regular tree with $\K_{T_{2k}}(\infty) = 2 - 2k \le 0$. In
general, Cayley graphs of nonabelian groups are expected to have
negative curvature at infinity. But this is not always the case. The
following example presents an $\infty$-curvature sharp non-abelian
Cayley graph.

\begin{example} The dihedral group of order $14$ is given by
  $$
  D_{14} = \langle \{s_1,s_2\} \mid s_1^2, s_2^2, (s_1s_2)^7 \rangle,
  $$
  representing the symmetry group of a regular $7$-gon
  $\Pi \subset \RR^2$ centered at the origin. Then $s_1$ and $s_2$
  represent reflections in the lines through the origin passing
  through the midpoint and a vertex of a side of $\Pi$, and $s_1s_2$
  represents a $2\pi/7$-rotation around the origin. Let
  $S = \{ s_1, s_2, s_3, s_4 \}$ with $s_3 = s_1 s_2 s_1$ and
  $s_4 = s_1 s_2 s_1 s_2 s_1 s_2 s_1$. The Cayley graph
  $G = {\rm Cay}(S,D_{14})$ is illustrated in Figure \ref{fig:nonabcayex}.

\begin{figure}[h]
\begin{minipage}[t]{0.42\linewidth}
    \centering
    \includegraphics[height=6.8cm]{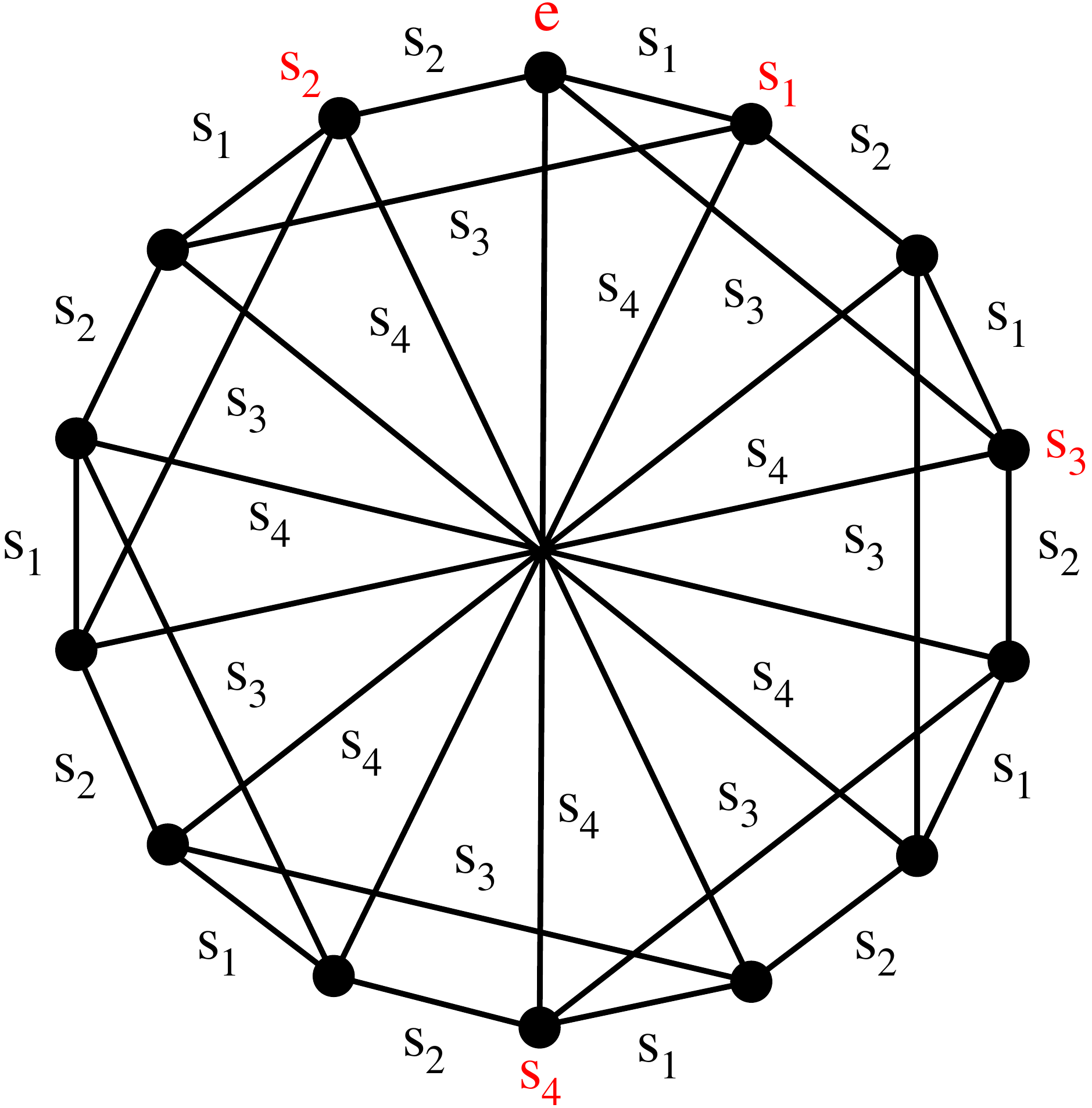}
    \caption{Cayley graph $G = {\rm Cay}(D_{14}, S)$ with edge labels and red
    vertex labels {\color{red}{$e,s_1,s_2,s_3,s_4$}}\label{fig:nonabcayex}}
\end{minipage}
\hfill
\begin{minipage}[b]{0.54\linewidth}
    \centering
    \includegraphics[width=\textwidth]{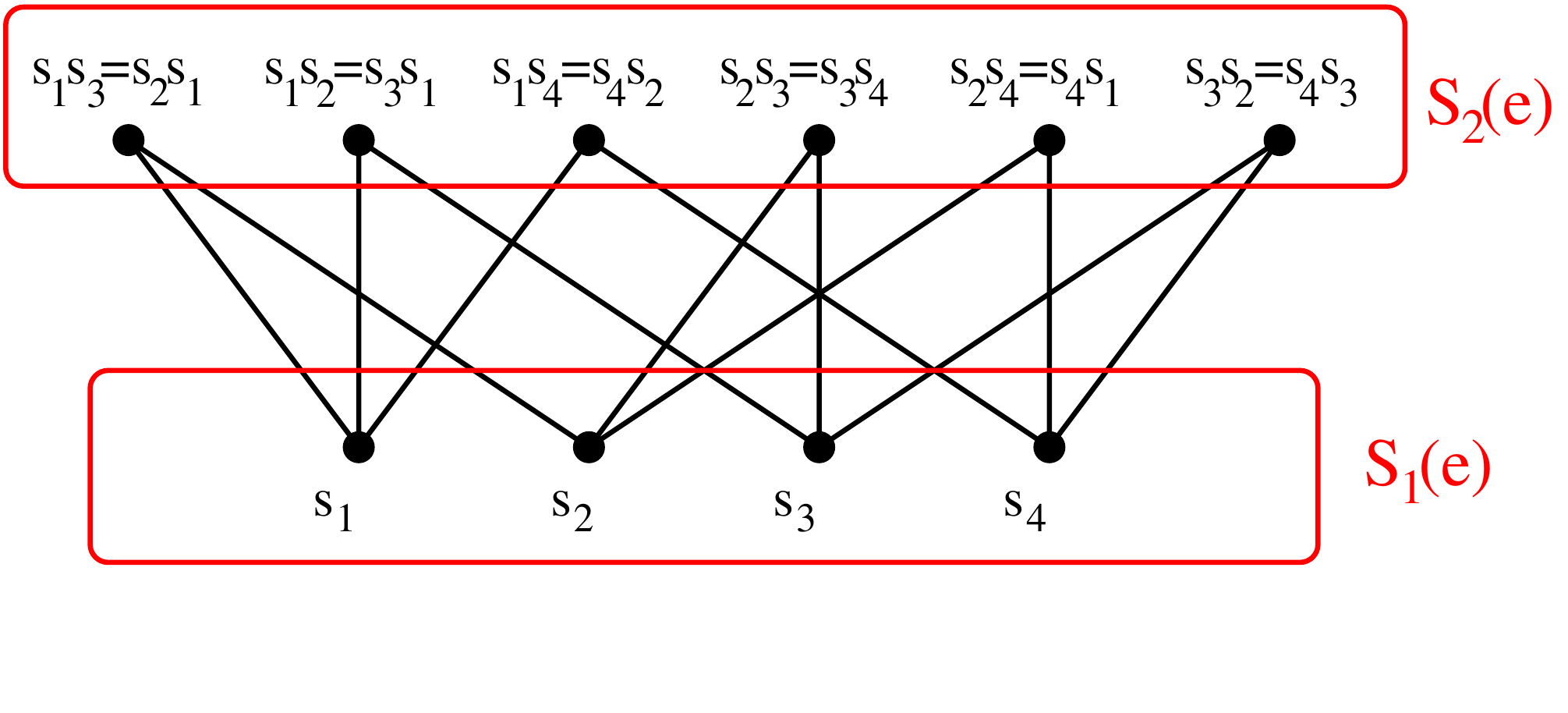}
    \caption{Illustration of $\mathring{B}_2(e)$\label{fig:nonabcayex2}}
\end{minipage}
\end{figure}

The structure of the punctured $2$-ball $\mathring{B}_2(e)$ is given
in Figure \ref{fig:nonabcayex2}. We see that $G$ is $S_1$-out regular,
$S_1(e)$ does not have any edges, $S_1'(e) = S_1''(e)$ agrees with the
complete graph $K_4$, and we have $av_1^+(e)=3$ and $d_g^-=2$ for all
$g \in S_2(e)$. As a consequence, all weights of $\Delta_{S_1''(e)}$
are equal to $1/2$ and we have
$\lambda_1(\Delta_{S_1''}) = 2 \ge 2 = d/2$. Therefore, $G$ is
$\infty$-curvature sharp by Theorem \ref{thm:curvs1out}(a), and we
have $\K_G(\N) = 2 - \frac{8}{\N}$ and $\K_g(\infty) = 2$.
\end{example}

Dihedral groups $D_{2k}$ are examples of {\em Coxeter groups}. Coxeter
groups are groups $W$ with generators $S = \{s_1,\dots,s_k\}$ and
presentations
\begin{equation} \label{eq:WCox}
W = \langle S \mid (s_is_j)^{m(i,j)} \rangle,
\end{equation}
where $m(i,i) = 1$ and $m(i,j) \ge 2$ for all $i \neq j$. It is also
possible to choose $m(i,j) = \infty$, which means that there is no
relation between the generators $s_i$ and $s_j$. The condition
$m(i,i)=1$ means that each generator $s_i$ has order $2$, and the
condition $m(i,j)=2$ means that the generators $s_i$ and $s_j$
commute. To a given Coxeter group \eqref{eq:WCox}, we associate a
Coxeter diagram (also called Dynkin diagram) as follows: It is a
finite graph with $k$ vertices (every vertex represents a generator
$s_j$) and the vertices corresponding to $s_i$ and $s_j$ are connected
by an edge if and only if $m(i,j) \ge 3$. For example, the Coxeter
diagram of the dihedral group
$$ D_{2k} = \langle \{s_1,s_2\} \mid s_1^2, s_2^2, (s_1s_2)^k \rangle $$
is the complete graph $K_2$. The Coxeter diagram carries all information
to calculate the curvature function of the Cayley graph ${\rm Cay}(W,S)$.

\begin{theorem} \label{thm:coxcurv}Let $(W,S)$ be a Coxeter group as
  defined in \eqref{eq:WCox} with $k = |S|$. Assume that the
  corresponding Coxeter diagram has at least one edge. Let
  $\mu^{max}(W,S) \in (0,k]$ be the maximal eigenvalue of the
  non-normalized Laplacian on the corresponding Coxeter diagram. Then
  the curvature function of $G = {\rm Cay}(W,S)$ is given by
  $$ \K_G(\N) = \begin{cases} 2 - \frac{2k}{\N}, & \text{if $\N \le \N_0$,} \\
    2-\mu^{max}(W,S), & \text{if $\N > \N_0$,} \end{cases} $$
  with $\N_0 = 2k/\mu^{max}(W,S)$.
\end{theorem}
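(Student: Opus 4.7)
The plan is to reduce Theorem \ref{thm:coxcurv} to Theorem \ref{thm:OutRegularFormula} by explicitly identifying the matrix $\P_\infty(e)$ of the Cayley graph $G = {\rm Cay}(W,S)$ with a scalar multiple of the Laplacian of the Coxeter diagram. Since every element of $S$ is an involution, the Coxeter presentation shows that every defining relation has even length $2m(i,j)$. Hence the sign homomorphism $\epsilon: W \to \{\pm 1\}$ with $\epsilon(s_i) = -1$ is well-defined, and any relation must have even length. In particular there are no relations of length $3$, so $G$ is triangle-free, hence $k$-regular and $S_1$-out regular at $e$ with $av_1^+(e) = k-1$ and $\K_\infty^0(e) = 2$ by \eqref{eq:K0short}.

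Next I would determine the structure of $\mathring{B}_2(e)$. The $1$-sphere is $S_1(e) = \{s_1,\dots,s_k\}$ with no spherical edges (triangle-free), and $S_2(e) = \{s_is_j : i \neq j\}$, where $s_is_j = s_js_i$ precisely when $m(i,j) = 2$. Using the word problem for Coxeter groups (or direct parity and braid considerations), I would show: if $m(i,j) = 2$, then $s_is_j \in S_2(e)$ is the unique common neighbor of $s_i$ and $s_j$ in $S_2(e)$ with in-degree $d_{s_is_j}^- = 2$; if $m(i,j) \ge 3$, then $s_is_j$ and $s_js_i$ are distinct, each with in-degree $1$, and $s_i$ and $s_j$ have no common neighbor in $S_2(e)$ (otherwise a length-$4$ relation $s_is_\ell s_m s_j = e$ with $\{i,j\} \cap \{\ell,m\} = \emptyset$ would exist, which the parity and classification of Coxeter relations rule out).

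Plugging this structure into \eqref{eq:Pij} gives, for $i \neq j$,
$$ (\P_\infty(e))_{ij} = 2 - 4\sum_{z \in S_2(e)} \frac{w_{s_iz}w_{zs_j}}{d_z^-} = \begin{cases} 0, & m(i,j) = 2, \\ 2, & m(i,j) \ge 3, \end{cases} $$
and the diagonal entries are forced by $\widehat{\P}_\infty(e)\mathbf{1} = 0$. Denoting by $A_{\rm Cox}$ and $L_{\rm Cox} = D_{\rm Cox} - A_{\rm Cox}$ the adjacency matrix and non-normalized Laplacian of the Coxeter diagram, this gives the identification $\P_\infty(e) = -2 L_{\rm Cox}$, whence $\lambda_{\min}(\P_\infty(e)) = -2\mu^{\max}(W,S)$. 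Since the Coxeter diagram has at least one edge we have $\mu^{\max}(W,S) > 0$, so $\lambda_{\min}(\P_\infty(e)) < 0$.

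Finally I would apply Theorem \ref{thm:OutRegularFormula}: the threshold \eqref{eq:N0} becomes $\N_0 = 4k/(2\mu^{\max}) = 2k/\mu^{\max}(W,S)$, and \eqref{eq:step_fucntion} yields exactly the claimed piecewise formula. The main obstacle is step two, the verification that no unexpected common neighbors occur in $S_2(e)$ for pairs $(i,j)$ with $m(i,j) \ge 3$; this is where the combinatorics of Coxeter relations (parity plus the fact that the only length-$4$ relations are the commutation relations coming from pairs with $m=2$) is essential.
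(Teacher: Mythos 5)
Your proposal is correct and follows essentially the same route as the paper: both arguments rest on Tits' solution of the word problem to classify reduced relations of length $\le 4$, derive the identical structure of $\mathring{B}_2(e)$ (no spherical edges, common neighbours in $S_2(e)$ of in-degree $2$ exactly for commuting pairs, in-degree $1$ otherwise), and reduce the curvature to the spectrum of the Coxeter-diagram Laplacian. The only cosmetic difference is that you compute $\P_\infty(e)=-2L_{\mathrm{Cox}}$ directly and invoke Theorem \ref{thm:OutRegularFormula}, whereas the paper identifies $S_1''(e)$ with the weighted complement of the Coxeter diagram and invokes Theorem \ref{thm:curvs1out}(b); these are equivalent via the identity $\P_\infty(x)=-4\Delta_{S_1''(x)}-2(dI_d-J_d)$.
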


\begin{proof} Let $(W,S)$ be a Coxeter group as defined in
  \eqref{eq:WCox} and $G = {\rm Cay}(W,S)$ be the associated Cayley
  graph.

  We extend first the notion of a {\em reduced relation} to be any
  relation which does neither contain subwords of the form $ss^{-1}$
  nor of the form $s^2$ for $s \in S$ (since $s^{-1}=s$). Then the
  Coxeter group $(W,S)$ does not have any non-trivial reduced relation
  of length $\le 4$ other than $(s_is_j)^{m(i,j)}$ with
  $m(i,j)=2$. This is a direct consequence of Tits' solution of the
  word problem for Coxeter groups (see \cite[Theorem
  3.4.2]{Davis}). In particular, $G$ is triangle free and $S^1$-out
  regular with $av_1^+(e) = k-1$. The induced subgraph $S_1(e)$ of $G$
  does not have any edges and its vertices correspond to the
  generators in $S$. Moreover, two different vertices $s_i,s_j \in S$
  are connected by an edge in $S_1'(e) = S_1''(e)$ if and only if
  $m(i,j) = 2$ (since the relation $(s_i s_j)^2$ means in the original
  graph $G$ that $s_i$ and $s_j$ have the unique common neighbour
  $s_i s_j = s_j s_i$ in $S_2(e)$). This implies that $S_1''(e)$ is
  the complement of the Coxeter diagram. Since the in degree of
  $s_i s_j \in S_2(e)$ in the case $m(i,j)=2$ is equal to $2$, all
  weights of the weighted Laplacian $\Delta_{S_1''(e)}$ are equal to
  $1/2$, and we have (see \cite[Section 1.3.2]{BH12} for the spectrum
  of the complement)
  $$ \lambda_1 = \lambda_1(\Delta_{S_1''(e)}) =
  \frac{1}{2}(k - \mu^{max}(W,S)).  $$
  Since $d=k$, we have $\lambda_1 <d/2$ and the statement of the theorem
  follows directly from Theorem \ref{thm:curvs1out}(b).
\end{proof}

\begin{remark} If the Coxeter diagram of $(W,S)$ has no edges, we are
  in the case of the Coxeter group
  $A_1^k = A_1 \times \dots \times A_1$ (with $k=|S|$) and we have
  $\mu^{max}(W,S) = 0$. It can be checked similarly as above that
  the Cayley graph $G={\rm Cay}(W,S)$ is then $\infty$-curvature sharp
  and
  $$ \K_G(\N) = 2 - \frac{2k}{\N}. $$
\end{remark}

\begin{example} The Coxeter diagrams of the finite Coxeter group $A_n$
  and the affine Coxeter group $\widetilde{A_n}$, $n \ge 2$, are the
  path $P_n$ and the cycle $C_{n+1}$.

  Let us now calculate the curvature functions of the corresponding
  Cayley graphs $G_n = {\rm Cay}(A_n,\{s_1,\dots,s_n\})$ and
  $G_n' = {\rm Cay}(\widetilde{A_n},\{s_1,\dots,s_{n+1}\})$. (In the
  particular case $n=2$, $G_2$ is the hexagon $C_6$ and $G_2'$ can be
  viewed as the $1$-skeleton of the regular hexagonal tiling of
  $\RR^2$.)

  The spectrum of the non-normalized Laplacian on $P_n$ is given by
  $\mu_j = 2-2\cos(\pi j/n)$ for $j=0,1,\dots,n-1$ (see \cite[Section
  1.4.4]{BH12}), and therefore
  $$ \mu^{max} = 2-2\cos(\frac{n-1}{n}\pi). $$
  Therefore, we have
  $$ \K_{G_n}(\N) = \begin{cases} 2 - \frac{2n}{\N}, &
    \text{if $\N \le \N_0$,} \\
    2\cos(\frac{n-1}{n}\pi), & \text{if $\N > \N_0$,} \end{cases} $$
  with $\N_0 = n/(1-\cos(\pi (n-1)/n))$. In particular, we have
  $$ \lim_{n \to \infty} \K_{G_n}(\infty) = -2. $$

  The spectrum of the non-normalized Laplacian on $C_{n+1}$ is
  $\mu_j = 2-2\cos(2\pi j/(n+1))$ for $j=0,1,\dots,n$ (see
  \cite[Section 1.4.3]{BH12}), which implies
  $$ \mu^{max} = 2-2\cos\left(\frac{n+\delta_{odd}(n)}{n+1} \pi\right), $$
  with $\delta_{odd}(n) \in \{0,1\}$ taking the value $1$ iff $n$ is odd.
  Therefore, we have
  $$ \K_{G_n'}(\N) = \begin{cases} 2 - \frac{2(n+1)}{\N}, &
    \text{if $\N \le \N_0$,} \\
    2-2\cos(\frac{n+\delta_{odd}(n)}{n+1}\pi), & \text{if
      $\N > \N_0$,} \end{cases} $$ with
  $\N_0 = (n+1)/(1-\cos(\pi (n+\delta_{odd}(n))/(n+1)))$. Again we have
  $$ \lim_{n \to \infty} \K_{G_n'}(\infty) = -2. $$
\end{example}

We end this section with a curvature calculation for a particular
Cayley graph acting transitively on the vertices of a Euclidean
building of type $\widetilde{A_2}$.

\begin{example} We consider the following infinite group $\Gamma$,
  given by seven generators $x_0, x_1$, $\dots, x_6$ (i.e.,
  $S = \{ x_0^{\pm 1}, \dots, x_6^{\pm 1}\}$) and seven defining
  relations
  \begin{equation*}
  \Gamma = \langle S \mid \text{$x_i x_{i+1} x_{i+3}$ for $i=0,1,\dots,6$}
  \rangle,
  \end{equation*}
  where the indices are understood to be taken mod $7$. This group
  belongs to a family of groups introduced in \cite[Section 4]{CMSZ},
  acting transitively on the vertices of a Euclidean building of type
  $\widetilde{A_2}$. So we can identify the vertices of this Euclidean
  building with the elements of $\Gamma$. Since the defining relations
  are all of length $3$, it is a priori not clear whether
  $G = {\rm Cay}(\Gamma,S)$ is $S^1$-out regular. The Euclidean building
  is a simplicial complex and all its faces are triangles. The building
  has thickness three, meaning that every edge is the boundary of precisely
  three triangles of the building.

  For the curvature calculation it is important to understand the
  structure of the punctured two ball $\mathring{B_2}(e)$. The induced
  subgraph $S_1(e)$ is $3$-regular with $14$ vertices and agrees with
  the Heawood graph $H$, as illustrated in Figure \ref{fig:heawood}. (In
  the terminology of simplicial complexes, $S_1(e)$ is the link of the
  vertex $e \in \Gamma$.)

  \begin{figure}[h]
  \centering
    \includegraphics[height=5cm]{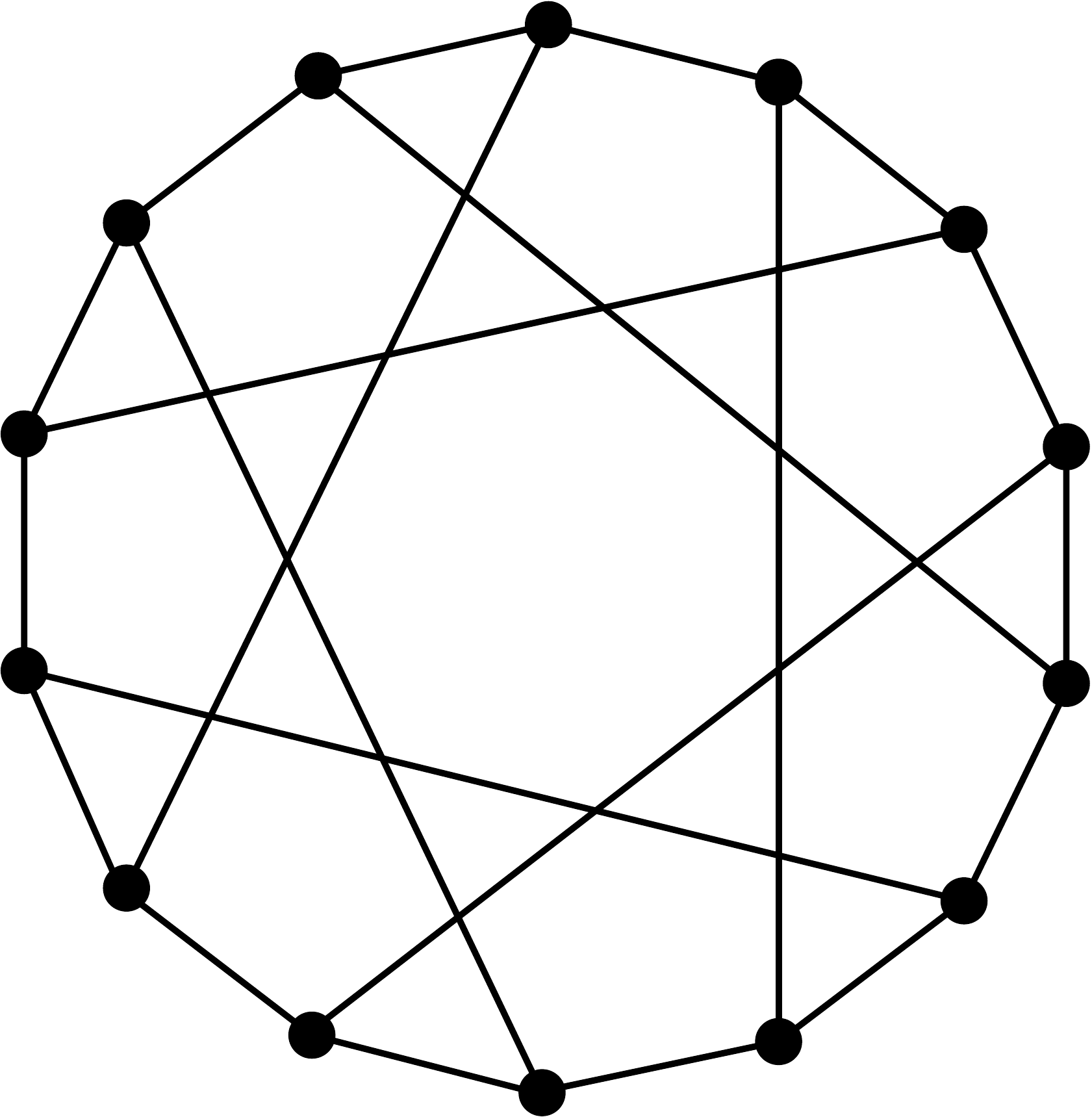}
    \caption{The Heawood graph $H$\label{fig:heawood}}
  \end{figure}

  The Heawood graph $H$ is bipartite, a generalized $3$-gon, and the
  spectrum of the non-normalized Laplacian $\Delta_H$ is given by (see
  \cite[Section 8.3]{Lub}):
  $$ \sigma(\Delta_H) = \{ 0, \underbrace{3-\sqrt{2}, \dots, 3-\sqrt{2}}_6,
  \underbrace{3+\sqrt{2}, \dots, 3+\sqrt{2}}_6, 6 \}. $$

  Since $S_1(e)$ is regular, every vertex $x \in S_1(e)$ has the same
  out degree $d_s^+=10$ and we see that $G$ is $S_1$-out regular with
  $av_1^+(e)=10$. There are two types of vertices in $S_2(e)$, the
  ones with only one ``parent'' in $S_1(e)$ and other ones with
  precisely two ``parents'' in $S_1(e)$. Recall that every edge of
  $S_1(e)$ bounds three triangles of $G$, one triangle formed by its
  end-points and the center $e$, and two other triangles formed by its
  end-points and a third vertex in $S_2(e)$ (these third vertices are
  precisely the ones with two parents). Since $S_1(e)$ has $21$ edges,
  there are precisely $42$ vertices in $S_2(e)$ with two parents in
  $S_1(e)$, and a straightforward calculation shows that there must be
  $56$ vertices in $S_2(e)$ with one parent. As a consequence, the
  graph $S_1'(e)$ is, again, the Heawood graph, and the associated
  weighted Laplacian $\Delta_{S_1'(e)}$ has all weights equal to
  $2 \cdot (1/2) =1$. Therefore, we have
  $\Delta_{S_1''(e)} = \Delta_{S_1'(e)} + \Delta_{S_1(e)} = 2
  \Delta_{S_1(e)}$ and
  $$ \lambda_1 = \lambda_1(\Delta_{S_1''(e)}) = 3-\sqrt{2}. $$
  Since $d=14$, we have $\lambda_1 < 7 = d/2$ and, applying Theorem
  \ref{thm:curvs1out}(b),
  $$ \K_{G}(\N) = \begin{cases} \frac{7}{2} - \frac{28}{\N}, &
    \text{if $\N \le \N_0$,} \\
    - \frac{9}{2} - 2 \sqrt{2}, & \text{if $\N >
      \N_0$,} \end{cases} $$
  with $\N_0 = 14/(4+\sqrt{2})$.

  It is interesting that the smallest positive eigenvalue of the
  Laplacian of the links of vertices of 2-dimensional simplicial
  complexes are also important to deduce Kazdhan property (T) for
  groups acting cocompactly in these complexes: In our case the
  requirement is connectedness of $S_1(e)$ and
  $\lambda_1(\Delta_{S_1(e)}) > 3/2$ (see \cite[Theorem 1]{BaSw}),
  which is just satisfied. It is also known that $\Gamma$ has
  infinitely many increasing finite quotients and, as a consequence,
  allows expander constructions (see, e.g., \cite{PV}). If we had
  $\K_G(\infty) \ge 0$, this example were a counterexample to our
  Conjecture \ref{conj:exp}. In this case, this example were also a
  counterexample to our Conjecture \ref{conj:Bish}, since our group
  $\Gamma$ is isomorphic to the group $G = \langle x \mid r_3 \rangle$
  in \cite[Example 3.3]{EH} and, therefore, satisfies the conditions
  $C(3)$ and $T(6)$ and must contain a free subgroup of rank two. This implies
  that our group $\Gamma$ has exponential volume growth.

\end{example}

\section{Strongly regular graphs}\label{section:srg}

A $d$-regular graph $G=(V,E)$ with $|V|=N$ is said to be \emph{strongly regular} if there are integers $\alpha$ and $\beta$ such that
every two adjacent vertices have $\alpha$ common neighbours, and
every two non-adjacent vertices have $\beta$ common neighbours.
We say a graph $G$ of this kind is \emph{a strongly regular graph} with \emph{parameters} $(N, d, \alpha, \beta)$.
Note when $\beta=0$, $G$ is isomorphic to (copies of) the complete graph $K_{d+1}$. (See, e.g., \cite[Lemma 10.1.1]{GR01}).
We assume henceforth $\beta\geq 1$.

\subsection{Curvature functions of strongly regular graphs}

We will show that the curvature function $\K_{G,x}$ of a vertex $x$ in a strongly regular graph $G$ can be determined by the graph's parameters $d, \alpha, \beta$ and the spectrum of the induced subgraph $S_1(x)$.
Note the induced subgraph $S_1(x)$ is $\alpha$-regular and of size $d$.

\begin{theorem}\label{thm:srgCur}
Let $G=(V,E)$ be a strongly regular graph with parameters $(N,d,\alpha,\beta)$. Let $x\in V$, and $A_{S_1(x)}$ be the adjacency matrix of the induced subgraph $S_1(x)$. Then, for any $\N\in (0,\infty]$, we have
\begin{equation}
\K_{G,x}(\N)=2+\frac{\alpha}{2}-\frac{2d}{\N}+\left(\frac{2d(\beta-2)-\alpha^2}{2\beta}+\frac{2d}{\N}+\frac{2}{\beta}\min_{\lambda\in \sigma\left(\at{A_{S_1(x)}}{\mathbf{1}^\perp}\right)}\left(\lambda-\frac{\alpha}{2}\right)^2\right)_-,
\end{equation}
where the minimum is taken over all eigenvalues of $A_{S_1(x)}$ corresponding to eigenvectors in $\mathbf{1}^\perp$, and $a_-:=\min\{0,a\}, \,\,\forall a\in \mathbb{R}$.
\end{theorem}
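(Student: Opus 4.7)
The plan is to reduce the problem to Theorem \ref{thm:OutRegularFormula} by exploiting the homogeneity of the strongly regular graph, then to express $\P_\infty(x)$ as a polynomial in the adjacency matrix $A_{S_1(x)}$, and finally to diagonalise. First I would observe that $x$ is automatically $S_1$-out regular: since any two adjacent vertices share $\alpha$ common neighbours, every $y \in S_1(x)$ satisfies $d_y^{x,0} = \alpha$ and hence $d_y^{x,+} = d-1-\alpha = av_1^+(x)$. In particular $\K_\infty^0(x) = (3+d-av_1^+(x))/2 = 2 + \alpha/2$, which is the leading constant in the asserted formula, and Theorem \ref{thm:OutRegularFormula} reduces everything to computing $\lambda_{\min}\bigl(\P_\infty(x) + \tfrac{4}{\N}(dI_d - J_d)\bigr)$.

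Next I would carry out the combinatorics underlying $\P_\infty(x)$. For any $z \in S_2(x)$ we have $d_z^{x,-} = \beta$. For distinct $y_i, y_j \in S_1(x)$, the common neighbours of $y_i, y_j$ in $V$ number $\alpha$ if $y_i \sim y_j$ and $\beta$ otherwise; among these the vertex $x$ contributes $1$, while the common neighbours in $S_1(x)$ number $(A_{S_1(x)}^2)_{ij}$, and the remainder lie in $S_2(x)$. Substituting into \eqref{eq:Pij} and using $\P_\infty(x)\mathbf{1}=0$ to fix the diagonal (which uses $A_{S_1(x)}\mathbf{1} = \alpha\mathbf{1}$) should yield the clean identity
\begin{equation*}
\P_\infty(x) = \frac{4}{\beta} A_{S_1(x)}^2 - \frac{4\alpha}{\beta} A_{S_1(x)} + \left(2d-\frac{4d}{\beta}\right)I_d + \left(-2+\frac{4}{\beta}\right)J_d.
\end{equation*}

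Finally I would diagonalise. The matrices $A_{S_1(x)}$, $I_d$, $J_d$ preserve $\mathbf{1}$ and $\mathbf{1}^\perp$, so for any eigenvector $v \in \mathbf{1}^\perp$ of $A_{S_1(x)}$ with eigenvalue $\lambda$, we get $J_d v = 0$ and
\begin{equation*}
\left(\P_\infty(x) + \tfrac{4}{\N}(dI_d - J_d)\right) v = \frac{1}{\beta}\Bigl(4(\lambda - \tfrac{\alpha}{2})^2 + 2d(\beta-2) - \alpha^2\Bigr) v + \frac{4d}{\N} v,
\end{equation*}
after completing the square in $\lambda^2 - \alpha\lambda$; while on $\mathbf{1}$ the matrix vanishes. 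The minimum eigenvalue is therefore the minimum of $0$ and the above expression over $\lambda \in \sigma(\at{A_{S_1(x)}}{\mathbf{1}^\perp})$; inserting this into the formula $\K_{G,x}(\N) = \K_\infty^0(x) - \tfrac{2d}{\N} + \tfrac{1}{2}\lambda_{\min}(\cdots)$ from Theorem \ref{thm:OutRegularFormula} and using $\tfrac{1}{2}\min\{0,a\} = (a/2)_-$ produces the stated expression. The only delicate step is the combinatorial identification leading to the matrix form of $\P_\infty(x)$; everything afterwards is linear-algebraic bookkeeping.
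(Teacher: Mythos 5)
Your proposal is correct and follows essentially the same route as the paper: the paper first establishes $\tfrac{1}{2}\P_\infty(x)=\tfrac{\beta-2}{\beta}(dI_d-J_d)-\tfrac{2\alpha}{\beta}A_{S_1(x)}+\tfrac{2}{\beta}A_{S_1(x)}^2$ (Proposition \ref{prop:srgPmatrix}) via exactly the common-neighbour count you describe, and then applies Theorem \ref{thm:OutRegularFormula}, restricts to $\mathbf{1}^\perp$, and completes the square in $\lambda$. Your matrix identity is the same one written in expanded form, and inferring the diagonal from $\P_\infty(x)\mathbf{1}=0$ rather than from \eqref{eq:Pii} directly is an immaterial difference.
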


Since $\AS\mathbf{1}=\alpha \mathbf{1}$, we have $\sigma\left(\at{A_{S_1(x)}}{\mathbf{1}^\perp}\right)=\sigma\{\AS\}\setminus\{\alpha\}$ when the spectrum are considered as multisets.
The following estimate,  which is purely in terms of the parameters $d,\alpha,\beta$, is a straightforward consequence of Theorems \ref{thm:srgCur}.
\begin{corollary}\label{cor:srglb}
Let $G=(V,E)$ be a strongly regular graph with parameters $(N,d,\alpha,\beta)$. Let $x\in V$. Then, for any $\N\in (0,\infty]$, we have
\begin{equation}
2+\frac{\alpha}{2}-\frac{2d}{\N}+\left(\frac{2d(\beta-2)-\alpha^2}{2\beta}+\frac{2d}{\N}\right)_-\leq \K_\N(G,x)\leq 2+\frac{\alpha}{2}-\frac{2d}{\N}.
\end{equation}
The above lower bound estimate is sharp if and only if the adjacency matrix $\AS$ of the induced subgraph $S_1(x)$ has an eigenvalue $\alpha/2$.
\end{corollary}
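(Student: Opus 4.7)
The plan is to read both inequalities and the sharpness condition directly off the exact formula in Theorem~\ref{thm:srgCur}, by splitting the negative-part term into a parameter-only summand and a spectral summand. Set
$$ a(\N) \;:=\; \frac{2d(\beta-2)-\alpha^2}{2\beta}+\frac{2d}{\N}, \qquad b \;:=\; \frac{2}{\beta}\min_{\lambda \in \sigma(\at{\AS}{\mathbf{1}^\perp})}\left(\lambda - \frac{\alpha}{2}\right)^2, $$
so that $b \ge 0$ and Theorem~\ref{thm:srgCur} reads
$$ \K_{G,x}(\N) \;=\; 2 + \frac{\alpha}{2} - \frac{2d}{\N} + \bigl(a(\N)+b\bigr)_-. $$

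The upper bound is then immediate because $(a(\N)+b)_- \le 0$; omitting this non-positive summand yields $\K_{G,x}(\N) \le 2 + \alpha/2 - 2d/\N$. For the lower bound I would invoke that the map $t \mapsto t_-$ is monotone non-decreasing: since $b \ge 0$, we have $a(\N)+b \ge a(\N)$, so $(a(\N)+b)_- \ge a(\N)_-$, and substituting this into the displayed formula recovers precisely the claimed lower bound.

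For the sharpness claim I would argue that equality $(a(\N)+b)_- = a(\N)_-$ holding uniformly in $\N \in (0,\infty]$ forces $b=0$: indeed $a(\N) \to \frac{2d(\beta-2)-\alpha^2}{2\beta}$ as $\N\to\infty$, and whenever this limit is negative (the non-trivial range of parameters) we would need $(a(\N)+b)_- = a(\N)_-$ for arbitrarily negative $a(\N)$, which is only possible when $b=0$. Conversely $b=0$ trivially gives equality. Finally $b=0$ amounts to $\alpha/2 \in \sigma(\at{\AS}{\mathbf{1}^\perp})$; since $\AS \mathbf{1} = \alpha \mathbf{1}$ and $\alpha = \alpha/2$ only in the degenerate case $\alpha = 0$, this is the same as saying $\alpha/2$ is an eigenvalue of $\AS$.

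I do not anticipate any real obstacle here: the whole argument is essentially the observation that the only $\N$-independent quantity one is allowed to drop on the way from Theorem~\ref{thm:srgCur} to Corollary~\ref{cor:srglb} is the non-negative spectral term $b$, and monotonicity of $(\cdot)_-$ controls the direction of the resulting inequality. The only mild care needed is in the degenerate case $\alpha^2 \le 2d(\beta-2)$, where $a(\N) \ge 0$ for all $\N$ and both sides of the lower-bound inequality collapse to the upper bound, so sharpness is automatic and consistent with the stated equivalence.
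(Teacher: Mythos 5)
Your argument is correct and is exactly the intended derivation: the paper states Corollary \ref{cor:srglb} without proof as a ``straightforward consequence'' of Theorem \ref{thm:srgCur}, and your decomposition of the negative-part term into the parameter summand $a(\N)$ and the non-negative spectral summand $b$, combined with the monotonicity of $t\mapsto t_-$, is precisely that consequence. One small slip in the sharpness discussion: $a(\N)$ never becomes ``arbitrarily negative'' as $\N\to\infty$ (it decreases to the finite limit $\frac{2d(\beta-2)-\alpha^2}{2\beta}$), but your argument only requires $a(\N)<0$ for some $\N$, which is exactly what a negative limit supplies, so the conclusion stands.
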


\begin{remark}
The set of $\alpha$-regular graphs whose adjacency spectrum include $\alpha/2$ is an interesting class of graphs to be explored further. Note that cycles $C_{6k}$, hypercubes $Q^{4k}$, $k=1,2,\ldots$ are examples. The Cartesian product of any two graphs from this set is still in this set.
\end{remark}

The following example illustrate the necessity of the spectrum of $A_{S_1(x)}$ in Theorem \ref{thm:srgCur}. That is, the curvature functions of a strongly regular graph are not uniquely determined by its parameters.

\begin{example}[Shrikhande graph and $4\times 4$ rook's graph]\label{example:shrikhande_rook} The Shrikhand graph can be constructed as a Cayley graph of $\mathbb{Z}_4\oplus \mathbb{Z}_4$ with the generator set $\{\pm (0,1),\pm (1,0),\pm (1,1)\}$, see, e.g., \cite[Section 9.2]{BH12}. The $4\times 4$ rook's graph is isomorphic to the line graph $L(K_{4,4})$ of $K_{4,4}$, and also to $K_4\times K_4$. They are strongly regular graphs with the same parameters $(16,6,2,2)$. But the subgraph $S_1(x)$ of a vertex $x$ in the Shrikhande graph of the $4\times 4$ rook's graph are not isomorphic, as illustrated in Figures \ref{figure:Shrikhande} and \ref{figure:rook}. The one for the Shrikehand graph is $C_6$, and the one for the $4\times 4$ rook's graph is two copies of $C_3$, denoted by $2C_3$. Recall that
$$\sigma(A_{2C_3})=\{-1,-1,-1,-1,2,2\}\,\,\,\text{and }\,\,\,\sigma(A_{C_6})=\{-2,-1,-1,1,1,2\}.$$
Hence, by Theorem \ref{thm:srgCur}, the curvature function of a vertex $x$ in the Shrikhande graph is given by
$$\K_{Shrikhande,x}(\N)=\left\{
              \begin{aligned}
                &3-\frac{12}{\N}, &&\hbox{if $0<\N\leq 12$;} \\
                &2, &&\hbox{if $\N>12$,}
              \end{aligned}
            \right.$$
and the curvature function for a vertex $x$ in the $4\times 4$ rook's graph is given by
$$
\K_{L(K_{4,4}),x}(\N)=3-\frac{12}{\N}\,\,\,\forall\,\,\N\in (0,\infty].$$
\begin{figure}[h]
\begin{minipage}[t]{0.45\linewidth}
\centering
\includegraphics[width=\textwidth]{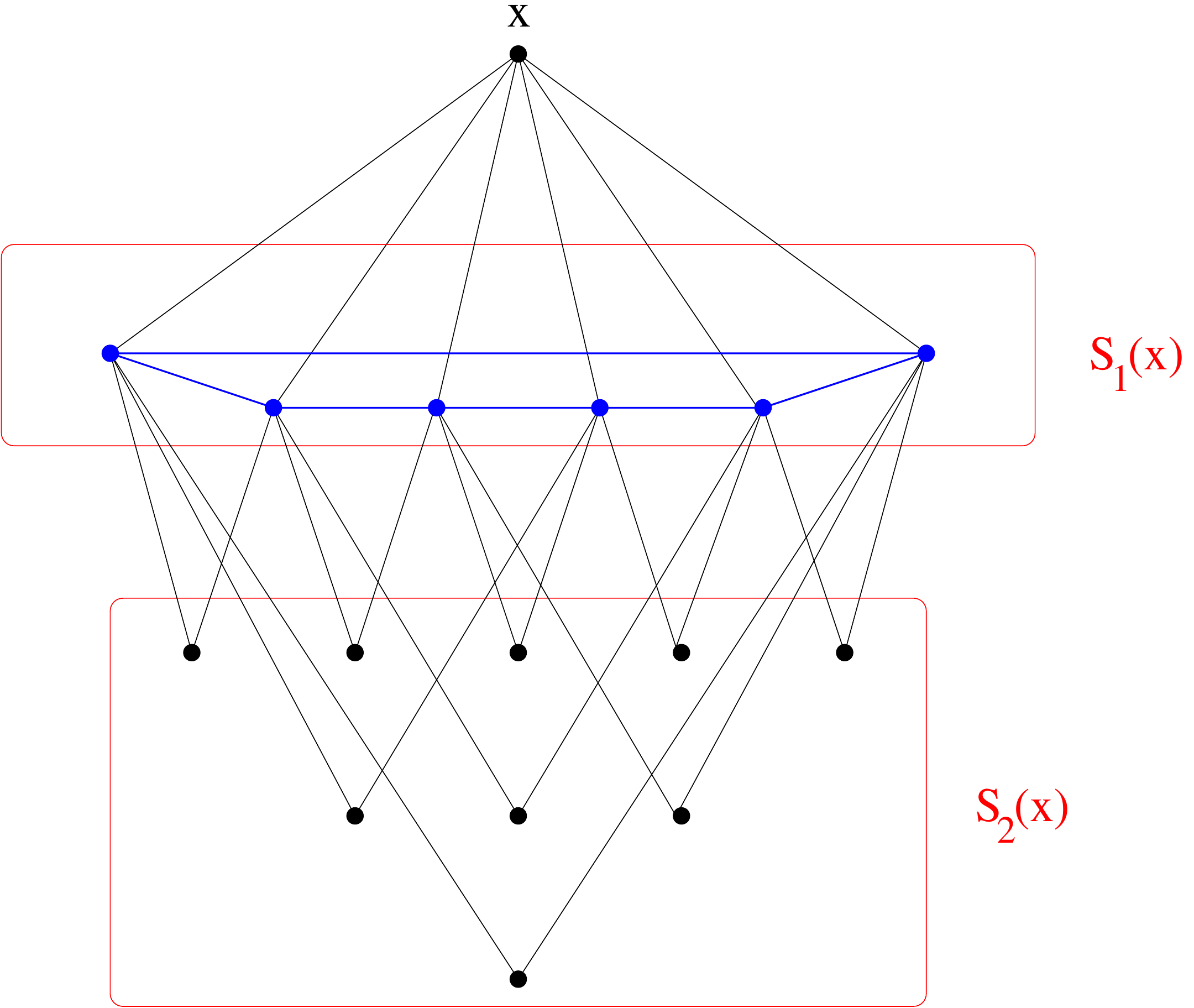}
\caption{Illustration of the subgraph $B_2(x)$ of Shrikhande graph\label{figure:Shrikhande}}
\end{minipage}
\hfill
\begin{minipage}[t]{0.45\linewidth}
\centering
\includegraphics[width=\textwidth]{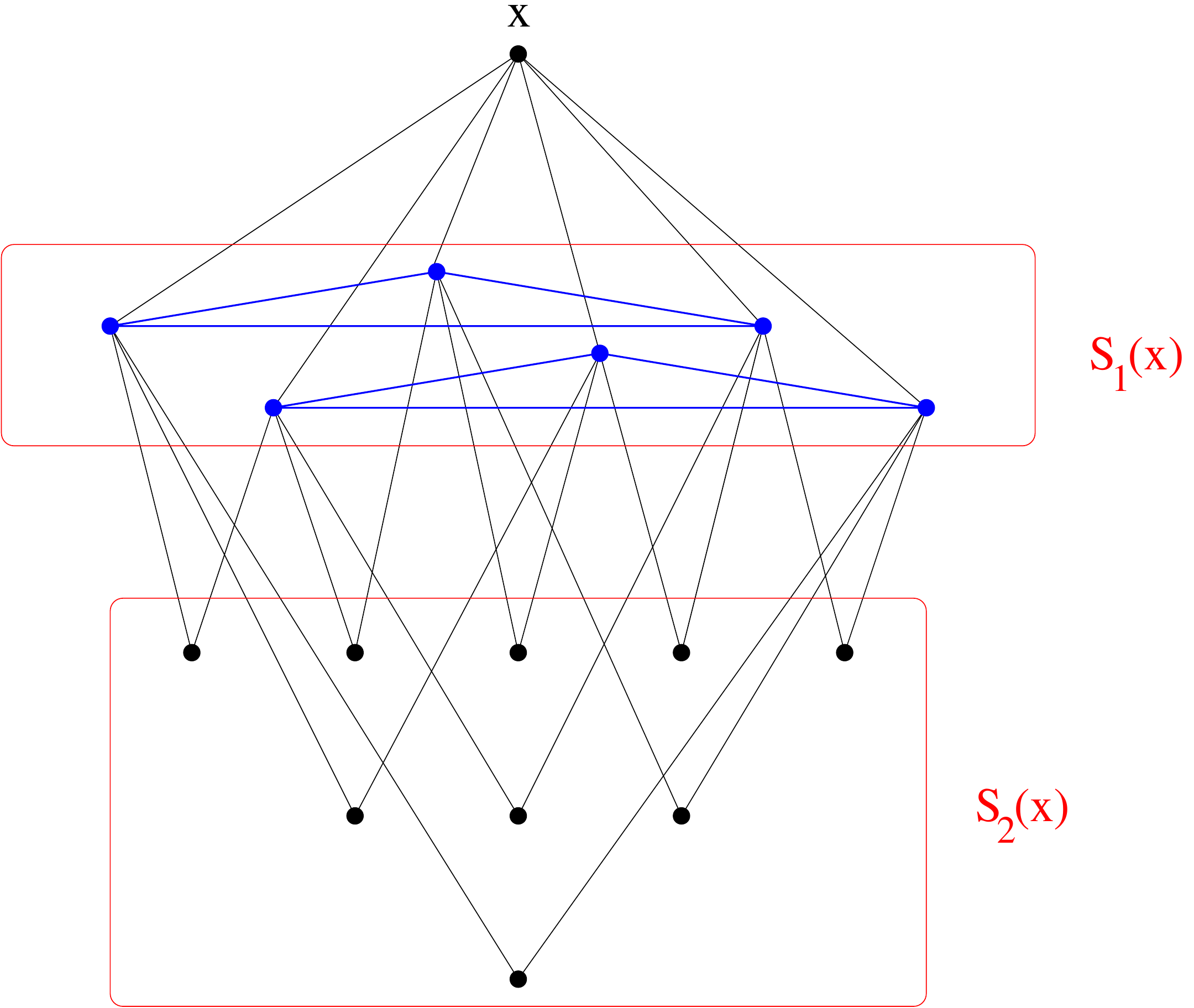}
\caption{Illustration of the subgraph $B_2(x)$ of $4\times 4$ rook's graph\label{figure:rook}}
\end{minipage}
\end{figure}
\end{example}

In the following, we prove Theorem \ref{thm:srgCur}. To that end, we first derive an expression of the matrix $\P_\infty(x)$ in terms of $A_{S_1(x)}$.

\begin{proposition}\label{prop:srgPmatrix}
Let $G=(V,E)$ be a strongly regular graph with parameters $(N,d,\alpha,\beta)$. Let $x\in V$, and $\AS$ be the adjacency matrix of the induced subgraph $S_1(x)$. Then we have
\begin{equation}\label{eq:srgPmatrix}
\frac{1}{2}\P_\infty(x)=\frac{\beta-2}{\beta}(dI_d-J_d)-\frac{2\alpha}{\beta}\AS+\frac{2}{\beta}\AS^2.
\end{equation}
\end{proposition}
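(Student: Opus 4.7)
The plan is to compute both sides of the asserted identity entry-by-entry on the off-diagonal part, and then to observe that both matrices have row sums equal to zero, which forces the diagonals to agree as well. The main technical ingredient will be a careful ``layer count'' of common neighbors of two vertices $y_i,y_j \in S_1(x)$ across the decomposition $V = \{x\} \sqcup S_1(x) \sqcup S_2(x) \sqcup \dots$

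First I would record the local data implied by the strong regularity parameters. For any $y_i \in S_1(x)$, we have $d_{y_i}^{x,-}=1$ (only $x$ lies at distance $0$), $d_{y_i}^{x,0}=\alpha$ (the common neighbors of $x$ and $y_i$, which sit in $S_1(x)$), and hence $d_{y_i}^{x,+}= d-1-\alpha$; in particular $G$ is $S_1$-out regular at $x$ and $av_1^+(x)=d-1-\alpha$. For any $z \in S_2(x)$, we have $d_z^- = \beta$ by the defining property of $\beta$ applied to the non-adjacent pair $\{x,z\}$.

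Next, for $i\neq j$, I would count common neighbors of $y_i$ and $y_j$ in $G$. Any common neighbor lies in $B_2(x)$, so such vertices split into three types: the vertex $x$ itself (always one), vertices in $S_1(x)$ (counted by $(\AS^2)_{ij}$), and vertices in $S_2(x)$. Using strong regularity, the total count is $\alpha$ when $y_i \sim y_j$ and $\beta$ otherwise, i.e.\ $\alpha w_{y_iy_j}+\beta(1-w_{y_iy_j})$. Solving for the number of common neighbors in $S_2(x)$, and recalling that $d_z^-=\beta$ for every $z \in S_2(x)$, gives
\[
\sum_{z \in S_2(x)} \frac{w_{y_iz}w_{zy_j}}{d_z^-} = \frac{1}{\beta}\Bigl[(\alpha-\beta)w_{y_iy_j} + \beta - 1 - (\AS^2)_{ij}\Bigr].
\]
Plugging this into the defining formula \eqref{eq:Pij} and simplifying (the $-4w_{y_iy_j}$ term combines with $-\tfrac{4(\alpha-\beta)}{\beta}w_{y_iy_j}$ to produce exactly $-\tfrac{4\alpha}{\beta}w_{y_iy_j}=-\tfrac{4\alpha}{\beta}(\AS)_{ij}$) yields
\[
\tfrac{1}{2}(\P_\infty(x))_{ij} = -\frac{\beta-2}{\beta} - \frac{2\alpha}{\beta}(\AS)_{ij} + \frac{2}{\beta}(\AS^2)_{ij} \qquad (i\neq j),
\]
which agrees with the $(i,j)$-entry of the right-hand side of \eqref{eq:srgPmatrix}, since $(dI_d-J_d)_{ij}=-1$ for $i\neq j$.

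Finally, I would handle the diagonal by a row-sum argument rather than another direct computation. By the defining relation \eqref{eq:Pii} (equivalent to $\P_\infty(x)\mathbf 1 = 0$ in the $S_1$-out regular case), the left-hand side annihilates $\mathbf 1$. For the right-hand side, $(dI_d-J_d)\mathbf 1 = 0$, and since $S_1(x)$ is $\alpha$-regular we have $\AS \mathbf 1 = \alpha\mathbf 1$ and $\AS^2\mathbf 1 = \alpha^2\mathbf 1$, so the $-\tfrac{2\alpha}{\beta}\AS$ and $+\tfrac{2}{\beta}\AS^2$ terms also cancel on $\mathbf 1$. Hence both matrices have vanishing row sums and agree off-diagonal, so they agree on the diagonal as well. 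The only real step requiring care is the layered common-neighbor count, which is the main (but entirely combinatorial) obstacle.
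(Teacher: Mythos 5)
Your proposal is correct and follows essentially the same route as the paper: the heart of both arguments is the identical layered count of common neighbours of $y_i,y_j$ (splitting them into $x$, the $(\AS^2)_{ij}$ neighbours inside $S_1(x)$, and the rest in $S_2(x)$ with $d_z^-=\beta$), substituted into \eqref{eq:Pij}. The only divergence is cosmetic: the paper computes the diagonal entries directly from the reformulated version of \eqref{eq:Pii}, whereas you deduce them from the fact that both sides annihilate $\mathbf 1$ (using $S_1$-out regularity and $\AS\mathbf 1=\alpha\mathbf 1$), which is a perfectly valid and slightly cleaner way to finish.
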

\begin{proof}
We observe from the strongly regularity that for any $y\in S_1(x)$, $d_y^0=\alpha$, $d_y^+=d-\alpha-1$. For any $z\in S_2(x)$, $d_z^-=\beta$. Recall the off-diagonal entry corresponding to the pair of vertices $y_i,y_j\in S_1(x)$ is given by
\begin{equation}\label{eq:srgPoffdiag}
(\P_\infty(x))_{ij}=2-4w_{y_iy_j}-\frac{4}{\beta}\sum_{z\in S_2(x)}w_{y_iz}w_{zy_j},
\end{equation}
where $w_{y_iy_j}=(\AS)_{ij}$, and
\begin{equation}
\sum_{z\in S_2(x)}w_{y_iz}w_{zy_j}=\begin{cases}
\alpha-1-(\AS^2)_{ij}, & \text{ if }\,\,y_i\sim y_j;\\
\beta-1-(\AS^2)_{ij},  & \text{ otherwise }.
\end{cases}
\end{equation}
Note that $(\AS^2)_{ij}$ provides the number of $2$-paths connecting $y_i$ and $y_j$ via another vertex in $S_1(x)$.

Therefore we can rewrite (\ref{eq:srgPoffdiag}) as below: (For convenience, we drop the dependence on $x$ in our notations and write $A:=\AS$.):
\begin{align*}
\frac{1}{2}(\P_\infty)_{ij}&=1-2A_{ij}-\frac{2}{\beta}\left(\alpha A_{ij}+\beta(1-A_{ij})-1-(A^2)_{ij}\right)\\
&=-\frac{\beta-2}{\beta}-\frac{2\alpha}{\beta}A_{ij}+\frac{2}{\beta}(A^2)_{ij}.
\end{align*}
For the diagonal entries, we calculate by (\ref{eq:Pii}),
\begin{align*}
\frac{1}{2}(\P_\infty)_{ii}=&-(d-1)+2(d_{y_i}^++d_{y_i}^0)-\frac{2}{\beta}d_{y_i}^+\\
=&\frac{\beta-2}{\beta}(d-1)+\frac{2\alpha}{\beta}.
\end{align*}
In matrix form, we obtain
\begin{align*}
\frac{1}{2}\P_\infty=\left(\frac{\beta-2}{\beta}(d-1)+\frac{2\alpha}{\beta}\right)\cdot I_d+\frac{\beta-2}{\beta}(I_d-J_d)-\frac{2\alpha}{\beta}A+\frac{2}{\beta}(A^2-\alpha I_d)
\end{align*}
Rearranging terms, we prove (\ref{eq:srgPmatrix}).
\end{proof}

\begin{proof}[Proof of Theorem \ref{thm:srgCur}]
Since $G$ is $S_1$-out regular at $x$, we can apply Theorem \ref{thm:OutRegularFormula}. First, Proposition \ref{prop:srgPmatrix} implies
\begin{equation}
\frac{1}{2}\at{\P_\infty(x)}{\mathbf{1}^\perp}=\at{\left(\left(\frac{\beta-2}{\beta}\right)dI_d-\frac{2\alpha}{\beta}\AS+\frac{2}{\beta}\AS^2\right)}{\mathbf{1}^\perp}.
\end{equation}
This implies the spectrum is given by
\begin{equation}
\frac{1}{2}\lambda_{\min}(\at{\P_\infty(x)}{\mathbf{1}^\perp})=\min\left\{\left.\frac{d(\beta-2)}{\beta}-\frac{2\alpha}{\beta}\lambda+\frac{2}{\beta}\lambda^2\right|\lambda\in \sigma\left(\at{A_{S_1(x)}}{\mathbf{1}^\perp}\right)\right\}.
\end{equation}
Applying Theorem \ref{thm:OutRegularFormula}, we obtain
\begin{align*}
\K_{G,x}(\N)=&\K_\infty^0(x)+\min\left\{0, \frac{1}{2}\lambda_{\min}(\at{\P_\infty(x)}{\mathbf{1}^\perp})+\frac{2d}{\N}\right\}\\
=&2+\frac{\alpha}{2}-\frac{2d}{\N}+\min\left\{0, \frac{2d(\beta-2)-\alpha^2}{2\beta}-\frac{2d}{\N}+\frac{2}{\beta}\min_{\lambda\in \sigma\left(\at{A_{S_1(x)}}{\mathbf{1}^\perp}\right)}\left(\lambda-\frac{\alpha}{2}\right)^2\right\}.
\end{align*}
This completes the proof.
\end{proof}

It is well known that the girth of a strongly regular graph is determined by its parameters (see, e.g., \cite[Section 4]{CvL75}).

\begin{proposition}\label{prop:srgGirth}
Let $G=(V, E)$ be a strongly regular graph with parameters $(N, d, \alpha, \beta)$. Then the girth is $3$ if and only if $\alpha>0$, $4$ if and only if $\alpha=0$ and $\beta\geq 2$, and $5$ if and only if $\alpha=0$ and $\beta=1$.
\end{proposition}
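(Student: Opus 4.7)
My plan is to handle the three cases by direct graph-theoretic arguments using the defining property of strongly regular graphs, that adjacent vertices share exactly $\alpha$ common neighbors and non-adjacent ones share exactly $\beta$.

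First I would dispose of the girth $3$ case: the graph $G$ has girth $3$ if and only if some edge $\{u,v\}$ lies in a triangle, which happens precisely when $u$ and $v$ share at least one common neighbor. Since adjacent vertices share exactly $\alpha$ common neighbors, this is equivalent to $\alpha \geq 1$. Having settled this, I assume from now on that $\alpha = 0$, so $G$ is triangle-free and the girth is at least $4$.

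Next I address the girth $4$ case under the standing assumption $\alpha = 0$. A $4$-cycle $u - v - w - v' - u$ in a triangle-free graph gives two non-adjacent vertices $u, w$ with at least two common neighbors ($v$ and $v'$); conversely, any pair of non-adjacent vertices with two distinct common neighbors produces a $4$-cycle. Since non-adjacent vertices share exactly $\beta$ common neighbors, girth $= 4$ under the assumption $\alpha = 0$ is equivalent to $\beta \geq 2$.

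The remaining case is $\alpha = 0$ and $\beta = 1$, where I have to show that a $5$-cycle actually exists (so that the girth equals $5$ rather than being larger). Note that $\beta \geq 1$ forces $G$ to be connected, and the combination $\alpha = 0$, $\beta = 1$ forces $d \geq 2$ and $G \neq K_{d+1}$, so there exist vertices $u, v$ at distance $2$ with a unique common neighbor $w$. Choosing any neighbor $v' \neq w$ of $v$, the uniqueness of $w$ forces $v' \not\sim u$, hence $u$ and $v'$ are non-adjacent and share a unique common neighbor $u'$. I would then propose the closed walk
\begin{equation*}
u - u' - v' - v - w - u
\end{equation*}
as the desired $5$-cycle.

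The main obstacle is verifying that the five vertices $u, u', v', v, w$ are pairwise distinct. Most distinctions are routine (for example $u' \neq v$ because $u' \sim u$ while $\mathrm{dist}(u,v) = 2$, and $v' \neq w$ by choice), but the delicate one is $u' \neq w$: this requires observing that $w \not\sim v'$, since otherwise $\{v, v', w\}$ would form a triangle, contradicting $\alpha = 0$; hence $w$ cannot play the role of the common neighbor $u'$ of $u$ and $v'$. Once distinctness is secured the $5$-cycle is produced, confirming that girth $= 5$ in this final case.
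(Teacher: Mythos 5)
The paper does not actually prove this proposition --- it is quoted as a standard fact with a pointer to Cameron--van Lint \cite{CvL75} --- so there is no ``paper proof'' to compare against; what matters is whether your self-contained argument is sound, and it is. The girth-$3$ and girth-$4$ equivalences are immediate from the definitions exactly as you say (a triangle is an adjacent pair with a common neighbour, and in a triangle-free graph a $4$-cycle is a non-adjacent pair with two common neighbours). The only genuinely nontrivial step is producing a $5$-cycle when $\alpha=0$ and $\beta=1$, and your construction works: $v'\not\sim u$ follows from uniqueness of the common neighbour $w$ of $u$ and $v$, so $u'$ exists, and the delicate identification $u'\neq w$ is correctly excluded because $u'\sim v'$ while $w\sim v'$ would create the triangle $\{v,v',w\}$. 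Together with the observation that $\alpha=0$, $\beta=1$ already forbids $3$- and $4$-cycles, this pins the girth at exactly $5$. (Implicit in all three cases is the paper's standing convention $\beta\ge 1$, which guarantees non-adjacent pairs exist and $G$ is connected of diameter $2$; you invoke this correctly, e.g.\ to get $d\ge 2$ in the last case.) The proof is complete as written.
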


\begin{corollary}\label{cor:srg_girth_curvature} Let $G=(V,E)$ be a strongly regular graph with vertex degree $d$. Then we have the following:
\begin{itemize}
\item[(i)] If $G$ has girth $4$, then for any $x\in V$, $$\K_{G,x}(\N)=2-\frac{2d}{\N}\,\,\,\,\forall\,\,\N\in(0,\infty].$$
\item[(ii)]
 If $G$ has girth $5$, then for any $x\in V$,
 $$\K_{G,x}(\N)=\left\{
              \begin{aligned}
                &2-\frac{2d}{\N}, &&\hbox{if $0<\N\leq 2$;} \\
                &2-d, &&\hbox{if $\N>2$.}
              \end{aligned}
            \right.$$
\end{itemize}
\end{corollary}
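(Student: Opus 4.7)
The plan is to derive Corollary \ref{cor:srg_girth_curvature} as a direct specialization of Theorem \ref{thm:srgCur}, using Proposition \ref{prop:srgGirth} to translate the girth hypothesis into constraints on the parameters $\alpha$ and $\beta$. By Proposition \ref{prop:srgGirth}, girth $4$ corresponds to $\alpha = 0$, $\beta \ge 2$, while girth $5$ corresponds to $\alpha = 0$, $\beta = 1$. The key simplifying observation is that $\alpha = 0$ means no two neighbours of $x$ are adjacent, so the induced subgraph $S_1(x)$ is edgeless and $A_{S_1(x)} = 0$; consequently its entire spectrum (including on $\mathbf{1}^\perp$) equals $\{0\}$, and
\[
\min_{\lambda \in \sigma(A_{S_1(x)}|_{\mathbf{1}^\perp})}\!\left(\lambda - \tfrac{\alpha}{2}\right)^2 = 0.
\]

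Substituting $\alpha = 0$ and this vanishing minimum into the formula from Theorem \ref{thm:srgCur} collapses the expression to
\[
\K_{G,x}(\N) = 2 - \frac{2d}{\N} + \left(\frac{d(\beta-2)}{\beta} + \frac{2d}{\N}\right)_-.
\]
For girth $4$ we have $\beta \ge 2$, so $d(\beta-2)/\beta \ge 0$ and the argument of $(\cdot)_-$ is strictly positive for every $\N \in (0,\infty]$; hence the correction term vanishes identically and $\K_{G,x}(\N) = 2 - 2d/\N$, establishing part (i).

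For girth $5$ we have $\beta = 1$, so the bracket becomes $-d + 2d/\N$. This is non-negative precisely when $\N \le 2$, in which case the correction vanishes and $\K_{G,x}(\N) = 2 - 2d/\N$; when $\N > 2$ we have $-d + 2d/\N < 0$, so the correction equals $-d + 2d/\N$ itself, yielding $\K_{G,x}(\N) = 2 - 2d/\N - d + 2d/\N = 2 - d$. This gives the piecewise formula of part (ii). Since every step is a direct substitution into Theorem \ref{thm:srgCur}, there is no substantive obstacle; the only point requiring mild care is verifying that restricting $A_{S_1(x)} = 0$ to $\mathbf{1}^\perp$ still leaves $0$ in its spectrum (which is automatic as long as $d \ge 2$, a case covered by the assumption $\beta \ge 1$ ruling out the complete-graph degeneracy already excluded in the section preamble).
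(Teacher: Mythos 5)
Your proposal is correct and follows essentially the same route as the paper: translate the girth condition into $\alpha=0$ with $\beta\ge 2$ (girth $4$) or $\beta=1$ (girth $5$) via Proposition \ref{prop:srgGirth}, note that $S_1(x)$ is edgeless so the spectral minimum in Theorem \ref{thm:srgCur} vanishes, and evaluate the $(\cdot)_-$ term. (Only a cosmetic quibble: for girth $4$ with $\beta=2$ and $\N=\infty$ the argument of $(\cdot)_-$ is zero rather than strictly positive, but the correction term still vanishes.)
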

\begin{proof}
Suppose the parameters of $G$ are $(N,d,\alpha,\beta)$. By Proposition \ref{prop:srgGirth}, if its girth is $4$, then $\alpha=0,\beta\geq 2$. Therefore, we have
$$\frac{2d\beta-4d-\alpha^2}{2\beta}=\frac{2(\beta-2)d}{2\beta}\geq 0.$$
Applying Theorem \ref{thm:srgCur}, we obtain (i).

If the girth of $G$ is $5$, then we have $\alpha=0,\beta=1$. Therefore, we have
$$\frac{2d\beta-4d-\alpha^2}{2\beta}=-d,$$
and the induced subgraph $S_1(x)$ has empty edge set. Hence, Theorem \ref{thm:srgCur} tells (ii). Noticing that, in this case, the the local  subgraph $B_2(x)$ is isomorphic to that of a vertex in a $d$-regular tree, (ii) also follows from Example \ref{example:dtree}.
\end{proof}

In particular, we see, from Corollary \ref{cor:srg_girth_curvature}, any strongly regular graph with girth $4$ is $\infty$-curvature sharp. Such graphs include particularly Clebsch graph, Gewirtz graph, $M_{22}$ graph and Higman-Sims graph, which are strongly regular graphs with parameters $(16,5,0,2)$, $(56,10,0,2)$, $(77,16,0,4)$ and $(100,22,0,6)$, respectively.

When the girth is $3$, we believe in the following result.

\begin{conjecture}\label{conj:srg}
Any strongly regular graph with girth $3$ satisfies $CD(2,\infty)$.
\end{conjecture}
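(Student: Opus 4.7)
The plan is to invoke Theorem \ref{thm:srgCur} at $\N = \infty$ together with Proposition \ref{prop:srgPmatrix}. Writing $A := A_{S_1(x)}$ and using $\alpha \ge 1$ (equivalent to girth three by Proposition \ref{prop:srgGirth}) together with $av_1^+(x) = d-1-\alpha$, one obtains $\K_\infty^0(x) = 2 + \alpha/2$, and Theorem \ref{thm:OutRegularFormula} reduces $\K_{G,x}(\infty) \ge 2$ to the eigenvalue inequality
\begin{equation} \label{eq:srgtarget}
(2\lambda - \alpha)^2 \;\ge\; \alpha(\alpha - \beta) + 2d(2 - \beta)
\qquad \text{for every } \lambda \in \sigma\bigl(\at{A}{\mathbf{1}^\perp}\bigr).
\end{equation}
The right-hand side of \eqref{eq:srgtarget} is non-positive whenever $\beta \ge 2$ and $\beta \ge \alpha$, so the interesting cases are $\beta = 1$, and $\beta < \alpha$ (or $\beta = 2$ with $\alpha \ge 3$).

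First I would exploit the global structure of the SRG: the identity $A(G)^2 = (\alpha-\beta)A(G) + (d-\beta)I + \beta J$ forces $\sigma(A(G)) \subset \{d, r, s\}$ with $r,s$ the roots of $t^2 - (\alpha-\beta)t - (d-\beta) = 0$, and Cauchy interlacing constrains $\sigma(\at{A}{\mathbf{1}^\perp}) \subset [s,r]$. Combined with the local identities $\operatorname{tr}(A) = 0$ and $\operatorname{tr}(A^2) = \alpha d$, this should already yield \eqref{eq:srgtarget} in most parameter regimes. A useful preliminary observation: if $S_1(x)$ is disconnected, then $\alpha$ itself is a non-principal eigenvalue of $A$, and a direct calculation turns \eqref{eq:srgtarget} into $\beta(\alpha + 2d) \ge 4d$; so in the $\beta = 1$ regime the conjecture forces the local graphs to be connected, which must then be verified independently (using that $\alpha \ge 1$, $\beta = 1$ SRGs are extremely constrained and perhaps do not exist beyond small cases).

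The main obstacle is Example \ref{example:shrikhande_rook}, which shows that $\sigma(A)$ is not determined by $(d,\alpha,\beta)$ alone, so any purely parametric strategy must fail. A more promising approach is to avoid diagonalizing $A$ entirely and instead prove directly that $\widehat{\P}_\infty(x) + \alpha \cdot 2\Gamma(x)$ is positive semidefinite by decomposing it as a sum of rank-one non-negative contributions indexed by triangles through $x$ and by $2$-paths through $S_2(x)$, using Proposition \ref{prop:MatrixVersion1} to lift this to an assertion about the full $\Gamma_2$-matrix whose entries are visibly controlled by the SRG axioms (via the blocks $(4\Gamma_2)_{S_1,S_2}$ and $(4\Gamma_2)_{S_2,S_2}$). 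I expect the hardest case to be $\beta = 1$ with $\alpha \ge 1$, where \eqref{eq:srgtarget} demands a uniform spectral gap of order $\sqrt{d}$ around $\alpha/2$ in the local spectrum; extracting this will likely require the rigidity of admissible parameter sets (Krein conditions, absolute bounds, Bruck--Ryser--Chowla-type constraints), rather than any soft spectral argument alone.
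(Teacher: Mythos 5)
You should first be aware that the statement you were asked to prove is a \emph{conjecture} in the paper, not a theorem: the authors explicitly state that at present they cannot even show that strongly regular graphs of girth $3$ satisfy $CD(0,\infty)$, so there is no proof in the paper to compare against. What the paper does provide is exactly your opening reduction: by Theorem \ref{thm:srgCur}, the conjecture is equivalent to
$$\min_{\lambda\in \sigma\left(\at{A_{S_1(x)}}{\mathbf{1}^\perp}\right)}\left(\lambda-\frac{\alpha}{2}\right)^2\geq \frac{\alpha^2}{4}+d-\frac{\beta d}{2}-\frac{\alpha\beta}{4},$$
which is your displayed inequality divided by $4$ and is precisely the paper's own equation \eqref{eq:srg_CD2_infty}, together with the observation that the right-hand side is non-positive once $\beta\geq\max\{\alpha,2\}$. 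Up to that point your computation (including $av_1^+(x)=d-1-\alpha$, $\K_\infty^0(x)=2+\alpha/2$, and the disconnected-$S_1(x)$ criterion $\beta(\alpha+2d)\geq 4d$) is correct and coincides with the paper's remarks following the conjecture.

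Everything after that, however, is a research programme rather than a proof, and the gaps sit exactly where the conjecture is hard. (i) The interlacing-plus-trace argument cannot close the remaining cases: interlacing only confines $\sigma(\at{A_{S_1(x)}}{\mathbf{1}^\perp})$ to $[s,r]$, and for $\beta=1$ the forbidden window around $\alpha/2$ has radius of order $\sqrt{d}$, which interlacing and the identities $\operatorname{tr}A_{S_1(x)}=0$, $\operatorname{tr}A_{S_1(x)}^2=\alpha d$ do not exclude; as you yourself note via Example \ref{example:shrikhande_rook}, no argument depending only on $(d,\alpha,\beta)$ can possibly succeed, so ``should already yield the inequality in most parameter regimes'' is not a step in a proof. (ii) The proposed positive-semidefinite decomposition of $\widehat{\P}_\infty(x)+\alpha\cdot 2\Gamma(x)$ into rank-one contributions is indeed the right target matrix (by Theorem \ref{thm:Sharpness} with $\lambda=\alpha$), but you neither exhibit the decomposition nor give any reason it exists; producing it \emph{is} the conjecture. (iii) The case $\beta=1$, $\alpha\geq 1$ is deferred to unproven nonexistence or rigidity assertions about admissible parameter sets. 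In short, your proposal correctly reproduces the paper's reduction of the conjecture to a local spectral inequality but does not advance beyond it; the statement remains open both in the paper and in your argument.
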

Recall Shrikhande graph is a strongly regular graph with girth $3$, whose curvature function is valued $2$ at $\infty$. So if the conjecture is true, the estimate is sharp.
In order to show Conjectre \ref{conj:srg}, it is enough, by Theorem \ref{thm:srgCur}, to show for any strongly regular graphs with parameters $(N,d,\alpha,\beta)$, $\alpha>0$,
\begin{equation}\label{eq:srg_CD2_infty}
\min_{\lambda\in \sigma\left(\at{A_{S_1(x)}}{\mathbf{1}^\perp}\right)}\left(\lambda-\frac{\alpha}{2}\right)^2\geq \frac{\alpha^2}{4}+d-\frac{\beta d}{2} -\frac{\alpha\beta}{4}.
\end{equation}
When the RHS of (\ref{eq:srg_CD2_infty}) is nonpositive, we can conclude that the graph satisfies $CD(2,\infty)$ without knowing the spectrum of $\AS$. But there are examples with the RHS of (\ref{eq:srg_CD2_infty}) positive, e.g., the strongly regular graphs with parameters $(n^2,2(n-1),n-2,2)$ with $n>4$ (this is actually the graph $L(K_{n,n})$, see Theorem \ref{thm:Chang_Shrikhande} below).

\subsection{Strongly regular graphs which are $\infty$-curvature sharp}
From Theorem \ref{thm:srgCur}, we obtain the following result immediately.
\begin{corollary}\label{cor:srg_infty_cur_sharp}
Let $G=(V,E)$ be a strongly regular graph with parameters $(N,d,\alpha,\beta)$. Then $G$ is $\infty$-curvature sharp if and only if
\begin{equation}\label{eq:srg_infty_cur_sharp}
\min_{\lambda\in \sigma\left(\at{A_{S_1(x)}}{\mathbf{1}^\perp}\right)}\left(\lambda-\frac{\alpha}{2}\right)^2\geq \frac{\alpha^2}{4}+d-\frac{\beta d}{2}.
\end{equation}
\end{corollary}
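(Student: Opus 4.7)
The plan is to derive this characterization directly from Theorem \ref{thm:srgCur} by evaluating the curvature function at $\N = \infty$ and comparing it with the curvature sharpness upper bound $\K_\infty^0(x)$.

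First I would compute $\K_\infty^0(x)$ for a strongly regular graph. Since every vertex $y \in S_1(x)$ has $\alpha$ neighbours in $S_1(x)$ (one spherical edge to each common neighbour of $x$ and $y$) and one edge back to $x$, the out degree is $d_y^+ = d - \alpha - 1$, so $av_1^+(x) = d - \alpha - 1$. Plugging this into \eqref{eq:K0short} gives
$$\K_\infty^0(x) = \frac{3 + d - (d-\alpha-1)}{2} = 2 + \frac{\alpha}{2}.$$
By definition, $x$ is $\infty$-curvature sharp if and only if $\K_{G,x}(\infty) = \K_\infty^0(x) - \frac{2d_x}{\infty} = 2 + \frac{\alpha}{2}$.

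Next I would evaluate the formula of Theorem \ref{thm:srgCur} at $\N = \infty$. The term $\frac{2d}{\N}$ vanishes both in the ``leading'' part and inside the $(\cdot)_-$, yielding
$$\K_{G,x}(\infty) = 2 + \frac{\alpha}{2} + \left(\frac{2d(\beta-2)-\alpha^2}{2\beta} + \frac{2}{\beta}\min_{\lambda\in \sigma\left(\at{\AS}{\mathbf{1}^\perp}\right)}\Bigl(\lambda - \frac{\alpha}{2}\Bigr)^2\right)_-.$$
Therefore $\K_{G,x}(\infty) = 2 + \frac{\alpha}{2}$ if and only if the expression inside $(\cdot)_-$ is non-negative, i.e.
$$\frac{2d(\beta-2)-\alpha^2}{2\beta} + \frac{2}{\beta}\min_{\lambda}\Bigl(\lambda - \frac{\alpha}{2}\Bigr)^2 \ge 0.$$

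Finally I would rearrange this inequality. Multiplying through by the positive quantity $\beta/2$ and isolating the minimum gives
$$\min_{\lambda\in \sigma\left(\at{\AS}{\mathbf{1}^\perp}\right)}\Bigl(\lambda - \frac{\alpha}{2}\Bigr)^2 \ge \frac{\alpha^2 - 2d(\beta-2)}{4} = \frac{\alpha^2}{4} + d - \frac{\beta d}{2},$$
which is exactly \eqref{eq:srg_infty_cur_sharp}. There is no serious obstacle here: once one has Theorem \ref{thm:srgCur} and the identification $av_1^+(x) = d - \alpha - 1$, the statement is an algebraic reformulation, with the only subtlety being to recognise that the $(\cdot)_-$ operation produces sharpness precisely when its argument is $\ge 0$.
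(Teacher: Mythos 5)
Your proposal is correct and matches the paper's intended argument: the paper states the corollary follows immediately from Theorem \ref{thm:srgCur}, and your derivation (computing $\K_\infty^0(x)=2+\alpha/2$ via $av_1^+(x)=d-\alpha-1$, evaluating the theorem at $\N=\infty$, and observing that sharpness is equivalent to the argument of $(\cdot)_-$ being non-negative) is exactly that immediate deduction. The algebraic rearrangement to \eqref{eq:srg_infty_cur_sharp} checks out.
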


In this subsection, we discuss examples of strongly regular graphs with girth $3$ which are $\infty$-curvature sharp.
Let us start with the following two interesting families.  The following result can be found in, e.g., \cite[Theorems 8.6, 8.7]{Harary69}).
\begin{theorem}[\cite{Harary69}]\label{thm:Chang_Shrikhande} We have the following characterization of line graphs of $K_n$ and $K_{n,n}$:
\begin{itemize}
\item [(i)] The line graph $L(K_n)$, $n\geq 3$, is the unique strongly regular graph with parameters $(\binom{n}{2},2(n-2),n-2,4)$, except for $n=8$. When $n=8$, there are three more graphs besides $L(K_8)$, which are called Chang graphs, named after Li-Chien Chang.
\item [(ii)] The line graph $L(K_{n,n})$, $n\geq 2$, is the unique strongly regular graph with parameters $(n^2,2(n-1),n-2,2)$, except for $n=4$. When $n=4$, there is one more graph besides $L(K_{4,4})$, which is called Shrikhande graph, named after Sharadchandra Shankar Shrikhande (recall Example \ref{example:shrikhande_rook}).
\end{itemize}
\end{theorem}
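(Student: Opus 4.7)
The plan is to prove this classical characterization by first verifying that $L(K_n)$ and $L(K_{n,n})$ realize the claimed parameters, and then establishing uniqueness (outside the exceptional dimensions) via a local-to-global reconstruction of the type developed by Chang, Hoffman, and Shrikhande.

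For the parameter verification I would argue directly. In $L(K_n)$ the vertices are pairs $\{a,b\} \in \binom{[n]}{2}$; the degree is $2(n-2)$; two adjacent edges $\{a,b\}, \{a,c\}$ have as common neighbors the $n-3$ other edges through $a$ together with the edge $\{b,c\}$, giving $\alpha = n-2$; and two disjoint edges $\{a,b\}, \{c,d\}$ have exactly the four ``crossing'' edges $\{a,c\},\{a,d\},\{b,c\},\{b,d\}$ as common neighbors, so $\beta=4$. In $L(K_{n,n})$ a parallel count yields degree $2(n-1)$, $\alpha = n-2$ (common neighbors of two edges sharing an endpoint are the other edges through that endpoint), and $\beta = 2$ (disjoint edges $\{a,b\}, \{a',b'\}$ share exactly the two crossings $\{a,b'\}, \{a',b\}$).

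For uniqueness I would exploit the rigidity of the local structure. Given any strongly regular graph $G$ with the parameters in (i), the induced subgraph $S_1(x)$ on the neighborhood of any vertex $x$ is $(n-2)$-regular on $2(n-2)$ vertices, and the constraint $\beta=4$ translates into combinatorial identities forcing $S_1(x)$ to decompose as two disjoint $(n-2)$-cliques joined by a perfect matching, i.e., $K_{n-2} \times K_2$. Once this local form is pinned down, one can consistently label each vertex of $G$ by an unordered pair in $\binom{[n]}{2}$ so that adjacency corresponds to sharing an element, producing an explicit isomorphism with $L(K_n)$. Part (ii) follows an analogous scheme: $S_1(x)$ is forced to be a disjoint union of two $(n-1)$-cliques, and a bipartite labeling by pairs in $[n]\times[n]$ yields an isomorphism with $L(K_{n,n})$.

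The main obstacle lies in the exceptional values $n=8$ and $n=4$. There the local constraints fail to be rigid: one can perform a Seidel switching (choose a vertex subset $X \subset V$ and complement the bipartite graph between $X$ and $V\setminus X$) that preserves the strongly regular parameters but destroys the triangular or lattice labeling. The three Chang graphs arise from $L(K_8)$ via three inequivalent Seidel switches, while the Shrikhande graph arises from $L(K_{4,4})$ via a switching associated with an affine plane of order $2$. The deepest part of the proof is to show that such switchings cannot occur for any other $n$: one uses eigenvalue analysis (the Krein conditions together with a clique-extension rigidity argument) to verify that any non-trivial switching class would violate the forced local structure whenever $n\notin\{4,8\}$, thereby completing the classification.
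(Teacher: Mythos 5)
First, note that the paper does not prove this statement at all: it is imported verbatim as a classical result, cited as \cite[Theorems 8.6, 8.7]{Harary69}, so there is no in-paper argument to compare yours against. Your verification of the parameters of $L(K_n)$ and $L(K_{n,n})$ is correct and complete, and your overall plan (local forcing of $S_1(x)$, then a grand-clique relabelling, with Seidel switching explaining the exceptional graphs) is indeed the shape of the classical Connor--Shrikhande--Hoffman--Chang proofs.

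However, as a proof the proposal has two genuine gaps, both located in the uniqueness half, which is the entire content of the theorem. (1) The assertion that the parameters ``force'' $S_1(x)\cong K_{n-2}\times K_2$ (resp.\ $2K_{n-1}$) is precisely the statement that fails at the exceptional values: for $n=4$ in case (ii), $S_1(x)$ is only constrained to be a $2$-regular graph on $6$ vertices, and $C_6$ (the Shrikhande graph, cf.\ Example \ref{example:shrikhande_rook}) is as admissible as $2K_3$; similarly the Chang graphs have non-triangular neighbourhoods. Your outline never identifies where the forcing argument actually uses $n\notin\{4,8\}$, so the step that does all the work is asserted rather than proved. (2) The proposed mechanism for closing the exceptional cases is not viable: the Krein conditions are satisfied by $L(K_8)$, all three Chang graphs, $L(K_{4,4})$ and the Shrikhande graph alike, so they cannot distinguish these graphs or rule out further exceptions; and while Seidel switching does produce the exceptional graphs, nonexistence of further switching-type exceptions for other $n$ is not obtained by analysing switching classes but by the combinatorial clique-counting case analysis (and, for $n=8$, Chang's explicit enumeration). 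In short, the easy half is done correctly, but the hard half is a plan whose decisive steps are either missing or rest on a tool (Krein inequalities) that provably cannot deliver the conclusion.
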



\begin{example}\label{example:line_graph_sharp}
All strongly regular graphs with parameters $(n^2,2(n-1),n-2,2)$, $n\geq 2$ except for the Shrikhande graph and all strongly regular graphs with parameters $(\binom{n}{2},2(n-2),n-2,4)$, $n\geq 3$, i.e., $L(K_{n,n})$, $L(K_n)$ and the three Chang graphs, are $\infty$-curvature sharp.
\end{example}
\begin{proof}
The $\infty$-curvature sharpness of $L(K_{n,n})$ and $L(K_n)=J(n,2)$ follows from Examples \ref{example:LKmn} and \ref{example:Johnson} where the curvature functions of $L(K_{m,n})$ and $J(n,k)$ are derived. We like to mention those properties can also be shown by calculating the spectrum of $\AS$ and applying Corollary \ref{cor:srg_infty_cur_sharp}. Recall from Example \ref{example:shrikhande_rook}, the Shrikhande graph is only $12$-curvature sharp.

For the strongly regular graph with parameters $(\binom{n}{2},2(n-2),n-2,4)$, $n\geq 3$, we check that the RHS of (\ref{eq:srg_infty_cur_sharp}) satisfies
$$\frac{\alpha^2}{4}+d-\frac{\beta d}{2}=\frac{1}{4}(n-2)(n-10),$$
which is nonpositive when $3\leq n\leq 10$.
This implies, in particular, the three Chang graphs are $\infty$-curvature sharp.
%
\end{proof}

\begin{example}[Complete $k$-partite graphs]\label{example:complete_multipartite}
Complete $k$-partite graphs $K_{n,\ldots,n}$, $k\geq 2, n\geq 1$ are $\infty$-curvature sharp.
\end{example}
\begin{proof}
When $n=1$, this follows from Example \ref{example:Kn}. When $k\geq 2, n\geq 2$, this follows from Corollary \ref{cor:srg_infty_cur_sharp} since $K_{n,\ldots,n}$ is strongly regular with parameters $(nk,(k-1)n,(k-2)n,(k-1)n)$ and
$$\frac{\alpha^2}{4}+d-\frac{\beta d}{2}=\frac{n}{4}2k(2-k)\leq 0.$$
\end{proof}

\begin{example}[Paley graphs]\label{example:Paley_graph}
Let $q\equiv 1 (mod 4)$ be a prime power. The Paley graph $\mathrm{Paley}(q)$ is the graph with the finite field $\mathbb{F}_q$ as the vertex set, and the pairs of vertices that differ by a (nonzero) square as the edge set, see, e.g., \cite[Section 9.1]{BH12}. $\mathrm{Paley}(q)$ is a strongly regular graph with parameters $(q, (q-1)/2, (q-5)/4, (q-1)/4)$.
All Paley graphs $\mathrm{Paley}(q)$ with $q\geq 9$ are $\infty$-curvature sharp.
\end{example}
\begin{proof}
Note $\mathrm{Paley}(5)=C_5$, whose curvature function are given in Example \ref{example:Cn}, and $\mathrm{Paley}(9)=K_3\times K_3=L(K_{3,3})$, which is $\infty$-curvature sharp by Example \ref{example:line_graph_sharp}. The $\infty$-curvature sharpness of $\mathrm{Paley}(q)$, $q=13,17,\ldots$ follows from Corollary \ref{cor:srg_infty_cur_sharp} since
$$\frac{\alpha^2}{4}+d-\frac{\beta d}{2}=-\frac{1}{64}(3q^2-30q+11)$$
is nonpositive when $q>10$.
\end{proof}

Paley graphs have interesting properties. For example, they are Ramanujan graphs, and they are isomorphic to their complements.

The following example can be checked via Corollary \ref{cor:srg_infty_cur_sharp} without knowing the spectrum of $\AS$.

\begin{example}[Clebsch graph, Schl\"afli graph and their complements]
The Clebsch graph, i.e., the unique strongly regular graph with parameters $(16,5,0,2)$, and its complement, i.e., the unique strongly regular graphs with parameters $(16,10,6,6)$, are both $\infty$-curvature sharp. The Schl\"afli graph, i.e., the unique strongly regular graph with parameters $(27,16,10,8)$, and its complement, i.e., the unique strongly regular graph with parameters $(27,10,1,5)$, are also both $\infty$-curvature sharp.
\end{example}

The following is a very interesting and challenging problem.
\begin{problem}
Classify strongly regular graphs which are $\infty$-curvature sharp.
\end{problem}

By Corollary \ref{cor:srg_infty_cur_sharp}, it is equivalent to classify the strongly regular graphs for which the spectrum of the adjacency matrices of the local subgraphs $S_1(x)$ satisfy (\ref{eq:srg_infty_cur_sharp}).
It is interesting to mention Seidel's \cite{Seidel68} classification of strongly regular graphs with least eigenvalue $-2$ (of the adjacency matrices of the whole graphs): $L(K_n)$, the three Chang graphs, $L(K_{n,n})$, the Shrikhande graph, the complete $k$-partite graph $K_{2,\ldots, 2}$ (i.e., the cocktail party graphs), the Petersen graph, the complement of Clebsch graph, and the Schl\"afli graph.





\section{Curvature functions of graphs with general measures}\label{genmeassec}
Many of the computing methods for the Bakry-\'Emery curvature functions discussed above are extendible to the following general setting. Let $G=(V,E)$ be a locally finite simple graph. We can assign a symmetric nonnegative edge weights $w:E\to [0,\infty)$ and a positive vertex measure $\mu: V\to (0,\infty)$. For $\{x,y\}\in E$, we write $w_{xy}=w_{yx}$. (Recall we used a zero/one valued edge weights in (\ref{eq:0_1_edge_weight}).) We consider the curvature functions corresponding to the following Laplacian:
\begin{equation}
\D_{\mu,w} f(x):=\frac{1}{\mu(x)}\sum_{y,y\sim x}w_{xy}(f(y)-f(x)).
\end{equation}
Denote $d_{x}:= \sum_{y,y\sim x}w_{xy}$, and let
$S_1(x) = \{y_1,\dots,y_k\}$. By definition it is straightforward to
find the following matrices. We have
$$ \Delta_{\mu,w}(x)= \frac{1}{\mu(x)}\begin{pmatrix}
-d_x & w_{xy_1} & \cdots & w_{xy_k}
\end{pmatrix},
$$
and
$$
2\Gamma_{\mu,w}(x)=\frac{1}{\mu(x)}\begin{pmatrix}
d_x & -w_{xy_1} & \cdots & -w_{xy_k}\\
-w_{xy_1} & w_{xy_1} & \cdots & 0\\
\vdots & \vdots &\ddots & \vdots\\
-w_{xy_k} & 0 & \cdots & w_{xy_k}
\end{pmatrix}.
$$
The matrix $4\Gamma_2^{full}(x)$ is given entry-wise as follows:
$$(4\Gamma_2^{full}(x))_{x,x}=\frac{d_{x}^{2}}{\mu(x)^{2}}+\frac{3}{\mu(x)}\sum_{y\in S_1(x)}\frac{w_{xy}^{2}}{\mu(y)},$$
for any $y\in S_1(x)$,
\begin{align*}
(4\Gamma_2^{full}(x))_{x,y}= & -\frac{3w_{xy}^{2}}{\mu(x)\mu(y)}+\frac{w_{xy}}{\mu(x)\mu(y)}\left(\sum_{z\in S_{2}(x), z\sim y}w_{yz}\right)+\frac{d_{x}w_{xy}}{\mu(x)^{2}}
\\
& -\frac{1}{\mu(x)}\sum_{y'\in S_{1}(x), y'\sim y}\left(\frac{w_{xy}w_{yy'}}{\mu(y)}-\frac{w_{xy'}w_{yy'}}{\mu(y')}\right),
\end{align*}
\begin{align*}
(4\Gamma_2^{full}(x))_{y,y}= & \frac{3w_{xy}^{2}}{\mu(x)\mu(y)}+\frac{2w_{xy}^{2}}{\mu(x)^{2}} -\frac{d_{x}w_{xy}}{\mu(x)}+\frac{3w_{xy}}{\mu(x)\mu(y)}\sum_{z\in S_{2}(x), z\sim y}w_{yz}
\\
& +\sum_{y'\in S_1(x), y'\sim y}\left(\frac{w_{xy'}w_{yy'}}{\mu(y')}+3\frac{w_{xy}w_{yy'}}{\mu(y)}\right),
\end{align*}
for any $y_i,y_j\in S_1(x)$, $y_i\neq y_j$,
$$(4\Gamma_2^{full}(x))_{y_i,y_j}=2 \frac{w_{xy_{i}}w_{xy_{j}}}{\mu(x)^{2}}-\frac{1}{\mu(x)}\left(\frac{2w_{xy_{i}}w_{y_{i}y_{j}}}{\mu(y_{i})}+\frac{2w_{xy_{j}}w_{y_{i}y_{j}}}{\mu(y_{j})}\right),$$
and, for any $z\in S_2(x)$,
$$(4\Gamma_2^{full}(x))_{x,z}=(4\Gamma_2^{full}(x))_{z,z}=\frac{1}{\mu(x)}\sum_{y\in S_1(x), y\sim z}\frac{w_{xy}w_{yz}}{\mu(y)}, \,\,(4\Gamma_2^{full}(x))_{y,z}=-\frac{2w_{xy}w_{yz}}{\mu(x)\mu(y)},$$
for any $z_1,z_2\in S_2(x)$, $z_1\neq z_2$, $(4\Gamma_2^{full}(x))_{z_1,z_2}=0$.

When $w\equiv 1$ and $\mu\equiv 1$, it reduces to the non-normalized Laplacian (\ref{eq:nonnormalised_Laplacian}).

Now we discuss briefly the curvature functions $\K_{G,x}^{nor}(\cdot)$ corresponding to the normalized Laplacian, i.e., the case $w\equiv 1$ and $\mu(x)=d_x\,\,\forall x\in V$. This provides another interesting special case. Finally, we 
discuss some further analogous fundamental curvature results in the normalized case.
 
We have
$$\D^{nor}(x)=\frac{1}{d_x}\begin{pmatrix}
-d_x & 1 & \cdots & 1
\end{pmatrix},
$$
and
$$
2\Gamma^{nor}(x)=\frac{1}{d_x}\begin{pmatrix}
d_x & -1 & \cdots & -1\\
-1 & 1 & \cdots & 0\\
\vdots & \vdots &\ddots & \vdots\\
-1 & 0 & \cdots & 1
\end{pmatrix}.
$$
By using the full formula above the matrix $4\Gamma_2^{nor}(x)$ is given entry-wise as follows:
$$(4\Gamma_2^{nor}(x))_{x,x}=1+\frac{3}{d_x}\sum_{y\in S_1(x)}\frac{1}{d_y},$$
for any $y\in S_1(x)$,
$$(4\Gamma_2^{nor}(x))_{x,y}=-\frac{3+d_y+d_y^+}{d_xd_y}-\frac{1}{d_x}\sum_{y'\in S_1(x),y'\sim y}\left(\frac{1}{d_y}-\frac{1}{d_{y'}}\right),$$
$$(4\Gamma_2^{nor}(x))_{y,y}=\frac{2}{d_x^2}+\frac{3-d_y+3d_y^+}{d_xd_y}+\frac{1}{d_x}\sum_{y'\in S_1(x), y'\sim y}\left(\frac{1}{d_{y'}}+\frac{3}{d_y}\right),$$
for any $y_i,y_j\in S_1(x)$, $y_i\neq y_j$,
$$(4\Gamma_2^{nor}(x))_{y_i,y_j}=\frac{2}{d_x^2}-\frac{w_{y_iy_j}}{d_x}\left(\frac{2}{d_{y_1}}+\frac{2}{d_{y_2}}\right),$$
and, for any $z\in S_2(x)$,
$$(4\Gamma_2^{nor}(x))_{x,z}=(4\Gamma_2^{nor}(x))_{z,z}=\frac{1}{d_x}\sum_{y\in S_1(x), y\sim z}\frac{1}{d_y}, \,\,(4\Gamma_2^{nor}(x))_{y,z}=-\frac{2w_{yz}}{d_xd_y},$$
for any $z_1,z_2\in S_2(x)$, $z_1\neq z_2$, $(4\Gamma_2^{nor}(x))_{z_1,z_2}=0$.

We have the following upper bounds.
\begin{theorem}\label{nub}
Let $G=(V,E)$ be a locally finite simple graph and let $x\in V(G).$ For $\N\in(0,\infty]$, we have
$$\K^{nor}_{G,x}(\infty)\leq \frac{1}{2d_{x}} \sum_{y\in S_{1}(x)} \frac{4+\#_{\Delta}(x,y)}{d_{y}}-\frac{2}{\N}=\frac{1}{2d_x}\sum_{y\in S_1(x)}\frac{3+d_y-d_y^+}{d_y}-\frac{2}{\N}.$$
\end{theorem}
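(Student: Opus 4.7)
The plan is to mirror the strategy of Theorem \ref{thm:ub}, replacing the non-normalized local matrices $\Delta(x), \Gamma(x), \Gamma_2(x)$ with their normalized analogues $\Delta^{nor}(x), \Gamma^{nor}(x), \Gamma_2^{nor}(x)$ displayed earlier in this section. First I would invoke the normalized analogue of Proposition \ref{prop:LMP}: writing $\K := \K^{nor}_{G,x}(\N)$, the $CD(\K,\N)$-inequality forces
\[
4 M^{nor}_{\K,\N}(x) := 4\Gamma_2^{nor}(x)-\tfrac{4}{\N}\Delta^{nor}(x)^\top \Delta^{nor}(x)-4\K\,\Gamma^{nor}(x)\;\succeq\;0.
\]
Then, following the proof of Theorem \ref{thm:ub}, I would extract the principal submatrix $M_0$ indexed by $\{x\}\sqcup S_2(x)$. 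Since $\Delta^{nor}(x)$ vanishes outside $B_1(x)$ and the non-trivial block of $\Gamma^{nor}(x)$ is entirely supported on $B_1(x)$, the only term contributing to $M_0$ at entries involving $S_2(x)$ comes from $\Gamma_2^{nor}(x)$.

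Using the explicit entries listed above, $M_0$ has the block form
\[
M_0 = \begin{pmatrix} c & b^\top \\ b & D \end{pmatrix},\quad
c = 1+\tfrac{3}{d_x}\sum_{y\in S_1(x)}\tfrac{1}{d_y}-\tfrac{4}{\N}-2\K,\quad
D=\mathrm{diag}(a_{z_1},\ldots,a_{z_{|S_2|}}),
\]
with $b=(a_{z_1},\ldots,a_{z_{|S_2|}})^\top$ and $a_z := \frac{1}{d_x}\sum_{y\in S_1(x),\,y\sim z}\frac{1}{d_y} > 0$; the crucial input is the identity $(4\Gamma_2^{nor}(x))_{x,z}=(4\Gamma_2^{nor}(x))_{z,z}=a_z$ from the displayed formulas. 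Sylvester's criterion gives $\det M_0 \geq 0$, and the special block structure (scalar plus rank-one correction of a diagonal) yields
\[
\det M_0 \;=\; \Bigl(\prod_{z\in S_2(x)} a_z\Bigr)\Bigl(c - \sum_{z\in S_2(x)} a_z\Bigr),
\]
so, since each $a_z > 0$, we obtain $c \geq \sum_{z\in S_2(x)} a_z$.

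Finally I would simplify the right-hand side by switching the order of summation:
\[
\sum_{z\in S_2(x)} a_z \;=\; \frac{1}{d_x}\sum_{y\in S_1(x)}\frac{d_y^+}{d_y},
\]
since the vertex $y\in S_1(x)$ is counted in the inner sum once for each $z\in S_2(x)$ that is a $+$-neighbour of $y$. Plugging this into $c \geq \sum_z a_z$ and rearranging yields
\[
2\K+\tfrac{4}{\N}\;\le\;\frac{1}{d_x}\sum_{y\in S_1(x)}\frac{3+d_y-d_y^+}{d_y}.
\]
Dividing by $2$ and using $d_y^+ = d_y - d_y^0 - 1$ (because the unique in-neighbour of $y\in S_1(x)$ relative to $x$ is $x$ itself, so $d_y^- = 1$), we rewrite $3+d_y-d_y^+ = 4+d_y^0 = 4+\#_\Delta(x,y)$, yielding the stated bound. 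No step requires new ideas beyond those in Theorem \ref{thm:ub}; the main care lies in correctly identifying the entries of $4\Gamma_2^{nor}(x)$ on $\{x\}\sqcup S_2(x)$ from the full formula and in performing the double-counting $\sum_z d_z^{-} $ replaced by $\sum_y d_y^+/d_y$ with the normalized weights, which is the only place where the $d_y$'s in the denominators appear. I expect the swap-of-summation step to be the subtlest, since the weights $1/d_y$ break the clean cardinality counting available in the non-normalized setting.
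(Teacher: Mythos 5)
Your proposal is correct and follows exactly the route the paper indicates: the paper's proof consists of the single sentence that one applies Sylvester's criterion to the principal submatrix of $\Gamma_2^{nor}(x)-\frac{1}{\N}(\D^{nor}(x))^\top\D^{nor}(x)-\K^{nor}_{G,x}(\N)\Gamma^{nor}(x)$ indexed by $\{x\}\sqcup S_2(x)$, which is precisely your $M_0$. Your determinant computation, the double-counting identity $\sum_{z\in S_2(x)} a_z=\frac{1}{d_x}\sum_{y\in S_1(x)} d_y^+/d_y$, and the rewriting $3+d_y-d_y^+=4+\#_\Delta(x,y)$ via $d_y^-=1$ are all accurate and simply fill in the details the paper omits.
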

\begin{proof}
This can be shown by applying Sylvester's criterion to the submatrix of $$\Gamma_2^{nor}(x)-\frac{1}{\N}(\D^{nor}(x))^\top\D^{nor}(x)-\K_{G,x}^{nor}(\N)\Gamma(x)$$ corresponding to the vertices $\{x\}\sqcup S_2(x)$.
\end{proof}

Corresponding to Theorem \ref{thm:Sharpness}, we have the following result for normalized Laplacian.
\begin{theorem}\label{thm:Sharpness_nor}
Let $G=(V,E)$ be a locally finite simple graph and let $x\in V$. Then for any $\N\in~(0,\infty]$, $\K^{nor}_{G,x}(\N)$ is the solution of the following semidefintie programming,
\begin{align*}
 &\text{maximize}\,\,\, \frac{1}{2d_x}\sum_{y\in S_1(x)}\frac{3+d_y-d_y^+}{d_y}-\frac{2}{\N}-\frac{\lambda}{2}\\
&\text{subject to}\,\,\,\widehat{\P}^{nor}_\N(x)\geq -\lambda\cdot 2\Gamma^{nor}(x),
\end{align*}
where
\begin{equation} \label{eq:PN_nor}
\widehat{\P}^{nor}_{\N}(x):=\widehat{\P}^{nor}_{\infty}(x)+\frac{4}{d_x^2\N}\left(\begin{array}{cccc}
0 & 0 & \cdots & 0\\
0 & \multicolumn{3}{c}{\multirow{3}{*}{\raisebox{1mm}{\scalebox{1}{$d_xI_{d_x}-J_{d_x}$}}}}  \\
\vdots & \\
0 &
\end{array}
\right),
\end{equation}
and (recall (\ref{eq:QM}))
$$\widehat{\P}^{nor}_\infty(x):=4\Q\left(\Gamma_2^{nor}(x)-\frac{1}{2d_x}\sum_{y\in S_1(x)}\frac{3+d_y-d_y^+}{d_y}\Gamma^{nor}(x)\right)$$
Moreover, the following are equivalent:
\begin{itemize}
\item[(i)]
$\K_{G,x}^{nor}(\N)=\frac{1}{2d_x}\sum_{y\in S_1(x)}\frac{3+d_y-d_y^+}{d_y}-\frac{2}{\N}-\frac{\lambda}{2}$;
\item[(ii)]
The matrix $\widehat{\P}^{nor}_\N(x)+\lambda\cdot 2\Gamma^{nor}(x)$ is positive semidefinite and has zero eigenvalue of multiplicity at least $2$.
\end{itemize}
\end{theorem}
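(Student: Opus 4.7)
The plan is to follow the template of Theorem \ref{thm:Sharpness} verbatim, replacing $\Gamma,\Gamma_2,\Delta$ by $\Gamma^{nor},\Gamma_2^{nor},\Delta^{nor}$ and carefully tracking the normalization factors $1/d_x$. For brevity, let $U(x) := \frac{1}{2d_x}\sum_{y \in S_1(x)} \frac{3+d_y-d_y^+}{d_y}$ denote the normalized $\infty$-upper bound from Theorem \ref{nub}.

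First, exactly as in Proposition \ref{prop:LMP}, $\K^{nor}_{G,x}(\N)$ is the maximum $K$ with
$$M_{K,\N}^{nor}(x) := \Gamma_2^{nor}(x) - \tfrac{1}{\N}\Delta^{nor}(x)^\top\Delta^{nor}(x) - K\,\Gamma^{nor}(x) \geq 0.$$
I would then reduce the size of the SDP via Proposition \ref{prop:MatrixVersion1} (Schur complement) with respect to the $(S_2(x),S_2(x))$-block. The crucial input is that this block of $M_{K,\N}^{nor}$ equals $(\Gamma_2^{nor})_{S_2,S_2}$, which by inspection of the explicit entries in Section \ref{genmeassec} is diagonal with strictly positive entries $\tfrac{1}{4d_x}\sum_{y \in S_1(x),\,y\sim z}\tfrac{1}{d_y}$, hence invertible. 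The normalized analogs of Lemma \ref{lemma:gammagamma2}, Theorem \ref{thm:CurDimMultiplicity}, and Corollary \ref{cor:preciseCur} then carry over verbatim, since their proofs only used invertibility of this block together with $\Gamma_2\mathbf{1}=0$ and $\Delta\mathbf{1}=0$, both of which still hold for the normalized operators.

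Next I would compute $4\mathcal{Q}(M_{K,\N}^{nor})$ explicitly. Since $\Gamma^{nor}$ and $(\Delta^{nor})^\top \Delta^{nor}$ are supported on the $(B_1,B_1)$-block, the Schur complement $\mathcal{Q}$ acts trivially on them, giving
$$ 4\mathcal{Q}(M_{K,\N}^{nor}) = \widehat{\mathcal{P}}_\infty^{nor}(x) + 4(U(x)-K)\Gamma^{nor}(x) - \tfrac{4}{\N}\Delta^{nor}(x)^\top\Delta^{nor}(x),$$
where the first term is $\widehat{\mathcal{P}}_\infty^{nor}(x)$ by definition. The key computational step is the normalized analog of \eqref{eq:GammaDeltaIdJd}, verified entry by entry from the explicit forms of $\Delta^{nor}$ and $2\Gamma^{nor}$ in Section \ref{genmeassec}:
$$ d_x^2 \bigl(2\Gamma^{nor}(x) - \Delta^{nor}(x)^\top\Delta^{nor}(x)\bigr) = \begin{pmatrix} 0 & \mathbf{0}^\top \\ \mathbf{0} & d_x I_{d_x} - J_{d_x} \end{pmatrix}.$$
Substituting this and setting $\lambda := 2\bigl(U(x) - 2/\N - K\bigr)$, all contributions collapse to
$$ 4\mathcal{Q}(M_{K,\N}^{nor}) = \widehat{\mathcal{P}}_\N^{nor}(x) + \lambda \cdot 2\Gamma^{nor}(x),$$
which is exactly the matrix in the reformulated SDP. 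The equivalence (i)$\Leftrightarrow$(ii) then follows from the normalized Corollary \ref{cor:preciseCur} applied to $M_{K,\N}^{nor}$ combined with Proposition \ref{prop:MatrixVersion1}(ii), which transfers the ``zero eigenvalue of multiplicity $\geq 2$'' property between $M_{K,\N}^{nor}$ and $\widehat{\mathcal{P}}_\N^{nor} + \lambda \cdot 2\Gamma^{nor}$.

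The main obstacle will be the careful bookkeeping in the normalized identity for $d_x^2\bigl(2\Gamma^{nor}-(\Delta^{nor})^\top\Delta^{nor}\bigr)$: the factor $1/d_x^2$ arising from $\Delta^{nor\,\top}\Delta^{nor}$ and the single $1/d_x$ in $\Gamma^{nor}$ do not immediately cancel, and this mismatch is precisely what produces the perturbation weight $4/(d_x^2 \N)$ in \eqref{eq:PN_nor}, in contrast with the $4/\N$ appearing in \eqref{eq:PN}. Beyond this careful rescaling, the argument is a straightforward translation of the unnormalized proof.
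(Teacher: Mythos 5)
Your proposal is correct and follows exactly the route the paper intends: the paper itself only remarks that Theorem \ref{thm:Sharpness_nor} ``can be proved in a similar way as the proof of Theorem \ref{thm:Sharpness}, based on Proposition \ref{prop:MatrixVersion1}'', and your argument is a faithful and complete instantiation of that template, including the correct identification of the invertible diagonal $(S_2,S_2)$-block of $\Gamma_2^{nor}$ and the key identity $d_x^2\bigl(2\Gamma^{nor}(x)-\Delta^{nor}(x)^\top\Delta^{nor}(x)\bigr)=\left(\begin{smallmatrix}0&0\\0&d_xI_{d_x}-J_{d_x}\end{smallmatrix}\right)$, which indeed accounts for the $4/(d_x^2\N)$ weight in \eqref{eq:PN_nor}. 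The substitution $\lambda=2(U(x)-2/\N-K)$ and the resulting cancellation giving $4\Q(M^{nor}_{K,\N})=\widehat{\P}^{nor}_\N(x)+\lambda\cdot 2\Gamma^{nor}(x)$ both check out.
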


Theorem \ref{thm:Sharpness_nor} can be proved in a similar way as the proof of Theorem \ref{thm:Sharpness}, based on Proposition \ref{prop:MatrixVersion1}. Below we give an explicit description of $\widehat{\P}^{nor}_\infty(x)$. It is a $|B_1(x)|$ by $|B_1(x)|$ matrix satisfying $\widehat{\P}^{nor}_\infty(x)\mathbf{1}=0$. We have
\begin{equation}
\widehat{\P}^{nor}_\infty(x)=\left(\begin{array}{cccc}
0 & p_1(x) & \cdots & p_{d_x}(x)\\
p_1(x) & \multicolumn{3}{c}{\multirow{3}{*}{\raisebox{-1mm}{\scalebox{1}{$\P^{nor}_\infty(x)$}}}}  \\
\vdots & \\
p_{d_x}(x) &
\end{array}
\right),
\end{equation}
where
where for $i\in [d_x]$
\begin{align*}
p_i(x):=\frac{1}{d_x}\left(\frac{d_{y_i}^+-3}{d_{y_i}}-\frac{1}{d_x}\sum_{y\in S_1(x)}\frac{d_y^+-3}{d_y}\right)-\frac{1}{d_x}\sum_{y'\in S_1(x),y'\sim y_i}\left(\frac{1}{d_{y_i}}-\frac{1}{d_{y'}}\right).
\end{align*}
Mover, for or any $y_i,y_j\in S_1(x)$, $y_i\neq y_j$,
$$(\P^{nor}_\infty(x))_{y_i,y_j}=\frac{2}{d_x^2}-\frac{w_{y_iy_j}}{d_x}\left(\frac{2}{d_{y_i}}+\frac{2}{d_{y_j}}\right)-\frac{4}{d_xd_{y_i}d_{y_j}}\sum_{z\in S_2(x)}\frac{w_{y_iz}w_{zy_j}}{\sum_{y\in S_1(x),y\sim z}\frac{1}{d_y}}.$$
Similarly as in Theorems \ref{thm:lb} and \ref{thm:OutRegularFormula}, we have the following results.
\begin{theorem}\label{thm:lb_nor}
Let $G=(V,E)$ be a locally finite simple graph and let $x\in V$, Then for any $\N\in (0,\infty]$, we have
\begin{equation}\label{eq:lb_nor}
\K^{nor}_{G,x}(\N)\geq \frac{1}{2d_x}\sum_{y\in S_1(x)}\frac{3+d_y-d_y^+}{d_y}-\frac{2}{\N}+\frac{1}{2}\lambda_{\min}(d_x\widehat{\P}^{nor}_\N(x)).
\end{equation}
\end{theorem}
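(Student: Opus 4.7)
The plan is to mimic the proof of Theorem \ref{thm:lb}, replacing the appeal to Theorem \ref{thm:Sharpness} with Theorem \ref{thm:Sharpness_nor}. Setting
$$ \lambda := -\lambda_{\min}(d_x\widehat{\P}^{nor}_\N(x)), $$
Theorem \ref{thm:Sharpness_nor} reduces the lower curvature bound \eqref{eq:lb_nor} to showing that $\lambda\geq 0$ and that the matrix
$$ L := \widehat{\P}^{nor}_\N(x) + \lambda\cdot 2\Gamma^{nor}(x) $$
is positive semidefinite. The non-negativity of $\lambda$ comes for free once one observes that $\widehat{\P}^{nor}_\N(x)\mathbf{1}=0$; indeed, $\widehat{\P}^{nor}_\infty(x)$ arises by applying the $\mathcal{Q}$ operation to a matrix which annihilates $\mathbf{1}$ (here I would invoke item (iii) of Proposition \ref{prop:MatrixVersion1} in analogy with the unnormalized setting), while the additional block in \eqref{eq:PN_nor} also sends $\mathbf{1}$ to $0$. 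Consequently $\lambda_{\min}(\widehat{\P}^{nor}_\N(x))\leq 0$, giving $\lambda\geq 0$ and also $L\mathbf{1}=0$.

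Next I would handle the trivial case $d_x=1$ separately: here $2\Gamma^{nor}(x)$ and $\widehat{\P}^{nor}_\N(x)$ are supported on the two-dimensional space $\mathbb{R}\mathbf{1}\oplus \mathbb{R}\mathbf{1}^\perp$, and $\widehat{\P}^{nor}_\N(x)\mathbf{1}=0$ together with a direct computation (mirroring Example \ref{example:leaf}) forces $\widehat{\P}^{nor}_\N(x)=0$, so there is nothing to prove.

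For $d_x>1$, the key analytic step is a Weyl-type eigenvalue estimate on the complement $\mathbf{1}^\perp$:
$$ \lambda_{\min}\bigl(L|_{\mathbf{1}^\perp}\bigr) \;\geq\; \lambda_{\min}\bigl(\widehat{\P}^{nor}_\N(x)|_{\mathbf{1}^\perp}\bigr) \;+\; \lambda \cdot \lambda_{\min}\bigl(2\Gamma^{nor}(x)|_{\mathbf{1}^\perp}\bigr). $$
Since $2\Gamma^{nor}(x) = \frac{1}{d_x}\,2\Gamma(x)$, Proposition \ref{prop:Gamma} yields $\lambda_{\min}(2\Gamma^{nor}(x)|_{\mathbf{1}^\perp}) = 1/d_x$. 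Substituting this and the definition of $\lambda$, together with the elementary identity
$$ \lambda_{\min}(\widehat{\P}^{nor}_\N(x)) = \min\bigl\{0,\;\lambda_{\min}(\widehat{\P}^{nor}_\N(x)|_{\mathbf{1}^\perp})\bigr\}, $$
produces $\lambda_{\min}(L|_{\mathbf{1}^\perp})\geq 0$. Combined with $L\mathbf{1}=0$ this gives $L\geq 0$, completing the proof.

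The main obstacle I anticipate is not analytic but bookkeeping: verifying rigorously that $\widehat{\P}^{nor}_\N(x)\mathbf{1}=0$ in the normalized setting. In the unnormalized case this followed cleanly from Proposition \ref{prop:Gamma2Constant} together with Proposition \ref{prop:MatrixVersion1}(iii); for the normalized Laplacian one must check that the relevant $\Gamma_2^{nor}$ matrix still annihilates the constant vector (the entry-by-entry formulas displayed in this section can be verified to sum to zero row-wise), and then the Schur-complement step passes this property through to $\widehat{\P}^{nor}_\infty(x)$ and then to $\widehat{\P}^{nor}_\N(x)$ via \eqref{eq:PN_nor}. Everything else is then formally identical to the unnormalized argument.
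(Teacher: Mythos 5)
Your proposal is correct and is essentially the paper's intended argument: the paper omits the proof of Theorem \ref{thm:lb_nor}, stating only that it follows ``similarly as in Theorem \ref{thm:lb}'', and your write-up is precisely that adaptation via Theorem \ref{thm:Sharpness_nor}. In particular, you correctly identify the one place where the normalization matters, namely that $\lambda_{\min}\bigl(2\Gamma^{nor}(x)|_{\mathbf{1}^\perp}\bigr)=1/d_x$ for $d_x>1$, which is exactly compensated by the factor $d_x$ inside $\lambda_{\min}(d_x\widehat{\P}^{nor}_\N(x))$ in the statement, and you handle the degenerate case $d_x=1$ and the verification of $\widehat{\P}^{nor}_\N(x)\mathbf{1}=0$ (via $\Gamma_2^{nor}(x)\mathbf{1}=0$ and Proposition \ref{prop:MatrixVersion1}(iii)) in the same way the unnormalized proof does.
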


\begin{theorem}\label{thm:OutRegularFormulaNor}
Let $G=(V,E)$ be a locally finite simple graph and let $x\in V$.
Assume that $G$ satisfies the following regularity
\begin{equation}\label{eq:outRegularityNor}
p_i(x)=0, \,\,\text{for any}\,\, y_i\in S_1(x).
\end{equation} Then, we have for any $\N\in (0,\infty]$
\begin{equation}\label{eq:OutRegularFormulaNor}
\K^{nor}_{G,x}(\N)=\frac{1}{2d_x}\sum_{y\in S_1(x)}\frac{3+d_y-d_y^+}{d_y}-\frac{2}{\N}+\frac{1}{2}\lambda_{\min}\left(d_x\P^{nor}_\infty(x)+\frac{4}{d_x\N}(d_xI_{d_x}-J_{d_x})\right).
\end{equation}
\end{theorem}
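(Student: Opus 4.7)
The plan is to mirror the proof of Theorem~\ref{thm:OutRegularFormula} in the normalized setting, with Theorems~\ref{thm:lb_nor} and~\ref{thm:Sharpness_nor} taking over the roles of Theorems~\ref{thm:lb} and~\ref{thm:Sharpness}. First I would observe that the regularity condition $p_i(x)=0$ for every $y_i\in S_1(x)$ annihilates the first row and first column of $\widehat{\P}^{nor}_\infty(x)$ and hence also of $\widehat{\P}^{nor}_\N(x)$, producing a block form analogous to \eqref{eq:PNhatOutRegular} in which the only non-trivial $d_x\times d_x$ block is $\P^{nor}_\infty(x)+\frac{4}{d_x^2\N}(d_xI_{d_x}-J_{d_x})$. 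Restricting the identity $\widehat{\P}^{nor}_\infty(x)\mathbf{1}_{d_x+1}=0$ to the $S_1(x)$-rows yields $\P^{nor}_\infty(x)\mathbf{1}_{d_x}=0$, so $\mathbf{1}_{d_x}$ is a zero eigenvector of this block and its least eigenvalue $\mu:=\lambda_{\min}(d_x\P^{nor}_\infty(x)+\frac{4}{d_x\N}(d_xI_{d_x}-J_{d_x}))$ is non-positive; combined with the block structure this identifies $\lambda_{\min}(d_x\widehat{\P}^{nor}_\N(x))=\mu$.

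Feeding this into Theorem~\ref{thm:lb_nor} already delivers the lower bound
$$\K^{nor}_{G,x}(\N)\ge \frac{1}{2d_x}\sum_{y\in S_1(x)}\frac{3+d_y-d_y^+}{d_y}-\frac{2}{\N}+\frac{\mu}{2},$$
which coincides with the claimed formula \eqref{eq:OutRegularFormulaNor}. So the substantive task is the matching upper bound, which I would extract from Theorem~\ref{thm:Sharpness_nor} by choosing $\lambda^*:=-\mu\ge 0$ and verifying that $M:=\widehat{\P}^{nor}_\N(x)+\lambda^*\cdot 2\Gamma^{nor}(x)$ is positive semidefinite with zero eigenvalue of multiplicity at least two.

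For the multiplicity-two condition I would exhibit two independent zero eigenvectors of $M$. The first is $\mathbf{1}_{d_x+1}$, annihilated by both summands. For the second, if $\mu=0$ the first standard basis vector $e_0$ works because of the block form, while if $\mu<0$ I take $v=(0,\mathbf{v})$ with $\mathbf{v}\in\mathbf{1}_{d_x}^\perp$ a $\mu$-eigenvector of $d_x\P^{nor}_\infty(x)+\frac{4}{d_x\N}(d_xI_{d_x}-J_{d_x})$; a direct calculation gives $\widehat{\P}^{nor}_\N(x)v=(0,\tfrac{\mu}{d_x}\mathbf{v})$ and $2\Gamma^{nor}(x)v=\tfrac{1}{d_x}(0,\mathbf{v})$, so $Mv=\tfrac{\mu+\lambda^*}{d_x}(0,\mathbf{v})=0$. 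For the positivity of $M$ it suffices to work on $\mathbf{1}_{d_x+1}^\perp$; decomposing such a $w$ as $w=(-d_x\beta,\beta\mathbf{1}_{d_x}+\mathbf{w}^\perp)$ with $\mathbf{w}^\perp\perp\mathbf{1}_{d_x}$, the block structure reduces $w^\top\widehat{\P}^{nor}_\N(x)w$ to $\tfrac{1}{d_x}(\mathbf{w}^\perp)^\top(d_x\P^{nor}_\infty(x)+\tfrac{4}{d_x\N}(d_xI_{d_x}-J_{d_x}))\mathbf{w}^\perp\ge \tfrac{\mu}{d_x}|\mathbf{w}^\perp|^2$, while a direct computation yields $w^\top 2\Gamma^{nor}(x)w=\tfrac{1}{d_x}|\mathbf{w}^\perp|^2+(d_x+1)^2\beta^2$, so the $|\mathbf{w}^\perp|^2$-terms cancel under $\lambda^*=-\mu$ and what remains is non-negative. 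Theorem~\ref{thm:Sharpness_nor} then gives $\K^{nor}_{G,x}(\N)=\frac{1}{2d_x}\sum_y\frac{3+d_y-d_y^+}{d_y}-\frac{2}{\N}-\frac{\lambda^*}{2}=\frac{1}{2d_x}\sum_y\frac{3+d_y-d_y^+}{d_y}-\frac{2}{\N}+\frac{\mu}{2}$.

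The main obstacle I expect is the bookkeeping of the multiple $d_x$- and $d_y$-weights carried by $\widehat{\P}^{nor}_\N(x)$ and $2\Gamma^{nor}(x)$, which are distributed asymmetrically between the two matrices unlike the non-normalized case. Verifying that $\lambda^*=-\mu$ (rather than, for instance, $-\mu/d_x$) is exactly the shift at which Theorem~\ref{thm:Sharpness_nor} aligns with the lower bound from Theorem~\ref{thm:lb_nor} requires tracking these factors carefully throughout, especially the $\tfrac{1}{d_x}$ emerging from $2\Gamma^{nor}(x)(0,\mathbf{v})^\top$ that must cancel the $\tfrac{1}{d_x}$ in front of the spectral bound on $\widehat{\P}^{nor}_\N(x)$.
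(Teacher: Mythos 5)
Your proposal is correct and is precisely the argument the paper intends: the paper states this theorem without proof, remarking only that it follows "similarly as in Theorems \ref{thm:lb} and \ref{thm:OutRegularFormula}", and your writeup is the faithful adaptation of the proof of Theorem \ref{thm:OutRegularFormula} with the $d_x$-weights tracked correctly (in particular, the identities $\lambda_{\min}(d_x\widehat{\P}^{nor}_\N(x))=\mu$, $2\Gamma^{nor}(x)(0,\mathbf{v})^\top=\tfrac{1}{d_x}(0,\mathbf{v})^\top$ for $\mathbf{v}\perp\mathbf{1}_{d_x}$, and $w^\top 2\Gamma^{nor}(x)w=\tfrac{1}{d_x}|\mathbf{w}^\perp|^2+(d_x+1)^2\beta^2$ all check out, so the shift $\lambda^*=-\mu$ is indeed the right one). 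No gaps.
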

In particular, if $d_{y_i}=av_1(x), \,\,d_{y_i}^+(x)=av_1^+(x)$ for any $y_i\in S_1(x)$, we have (\ref{eq:outRegularityNor}). If, furthermore, the graph $G$ is $d$-regular, all the above results are direct consequences of their counterparts for non-normalized Laplacians, since $\K_{G,x}(\N)=d\K^{nor}_{G,x}(\N)$.

\begin{example}[Complete bipartite graphs revisited]\label{example:Kmn_nor}
Let $K_{m,n}=(V,E),\,m,n\geq 1$ be a complete bipartite graph. Let $x\in V$ be a vertex with degree $d_x=n$. Then we have for any $\N\in(0,\infty]$
\begin{equation}\label{eq:Kmn_nor}
\K^{nor}_{K_{m,n},x}(\N)=\frac{2}{m}-\frac{2}{\N}.
\end{equation}
\end{example}
\begin{proof}
By definition, we have
$$\P^{nor}_\infty(x)=\frac{n-1}{n^2}\left(4\frac{m-1}{m}-2\right)(nI_n-J_n).$$
Therefore, when $n\geq 1, m\geq 2$, we have $\lambda_{\min}(\P^{nor}_\infty(x))\geq 0$. By Theorem \ref{thm:OutRegularFormulaNor}, we conclude (\ref{eq:Kmn_nor}) in this case.
Otherwise, when $m=n=1$, (\ref{eq:Kmn_nor}) follows directly from Example \ref{example:Kmn}.
\end{proof}
\begin{remark}
From Example \ref{example:Kmn_nor}, we see every vertex in $K_{m,n}$ is $\infty$-curvature sharp with respect to the normalized Laplacian. Recall from Example \ref{example:Kmn}, this is not always the case with respect to the non-normalized Laplacian.

Note that other curvature notions based on normalized Laplacian and optimal transportation have also been calculated on complete bipartite graphs. The Ollivier-Ricci curvature on each edge of $K_{m,n}$ is $0$ (\cite[Corollary 3.2]{BM15}); Lin-Lu-Yau's modified Ollivier-Ricci curvature \cite{LLY11} on each edge of $K_{m,n}$ is $2/\max\{m,n\}$ \cite[Section 5]{Smith14}. The latter one coincides with $\K^{nor}_{K_{m,n},\cdot}(\infty)$ when $m=n$. For the comparison of Ollivier-Ricci curvature and Bakry-\'Emery curvature functions on a complete graph, we refer to \cite[Section 4]{JL14}.
\end{remark}

Finally,
we present an example for which $\K^{nor}_{G,x}(\infty)$ and $\K_{G,x}(\infty)$ has different sign at every vertex $x$. The Figures \ref{figure:cur_nor} and \ref{figure:cur_non} are taken from the web-application for calculation of curvature on graphs by David Cushing and  George W. Stagg. The number of each vertex is the corresponding curvature values.
\begin{figure}[h]
\begin{minipage}[t]{0.48\linewidth}
\centering
\includegraphics[width=\textwidth]{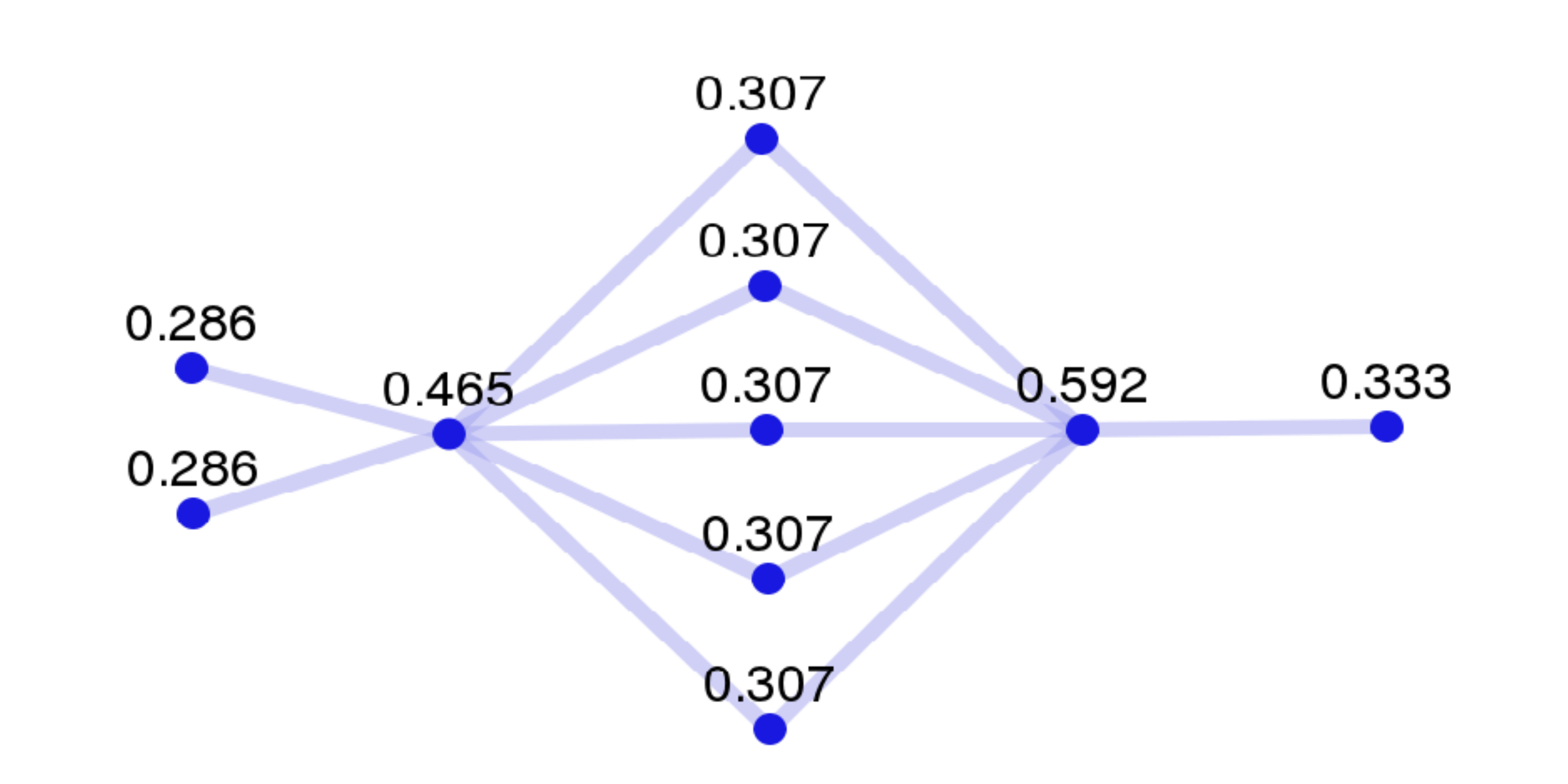}
\caption{Normalized curvature $\K^{nor}_{G,\cdot}(\infty)$\label{figure:cur_nor}}
\end{minipage}
\hfill
\begin{minipage}[t]{0.48\linewidth}
\centering
\includegraphics[width=.85\textwidth]{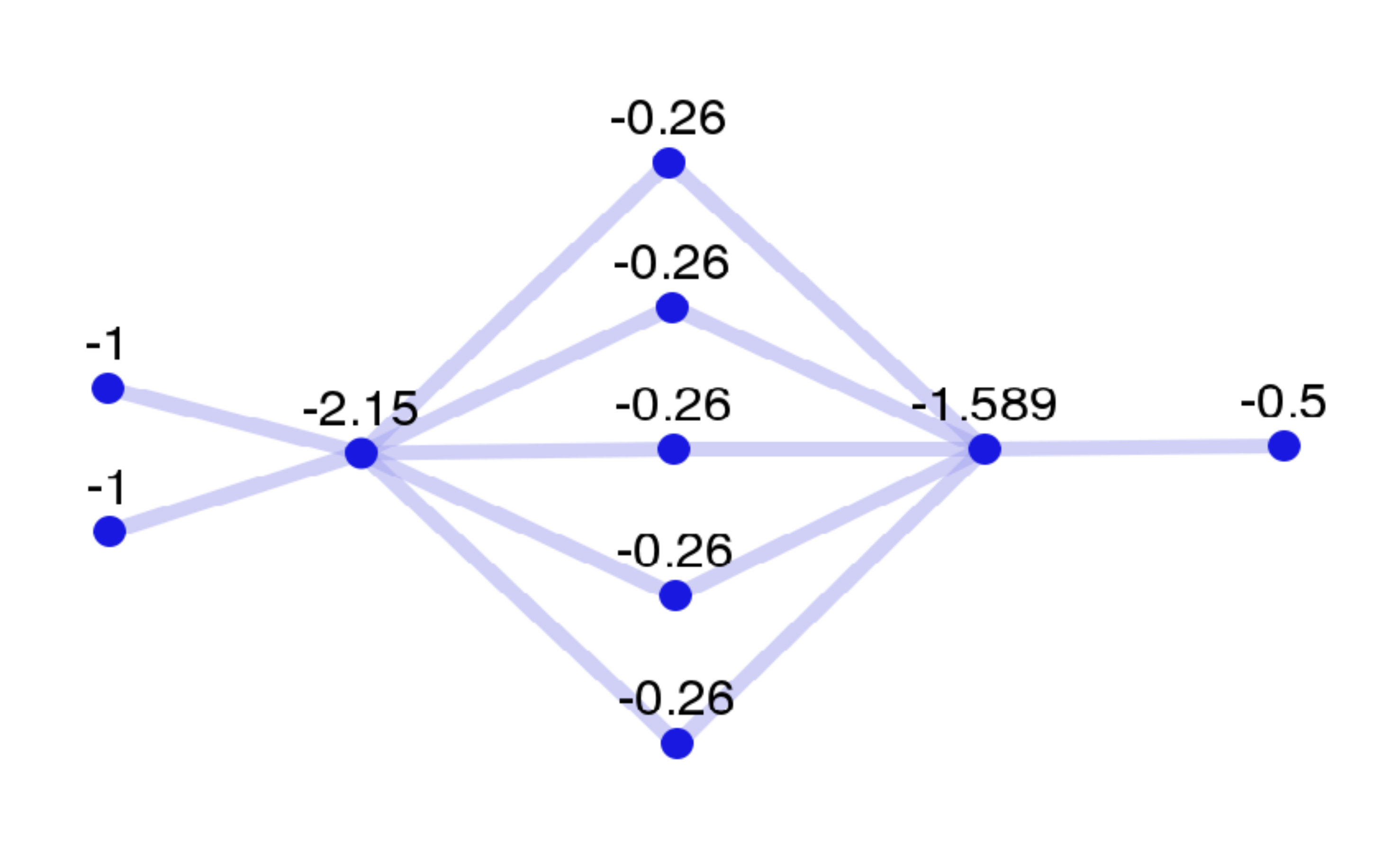}
\caption{Non-normalized curvature $\K_{G,\cdot}(\infty)$\label{figure:cur_non}}
\end{minipage}
\end{figure}

\section*{Acknowledgements}
This work was supported by the EPSRC Grant EP/K016687/1 "Topology, Geometry and Laplacians of Simplicial Complexes". We are very grateful to George W. Stagg for collaborating with DC on Python program and web-application
\begin{center}
http://teggers.eu/graph/
\end{center}
for calculation of Bakry-\'Emery curvature $CD(K,\infty)$ on
graphs. We also like to thank Anna Felikson for helpful discussions
about Coxeter groups, and Boba Hua and 
Jim Portegies for raising the
question whether curvature functions are concave.


%
%
%
%

\end{document}